\theoremstyle{plain}
\newtheorem{theorem}{Theorem}[section]
\newtheorem{corollary}[theorem]{Corollary}
\newtheorem{lemma}[theorem]{Lemma}
\newtheorem{Proposition}[theorem]{Proposition}
\newtheorem{Example}[theorem]{Example}
\newtheorem{Definition}[theorem]{Definition}
\newtheorem{Fact}[theorem]{Fact}
\newtheorem{Conjecture}[theorem]{Conjecture}
\newtheorem*{Induction hypothesis}{Induction hypothesis}
\theoremstyle{remark}
\newtheorem{remark}[theorem]{Remark}
\numberwithin{equation}{section}
\title[Multiple singular values]{Small ball probability for multiple singular values of symmetric random matrices}
\author{Yi HAN}
\address{Department of Pure Mathematics and Mathematical Statistics, University of Cambridge.
}
\email{yh482@cam.ac.uk}
\thanks{Supported by EPSRC grant EP/W524141/1.}
\begin{document}

\begin{abstract} Let $A_n$ be an $n\times n$ random symmetric matrix with $(A_{ij})_{i< j}$ i.i.d. mean $0$, variance 1, following a subGaussian distribution and diagonal elements i.i.d. following a subGaussian distribution with a fixed variance. We investigate the joint small ball probability that $A_n$ has eigenvalues near two fixed locations $\lambda_1$ and $\lambda_2$, where $\lambda_1$ and $\lambda_2$ are sufficiently separated and in the bulk of the semicircle law. More precisely we prove that for a wide class of entry distributions of $A_{ij}$ that involve all Gaussian convolutions (where $\sigma_{min}(\cdot)$ denotes the least singular value of a square matrix), $$\mathbb{P}(\sigma_{min}(A_n-\lambda_1 I_n)\leq\delta_1n^{-1/2},\sigma_{min}(A_n-\lambda_2 I_n)\leq\delta_2n^{-1/2})\leq c\delta_1\delta_2+e^{-cn}.$$ The given estimate approximately factorizes as the product of the estimates for the two individual events, which is an indication of quantitative independence. The estimate readily generalizes to $d$ distinct locations. As an application, we upper bound the probability that there exist $d$ eigenvalues of $A_n$ asymptotically satisfying any fixed linear equation, which in particular gives a lower bound of the distance to this linear relation from any possible eigenvalue pair that holds with probability $1-o(1)$, and rules out the existence of two equal singular values in generic regions of the spectrum.
\end{abstract}

\maketitle

\section{Introduction}

Let $A_n$ be an $n\times n$ symmetric random matrix with entries above the diagonal $(A_{i,j})_{i< j}$ being identically distributed and having mean $0$ and variance $1$, and diagonal entries having a fixed variance.  Wigner proved in his seminal work \cite{wigner1958distribution} in the 1950s that the eigenvalue distribution of $A_n$ converges to the semicircle law on the real line.

In this work we investigate the following question: given two real numbers $\lambda_1\neq\lambda_2$ in the bulk of the semicircle law, what can be said about the dependence between the following two events: (a) $A_n$ has an eigenvalue near $\lambda_1$; and (b) $A_n$ has an eigenvalue near $\lambda_2$? As the spectra of $A_n$ at different locations are correlated, one cannot argue from elementary probability theory that the events (a) and (b) are asymptotically independent in some quantitative sense.

This question is related to the study of asymptotic independence in spatial point processes, especially determinantal point process (DPP). The GUE matrix ensemble and complex Ginibre ensemble are both determinantal point processes, and GOE matrices have instead a Pfaffian structure; see for example \cite{anderson2010introduction}. For DPPs, the fact that a positive definite matrix satisfies the Hadamard–Fischer inequality can be used to show that its $k$-fold correlation function approximately factorizes as the product of each individual factor when we take the limit, which is used in \cite{MR3112927} for the study of smallest gap of GUE. This fact is a strong indication of asymptotic independence, though restricted to DPPs and the GUE matrix. From the perspective of universality in random matrix theory, we should expect that the events (a) and (b) are approximately independent, in a certain quantitative sense, for a general class of Wigner matrices $A_n$ without a determinantal point process structure and do not match to fourth moments with the GUE. Another related notion is negative dependence in determinantal point processes, see for example the works \cite{pemantle2000towards}, \cite{lyons2003determinantal}, \cite{ghosh2015determinantal}, \cite{borcea2009negative}. 

Another closely related topic concerns the log correlated structure in Wigner matrices. This log correlated structure can be derived in the large $n$ limit for the log determinant \cite{tao2012central} \cite{MR4017114} and the log characteristic polynomial \cite{bourgade2010mesoscopic} \cite{bourgade2022optimal}. Owing to this structure, we can explicitly compute the correlation of the log characteristic functions and eigenvalue fluctuations at two different locations $\lambda_1,\lambda_2$. The limiting covariance of observable at these two locations is less than one when $\lambda_1$ and $\lambda_2$ are distant at mesoscopic scales, and is zero when they are distant at macroscopic scales, see \cite{mody2023log}. Therefore we should expect that the events (a) and (b) are approximately independent in a very strong sense, modulo numerical constants.

In this work we derive quantitative, non-asymptotic estimates to highlight the asymptotic independence of (a) and (b), for $A_n$ having a wide class of entry distributions (and therefore not assuming a DPP structure and not assuming $A_n$ is invariant under orthogonal or unitary transform). More precisely, we prove that when the entry distribution of $A_{ij}$ is subGaussian and has a small (but non-vanishing) Gaussian component, and when $|\lambda_1-\lambda_2|\geq\Delta\sqrt{n}$ and both lie in the bulk $[-(2-\kappa)\sqrt{n},(2-\kappa)\sqrt{n}]$ for any $\Delta,\kappa>0$, then 
\begin{equation}\label{singularvalue111}
    \mathbb{P}(\sigma_{min}(A_n-\lambda_1 I_n)\leq\delta_1n^{-1/2},\sigma_{min}(A_n-\lambda_2 I_n)\leq\delta_2n^{-1/2})\leq c\delta_1\delta_2+e^{-cn}
\end{equation}
for any $\delta_1,\delta_2>0$, where $c$ is a fixed constant. Here for any square matrix $M$ we use the notation $\sigma_{min}(M)$ to denote the smallest singular value of $M$. We will further prove this result for $d$ locations $\lambda_1,\cdots,\lambda_d$ for any $d$, and a similar result for $\lambda_1,\lambda_2$ with distance $|\lambda_1-\lambda_2|\geq\Delta n^{-\frac{1}{2}+\sigma}$ and any $\sigma\in(0,1]$. Moreover, we do not use the explicit density function of Gaussian distribution (but only use its Fourier decay) so that our assumption covers many other distributions.

The main estimate \eqref{singularvalue111} of this paper is a multilocation generalization of the main result in Campos, Jenssen, Michelen and Sahasrabudhe \cite{campos2024least} which proves that, for any symmetric random matrix $A_n$ with subGaussian entries, we have for any $\epsilon>0$,
\begin{equation}\label{subgaussimone}
    \mathbb{P}(\sigma_{min}(A_n)\leq\epsilon n^{-1/2})\leq c\epsilon+e^{-cn}.
\end{equation} It is not hard to check that the same estimate holds for $\sigma_{min}(A_n-\lambda_iI_n)$ for any $\lambda_i$ in the bulk of the semicircle law, so one can interpret \eqref{singularvalue111} as an approximate independence property thanks to \eqref{subgaussimone}. Estimate \eqref{subgaussimone} was previously proven fir $c=1$ and $\epsilon\geq n^{-d}$ for some sufficiently small $d>0$ via fixed energy universality results and comparison to the GOE matrix, obtaining the correct dependence in $\epsilon$, see \cite{MR3541852} and also \cite{MR2784665} where a moment matching condition was imposed.

To the best of our knowledge, the simultaneous small ball probability estimate for several distinct singular values \eqref{singularvalue111} is quite new in the literature. Similar results do not exist even for random matrices with i.i.d. entries where it is usually simpler, though still very challenging, to derive least singular value estimates; see \cite{rudelson2008littlewood}, \cite{MR4255145}, \cite{MR4076632}.

We briefly elaborate how our estimate \eqref{singularvalue111} compares to the log correlated structure interpretation. In \eqref{singularvalue111} the numerical constants are not explicitly computed; thus the covariance is not computed and we can say only that the two events are approximately independent. The advantage is that we can consider the event where $A_n$ has an eigenvalue in an $\epsilon$-neighborhood of $\lambda_1$ for any $\epsilon>e^{-cn}$, which is much smaller than the typical eigenvalue spacing $n^{-1/2}$. Additionally, our estimate holds with probability $1-e^{-\Omega(n)}$. These two advantages appear to be inaccessible from the log correlated field structure in \cite{bourgade2022optimal}, \cite{mody2023log}. The estimate we prove here mostly relies on the anti-concentration property of random matrix entries rather than comparison to the GOE matrix, and it appears that our results are new for the GOE matrix as well. Allowing $\epsilon$ to be much smaller than $n^{-1/2}$ is of primary importance to our application, as this permits us to take an $\epsilon$ net and deduce that the possibility that some $d$ eigenvalues of $A_n$ satisfy any fixed linear structure is exponentially small (Theorem \ref{theorem1.7}).

\subsection{Statement of main estimates} Now we state the standing assumptions and the main estimates of our paper. Although some conditions listed as follows seem technical, the reader can simply take $A_{ij}$ to be any subGaussian random variable having a nonvanishing Gaussian component; under this assumption all the results of this paper hold.

Recall that for a random variable $\zeta$ with mean 0 and variance one, we define its subGaussian moment via 
$$
\|\zeta\|_{\psi_2}:=\sup_{p\geq 1}p^{-1/2}(\mathbb{E}|\zeta|^p)^\frac{1}{p}. 
$$ 

For any $B>0$, let $\Gamma_B$ denote the set of random variables $\xi$ with mean 0, variance one that satisfies $\|\xi\|_{\psi_2}\leq B$.

\begin{Definition}\label{defininition1.1} Fix a parameter $\sigma_0\in(0,1)$ throughout the paper, and fix three real numbers $G>0,$ $K>0$, $B>0$.

Let $\Gamma_1(G,B,\sigma_0)$
    denote the collection of mean zero real random variables $\zeta_1$ with variance $\sigma$, which also satisfies a subGaussian estimate $\|\sigma_0^{-1/2}\zeta_1\|_{\psi_2}\leq G$ and also satisfies the following Fourier decay estimate: 
    \begin{equation}\label{assumptionone}
       | \mathbb{E}e^{2\pi i\theta \xi_1}|\leq \exp(-B|\theta|^2),\quad\text{ for all } \theta\in\mathbb{R}.    \end{equation}

       Let $\Gamma_2(G,K,\sigma_0)$ denote the collection of mean zero real random variables $\zeta_2$ with variance $1-\sigma_0$, which also satisfies sub-Gaussian estimate $\|(1-\sigma_0)^{-1/2}\zeta_2\|_{\psi_2}\leq G$, and have a bounded density function $\mu_{\zeta_2}(x)$ with respect to the Lebesgue measure that satisfies 
       \begin{equation} \label{assumptiontwo}|\mu_{\zeta_2}(x)|\leq K,\quad \text{ for all }x\in\mathbb{R}.       \end{equation}
    
\end{Definition}
The main result of this paper is the following estimate:
\begin{theorem}\label{Theorem1.1} Fix some positive constants $G,K,B$ and $\sigma_0\in(0,1)$. Consider two random variables $\zeta_1\in \Gamma_1(G,B,\sigma_0)$ and $\zeta_2\in (G,K,\sigma_0)$, such that $\zeta_1$ and $\zeta_2$ are independent.  Moreover, assume that

$(\mathbf{LSI})$ The random variable $\zeta_1+\zeta_2$ satisfies a logarithmic Sobolev inequality with the log Sobolev constant finite and independent of $n$.

Let $A_n$ be an $n\times n$ symmetric random matrix with upper-diagonal $\{A_{i,j}\}_{1\leq i< j\leq n}$ being independent and identically distributed, having distribution $\zeta_1+\zeta_2$. 

Additionally, assume that the diagonal entries $(A_{ii})_{1\leq i\leq n}$ are i.i.d. having distribution $\bar{\sigma}(\zeta_1+\zeta_2)$ for some $\bar{\sigma_0}>0$, and independent of $\{A_{i,j}\}_{1\leq i< j\leq n}$.

Consider any fixed $\kappa>0$, $\Delta>0$ and $d\in\mathbb{N}_+$, and consider any fixed locations $\lambda_1,\cdots,\lambda_d\in[(-(2-\kappa)\sqrt{n},(2-\kappa)\sqrt{n}]$ such that $|\lambda_i-\lambda_j|\geq \Delta n^{1/2}$ for any $i\neq j\in\{1,\cdots,d\}$. Then there exists $c>0$ depending on $G,K,B,\sigma_0,\bar{\sigma}_0,\Delta,\kappa$ such that for any $\delta_1,\cdots,\delta_d\geq 0$,
\begin{equation}\label{gasagagga}
    \mathbb{P}\left(\sigma_{min}(A_n-\lambda_i I_n)\leq\delta_i n^{-1/2},\quad\text{ for all }i=1,\cdots,d\right)\leq c\prod_{i=1}^d\delta_i+e^{-cn}. 
\end{equation}

\end{theorem}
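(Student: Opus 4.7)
The plan is to extend the Campos--Jenssen--Michelen--Sahasrabudhe (CJMS) least-singular-value strategy from \cite{campos2024least} to $d$ simultaneous shifts. First I would reduce the joint small-ball event in \eqref{gasagagga} to a joint small-ball statement for inner products against approximate null vectors. Exposing the last row/column of $A_n$, write the matrix as an $(n-1)\times(n-1)$ principal minor $A'$ together with an independent column $Y\in\mathbb{R}^{n-1}$ and diagonal entry $A_{nn}$. A distance-to-subspace inequality gives $\sigma_{\min}(A_n-\lambda_i I_n)\gtrsim n^{-1/2}|\langle Y,w_i\rangle+r_i|$, where $w_i$ is a unit normal to the column span of $A'-\lambda_i I_{n-1}$ (on the event the latter is well-conditioned) and $r_i$ is a remainder depending on $A_{nn}$ and $A'$. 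Thus the event in \eqref{gasagagga} is dominated, on the good event that each $A'-\lambda_i I_{n-1}$ is well-behaved, by a joint small-ball event for the random vector $\bigl(\langle Y,w_i\rangle\bigr)_{i=1}^d$.

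Following CJMS I would split by the structural type of the $w_i$. Compressible or otherwise ``structured'' $w_i$ (lying in a small net) yield an exceptional contribution of $e^{-cn}$, handled by a union bound combined with tensorization against the Gaussian Fourier decay \eqref{assumptionone} of $\zeta_1$. In the incompressible regime, bulk eigenvector delocalization (from the $(\mathbf{LSI})$ hypothesis and the bulk condition $\lambda_i\in[-(2-\kappa)\sqrt n,(2-\kappa)\sqrt n]$) together with CJMS-type inverse Littlewood--Offord analysis shows that each $w_i$ is delocalized with exponentially large least common denominator. Crucially, the macroscopic separation $|\lambda_i-\lambda_j|\ge\Delta\sqrt n$, combined with the identity
\[
(\lambda_i-\lambda_j)\langle w_i,w_j\rangle = \bigl\langle (A'-\lambda_j I)w_i,\,w_j\bigr\rangle-\bigl\langle w_i,\,(A'-\lambda_i I)w_j\bigr\rangle,
\]
forces $|\langle w_i,w_j\rangle|\ll 1$ on the good event, i.e.\ the $w_i$ are near-orthogonal.

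The central new ingredient is a \emph{joint} inverse Littlewood--Offord estimate: for $d$ unit vectors $w_1,\dots,w_d$ that are pairwise near-orthogonal and each have polynomially large LCD, one needs
\[
\mathbb{P}\bigl(\forall i\colon |\langle Y,w_i\rangle-t_i|\le \delta_i\bigr) \;\le\; C\prod_{i=1}^d \delta_i
\]
for arbitrary shifts $t_i$ and $\delta_i$ above the large-deviation threshold. I would prove this via Fourier inversion: the characteristic function of $Y$ factorizes over coordinates, and \eqref{assumptionone} contributes a tensor-product Gaussian factor whose exponent in Fourier variables $(\theta_1,\dots,\theta_d)$ is the quadratic form $B\sum_{i,j}\theta_i\theta_j\langle w_i,w_j\rangle$. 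Near-orthogonality renders this form comparable to $B\sum_i\theta_i^2$, so Gaussian integration produces exactly the claimed product $\prod_i\delta_i$ after the $n^{-1/2}$ rescaling is reinstated.

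The main obstacle I anticipate lies in gluing the $d$ individual net/LCD arguments into a single joint one without losing the product structure. One-vector nets exchange cardinality for a single $\delta$-factor, and the multiplicative $d$-fold version must be balanced against small-ball probabilities depending on the joint LCD of $\mathrm{span}(w_1,\dots,w_d)$ rather than the individual LCDs, with the additional hazard that some low-dimensional direction inside this span could be structured and destroy the factorization. I expect this to be addressable by induction on $d$: condition on $w_1,\dots,w_{d-1}$, restrict to their orthogonal complement, and apply the one-shift CJMS bound using the Gaussian component inherited from the coordinates of $Y$ orthogonal to the already-fixed structure; the bulk assumption on each $\lambda_i$ and the separation $\Delta\sqrt n$ are used at each inductive step to keep the near-orthogonality and delocalization effective.
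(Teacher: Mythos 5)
Your proposal has two fundamental gaps, both of which the paper explicitly identifies and is designed to avoid.

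First, the starting reduction to a \emph{linear} small-ball event $|\langle Y,w_i\rangle+r_i|$ is not available for symmetric matrices. Because the exposed column $Y$ appears simultaneously in the removed column \emph{and} the removed row, the distance of the last column of $A_n-\lambda_i I_n$ to the span of the others is a quotient of \emph{quadratic} expressions in $Y$; this is exactly Fact \ref{fact2.4}: $\operatorname{dist}=\frac{|\langle (A'-\lambda_i I)^{-1}Y,Y\rangle-(A_{nn}-\lambda_i)|}{\sqrt{1+\|(A'-\lambda_i I)^{-1}Y\|_2^2}}$. There is no fixed vector $w_i$ independent of $Y$ against which $Y$ is tested; the candidate $(A'-\lambda_i I)^{-1}Y$ depends on $Y$. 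The paper therefore must work with the quadratic form $\langle(A'-\lambda_iI)^{-1}Y,Y\rangle$, and the entire Section~\ref{chap2chap2} (decoupling into bilinear forms) exists precisely to handle this quadratic dependence; your proposal skips it.

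Second, and more seriously, the ``joint inverse Littlewood--Offord via LCD'' route is the very approach the paper explains cannot be made to work for $d\geq 2$ locations. The test vectors that arise after the reduction are (approximately) of the form $\cos\theta\,v_1+\sin\theta\,v_2$ with $v_1,v_2$ eigenvectors of $A_n$ associated with eigenvalues near $\lambda_1,\lambda_2$. Such a vector is not an approximate null vector of any single shifted matrix $A_n-\lambda I_n$; it is annihilated only by a degree-$d$ polynomial in $A_n$, and matrices like $A_n^2$ have completely correlated entries, so the tensorization-over-independent-rows step that underlies the exponential LCD bound breaks down. You acknowledge the joint-LCD hazard, but your proposed fix (inducting on $d$ by conditioning on $w_1,\dots,w_{d-1}$ and restricting to their orthogonal complement) does not restore independence: after conditioning on a principal minor, the remaining column $Y$ must simultaneously satisfy $d$ quadratic constraints, not $d-1$ plus one. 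The paper sidesteps the arithmetic route entirely: it imposes a bounded-density hypothesis and uses Livshyts' anti-concentration lemma (Lemma~\ref{lemma5.1}) in place of inverse Littlewood--Offord, and it replaces incompressibility by the much stronger no-gap delocalization (Theorem~\ref{veragaghagag}) to find a \emph{common} index $j$ at which all $d$ relevant eigenvectors are simultaneously of order $n^{-1/2}$ — a step your plan does not supply. Your near-orthogonality identity is correct but is just a restatement that eigenvectors of a symmetric matrix for distinct eigenvalues are orthogonal; it does not by itself produce the needed factorization of the joint small-ball probability, which in the paper comes from the careful bilinear-form analysis (Lemmas~\ref{lemma6.61}, \ref{lemma6.612}) together with the eigenvalue rigidity estimates of Section~\ref{chap3chap3chap3} and the bootstrap on the product $\prod_i\mu_1(\lambda_i)$.
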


Informally, this theorem implies that the eigenvalues of $A_n$ satisfy strong asymptotic independence properties and is, to some extent, similar to a Poisson point process.

\begin{Example}(Gaussian divisible ensemble) A class of symmetric random matrices that satisfy the assumptions in Theorem \ref{Theorem1.1} are matrices with a Gaussian component, including in particular the GOE matrix. That is, we take $\zeta_1$ to be a Gaussian random variable with mean $0$ and variance $\sigma_0$, and $\zeta_2$ any mean $0$ subGaussian random variable with variance $1-\sigma_0$. By properties of Gaussian distributions, $\zeta_1+\zeta_2$ has the same distribution as the independent sum of a $N(0,\sigma_0/2)$ random variable and another mean $0$ random variable with bounded density and variance $1-\sigma_0/2$. The $\mathbf{(LSI)}$ condition is also satisfied: indeed, by \cite{chen2021dimension}, Theorem 2, the convolution of a subGaussian random variable with a Gaussian variable always satisfies the log Sobolev inequality. Thus Theorem \ref{Theorem1.1} covers all subGaussian random matrices with a small Gaussian perturbation. To the best of our knowledge, Theorem \ref{Theorem1.1} is new for the GOE matrix as well. 
\end{Example}

\begin{remark}
    Theorem \ref{Theorem1.1} can be generalized without much difficulty to generalized Wigner matrices (see \cite{erdHos2011universality}) where each entry $A_{ij}$ has different variance $\frac{\sigma_{ij}}{n}$, and $C_1\leq \sigma_{ij}\leq C_2$ for some $C_1,C_2>0$. This is because we assume a continuous distribution, and the place where this assumption is used does not strictly require i.i.d. entries. For discrete entries, the use of Littlewood-Offord type arguments generally requires i.i.d. entries, but see \cite{MR4255145} for a non-i.i.d. extension.
\end{remark}

    We also formulate a variant of Theorem \ref{Theorem1.1} that considers locations $\lambda_i$'s that are distant only on mesoscopic scales.

    \begin{theorem}\label{Theorem1.2}(Mesoscopic distance) Fix some positive constants $G,K,B$ and $\sigma_0\in(0,1)$. Consider two random variables $\zeta_1\in \Gamma_1(G,B,\sigma_0)$ and $\zeta_2\in (G,K,\sigma_0)$, such that $\zeta_1$ and $\zeta_2$ are independent. 

Let $A_n$ be an $n\times n$ symmetric random matrix with $\{A_{i,j}\}_{1\leq i< j\leq n}$ being independent and identically distributed, with the distribution $\zeta_1+\zeta_2$. 

Additionally assume that the diagonal entries $(A_{ii})_{1\leq i\leq n}$ are i.i.d. with the distribution $\bar{\sigma_0}(\zeta_1+\zeta_2)$ for some $\bar{\sigma_0}>0$, and independent of $\{A_{i,j}\}_{1\leq i< j\leq n}$.

Fix some $\sigma\in(0,1)$. Consider any fixed $\kappa>0$, $\Delta>0$ and $d\in\mathbb{N}_+$, and consider any fixed locations $\lambda_1,\cdots,\lambda_d\in[(-(2-\kappa)\sqrt{n},(2-\kappa)\sqrt{n}]$ such that $|\lambda_i-\lambda_j|\geq \Delta n^{\sigma-1/2}$ for any $i\neq j\in\{1,\cdots,d\}$. Then there exists $c>0$ depending on $G,K,B,\sigma_0,\bar{\sigma}_0,\sigma,\Delta,\kappa$ such that for any $\delta_1,\cdots,\delta_d\geq 0$,
\begin{equation}
    \mathbb{P}\left(\sigma_{min}(A_n-\lambda_i I_n)\leq\delta_i n^{-1/2},\quad\text{ for all }i=1,\cdots,d\right)\leq c\prod_{i=1}^d\delta_i+e^{-cn^{\sigma/2}}.
\end{equation}

\end{theorem}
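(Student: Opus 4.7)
The plan is to follow the scheme of Theorem~\ref{Theorem1.1} but to run every step at the reduced scale $n^{\sigma/2}$ in place of the macroscopic scale $n^{1/2}$. Two features of the statement support this reading: the $(\mathbf{LSI})$ hypothesis is dropped -- it is only needed when one wants the full $e^{-cn}$ concentration -- and the exponential error degrades to $e^{-cn^{\sigma/2}}$. My strategy is therefore to isolate those parts of the proof of Theorem~\ref{Theorem1.1} that used the macroscopic gap $\Delta\sqrt{n}$ between the $\lambda_i$'s and replace them by mesoscopic analogues that rely only on sub-Gaussian tails.

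\textbf{Step 1 (set-up and incompressible case).} As in Theorem~\ref{Theorem1.1}, I reduce the joint small-ball event to the existence of unit vectors $v_1,\dots,v_d$ with $\|(A_n-\lambda_i I)v_i\|\le \delta_i n^{-1/2}$, and split each $v_i$ into compressible and incompressible parts. For the incompressible case the mesoscopic lower bound $|\lambda_i-\lambda_j|\ge \Delta n^{\sigma-1/2}$ is still much larger than the typical eigenvalue spacing $n^{-1/2}$, so the joint inverse Littlewood--Offord / LCD analysis that produced the product factor $\prod_i\delta_i$ in Theorem~\ref{Theorem1.1} carries over with only cosmetic quantitative changes. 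This part of the argument does not deteriorate and contributes only the main term $c\prod_i\delta_i$.

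\textbf{Step 2 (compressible case at the mesoscopic scale).} The genuinely different step is the structured case. In Theorem~\ref{Theorem1.1} one uses $(\mathbf{LSI})$ to obtain $e^{-cn}$-type concentration for a relevant quadratic form on an $\epsilon$-net of compressible vectors. Here I would instead truncate the configuration space to an effective dimension of order $n^{\sigma}$ -- heuristically the scale on which the resolvents $(A_n-\lambda_i I)^{-1}$ decorrelate when $|\lambda_i-\lambda_j|\asymp n^{\sigma-1/2}$ -- and apply a plain sub-Gaussian Hoeffding / Hanson--Wright style bound on this reduced problem. This yields a tail of order $e^{-cn^{\sigma/2}}$, which matches the claimed additive error and no longer requires $(\mathbf{LSI})$, since the needed concentration is imposed on an $n^{\sigma}$-sized object rather than the full matrix.

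\textbf{Main obstacle.} The hardest step will be implementing the mesoscopic reduction rigorously: identifying the correct $n^{\sigma}$-dimensional ``effective'' subspace on which $A_n-\lambda_1 I,\dots,A_n-\lambda_d I$ behave approximately independently, and ensuring that the compressible / structural analysis of joint near-null vectors $(v_1,\dots,v_d)$ uses only information supported on this subspace. A natural implementation is a block Schur-complement / resolvent expansion, with the local semicircle law and eigenvalue rigidity at scale $n^{\sigma-1/2}$ used to quantify the decorrelation. Making this uniform over the compressible configurations while keeping the net cardinality compatible with the $e^{-cn^{\sigma/2}}$ budget is where I expect the bulk of the technical work, and in particular where one must verify that the joint LCD estimate from Step~1 does not degrade when the $\lambda_i$'s are only mesoscopically separated.
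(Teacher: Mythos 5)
Your proposal is built on a misreading of how Theorem~\ref{Theorem1.1} is actually proved, and the gap propagates into the mesoscopic case. The paper does \emph{not} reduce the joint small-ball event to compressible/incompressible unit vectors and does \emph{not} run a joint inverse Littlewood--Offord / LCD analysis at any point. On the contrary, the introduction explicitly explains that LCD arguments break down here: a vector in the span of several eigenvectors is annihilated only by a polynomial in $A_n$ such as $A_n^2+\alpha A_n+\beta$, and that matrix has no usable independence structure, so no effective LCD/arithmetic control is available. This is precisely why Definition~\ref{defininition1.1} imposes a bounded density component ($\zeta_2$) and a Gaussian-type Fourier-decay component ($\zeta_1$): these hypotheses let the paper bypass Littlewood--Offord entirely, replacing it by Lemma~\ref{lemma5.1} (bounded density of linear images) and Lemma~\ref{lemma3.3great888} (Fourier decay). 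So your ``Step~1'' does not describe an existing argument that you can port to the mesoscopic regime; it describes an argument the paper deliberately avoided because it cannot be made to work.

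Your ``Step~2'' also mislocates the role of $(\mathbf{LSI})$ and the origin of the $e^{-cn^{\sigma/2}}$ error. In the paper, $(\mathbf{LSI})$ is used only in Proposition~\ref{proposition4.7}, a superexponential concentration estimate for the \emph{empirical spectral measure}, which upgrades the eigenvalue rigidity statements (Corollaries~\ref{corollary4.88}, \ref{corollary4.999}) from $e^{-c\sqrt{n}}$ to $e^{-cn}$. It is not used for a net argument over compressible vectors. When $(\mathbf{LSI})$ is dropped, the paper falls back on the local semicircle law (Theorem~\ref{theorem4.3}): the relevant window is $\eta^*\sim k/n$ with $k\lesssim n^{\sigma}$, and the law's error is $e^{-c\sqrt{n\eta^*}}=e^{-cn^{\sigma/2}}$, which is exactly the additive error in Theorem~\ref{Theorem1.2} (this is Corollary~\ref{corollary3.6chap9}). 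The actual modifications in Section~\ref{section9section9} consist of: (i) re-proving Corollaries~\ref{corollary4.999}/\ref{corollary4.88} at scale $k\sim n^{\sigma}$ (Corollary~\ref{corollary11.999} and the subsequent lemma controlling the index shifts $c_j(i;k)$ when $|\lambda_i-\lambda_j|\gtrsim n^{\sigma-1/2}$), (ii) restating Lemmas~\ref{lemma6.61} and \ref{lemma6.612} with the cutoff $k\le c_0 n^{\sigma}$ instead of $k\le c_0 n$ (Lemmas~\ref{9--lemma6.61}, \ref{9--lemma6.612}), and (iii) rerunning the invertibility-via-distance reduction, the quadratic small-ball estimate (Theorem~\ref{theorem3.1}), the Hanson--Wright step, and the bootstrap verbatim, with every $e^{-\Omega(n)}$ replaced by $e^{-\Omega(n^{\sigma/2})}$ and with the decomposition of $[\prod_i\delta_i,1]$ terminated at $k=c_0 n^{\sigma}$ as in \eqref{decompositiongaag}. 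No new ``effective $n^{\sigma}$-dimensional subspace'' is constructed, and no block Schur-complement decorrelation argument is needed; the only place dimension $n^{\sigma}$ enters is as the cutoff in the index $k$ of the eigenvalue rigidity estimates. You should redo the proof along these lines, starting from the machinery the paper actually uses rather than from an LCD-based framework.
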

In light of the log correlated structure of Wigner matrices (see for example \cite{mody2023log}), the distance $|\lambda_i-\lambda_j|\geq n^{\sigma-1/2}$ for any $\sigma>0$ is almost the shortest distance at which we can prove approximate independence without too much difficulty. At scale $n^{-1/2}$ or on some even smaller scales, we believe the bounds in Theorem \ref{Theorem1.1} and \ref{Theorem1.2} still hold true, but this range is not accessible by the techniques presented in this paper.

\subsection{Applications of the main estimate: anticoncentration and equal singular values}We now discuss an application of the main estimate. In Conjecture 8.5 of the survey article \cite{MR4260513}, Vu posed the following question: 
  \begin{Conjecture}\label{conjecture1.6}
  let $M_n^{sym}$ be a symmetric random matrix, whose entry distributions are Rademacher variables. Prove that with probability $1-o(1)$, all the singular values of $M_n^{sym}$ are distinct.
\end{Conjecture}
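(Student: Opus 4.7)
The plan is to exploit that, for a real symmetric matrix $M$, the singular values coincide with $\{|\lambda_i(M)|\}_{i=1}^n$. Hence two singular values of $M_n^{sym}$ are equal if and only if either (E1) two eigenvalues coincide, or (E2) two nonzero eigenvalues form a ``mirror pair'' $\lambda_i+\lambda_j=0$. It suffices to show that each of $\mathbb{P}(\mathrm{E1})$ and $\mathbb{P}(\mathrm{E2})$ is $o(1)$.

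Event (E1), simple spectrum for Rademacher symmetric matrices, is already in the literature (Nguyen; Tao--Vu) via an inverse Littlewood--Offord analysis of $\lambda_i-\lambda_j$ combined with eigenvector delocalization, and I would invoke it as a black box. The genuinely new work concerns (E2). Here I would imitate the net argument of the present paper applied to the pair of locations $(\lambda_1,\lambda_2)=(t,-t)$: provided one has a joint small-ball estimate of the form
\begin{equation*}
\mathbb{P}\bigl(\sigma_{\min}(M_n^{sym}-tI)\le \delta/\sqrt n,\ \sigma_{\min}(M_n^{sym}+tI)\le \delta/\sqrt n\bigr)\le c\delta^2+e^{-cn},
\end{equation*}
uniformly for $t$ in an $n^{-A}$-mesh of the bulk with $|t|\geq \Delta\sqrt n$, then choosing $\delta=n^{-A}$ for any $A>1$ and union-bounding over the net rules out mirror pairs outside a neighbourhood of $0$ and the edge. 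The number of eigenvalues within an $n^{-1/2+\eta}$ window of $0$ is $O(1)$ with overwhelming probability by the local semicircle law, so that the ``near-zero'' mirror sub-case can be dispatched separately by a direct Hanson--Wright / rank argument on the $2\times 2$ block these eigenvalues span; Tracy--Widom tail bounds handle the extreme edge.

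The principal obstacle is that Theorem~\ref{Theorem1.1} requires a Gaussian convolution component in the entry law, which Rademacher variables strictly lack. The cleanest route I see is to redo the inverse Littlewood--Offord together with the iterative halving analysis of Campos--Jenssen--Michelen--Sahasrabudhe -- on which the single-location bound \eqref{subgaussimone} for pure Rademacher entries rests -- but now for the joint two-location problem. The key technical difficulty is that one must simultaneously control a pair of approximate null vectors of $M-tI$ and $M+tI$: they lie in distinct eigenspaces of $M^2$, so no orthogonality relation is available, and one must extract approximate joint decorrelation from the compressible/incompressible dichotomy at each location combined with a two-variable Littlewood--Offord input. A softer alternative is to add a polynomial-scale Gaussian perturbation $\varepsilon G$ with $G$ a GOE, apply Theorem~\ref{Theorem1.1} to $M_n^{sym}+\varepsilon G$, and transfer via Weyl's inequality; the subtle point is to choose $\varepsilon$ so that the $e^{-cn}$ tail in \eqref{gasagagga} still dominates after the $n^{A}$-union bound while $\varepsilon$ remains small enough that an exact mirror pair of $M_n^{sym}$ would survive as an approximate mirror pair of the perturbation. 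Whichever route is taken, producing a genuine joint small-ball bound without any Gaussian input is, in my view, the main technical bottleneck and the decisive step in the conjecture.
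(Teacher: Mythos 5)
This statement is Conjecture~\ref{conjecture1.6} (due to Vu), and the paper does \emph{not} prove it; the authors explicitly state that ``this conjecture remains open up to now'' and identify precisely why their Theorem~\ref{theorem1.7} falls short: it requires a continuous (Gaussian-convolved) entry law, only controls eigenvalues in the bulk $I_\kappa$, and the small-ball estimate degrades for nearly coincident $\lambda_i$'s, i.e.\ for the ``mirror near zero'' case $\lambda_i,\lambda_j\in[-\epsilon,\epsilon]$ with $\epsilon\sim n^{-1/2}$. So there is no proof in the paper to compare yours against; what you have written is an attack plan, not a proof, and you rightly acknowledge as much.

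Your diagnosis of the obstacles tracks the paper's own discussion closely, which is the right reading of the situation. Two cautions, though. First, you dispatch the near-zero mirror sub-case by saying the $O(1)$ eigenvalues in a window of width $n^{-1/2+\eta}$ around $0$ can be handled ``by a direct Hanson--Wright / rank argument on the $2\times2$ block these eigenvalues span.'' This is not a correct reduction: those eigenvalues do not live in a fixed low-dimensional block of $M_n^{sym}$, their eigenvectors are delocalized across all $n$ coordinates, and the event $\lambda_i+\lambda_j=0$ for two such eigenvalues involves the full matrix. Indeed this is exactly the regime the paper flags as needing ``a different treatment'' and leaves as future work, so you should not present it as a solved sub-case. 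Second, the perturbation route via $M_n^{sym}+\varepsilon G$ and Weyl has a quantitative snag beyond the one you mention: the constants $c$ in the bound $c\delta_1\delta_2+e^{-cn}$ of Theorem~\ref{Theorem1.1} depend on the Gaussian variance $\sigma_0$ (through $\Gamma_1(G,B,\sigma_0)$, Lemma~\ref{lemma3.3great888}, etc.), and nothing in the paper controls how they degenerate as $\sigma_0\to0$. Sending $\varepsilon\to0$ polynomially in $n$ to make Weyl transfer work therefore risks losing the $e^{-cn}$ tail and inflating the prefactor simultaneously; without a uniform-in-$\sigma_0$ version of Theorem~\ref{Theorem1.1} this route is not merely subtle, it is unsupported by what the paper proves. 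Your bottom-line assessment --- that a joint small-ball bound without Gaussian input is the decisive missing ingredient --- agrees with the authors' own assessment, and is the honest place to leave the matter.
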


This conjecture is equivalent to proving that there does not exist any two eigenvalues of $M_n^{sym}$ that sum up to zero, as singular values of $M_n^{sym}$ are the absolute values of eigenvalues of $M_n^{sym}$. To the best of our knowledge, this conjecture remains open up to now.

This conjecture shares some formal similarity to a related problem: proving that $M_n^{sym}$ has distinct eigenvalues with probability $1-o(1)$. Much more progress has been made in solving the latter problem: it was positively solved in \cite{tao2017random}, and then in \cite{campos2024least} the authors provided an exponential bound $1-e^{-cn}$ for subGaussian entries. (The analogous problem for i.i.d. random matrices has also been solved, see \cite{ge2017eigenvalue} and \cite{MR4235467}). The difference between these two problems is as follows: the problem of distinct eigenvalues only requires singular value estimates at one location, whereas Conjecture \eqref{conjecture1.6} requires singular value estimates at different locations.

We prove the following theorem, which considers a much more general problem than that posed in Conjecture \eqref{conjecture1.6}.

\begin{theorem}\label{theorem1.7} Let $A_n$ be an $n\times n$ symmetric random matrix satisfying the assumptions of Theorem \ref{Theorem1.1}. Fix any $\kappa>0$, $d>0$ and $\Delta>0$.

We say that the eigenvalues $x_1,\cdots,x_d$ are \textit{distant} if $|x_i-x_j|\geq\Delta n^{1/2}$ for any $i\neq j$. Let $I_\kappa:=[-(2-\kappa)\sqrt{n},(2-\kappa)\sqrt{n}]$. Then for any $a_1,\cdots,a_d\in\mathbb{R}$ and $c\in\mathbb{R}$, and any $\epsilon>0$,\footnote{In this paper we use the symbol $a\lesssim b$ to mean that $a\leq Cb$ for some constant $C$ that depends only on the density of matrix entries of $A_n$ and other given parameters $d$, $\sigma$ and $\Delta$.}
 \begin{equation} \begin{aligned}   &\mathbb{P}(A_n \text{ has }d \text{ distant eigenvalues } x_1,\cdots,x_d\in I_\kappa \text{ satisfying } |\sum_{i=1}^da_ix_i-c|\leq \epsilon)\\&\lesssim n^{(2d-1)/2}\epsilon+e^{-\Omega(n)}.\end{aligned}\end{equation}
 More generally, for any $\sigma\in(0,1)$, we say that the eigenvalues $x_1,\cdots,x_d$ are $\sigma$-\textit{distant} if $|x_i-x_j|\geq \Delta n^{\sigma-\frac{1}{2}}$ for any $j\neq i$. Then for any $a_1,\cdots,a_d\in\mathbb{R}$ and $c\in\mathbb{R}$, and any $\epsilon>0$,
  \begin{equation} \begin{aligned}   &\mathbb{P}(A_n \text{ has }d\quad 
  \sigma-\text{distant eigenvalues } x_1,\cdots,x_d\in I_\kappa \text{ satisfying } |\sum_{i=1}^da_ix_i-c|\leq \epsilon)\\&\lesssim n^{(2d-1)/2}\epsilon+e^{-\Omega(n^{\sigma/2})}.\end{aligned}\end{equation}
\end{theorem}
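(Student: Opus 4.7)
The plan is an $\epsilon$-net discretization reducing Theorem \ref{theorem1.7} directly to the joint small-ball estimate in Theorem \ref{Theorem1.1}. Without loss of generality I may assume some $a_i \neq 0$ (otherwise the statement is trivial) and relabel so that $|a_d| = \max_i |a_i|$; I also restrict to $e^{-c_0 n} \leq \epsilon \leq \Delta\sqrt{n}/4$, with $c_0$ chosen below. For smaller $\epsilon$ the asserted bound is already encoded in the $e^{-\Omega(n)}$ term, and for larger $\epsilon$ the main term $n^{(2d-1)/2}\epsilon$ exceeds $1$ and the statement is vacuous.

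Next I would introduce an $\eta$-net $\mathcal{N} \subset I_\kappa$ of cardinality $\lesssim \sqrt{n}/\eta$, with $\eta \in (0, \Delta\sqrt{n}/4]$ to be chosen. If $A_n$ has distant eigenvalues $x_1,\ldots,x_d \in I_\kappa$ satisfying $|\sum_i a_i x_i - c| \leq \epsilon$, pick $\lambda_i \in \mathcal{N}$ with $|\lambda_i - x_i|\leq \eta$. Then $\lambda_1,\ldots,\lambda_d$ remain $(\Delta/2)\sqrt{n}$-separated, lie in $I_\kappa$ (possibly after slightly enlarging $\kappa$), satisfy $\sigma_{\min}(A_n - \lambda_i I_n) \leq \eta$ because $A_n - x_i I_n$ is singular, and obey $|\sum_i a_i \lambda_i - c| \leq \epsilon + \eta \sum_i |a_i|$. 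The admissible $d$-tuples in $\mathcal{N}^d$ are counted by letting $\lambda_1,\ldots,\lambda_{d-1}$ range freely ($\lesssim (\sqrt{n}/\eta)^{d-1}$ choices) and constraining $\lambda_d$ to an interval of length $\lesssim (\epsilon+\eta)/|a_d|$ containing $\lesssim 1 + \epsilon/\eta$ net points. Applying Theorem \ref{Theorem1.1} with $\delta_i = \eta\sqrt{n}$ tuple-by-tuple and union-bounding yields
\begin{equation*}
\mathbb{P}(\cdots) \lesssim \bigl(\sqrt{n}/\eta\bigr)^{d-1}\bigl(1 + \epsilon/\eta\bigr)\bigl[(\eta\sqrt{n})^d + e^{-cn}\bigr].
\end{equation*}

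Finally I would pick $\eta = \epsilon$, which is compatible with $\eta \leq \Delta\sqrt{n}/4$ by the reduction. The main term collapses to $\lesssim n^{(2d-1)/2}\epsilon$, and the tail contribution $(\sqrt{n}/\epsilon)^{d-1} e^{-cn}$ is $\leq e^{-\Omega(n)}$ provided $c_0 < c/(d-1)$. This proves the first estimate. The $\sigma$-distant case is verbatim the same, using Theorem \ref{Theorem1.2} in place of Theorem \ref{Theorem1.1}: the exponential residue weakens to $e^{-\Omega(n^{\sigma/2})}$ and the allowed range sharpens to $\epsilon \geq e^{-c_0 n^{\sigma/2}}$. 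No conceptually new ingredient is needed beyond the joint small-ball estimate; the only mild subtlety is book-keeping the edge cases (very small or very large $\epsilon$, degenerate $a_i$, compatibility of the net scale with the distance assumption of Theorem \ref{Theorem1.1}) so that all constants line up, and I expect this to be routine.
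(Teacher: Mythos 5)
Your argument matches the paper's proof: both discretize $I_\kappa$ by an $\epsilon$-scale net, union-bound over tuples of net points (with the linear constraint pinning the last coordinate to $O(1)$ choices), and invoke Theorem \ref{Theorem1.1} (resp.\ Theorem \ref{Theorem1.2}) with $\delta_i=\epsilon\sqrt{n}$ to get the factor $\epsilon^d n^{d/2}$ per tuple, which recombines to $n^{(2d-1)/2}\epsilon$. You spell out the bookkeeping (net mesh vs.\ separation, the cutoff $\epsilon\gtrsim e^{-c_0 n}$ needed so the $(\sqrt{n}/\epsilon)^{d-1}e^{-cn}$ tail stays exponentially small) a bit more explicitly than the paper, but the method and the final estimate are the same.
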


This theorem implies that the minimal value attained in the quantity $|\sum_{i=1}^da_ix_i-c|$ where $x_1,\cdots,x_d$ range from distant bulk eigenvalues of $A_n$, is typically at least $n^{-(2d-1)/2}$ with probability $1-o(1)$. In the case $d=2$, this distance is consistent with the fact that the minimal gap in a Poisson distribution on $[0,1]$ with $n$ particles is typically $n^{-2}$.

\begin{proof}[\proofname\ of Theorem \ref{theorem1.7}]

    Consider the first claim. Let $\epsilon>0$ and consider the question of bounding $|\sum_{i=1}^da_ix_i-c|\leq\epsilon$. We find an $\epsilon$ net of $I_\kappa$ and choose any $d-1$ points $p_1,\cdots,p_{d-1}$ from this net. Then there are only finitely many points $p_d$ in this net such that $|\sum_{i=1}^d a_ip_i-c|\leq C\epsilon$ for some very large $C>0$. Then the possibility that $A_n$ has $d$ distant eigenvalues in $I_\kappa$ satisfying the assumption is upper bounded by 
    $$
C(\frac{2\sqrt{n}}{\epsilon})^{d-1}(\epsilon^dn^{d/2}+e^{-\Omega(n)})
    $$ where the first factor is the number of all possible choice of $p_1,\cdots,p_{d-1}$ from the net and the second term comes from $$\mathbb{P}(\sigma_{min}(A_n-p_iI_n)\leq\epsilon,i=1,\cdots,d)\lesssim \epsilon^dn^{d/2}+e^{-cn}.$$ As we only consider distant eigenvalues, we can assume that $|p_i-p_j|\geq\Delta$ for any $i\neq j$. Then applying Theorem \ref{Theorem1.1} completes the proof. For the second claim, it suffices to use Theorem \ref{Theorem1.2} instead. 
\end{proof}

Compared to Conjecture \ref{conjecture1.6}, we have shown that Conjecture \ref{conjecture1.6} holds with possibility $1-e^{-\Omega(n)}$ for singular values in $[\kappa\sqrt{n},(2-\kappa)\sqrt{n}]$ (and any $\kappa>0$) and for any entry distribution specified in Theorem \ref{Theorem1.1}. To the best of our knowledge, Theorem \ref{theorem1.7} is the first time that we can estimate the probability that the eigenvalues of $A_n$ may approximately satisfy any fixed linear or nonlinear structure; moreover this theorem is new even for the GOE matrix, and all prior results consider only eigenvalue gap or eigenvalue overcrowding estimates, see \cite{nguyen2018random}, \cite{nguyen2017random}, \cite{MR4034920} and \cite{MR4416591}, and these works correspond to the special case $x_1-x_2=0$ of Theorem \ref{theorem1.7}. That is, we are now able to estimate the \textit{long-range} dependence between eigenvalues and prove they are asymptotically independent, at the precision of small ball probability bounds. 

Prior to this work, there appear to be very few papers investigating the independence, or anti-concentration property of the spectrum of a quantum system, beyond exactly integrable models such as the GUE matrix. In this work we rigorously justify the anti-concentration and approximate independence properties of the spectrum of a wide family of Wigner matrices. We hope this will initiate a new direction of research.

In Theorems \ref{Theorem1.1}, \ref{Theorem1.2} and \ref{theorem1.7}, we have assumed the locations $\lambda_1,\cdots,\lambda_d$ are separated by some distance $n^{\sigma-1/2}$. We believe that the same estimate should also work if such a distance separation assumption is removed. When some $\lambda_i's$ are very close to each other, the eigenvalue repulsion phenomena dominates so the minimal eigenvalue gap is larger than that in a Poisson point process (see \cite{MR4034920} and \cite{MR3112927}) while the bound in Theorem \ref{theorem1.7} does not take into account this repulsion effect. Under our assumption that $\lambda_i$'s are distant, however, we believe the estimate in Theorem \ref{theorem1.7} is sharp modulo constants.

Given the strength of Theorem \ref{theorem1.7}, there is still much to do before our theorem can be turned to a solution to Conjecture \ref{conjecture1.6}. First, we have made additional density assumptions on matrix entries, excluding the Rademacher variables; second, we only work with eigenvalues in the bulk $I_\kappa$; and third, the estimate breaks down when eigenvalues are very close: in the context of Conjecture \ref{conjecture1.6} this accounts for the region where two eigenvalues in $[-\epsilon,\epsilon]$ sum up to zero for $\epsilon\sim n^{-1/2}$. A different treatment is needed in this region, and we leave this topic for future work.

We should also mention that the phenomena revealed in Theorem \ref{theorem1.7} apply to far more general nonlinear relations than the one stated here. Although Theorem \ref{theorem1.7} is stated for linear relations only, it will be clear from the proof that the same result holds for a wide variety of nonlinear relations on $d$ variables. Namely, consider a function $F(x_1,\cdots,x_d)$ which is Lipschitz continuous in $x_1,\cdots,x_{d-1}$, that for any $x_1,\cdots,x_{d-1}$ there are only finitely many $x_d$ such that $F(x_1,\cdots,x_d)=0$ (we denote the collection of such points as $Z_{x_1,\cdots,x_{d-1}}$), and for a fixed $x_1,\cdots,x_{d-1}$, $|F(x_1,\cdots,x_d)|\leq\epsilon\implies \operatorname{dist}(x_d,Z_{x_1,\cdots,x_{d-1}})\leq C\epsilon$. Then the conclusion of Theorem \ref{theorem1.7} holds for any such function $F$ as well, with identical proofs.

\subsection{Proof techniques and major challenges}

The main result Theorem \ref{Theorem1.1} of this paper is a multilocation generalization of the singular value estimate in \cite{campos2024least}, which is the culmination of many prior works on the singularity of symmetric random matrices, see \cite{MR2267289}, \cite{MR2891529}, \cite{MR4439776}, \cite{vershynin2014invertibility}, \cite{MR4255046}. 

From a broad perspective, we follow Rudelson and Vershynin's geometric method of invertibility, see \cite{rudelson2008littlewood}. However, this seems to be the first time that the method has been generalized to multiple matrices. Recall that in the approach of \cite{rudelson2008littlewood}, we first show that the matrix is well invertible on the set of compressible vectors via a net argument. Since incompressible vectors have at least $\rho n$ coordinates with an absolute value of at least $cn^{-1/2}$, we can reduce the singular value lower bound to bounding the possibility that one column of this matrix is very close to the linear span of the other columns.

Now we have several matrices and would like to derive joint lower tail estimates for singular values. We may still heuristically expect the above reasoning to hold, so that we bound the possibility that a fixed column of each matrix is very close to the linear span of the other columns of the same matrix. However, there is a flaw: each incompressible vector has only $\rho n$ coordinates that are not too small, and if we take two such vectors, their location of large coordinates may not even intersect if $\rho$ is small. 

Fortunately, this flaw can be resolved by the following fact: the minimal singular values of $A_n$ are achieved by the eigenvectors of $A_n$, and the eigenvectors of $A_n$ satisfy a much stronger property than being incompressible: the no-gap delocalization result of \cite{rudelson2016no} (see Theorem \ref{veragaghagag}) guarantees that with a very high possibility, all eigenvectors have $(1-c_1)n$ coordinates that are at least $(c_1c_2)^6n^{-1/2}$ in absolute value, for any $c_1>0$. This allows us to take the union of the large locations of $d$ different eigenvectors, justifying the heuristic reduction. The multi-matrix version of the invertibility via the distance approach is quite nontrivial even from the first step.

A second, more serious issue emerges when we consider discrete entry distributions (i.e. entry distribution without a continuous density). It is not so difficult to prove that any eigenvector of $A_n$ should have a large least common denominator (LCD) with very high possibility (for example see \cite{rudelson2008littlewood}, \cite{vershynin2014invertibility} and \cite{campos2024least} for definitions and proofs). This is because each eigenvector $v$ satisfies $(A_n-\lambda I_n)v=0$ for some $\lambda$, and this matrix $A_n$ has a large portion of independent components, permitting the use of the tensorization lemma (see \cite{rudelson2008littlewood}, \cite{vershynin2014invertibility}) to obtain an exponential bound. Now we consider multiple locations, and thus we have to consider the least common denominator of vectors of the form $v=\cos\theta v_1+\sin\theta v_2$ where $v_1$ and $v_2$ are two eigenvectors of $A_n$. This time $v$ is only annihilated by $A_n^2+\alpha A_n+\beta$ for some $\alpha$ and $\beta$, and the matrix $A_n^2$ has a complete loss of independence: every other element is strongly dependent. We are not aware of any method that can effectively solve this issue, so we consider only entries with a continuous distribution, completely avoiding the arithmetic conditions based on LCD. It would be very fortunate to overcome this barrier in the future. Note that when $d=2$, we can still prove that $v$ is incompressible for any $\theta$ with very high probability, but this information is not strong enough to complete the proof. When $d\geq 3$ the argument completely breaks down for the linear span of three different eigenvectors, so we do not pursue this approach any further.

We should also note that in addition to assuming that the entry distributions of $A_n$ have a continuous density, we also assume that the density has a component whose Fourier transform decays like a Gaussian \eqref{assumptionone}. This assumption will be used in Section \ref{chap2chap2} and significantly simplifies the reasoning. It might be possible to remove this assumption with some effort, but we believe that removing all the density assumptions is pretty challenging at the current stage.

There are several other remaining technical difficulties. First, when analyzing the function $I(\theta)$ defined in \eqref{Itheta} for different values of $\theta$, it appears that we need some care to where $\theta$ lies in the integration region. We divide the region into an essentially lower dimensional region and a genuinely full dimensional region, and for the latter we have to rely on our induction hypothesis, that is, we prove Theorem \ref{Theorem1.1} inductively from $d=2$ upward. For the full dimensional region, it is very natural to hope for a probability bound on $I(\theta)$ that is good enough to tame the singularity of integration (see for example \eqref{integration97000}). This leads to the estimate in Lemma \ref{lemma6.61}. We have been slightly careless in the power of $s$ in Lemma \ref{lemma6.61}: one would expect the estimate to be $s^de^{-k}$ so as to tame the singularity in \eqref{integration97000}. Unfortunately this approach is neither feasible nor helpful: it seems that raising to power $s^d$ in Lemma \ref{lemma6.61}is extremely difficult and may not be possible at all, and more importantly, our decoupling estimate in Theorem \ref{theorem3.1} causes us to eventually lose half of the power of $s$. 

To address the last difficulty, we take special care and integrate to $(\prod_{i=1}^d\frac{\mu_1(\lambda_i)}{\sqrt{n}})^{1-\frac{1}{4d}}$ at the end (where $\mu_1(\lambda_i)$ is the largest singular value of $(A_n-\lambda_i I_n)^{-1}$). There are several considerations when we choose the product: it would also be possible to integrate to, say, $(\frac{\mu_1(\lambda_i)}{\sqrt{n}})^{d-1/4}$, but the expectation of this term diverges as the power becomes too high. One may also consider controlling $\prod_{i=1}^d\mu_1(\lambda_i)$ via Hölder inequality to decouple one another, but this would again lead to a very high power. The correct idea here is to treat the product $\prod _{i=1}^d\mu_1(\lambda_i)$ as a whole and perform a bootstrapping procedure, whose details are outlined in Section \ref{bootstrapping209}. Indeed, we are somewhat using the intuition that $\mu_1(\lambda_i)$ are mutually independent, so that once the power of each of them is less than one, the expectation is still finite. This is not a proof by circulation: we just regard $\prod \mu_1(\lambda_i)$ as a whole and bootstrap, ultimately showing that they are somewhat asymptotically independent. By these considerations, we choose to use $\prod_{i=1}^d |\theta_i|,\theta\in\mathbb{R}^d$ as a measure to partition the region of integration, rather than using the standard Euclidean norm.

\subsection{Notation conventions and some preliminary facts} 

Recall that $\Gamma$ denotes all the mean zero, variance one, subGaussian random variables.

For any $\zeta\in\Gamma$ we denote by $\operatorname{Sym}_n(\zeta)$ the probability space of symmetric $n\times n$ random matrices with entries $(A_{ij})_{i< j}$ independent and identically distributed with distribution $\zeta$, and entries $(A_{ii})_{1\leq i\leq n}$ i.i.d. with distribution $\bar{\sigma_0}\zeta$ and independent of $(A_{ij})_{i< j}$. Strictly speaking this definition also depends on $\bar{\sigma_0}$, but we fix this parameter in the sequel and omit it in the notation. We also write $X\in\operatorname{Col}_n(\zeta)$ if $X\in\mathbb{R}^n$ has independent coordinates following the distribution $\zeta$.

The spaces $(\operatorname{Sym}_n(\zeta))_{n\geq 1}$ are coupled as follows: we sample $A_{n+1}\sim\operatorname{Sym}_{n+1}(\zeta)$ via first sampling $A_n\sim\operatorname{Sym}_n(\zeta)$ and put it in the $[2,\cdots,n+1]$ principal minor in $\operatorname{Sym}_{n+1}(\zeta)$, then independently sample a random vector $X\in\operatorname{Col}_n(\zeta)$ as the first row and column (with the first entry removed), and finally sample the (1,1)-th entry of $A_{n+1}$ independently from $\zeta$.

Different sets of assumptions are used to state the main estimates in this paper.
In some of these lemmas, we only assume an entry distribution $\zeta\in\Gamma_B$ for some $B$. In other lemmas we assume moreover that $\zeta$ satisfies the assumptions in Theorem \ref{Theorem1.1}. In other words, in the former case the estimate depends only on $B$ but in the latter case the estimate also depends on the constants $G$,$K$ and $\sigma_0$ in the definitions of $\zeta_1$ and $\zeta_2$.

Here we collect some basic facts that will be useful throughout the paper.

Let $Y\sim\operatorname{Col}_n(\zeta)$ with $\zeta\in\Gamma_B$, then for any fixed $u\in\mathbb{R}^n$, 
\begin{equation}\label{subGaussianmoment1}
    \mathbb{E}_Ye^{\langle Y,u\rangle}\leq\exp(2B^2\|u\|_2^2).
\end{equation}

Before we start the proof we recall the following two useful properties from Vershynin \cite{vershynin2014invertibility}.

\begin{lemma}\label{operatorbound}
    Given $B>0$, $\zeta\in\Gamma_B$ and $A_n\sim\operatorname{Sym}_n(\xi)$. 
Then for any $t>0$  we have
\begin{equation}
\mathbb{P}\left(\|A_n\|_{op}\geq (3+t)\sqrt{n}\right)\leq e^{-ct^{3/2}n}
\end{equation}
for some constant $c>0$ depending only on $B$.

\end{lemma}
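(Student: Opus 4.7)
The plan is to apply the classical $\epsilon$-net discretization of the unit sphere combined with subGaussian concentration of the resulting scalar quadratic form. Using symmetry I will write $\|A_n\|_{op}=\sup_{x\in S^{n-1}}|\langle x,A_nx\rangle|$ and fix a $1/4$-net $\mathcal{N}\subset S^{n-1}$ of cardinality at most $9^n$; a standard triangle-inequality approximation then reduces matters to controlling $\sup_{x\in\mathcal{N}}|\langle x,A_nx\rangle|$ via the inequality $\|A_n\|_{op}\leq 2\sup_{x\in\mathcal{N}}|\langle x,A_nx\rangle|$.

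For each fixed $x\in S^{n-1}$ I will expand $\langle x,A_nx\rangle=\sum_i A_{ii}x_i^2+2\sum_{i<j}A_{ij}x_ix_j$, so that $\langle x,A_nx\rangle$ appears as a linear combination of the independent, mean-zero, subGaussian entries $\{A_{ij}\}_{i\leq j}$. Since the coefficient vector has $\ell_2$-norm at most $\sqrt{2}$, the subGaussian moment inequality gives $\|\langle x,A_nx\rangle\|_{\psi_2}\leq C_0 B$ for a universal $C_0$ (absorbing the diagonal variance scaling $\bar{\sigma}_0$), and the scalar subGaussian tail bound yields $\mathbb{P}(|\langle x,A_nx\rangle|\geq s)\leq 2\exp(-c_0 s^2/B^2)$ for $s\geq 0$ and a universal $c_0>0$. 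A union bound over $\mathcal{N}$ applied at level $s=(3+t)\sqrt{n}/2$ then produces
$$\mathbb{P}\bigl(\|A_n\|_{op}\geq(3+t)\sqrt{n}\bigr)\leq 2\cdot 9^n\exp\!\Bigl(-\frac{c_0(3+t)^2 n}{4B^2}\Bigr).$$

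It will then remain an elementary check that the resulting exponent is bounded above by $-c(B)t^{3/2}n$ uniformly in $t\geq 0$: for $t\in[0,1]$ one uses $(3+t)^2-9=6t+t^2\geq 6t\geq 6t^{3/2}$, while for $t\geq 1$ the quadratic growth $(3+t)^2\gtrsim t^2\geq t^{3/2}$ already dominates. The main (mild) obstacle will be arranging, at $t=0$, the strict inequality $9c_0/(4B^2)>\log 9$, so that the base exponent is strictly negative; if this fails for a given $B$ I will simply shrink the net parameter (for instance from $1/4$ to $1/10$), trading a little entropy for a better Lipschitz factor in the approximation step. Because the true top eigenvalue of the Wigner matrix concentrates near $2\sqrt{n}$, the margin of order $\sqrt{n}$ built into the constant $3$ ensures that this constant-chasing step succeeds for every $B>0$, and the underlying ideas are otherwise entirely standard.
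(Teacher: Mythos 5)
Your $\epsilon$-net argument does not prove the lemma, and the obstacle you flag at $t=0$ is not a mild constant-chasing nuisance but a structural failure: it arises for \emph{every} admissible entry distribution, and tuning the net mesh cannot repair it. The paper itself gives no proof of this lemma --- it is quoted from Vershynin \cite{vershynin2014invertibility} --- and the absolute constant $3$ (with the operator norm actually concentrating near $2\sqrt{n}$ by Bai--Yin) together with the $t^{3/2}$ rate are the signature of a high-moment/trace estimate in the spirit of Feldheim--Sodin, not of a union bound over a net.

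To see the gap concretely, your net argument yields
\[
\mathbb{P}\bigl(\|A_n\|_{op}\geq(3+t)\sqrt{n}\bigr)\leq 2\,(1+2/\epsilon)^{n}\exp\!\Bigl(-\frac{c_0(1-2\epsilon)^2(3+t)^2}{B^2}\,n\Bigr),
\]
and this right-hand side is below $1$ at $t=0$ only if $B^2<9c_0(1-2\epsilon)^2/\log(1+2/\epsilon)$. The quantity $(1-2\epsilon)^2/\log(1+2/\epsilon)$, maximized over $\epsilon\in(0,1/2)$, is still an absolute constant of order $0.2$, and with the universal constants in the scalar subGaussian tail bound (your $c_0$ is at most of order $1/e$, and you lose a further factor because the coefficient vector has $\ell_2$-norm $\sqrt{2}$ rather than $1$) this forces $B$ below an absolute threshold well under $1$. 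But any variance-one random variable $\zeta$ satisfies $\|\zeta\|_{\psi_2}\geq 2^{-1/2}(\mathbb{E}\zeta^2)^{1/2}=2^{-1/2}$ (take $p=2$ in the supremum), so $B\geq 1/\sqrt{2}$ always --- and in any case the lemma is asserted for \emph{arbitrary} $B>0$. Consequently, for every admissible $B$ the net bound strictly exceeds $1$ on an interval of small $t>0$, whereas $e^{-ct^{3/2}n}<1$ there for every $c>0$, so no choice of $c$ can close the estimate. What the naive net actually proves is $\|A_n\|_{op}\lesssim_B\sqrt{n}$ with a $B$-dependent constant and a $t^2$ rate; upgrading that constant to an absolute $3$ requires a method that knows the spectral edge sits near $2\sqrt{n}$, which the union bound over $\mathcal{N}$ cannot supply.
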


We also need the following linear algebra fact from Vershynin \cite{vershynin2014invertibility}:

\begin{Fact}\label{fact2.4}
    For $A=(a_{ij})$ an $n\times n$ matrix, let $A_1$ be the first column of $A$ and $H_1$ the span of the other $n-1$ columns. Additionally, let $B$ be the $(n-1)\times (n-1)$ minor from removing the first row and first column of $A$. Let $X\in\mathbb{R}^{n-1}$ be the first column of $A$ with the first element removed. Then 
    $$\operatorname{dist}(A_1,H_1)=\frac{\left|\langle B^{-1}X,X\rangle-a_{11}\right|}{\sqrt{1+\|B^{-1}X\|_2^2}}.$$
\end{Fact}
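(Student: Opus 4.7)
The plan is to use the standard characterization of the distance from a point to a subspace in terms of the orthogonal complement: for any nonzero $w \in \mathbb{R}^n$ with $w \perp H_1$ we have $\operatorname{dist}(A_1, H_1) = |\langle A_1, w\rangle|/\|w\|_2$. Since $H_1$ is spanned by $n-1$ vectors, such a $w$ always exists, so the proof reduces to exhibiting a convenient $w$ and computing two inner products.

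To find $w$, I would block-decompose $w = (w_0, v)^T$ with $w_0 \in \mathbb{R}$ and $v \in \mathbb{R}^{n-1}$. Implicit in the statement (and consistent with how the fact will be applied, namely on the symmetric matrix $A_n$) is that $A$ is symmetric, so $B$ is symmetric and the matrix whose columns are $A_2, \ldots, A_n$ has the block form $\bigl(\begin{smallmatrix} X^T \\ B \end{smallmatrix}\bigr)$. The orthogonality condition $w \perp A_j$ for $j = 2, \ldots, n$ then collapses into the single vector equation $w_0 X + B v = 0$, and assuming $B$ is invertible (which is tacit in the statement, since $B^{-1}$ appears on the right-hand side) one solves $v = -w_0 B^{-1}X$. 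Normalizing by $w_0 = 1$ gives the explicit choice $w = (1, -B^{-1}X)^T$.

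It then remains to read off $\|w\|_2^2 = 1 + \|B^{-1}X\|_2^2$ and, using $A_1 = (a_{11}, X^T)^T$ together with the symmetry of $B^{-1}$,
\[
\langle A_1, w\rangle \;=\; a_{11} - \langle X, B^{-1}X\rangle \;=\; a_{11} - \langle B^{-1}X, X\rangle.
\]
Taking absolute values of the numerator yields the claimed identity. There is no genuine obstacle here; this is a Schur-complement style identity reused throughout the Rudelson--Vershynin geometric approach to invertibility, and the only point worth emphasizing is that the symmetry of $A$ is what lets the first-row and first-column orthogonality constraints be packaged into a single linear system involving $B$ and $X$ alone.
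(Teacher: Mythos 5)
Your proof is correct, and it follows the standard route: identify a nonzero normal vector to $H_1$, compute it explicitly in block form, and read off the distance from the quotient $|\langle A_1,w\rangle|/\|w\|_2$. The paper itself does not prove this Fact --- it is cited directly from Vershynin's invertibility paper --- so there is no competing argument to compare against; your derivation is the natural Schur-complement-style one that Vershynin also uses. Two small polish points: (i) your parenthetical that symmetry of $B^{-1}$ is needed to rewrite $\langle X,B^{-1}X\rangle$ as $\langle B^{-1}X,X\rangle$ is superfluous, since the real inner product is symmetric regardless; and (ii) the identity $\operatorname{dist}(A_1,H_1)=|\langle A_1,w\rangle|/\|w\|_2$ requires $w$ to span all of $H_1^{\perp}$, not merely to lie in it --- this holds here because $B$ invertible forces $A_2,\ldots,A_n$ to be linearly independent, so $\dim H_1^{\perp}=1$, and it is worth saying so explicitly. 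Your observation that symmetry of $A$ is implicit in the statement is correct and worth recording: the displayed formula is simply false for a generic non-symmetric $A$, where the row vector $(a_{12},\ldots,a_{1n})$ would replace $X^{T}$ in the block decomposition and the answer would involve both $X$ and the first row.
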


\subsection{Plan of the paper}
The main body of this paper, until Section \ref{multiparticlecase}, is devoted to proving Theorem \ref{Theorem1.1}. We will follow an inductive approach on $d\in\mathbb{N}_+$. We feel it would be better to perform the $d=2$ case first in full detail, and return to the general $d$ case later via an induction argument. We prove the $d=2$ case of Theorem \ref{Theorem1.1} in Sections \ref{bootstrapping209} and outline the main modifications for the general $d$ case in Section \ref{multiparticlecase}. The contents in sections \ref{chap2chap2}, \ref{chap3chap3chap3} and \ref{chap4chap4chap4} work for general $d$, where Section \ref{chap3chap3chap3} is only about eigenvalue spacing estimates. Finally, in Section \ref{section9section9} we prove Theorem \ref{Theorem1.2} as a modification of the proof of Theorem \ref{Theorem1.1}.

\section{Small ball probability of quadratic forms}\label{chap2chap2}

The main result of this section is the following lemma, which provides a means to effectively estimate the quantities appearing in the numerator of the distance function in Fact \ref{fact2.4}.

\begin{theorem}\label{theorem3.1}
    Given $B,\sigma,K>0$, $\zeta_1\in \Gamma_1(G,B,\sigma)$ and $\zeta_2\in\Gamma_2(G,K,\sigma)$ be independent. Let $A_n$ be any $n\times n$ matrix and $X\sim \operatorname{Col}_n(\zeta_1+\zeta_2)$ be a random vector independent of $A_n$. Let $\lambda_1,\cdots,\lambda_d\in\mathbb{R}$ be fixed real numbers and $\delta_1,\cdots,\delta_d>0$. For each $i=1,\cdots,d$ denote by $\mu_1(\lambda_i):=\sigma_{\operatorname{max}}\left((A_n-\lambda_i I_n)^{-1}\right)$. Then, for any $(t_1,\cdots,t_d)\in\mathbb{R}^d$, and any $u\in\mathbb{R}^n$ with $\|u\|_2\leq d$,
\begin{equation}\begin{aligned}
    &\mathbb{P}_X\left(\bigcap_{i=1}^d\left\{  \left\langle (A_n-\lambda_iI_n)^{-1}X,X\rangle-t_i|\leq \delta_i\mu_1(\lambda_i)\right\} ,\langle X,u\right\rangle\geq s\right)\\&\lesssim \prod_{i=1}^d\delta_i\cdot e^{-s}\cdot \int_{\{\sum_{i=1}^d |\theta_i\delta_i|^2\leq d\}\subset\mathbb{R}^d} I(\theta)^\frac{1}{2}d\theta,\end{aligned}
\end{equation}
where $I(\theta)$ is defined as follows: for two independent random vectors $X_2,X_2'\sim\operatorname{Col}_n(\zeta_2)$,  we define $I(\theta)$ for each $\theta\in\mathbb{R}^d$ via
\begin{equation}\label{Itheta}
I(\theta)=\mathbb{E}_{X_2,X_2'}\exp\left(\langle X_2+X_2',u\rangle-c\left\|\sum_{i=1}^d \frac{\theta_i}{\mu_1(\lambda_i)}(A_n-\lambda_iI_n)^{-1}(X_2-X_2')\right\|_2^2
    \right),
\end{equation}
where $c>0$ depends only on $G,B,\sigma$ and $K$.
\end{theorem}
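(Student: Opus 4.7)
The plan is to combine an Esseen-type Fourier representation of the joint small-ball indicator with a single decoupling step that exploits the Gaussian Fourier decay of $\zeta_1$.

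First I dispose of the one-sided constraint via $\mathbf{1}_{\langle X, u\rangle \geq s} \leq e^{\langle X, u\rangle - s}$, producing the $e^{-s}$ factor. For the joint small-ball event on $Q_i := \langle (A_n - \lambda_i I_n)^{-1}X, X\rangle$ I apply a standard multidimensional Esseen bound: using a smooth envelope $\phi \geq \mathbf{1}_{[-1,1]}$ with $\widehat\phi \geq 0$ and $\operatorname{supp}\widehat\phi \subset [-1,1]$, Fourier inversion and Fubini give
\[\mathbb{E}\Bigl[e^{\langle X, u\rangle} \prod_i \mathbf{1}_{|Q_i - t_i|\leq \delta_i \mu_1(\lambda_i)}\Bigr] \lesssim \prod_i \delta_i \mu_1(\lambda_i) \int_{\prod_i \{|\widetilde\theta_i|\leq 1/(\delta_i\mu_1(\lambda_i))\}} \bigl| \mathbb{E} e^{i\sum_i \widetilde\theta_i Q_i + \langle X, u\rangle}\bigr| \, d\widetilde\theta.\]
The change of variables $\widetilde\theta_i = \theta_i / \mu_1(\lambda_i)$ yields the prefactor $\prod_i \delta_i$ and an integration domain contained in $\{\sum_i |\theta_i \delta_i|^2 \leq d\}$, so it suffices to show the pointwise bound
\[\Phi(\theta) := \bigl| \mathbb{E}_X e^{i \langle \widetilde M_\theta X, X\rangle + \langle X, u\rangle}\bigr| \lesssim I(\theta)^{1/2}, \qquad \widetilde M_\theta := \sum_{i=1}^d \tfrac{\theta_i}{\mu_1(\lambda_i)}(A_n - \lambda_i I_n)^{-1}.\]

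Next I split $X = X_1 + X_2$ with $X_1 \sim \operatorname{Col}_n(\zeta_1)$ and $X_2 \sim \operatorname{Col}_n(\zeta_2)$ independent. Expanding the quadratic form and writing the integrand as $A(X_1)\, G(X_1)$, with
\[A(X_1) = e^{\langle X_1, u\rangle + i\langle \widetilde M_\theta X_1, X_1\rangle}, \qquad G(X_1) = \mathbb{E}_{X_2}\bigl[e^{\langle X_2, u\rangle + i\langle \widetilde M_\theta X_2, X_2\rangle + 2i\langle \widetilde M_\theta X_1, X_2\rangle}\bigr],\]
I note $|A(X_1)| = e^{\langle X_1, u\rangle}$ and apply Cauchy--Schwarz in $X_1$, obtaining $\Phi(\theta)^2 \leq \mathbb{E}_{X_1}[e^{2\langle X_1, u\rangle}] \cdot \mathbb{E}_{X_1}|G(X_1)|^2$. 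The first factor is bounded by $e^{C\|u\|_2^2} \leq e^{Cd^2}$ by sub-Gaussianity of $\zeta_1$ (using \eqref{subGaussianmoment1}). For the second factor I introduce an independent copy $X_2'$ and decouple:
\[|G(X_1)|^2 = \mathbb{E}_{X_2, X_2'} \exp\bigl(\langle X_2 + X_2', u\rangle + i\langle \widetilde M_\theta X_2, X_2\rangle - i\langle \widetilde M_\theta X_2', X_2'\rangle + 2i\langle \widetilde M_\theta X_1, X_2 - X_2'\rangle\bigr).\]
Crucially the $X_1$-dependence is now purely linear. Interchanging expectations by Fubini and integrating out $X_1$ coordinatewise via the Fourier-decay assumption $|\mathbb{E} e^{2\pi i \alpha \zeta_1}| \leq e^{-B\alpha^2}$ yields
\[\bigl| \mathbb{E}_{X_1} e^{2i\langle \widetilde M_\theta X_1, X_2 - X_2'\rangle}\bigr| \leq \exp\bigl(-c\|\widetilde M_\theta(X_2 - X_2')\|_2^2\bigr),\]
after which discarding the unit-modulus oscillatory $i\langle \widetilde M_\theta X_2, X_2\rangle$ and $i\langle \widetilde M_\theta X_2', X_2'\rangle$ terms produces exactly $I(\theta)$.

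The principal obstacle is choosing the correct decoupling: a full decoupling with $X' = X_1' + X_2'$ leaves behind quadratic cross-terms $\langle \widetilde M_\theta X_1, X_1'\rangle$ that cannot be integrated out using the Fourier decay of $\zeta_1$ alone, so the Gaussian-like Gaussian factor $\exp(-c\|\widetilde M_\theta(\cdot)\|_2^2)$ never emerges. The correct strategy is to decouple only in the $\zeta_2$-component \emph{inside} the conditional expectation, at the cost of one Cauchy--Schwarz whose wasteful factor $\mathbb{E}_{X_1} e^{2\langle X_1, u\rangle}$ is absorbed into a constant $e^{Cd^2}$ thanks to $\|u\|_2 \leq d$. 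A secondary technical point is the book-keeping in the substitution $\widetilde\theta_i \mapsto \theta_i/\mu_1(\lambda_i)$, which simultaneously produces the $\prod_i \delta_i$ prefactor, the normalization of $\widetilde M_\theta$ entering $I(\theta)$, and the integration domain $\{\sum_i |\theta_i \delta_i|^2 \leq d\}$.
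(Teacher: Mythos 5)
Your proposal is correct and follows essentially the same architecture as the paper's proof: an Esseen/Fourier-inversion step on the exponentially tilted measure, the split $X=X_1+X_2$, a decoupling that expands only the $\zeta_2$-component via an independent copy $X_2'$, and an integration of the linear-in-$X_1$ phase using the Gaussian Fourier decay \eqref{assumptionone}. The one cosmetic deviation is in the decoupling step: you apply Cauchy--Schwarz in $X_1$ to separate the $e^{\langle X_1,u\rangle}$ weight (absorbing $\mathbb{E}_{X_1}e^{2\langle X_1,u\rangle}\lesssim e^{Cd^2}$ as a constant) before integrating out $X_1$, whereas the paper uses Jensen in Lemma~\ref{decouplinglemma} and then handles the weight together with the phase inside Lemma~\ref{lemma3.3great888}; both routes yield $\Phi(\theta)^2\lesssim I(\theta)$.
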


We first prove a version of the quadratic form decoupling estimate, which is adapted from \cite{campos2024least}, Lemma 5.3.

\begin{lemma}\label{decouplinglemma} Given $\xi_1\in \Gamma_1(G,B,\sigma)$ and $\xi_2\in \Gamma_2(G,K,\sigma)$. Let $X_1,X_1'\sim \operatorname{Col}_n(\xi_1)$ be independent vectors and let let $X_2,X_2'\sim \operatorname{Col}_n(\xi_2)$ be independent vectors. We use the symbol $\mathbb{E}_X$ to denote the expectation with respect to both $X_1$ and $X_2$, and we use $\mathbb{E}_{X_i}$ to denote the expectation with respect to $X_i$ for each $i$. Then for any $n\times n$ symmetric matrix $M$ and $u\in\mathbb{R}^n$, we have
$$\begin{aligned}&
\left|\mathbb{E}_X e^{2\pi i\theta\langle M(X_1+X_2),(X_1+X_2)\rangle+\langle X_1+X_2,u\rangle}\right|^2
\\&\quad\leq \mathbb{E}_{X_2,X_2'}e^{\langle(X_2+X_2'),u\rangle}\cdot 
\left| \mathbb{E}_{X_1}e^{4\pi i\theta \langle M(X_2-X_2'),X_1\rangle+2\langle X_1,u\rangle}
\right|.
\end{aligned}$$
\end{lemma}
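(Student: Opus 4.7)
The plan is a standard decoupling via squaring: integrate out $X_1$, lift the square inside by Jensen's inequality, and then symmetrize $X_2$ against an independent copy $X_2'$. Notably, no distributional hypothesis on $\xi_1,\xi_2$ beyond independence will be used; the statement is purely algebraic, and the assumptions $\zeta_1\in\Gamma_1(G,B,\sigma)$, $\zeta_2\in\Gamma_2(G,K,\sigma)$ enter only through notation.

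Concretely, I would set $F(X_1,X_2) := e^{2\pi i\theta\langle M(X_1+X_2),(X_1+X_2)\rangle + \langle X_1+X_2, u\rangle}$ and $G(X_1) := \mathbb{E}_{X_2}F(X_1,X_2)$, so that the left-hand side is $|\mathbb{E}_{X_1}G(X_1)|^2$. Jensen's inequality for the convex map $|\cdot|^2$ gives $|\mathbb{E}_{X_1}G|^2\le\mathbb{E}_{X_1}|G|^2$. Writing $|G(X_1)|^2 = G(X_1)\overline{G(X_1)}$ via an independent copy $X_2'$ of $X_2$ then yields
\begin{equation*}
    \bigl|\mathbb{E}_X F\bigr|^2 \;\le\; \mathbb{E}_{X_1,X_2,X_2'}\bigl[F(X_1,X_2)\,\overline{F(X_1,X_2')}\bigr].
\end{equation*}

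The algebraic heart of the argument is to expand the exponent of $F(X_1,X_2)\overline{F(X_1,X_2')}$ using the symmetry of $M$. The quadratic form $\langle MX_1,X_1\rangle$ cancels identically between $F$ and $\overline{F}$, so the only $X_1$-dependence that survives comes from the cross-terms and the real exponential: $e^{4\pi i\theta\langle M(X_2-X_2'),X_1\rangle + 2\langle X_1,u\rangle}$. Here the factor $2\langle X_1,u\rangle$ is produced because the real exponentials multiply (not cancel), namely $e^{\langle X_1+X_2,u\rangle}e^{\langle X_1+X_2',u\rangle} = e^{2\langle X_1,u\rangle + \langle X_2+X_2',u\rangle}$. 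The remaining factor is a pure function of $X_2,X_2'$, namely $e^{2\pi i\theta[\langle MX_2,X_2\rangle - \langle MX_2',X_2'\rangle]}\cdot e^{\langle X_2+X_2',u\rangle}$.

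Finally, I would use Fubini to perform the $X_1$-integration inside the outer $X_2,X_2'$ average, producing the factor $\mathbb{E}_{X_1}[e^{4\pi i\theta\langle M(X_2-X_2'),X_1\rangle + 2\langle X_1,u\rangle}]$ that appears on the right-hand side, and then apply the triangle inequality to move the absolute value inside the $X_2,X_2'$-integral. Since $e^{2\pi i\theta[\langle MX_2,X_2\rangle - \langle MX_2',X_2'\rangle]}$ is a phase of modulus one and $e^{\langle X_2+X_2',u\rangle}>0$ passes through unchanged, this produces exactly the claimed bound. I do not expect a substantive obstacle: the only points requiring care are to use symmetry of $M$ correctly so that the $X_1$-quadratic cancels, and to separate real from imaginary parts of the exponent when passing to the modulus, so that $e^{2\langle X_1,u\rangle}$ is retained inside the inner $\mathbb{E}_{X_1}$ rather than being discarded with the phase.
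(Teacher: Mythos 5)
Your proposal is correct and follows essentially the same route as the paper's proof: apply Jensen to push the square past $\mathbb{E}_{X_1}$, symmetrize the inner $X_2$-average against an independent copy $X_2'$ to expand the square, observe that the $X_1$-quadratic $\langle MX_1,X_1\rangle$ cancels while the real exponential doubles to $e^{2\langle X_1,u\rangle}$, swap the order of integration, and finish with the triangle inequality. The only cosmetic difference is that you name the intermediate function $G(X_1)=\mathbb{E}_{X_2}F(X_1,X_2)$ explicitly before squaring, whereas the paper writes the same inequality directly; the algebra and conclusions are identical.
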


\begin{proof}
    Writing $\mathbb{E}_X=\mathbb{E}_{X_1}\mathbb{E}_{X_2}$ and applying Jensen's inequality, we deduce 
    $$\begin{aligned}
&E:=|\mathbb{E}_X e^{2\pi i\theta\langle M(X_1+X_2),X_1+X_2\rangle+\langle X_1+X_2,u\rangle}|^2\\&\leq \mathbb{E}_{X_1}|\mathbb{E}_{X_2}e^{2\pi i\theta \langle M(X_1+X_2),X_1+X_2\rangle+\langle X_1+X_2,u\rangle}|^2.\end{aligned}$$

Expanding the square $|\mathbb{E}_{X_2}e^{2\pi i\theta \langle M(X_1+X_2),X_1+X_2\rangle+\langle X_1+X_2,u\rangle}|^2$, we obtain

$$\begin{aligned}&
\mathbb{E}_{X_2,X_2'}e^{2\pi i\theta \langle M(X_1+X_2),(X_1+X_2)\rangle+\langle (X_1+X_2),u\rangle-2\pi i\theta 
\langle M(X_1+X_2'),(X_1+X_2')\rangle+\langle X_1+X_2',u\rangle}\\&
\quad=\mathbb{E}_{X_2,X_2'}e^{4\pi i\theta \langle M(X_2-X_2'),X_1\rangle+\langle X_2+X_2',u\rangle+2\langle X_1,u\rangle+2\pi i\theta \langle MX_2,X_2\rangle-2\pi i\theta \langle MX_2',X_2'\rangle}.
\end{aligned}$$
Swapping expectations, we obtain 
$$\begin{aligned}
E&\leq\mathbb{E}_{X_2,X_2'}|\mathbb{E}_{X_1}e^{4\pi i\theta \langle M(X_2-X_2'),X_1\rangle+\langle X_2+X_2',u\rangle+2\langle X_1,u\rangle+2\pi i\theta \langle MX_2,X_2\rangle-2\pi i\theta\langle MX_2',X_2'\rangle}|.
\end{aligned}$$ Swapping the $X_1$-independent terms out completes the proof.
\end{proof}

Now we prove a lemma that makes use of the Fourier decay of $X_1$ and decouples the inner product $\langle X_1,u\rangle$.

\begin{lemma}\label{lemma3.3great888} Let $X_1\sim \operatorname{Col}_n(\zeta_1)$ where $\zeta_1\in\Gamma_1(G,B,\sigma)$, and let $u,v\in\mathbb{R}^n$ be two fixed vectors. Then 
\begin{equation}
    |\mathbb{E}_X e^{2\pi i\langle X,v\rangle+\langle X,u\rangle}|\leq c^{-1}\exp\left(-c\|v\|_2^2+c^{-1}\|u\|_2^2\right)
\end{equation} where $c\in(0,1)$ is a fixed constant depending only on $G,B,\sigma$.
\end{lemma}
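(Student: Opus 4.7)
The plan is to reduce to a single-variable estimate on the complex moment generating function of $\zeta_1$ by factoring across the $n$ independent coordinates of $X$. Writing $\phi(z):=\mathbb{E}\,e^{z\zeta_1}$, the i.i.d.\ structure of $X$ gives
$$\mathbb{E}_X\,e^{2\pi i\langle X,v\rangle+\langle X,u\rangle}=\prod_{j=1}^{n}\phi(u_j+2\pi iv_j),$$
so it suffices to prove the per-coordinate estimate $|\phi(u+2\pi iv)|\le\exp(-cv^2+c^{-1}u^2)$ for some $c\in(0,1)$ depending only on $G,B,\sigma_0$; multiplying over $j$ and absorbing any uniform $O(1)$ slack into the leading factor $c^{-1}$ then yields the lemma.

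For the per-coordinate estimate I would interpolate the two boundary controls on $\phi$. On the real axis, the sub-Gaussian hypothesis $\|\sigma_0^{-1/2}\zeta_1\|_{\psi_2}\le G$ yields $\phi(u)\le\exp(\bar C u^2)$ by a standard moment estimate such as \eqref{subGaussianmoment1}; on the imaginary axis, assumption \eqref{assumptionone} gives $|\phi(2\pi iv)|\le\exp(-Bv^2)$. The target bound asks that these two decay rates persist jointly when we move to a generic point $u+2\pi iv$ in the strip. The cleanest mental model is when $\zeta_1$ has an independent Gaussian summand $\zeta_1=G+W$ with $G\sim N(0,\sigma_G)$, in which case the MGF factorizes and
$$|\phi(u+2\pi iv)|=e^{\sigma_G(u^2-4\pi^2 v^2)/2}\,|\phi_W(u+2\pi iv)|\le e^{(\sigma_G/2+\bar C_W)u^2-2\pi^2\sigma_G v^2},$$
which is exactly the desired form with $c=2\pi^2\sigma_G$; this covers the Gaussian-divisible example of Theorem~\ref{Theorem1.1}. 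For general $\zeta_1\in\Gamma_1$ I would either reproduce the same bound by a Hadamard three-lines / Phragm\'en--Lindel\"of argument on a strip $\{0\le\mathrm{Re}\,z\le u\}$ (using that $\phi$ is entire of order~2 by sub-Gaussianity, together with the two sided estimates), or exploit the smoothness of the density $f$ of $\zeta_1$ forced by the Fourier decay (in particular the bound $\|f\|_\infty\le\sqrt{\pi/B}$ obtained from the inverse Fourier representation).

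The main obstacle is that the natural Cauchy--Schwarz separating the real-exponential and oscillatory parts, $|\mathbb{E}\,e^{u\zeta+2\pi iv\zeta}|^2\le\mathbb{E}\,e^{2u\zeta}\cdot\mathbb{E}|e^{2\pi iv\zeta}|^2$, destroys all Fourier cancellation since $|e^{2\pi iv\zeta}|=1$. The symmetrization identity $|\phi(u+2\pi iv)|^2=\mathbb{E}_{\zeta,\zeta'}\,e^{u(\zeta+\zeta')+2\pi iv(\zeta-\zeta')}$ is likewise insufficient by itself, since Plancherel applied to the autocorrelation $(g\star g)$ with $g(x)=e^{ux}f(x)$ gives only the tautology $|\phi(u+2\pi iv)|^2=|\hat g(v)|^2$. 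Any genuine interpolation must use the Fourier decay structurally and keep careful track of constants, which is precisely what the asymmetric appearance of $c$ and $c^{-1}$ in the exponent reflects: the sub-Gaussian constant $\bar C$ and the Fourier constant $B$ need not be comparable, so $c$ must be chosen smaller than both $B$ and $1/\bar C$.
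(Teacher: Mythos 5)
Your proposal takes a genuinely different route from the paper and leaves the key single-coordinate estimate unproved. The paper does not interpolate: it uses exactly the symmetrization identity you mention, coordinate by coordinate, followed by a Cauchy--Schwarz you did not consider --- across the pair $\widetilde Y=\zeta+\zeta'$, $\widetilde X=\zeta-\zeta'$, not across the exponential/oscillatory split. The $\widetilde Y$-factor is handled by the sub-Gaussian moment bound \eqref{subGaussianmoment1}; the $\widetilde X$-factor, after the inequality $|\cos|^2\le|\cos|$ and independence over coordinates, becomes $\prod_j\mathbb E_\xi|\cos(2\pi\xi v_j)|$ with $\xi=\zeta-\zeta'$, which is then controlled using the Fourier decay hypothesis \eqref{assumptionone}. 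Because the cosine is real-valued, this Cauchy--Schwarz does not discard the oscillation the way taking $|e^{2\pi iv\zeta}|\equiv 1$ does.

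The gap in your proposal is the Phragm\'en--Lindel\"of step. You announce it but do not set it up, and it is not clear that it \emph{can} be set up in the usual form: $\phi(z)=\mathbb E\,e^{z\zeta_1}$ is entire of order $\le 2$, satisfies $|\phi(z)|\le e^{C(\operatorname{Re}z)^2}$ everywhere (so is unbounded on every horizontal line and in every vertical strip), and you want to propagate decay known on a \emph{single} line (the imaginary axis) outward to nearby parallel lines. The three-lines theorem interpolates from two boundary lines inward, and sector Phragm\'en--Lindel\"of sits at the critical order $2$ where the conclusion may fail; neither applies as stated. One would have to premultiply by a Gaussian $e^{Az^2}$, tune $A$, and verify the modified growth and boundary bounds carefully --- which is exactly the substance of the lemma and is absent here. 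There are two secondary problems as well. ``Absorbing any uniform $O(1)$ slack into the leading factor $c^{-1}$'' is not legitimate: the lemma allows a single global constant $c^{-1}$, not one per coordinate, so a per-coordinate constant $C_0>1$ yields $C_0^n$ after taking the product over $j$; your single-variable bound must hold with no multiplicative slack (as your Gaussian computation indeed does, being an exact identity), which is more demanding than you acknowledge. Finally, the fallback via $\|f\|_\infty\le\sqrt{\pi/B}$ goes nowhere: a bounded density of $\zeta_1$ gives only $|\hat g(v)|\le\|g\|_{L^1}$ with no $v$-decay, and you would need quantitative smoothness or analyticity of $g(x)=e^{ux}f(x)$, not mere boundedness of $f$.
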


\begin{proof}
    Consider $\zeta_1'$ an independent copy of $\zeta_1$. We compute 
    $$
|\mathbb{E}_{\zeta_1}e^{2\pi i\zeta_1 v_j+\zeta_1 u_j}|^2=\mathbb{E}_{\zeta_1,\zeta_1'}e^{2\pi i(\zeta_1-\zeta_1')v_j+(\zeta_1+\zeta_1')u_j}=\mathbb{E}_{\zeta_1,\zeta_1'}[e^{(\zeta_1+\zeta_1')u_j}\cos(2\pi(\zeta_1-\zeta_1')v_j)].$$

Consider $\widetilde{X}=(\widetilde{X}_i)_{i=1}^n,$ $\widetilde{Y}=(\widetilde{Y}_i)_{i=1}^n$ two given vectors with i.i.d. coordinates having the distribution $\xi:=\zeta_1-\zeta_1'$ and $\zeta_1+\zeta_1'$. Then by Cauchy-Schwartz and the bound $|\cos(x)|^2\leq |\cos(x)|$,
\begin{equation}
|\mathbb{E}_X e^{2\pi i\langle X,v\rangle+\langle X,u\rangle}|^2\leq \mathbb{E}e^{\langle \widetilde{Y},u\rangle}\prod_j \cos(2\pi \widetilde{X}_jv_j)\leq (\mathbb{E}_{\widetilde{Y}}e^{2\langle \widetilde{Y},u})^\frac{1}{2}    \left(     \prod_j \mathbb{E}_\xi |\cos(2\pi \xi v_j)|\right)^\frac{1}{2}.
\end{equation} By \eqref{subGaussianmoment1}, we have $\mathbb{E}_{\widetilde{Y}}e^{2\langle \widetilde{Y},u\rangle}$ is bounded by $\exp(\mathcal{O}(\|u\|_2^2))$.
Applying the Fourier decay assumptions on $\zeta_1$ \eqref{assumptionone} finally completes the proof.
\end{proof}
Note: the Fourier decay assumption \eqref{assumptionone} is essential here. Without this assumption we can only obtain $\|v\|_\mathbb{T}$ in the estimate (see \cite{campos2024least}, Fact 5.4), which measures the closeness of $v$ to the integer lattice. Unfortunately we do not have good control of this quantity in this paper where we have multiple locations $\lambda_i\neq\lambda_j$.

We also need the following multidimensional Esseen inequality \cite{esseen1966kolmogorov}:
\begin{lemma}
Let $Z$ be a random vector in $\mathbb{R}^m$, then 
$$
\sup_{v\in\mathbb{R}^m} \mathbb{P}(\|Z-v\|_2\leq \sqrt{m})\leq C^m\int_{B(0,\sqrt{m})}|\phi_Z(\theta)|d\theta, 
$$
with $\phi_Z(\theta)=\mathbb{E}\exp( 2\pi i\langle \theta,Z \rangle)$.
\end{lemma}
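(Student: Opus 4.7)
The plan is to adapt the classical one-dimensional Esseen smoothing inequality to $\mathbb{R}^m$: dominate the indicator of the ball $B(0, \sqrt{m})$ by a non-negative majorant $K$ whose Fourier transform is supported in $B(0, \sqrt{m})$, and then rewrite $\mathbb{E}[K(Z - v)]$ as a Fourier integral against $\phi_Z$.

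Concretely, suppose first that we have produced a function $K : \mathbb{R}^m \to \mathbb{R}_{\geq 0}$ satisfying $K \geq \mathbf{1}_{B(0, \sqrt{m})}$ pointwise, $\operatorname{supp}(\hat{K}) \subseteq B(0, \sqrt{m})$, and $\|\hat{K}\|_\infty \leq C^m$ for some absolute constant $C$. Then by Fubini and Fourier inversion,
\[
\mathbb{P}(\|Z - v\|_2 \leq \sqrt{m}) \leq \mathbb{E}[K(Z - v)] = \int_{B(0, \sqrt{m})} \hat{K}(\theta)\, e^{-2\pi i \langle v, \theta\rangle}\, \phi_Z(\theta)\, d\theta,
\]
and the claim follows immediately by taking absolute values, using the support constraint on $\hat{K}$ together with $\|\hat{K}\|_\infty \leq C^m$. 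This reduction is completely routine.

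The heart of the proof is the construction of such a $K$. The natural candidate is $K(x) = |g(x)|^2 / c_0$, where $g : \mathbb{R}^m \to \mathbb{C}$ has $\hat{g}$ supported in the half-radius ball $B(0, \sqrt{m}/2)$, and $c_0 := \inf_{\|x\|_2 \leq \sqrt{m}} |g(x)|^2 > 0$. The convolution theorem then automatically forces $\hat{K}$ to be supported in $B(0, \sqrt{m})$, and Cauchy--Schwarz together with Plancherel yields $\|\hat{K}\|_\infty \leq \|g\|_2^2 / c_0$. The task therefore reduces to exhibiting a function $g$ band-limited to $B(0, \sqrt{m}/2)$ for which $\|g\|_2^2 / c_0 \leq C^m$.

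The main obstacle is producing this band-limited majorant with the correct exponent $C^m$ (as opposed to the weaker $m^{m/2}$ given by a naive tensor-product construction $g(x) = \prod_i g_0(x_i)$, which matches a cube rather than a Euclidean ball and therefore loses a factor of $(\sqrt{m})^m$). To recover the sharp exponent one must use a genuinely rotation-invariant construction: for instance, $g$ can be taken to be a radially symmetric Paley--Wiener function obtained by suitably mollifying the indicator of $B(0, \sqrt{m}/2)$ in the Fourier variable. The existence of such a $g$ with $\|g\|_2^2/c_0 = O((\sqrt{2\pi e})^m)$ is essentially optimal, since Stirling's formula gives $\operatorname{Vol}(B(0, \sqrt{m})) \asymp (2\pi e)^{m/2}$ and so any admissible $K$ must have $\|\hat{K}\|_\infty$ at least of this order. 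The detailed construction is classical and is carried out in Esseen's original paper \cite{esseen1966kolmogorov}.
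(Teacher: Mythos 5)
The paper does not prove this lemma itself — it states it, cites Esseen \cite{esseen1966kolmogorov}, and then derives Corollary~\ref{corollary2.4firsts} from it by rescaling — so you are reconstructing a proof rather than reproducing one. Your reduction is correct and is exactly the standard Esseen smoothing argument: the Fubini/Fourier-inversion identity $\mathbb{P}(\|Z-v\|\le\sqrt m)\le\int\hat K(\theta)e^{-2\pi i\langle v,\theta\rangle}\phi_Z(\theta)\,d\theta$, the ansatz $K=|g|^2/c_0$ with $\hat g$ supported in the half-radius ball, the Cauchy--Schwarz/Plancherel bound $\|\hat K\|_\infty\le\|g\|_2^2/c_0$, the observation that a tensor-product (cube) construction loses $m^{m/2}$, and the lower bound $\|\hat K\|_\infty\ge\hat K(0)=\int K\ge\operatorname{Vol}(B(0,\sqrt m))\asymp(2\pi e)^{m/2}$ are all correct.

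What is left unjustified is the one step that carries the content: producing a band-limited $g$ with $c_0:=\inf_{\|x\|\le\sqrt m}|g(x)|^2>0$ and $\|g\|_2^2/c_0\le C^m$. Your description --- ``mollifying the indicator of $B(0,\sqrt m/2)$ in the Fourier variable'' --- would not work as stated. The kernel $\mathcal F^{-1}[\mathbf 1_{B(0,R)}](x)=(R/\|x\|)^{m/2}J_{m/2}(2\pi R\|x\|)$ has its first zero at $\|x\|\approx j_{m/2,1}/(2\pi R)\asymp \sqrt m/(2\pi)$ when $R=\sqrt m/2$, which is well inside $B(0,\sqrt m)$, so $c_0=0$; and convolving the Fourier-side indicator with a small bump corresponds to multiplying $g$ by another entire function, which cannot remove those zeros. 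A construction that does work is to \emph{cut off a Gaussian}: $\hat g(\theta)=e^{-\alpha\|\theta\|^2}\mathbf 1_{B(0,\sqrt m/2)}(\theta)$ for a suitably large absolute constant $\alpha$. Then $g$ equals the full Gaussian $(\pi/\alpha)^{m/2}e^{-\pi^2\|x\|^2/\alpha}$ up to a truncation error which, for $\alpha$ large enough, a chi-square tail bound shows is uniformly smaller on $B(0,\sqrt m)$ than half of the Gaussian's own value at radius $\sqrt m$; this gives $c_0\ge e^{-O(m)}\|g\|_2^2$ and hence $\|\hat K\|_\infty\le C^m$. Either supply a construction of this kind explicitly, or defer cleanly to the literature (as the paper does) rather than sketching one that does not go through.
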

Via a simple rescaling argument, we deduce that

\begin{corollary}\label{corollary2.4firsts}
    Let $Z_1,Z_2,\cdots Z_d$ be real-valued random variables (not necessarily independent) and let $\delta_1,\delta_2,\cdots,\delta_d>0$ be fixed constants. Then 
    \begin{equation}
 \sup_{\{v_i\}_{i=1}^d\in\mathbb{R}^d}\mathbb{P}\left(\cap_{i=1}^d \{\|Z_i-v_i\|\leq \delta_i\} \right)\lesssim \prod_{i=1}^d \delta_i \int_{\{\sum_{i=1}^d |\theta_i\delta_i|^2\leq d\}}|\phi_{\{Z_i\}_{i=1}^d}(\theta)|      d\theta, 
    \end{equation}
where $\theta\in\mathbb{R}^d$ and $\phi_{\{Z_i\}_{i=1}^d}(\theta)=\mathbb{E}\left(2\pi i(\sum_i \theta_i Z_i)\right)$.
    
\end{corollary}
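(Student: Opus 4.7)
The plan is to deduce this corollary from the multidimensional Esseen inequality stated immediately above it by a simple diagonal rescaling, exactly as the paper hints. The key observation is that the intersection of intervals $\bigcap_{i=1}^d \{|Z_i-v_i|\le\delta_i\}$ is a box, and rescaling the coordinates by $1/\delta_i$ turns the box into something containing a ball of radius $\sqrt{d}$, at which point the Esseen inequality applies directly.

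Concretely, I would set $Z=(Z_1,\dots,Z_d)\in\mathbb{R}^d$, define $W=(Z_1/\delta_1,\dots,Z_d/\delta_d)$ and $w=(v_1/\delta_1,\dots,v_d/\delta_d)$. The event $\bigcap_{i=1}^d\{|Z_i-v_i|\le\delta_i\}$ is equivalent to $\bigcap_{i=1}^d\{|W_i-w_i|\le 1\}$, which implies $\|W-w\|_2\le\sqrt{d}$. Applying the stated Esseen inequality with $m=d$ to the random vector $W$ therefore gives
\begin{equation*}
\sup_{v\in\mathbb{R}^d}\mathbb{P}\bigl(\textstyle\bigcap_{i=1}^d\{|Z_i-v_i|\le\delta_i\}\bigr)\le\sup_{w\in\mathbb{R}^d}\mathbb{P}(\|W-w\|_2\le\sqrt{d})\le C^d\int_{B(0,\sqrt{d})}|\phi_W(\eta)|\,d\eta.
\end{equation*}

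Next I would translate the Fourier integral back in terms of $Z$. Since $\phi_W(\eta)=\mathbb{E}\exp(2\pi i\sum_i \eta_i Z_i/\delta_i)$, the substitution $\theta_i=\eta_i/\delta_i$ gives $\phi_W(\eta)=\phi_{\{Z_i\}}(\theta)$ with Jacobian $d\eta=\prod_{i=1}^d\delta_i\,d\theta$, and the ball $\|\eta\|_2^2\le d$ becomes the ellipsoid $\sum_{i=1}^d|\theta_i\delta_i|^2\le d$. Combining these yields
\begin{equation*}
\sup_v\mathbb{P}\bigl(\textstyle\bigcap_{i=1}^d\{|Z_i-v_i|\le\delta_i\}\bigr)\le C^d\prod_{i=1}^d\delta_i\int_{\{\sum_i|\theta_i\delta_i|^2\le d\}}|\phi_{\{Z_i\}}(\theta)|\,d\theta,
\end{equation*}
which is the claim once $C^d$ is absorbed into the implicit constant since $d$ is fixed throughout the paper.

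There is no real obstacle here; the only thing to be careful about is ensuring that the $d$-dependent factor $C^d$ is legitimately swallowed by the notation $\lesssim$, which it is by the footnote convention declared earlier that $\lesssim$ hides constants depending on $d$. No further structural assumptions on the joint law of $(Z_1,\dots,Z_d)$ (independence, moments, etc.) are used, consistent with the statement.
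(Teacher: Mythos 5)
Your proof is correct and is exactly the rescaling argument the paper intends (the paper itself says only ``via a simple rescaling argument''), with the details of the diagonal change of variables and the box-contains-ball reduction spelled out. Nothing to add.
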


We prove the following lemma, which is a multidimensional version of \cite{campos2024least}, Lemma 5.1:

\begin{lemma}
   Let $X\sim \operatorname{Col}_n(\zeta_1+\zeta_2)$, where $(\zeta_1,\zeta_2)\in \Gamma_1(G,B,\sigma)\times \Gamma_2(G,K,\sigma)$ and $\zeta_1,\zeta_2$ are independent. Let $M_i$, $i=1,\cdots,d$ be $d$ $n\times n$ real symmetric matrices, $u\in\mathbb{R}^n$, $t_1,\cdots,t_d\in\mathbb{R}$ and $s,\delta_1,\cdots,\delta_d\geq 0$. Then we have
\begin{equation}\begin{aligned}
 &\mathbb{P}\left(\bigcap_{i=1}^d \{|\langle M_iX,X\rangle-t_i|<\delta_i\},\langle X,u\rangle\geq s\right)       
   \\& 
   \lesssim \prod_{i=1}^d \delta_i \cdot e^{-s}\cdot \int_{\{\sum_{i=1}^d|\theta_i\delta_i|^2\leq d\}} |\mathbb{E}^{2\pi i\sum_{i=1}^d \theta_i\langle M_iX,X\rangle +\langle X,u\rangle}|d\theta.
\end{aligned}\end{equation}  
\end{lemma}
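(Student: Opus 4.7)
The plan is to combine an exponential Markov bound, which handles the linear constraint $\langle X,u\rangle\geq s$, with a weighted version of the multidimensional Esseen inequality from Corollary \ref{corollary2.4firsts} that handles the small-ball events for the quadratic forms simultaneously.

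First, since $s\geq 0$, one has the pointwise bound $\mathbf{1}_{\langle X,u\rangle\geq s}\leq e^{\langle X,u\rangle-s}$. Therefore
$$\mathbb{P}\!\left(\bigcap_{i=1}^d\{|\langle M_iX,X\rangle-t_i|<\delta_i\},\,\langle X,u\rangle\geq s\right)\leq e^{-s}\,\mathbb{E}\!\left[e^{\langle X,u\rangle}\mathbf{1}_{\bigcap_{i}\{|\langle M_iX,X\rangle-t_i|<\delta_i\}}\right].$$
This factors out the exponential pre-factor $e^{-s}$ and reduces the problem to bounding the expectation on the right, which is no longer constrained by a half-space inequality.

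Next, I would introduce the finite positive measure $\nu$ on $\mathbb{R}^n$ defined by $d\nu(x)=e^{\langle x,u\rangle}\,d\mathbb{P}_X(x)$, so that the expectation above equals $\nu\!\left(\bigcap_{i}\{|\langle M_ix,x\rangle-t_i|<\delta_i\}\right)$. Corollary \ref{corollary2.4firsts} is stated for a probability measure, but its standard proof (convolution with a compactly supported smooth kernel, followed by Plancherel) extends without change to any finite positive measure. Applying this extension to $\nu$ with the random variables $Z_i=\langle M_iX,X\rangle$ yields
$$\nu\!\left(\bigcap_{i=1}^d\{|Z_i-t_i|<\delta_i\}\right)\lesssim \prod_{i=1}^d\delta_i\int_{\{\sum_{i}|\theta_i\delta_i|^2\leq d\}}|\hat\nu(\theta)|\,d\theta,$$
where
$$\hat\nu(\theta)=\int e^{2\pi i\sum_{i}\theta_i\langle M_ix,x\rangle}\,d\nu(x)=\mathbb{E}\,e^{2\pi i\sum_{i}\theta_i\langle M_iX,X\rangle+\langle X,u\rangle}.$$
Combining this with the Markov step above delivers the claimed bound.

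The only point that deserves any care is justifying that the Esseen-type bound extends to the non-probability measure $\nu$; an equivalent workaround is to normalize to the probability measure $\tilde{\mathbb{P}}=Z^{-1}\nu$ with $Z=\mathbb{E}\,e^{\langle X,u\rangle}$, apply Corollary \ref{corollary2.4firsts} directly, and then observe that the factor $Z$ cancels when one expresses the tilted characteristic function back in terms of the original expectation. Aside from this bookkeeping, the argument is entirely Fourier-analytic and does not touch the subGaussian or Fourier-decay structure of $\zeta_1,\zeta_2$; its purpose is simply to package the joint small-ball event in a form on which the quadratic-form decoupling of Lemma \ref{decouplinglemma} can subsequently be applied.
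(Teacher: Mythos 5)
Your proposal is correct and is essentially the same proof as in the paper: the paper also begins with the bound $\mathbf{1}\{x\geq s\}\leq e^{x-s}$, passes to the exponentially tilted (and normalized) measure — exactly the ``equivalent workaround'' you describe at the end — applies Corollary \ref{corollary2.4firsts} to the tilted variable, and unwinds the normalization constant. The minor distinction of working with an unnormalized finite measure versus its normalization is immaterial, as you correctly note.
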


\begin{proof}
Using $1\{x\geq s\}\leq e^{x-s}$ we bound the left hand side of the equation from above by 
\begin{equation}\label{equation2.41}
    e^{-s}\mathbb{E}\left[\prod_{i=1}^d\mathbf{1}\{|\langle M_iX,X\rangle-t_i|<\delta_i\}\cdot e^{X,u}\right].
\end{equation}

Consider the auxiliary random variable $Y\in\mathbb{R}^n$ that satisfies, for all open $U\subseteq \mathbb{R}^n$,
\begin{equation}
    \mathbb{P}(Y\in U)=(\mathbb{E}e^{\langle X,u\rangle})^{-1}\mathbb{E}[\mathbf{1}_Ue^{\langle X,u\rangle}].
\end{equation}

Then we rewrite \eqref{equation2.41} as
\begin{equation}
 \eqref{equation2.41}=e^{-s}(\mathbb{E}e^{\langle X,u\rangle}) \mathbb{P}_Y\left(\cap_{i=1}^d \{|\langle M_iY,Y\rangle-t_i|\leq\delta_i\}\right).  
\end{equation}
Then we apply Corollary \ref{corollary2.4firsts} to $Y$ and obtain 
$$\begin{aligned}
&\mathbb{P}_Y\left(\cap_{i=1}^d \{|\langle M_iY,Y\rangle-t_i|\leq\delta_i\}\right)\\&\quad \lesssim\prod_{i=1}^d \delta_i \cdot \int_{\{\sum_{i=1}^d|\theta_i\delta_i|^2\leq d\}} |\mathbb{E}_Ye^{2\pi i\sum_{i=1}^d \theta_i\langle M_iY,Y\rangle}|d\theta.
\end{aligned}$$
Finally, by definition of $Y$,
$$
\mathbb{E}_Y e^{2\pi i\sum_{i=1}^d \theta_i\langle M_iY,Y\rangle}=(\mathbb{E}_Xe^{\langle X,u\rangle})^{-1}\mathbb{E}e^{2\pi i\sum_{i=1}^d \theta_i\langle M_iX,X\rangle+\langle X,u\rangle},
$$
and the proof is completed.
\end{proof}

Now we are ready to prove Theorem \ref{theorem3.1}.
\begin{proof}[\proofname\ of Theorem \ref{theorem3.1}]

Throughout the proof we write $X=X_1+X_2$ where $X_1\sim\operatorname{Col}_n(\zeta_1)$ and $X_2\sim\operatorname{Col}_n(\zeta_2)$ are independent random vectors.

    We first apply Corollary \ref{corollary2.4firsts} to write
\begin{equation}\begin{aligned}
    &\mathbb{P}_X\left(\cap_{i=1}^d \left\{\left|\langle (A_n-\lambda_iI_n)^{-1}X,X\rangle-t_i\right|\leq \delta_i\cdot  \mu_1(\lambda_i)\right\},\quad  \langle X,u\rangle\geq s\right)\\&\lesssim \prod_{i=1}^d\delta_i\cdot e^{-s}\int_{\{\sum_{i=1}^d|\theta_i\delta_i|^2\leq d\}} \left| \mathbb{E}_X e^{2\pi i\left\langle \left(\sum_{i=1}^d  \frac{ \theta_i (A_n-\lambda_i I_n)^{-1}X}{\mu_1(\lambda_i)}\right),X\right\rangle +\langle X,u\rangle }\right|d\theta.\end{aligned}
\end{equation}

Then we apply the decoupling lemma (Lemma \ref{decouplinglemma}) to deduce that 
\begin{equation}\begin{aligned}&
   \left| \mathbb{E}_X e^{2\pi i\left\langle \left(\sum_{i=1}^d  \frac{ \theta_i (A_n-\lambda_i I_n)^{-1}X}{\mu_1(\lambda_i)}\right),X\right\rangle +\langle X,u\rangle }\right|^2\\&\leq \mathbb{E}_{X_2,X_2'}e^{\langle (X_2+X_2'),u\rangle}\cdot \left|\mathbb{E}_{X_1}e^{4\pi i\left\langle \left(\sum_{i=1}^d \frac{\theta_i(A_n-\lambda_i I_n)^{-1}\widetilde{X}_2}{\mu_1(\lambda_i)}\right),X_1\right\rangle+2\langle X_1,u\rangle}
   \right|,\end{aligned}
\end{equation}
where $\widetilde{X}:=X_2-X_2'$.

Finally, we apply Lemma \ref{lemma3.3great888} to the $\mathbb{E}_{X_1}$ term and complete the proof.

\end{proof}

\section{Eigenvalue spacing estimates}\label{chap3chap3chap3}
The main objective of this section is to establish eigenvalue rigidity estimates for Wigner matrices. We make crucial use of the local semicircle law, as well as the super-exponential concentration of the empirical measure guaranteed by a finite log-Sobolev constant on the entries. Informally speaking, the main results are that with probability $1-e^{-\Omega(n)}$, we have very well control on $\mu_{k}(\lambda_i)$ from both sides, for $k\geq cn$ and $\lambda_i$ in the bulk. We also have very good control of $\sqrt{n}/(k\mu_k(\lambda_i))$ from above, for any $k$ and any $\lambda_i$ in the bulk. This section is mostly technical and readers can skip to the next section and turn back once the results here are needed.

Throughout this section we will use a simple yet crucial property of symmetric matrices $A_n$: for any $\lambda_i\in\mathbb{R}$, $(A_n-\lambda_i I_n)^{-1}$ has the same set of eigenvectors as $(A_n-\lambda_i I_n)$, thus it has the same set of eigenvectors as $A_n$. Moreover, the singular values of $(A_n-\lambda_i I_n)^{-1}$ and that of $A_n$ have very simple relationships.

Recall that for any $\lambda_i\in\mathbb{R}$, we denote by $\mu_1(\lambda_i)\geq\mu_2(\lambda_i)\cdots\geq \mu_n(\lambda_i)\geq 0$ the singular values of $(A_n-\lambda_i I)^{-1}$ in decreasing order. Additionally, we define a modified function $\|\cdot\|_*$ for $(A-\lambda_i I)^{-1}$ via
\begin{equation}
    \left\|(A_n-\lambda_i I_n)^{-1}\right\|_*^2=\sum_{k=1}^n \mu_k(\lambda_i)^2(\log(1+k))^2.
\end{equation}

\subsection{Single location estimate}
\label{section4fuck}
We first prove the following two lemmas that generalize Lemmas 8.1 and 8.2 of \cite{campos2024least} to nonzero eigenvalues in the bulk. 

\begin{lemma}\label{lemma4.11} For any $p>1$, $B>0$, $\kappa>0$ and $\zeta\in\Gamma_B$, let $A\sim\operatorname{Sym}_n(\zeta)$. Then we can find a constant $C_p$ depending on $B$, $p$ and $\kappa$ such that for any $\lambda_i\in[-(2-\kappa)\sqrt{n},(2-\kappa)\sqrt{n}]$ we have
$$
\mathbb{E}\left[\left(\frac{\sqrt{n}}{\mu_k(\lambda_i)\cdot k}\right)^p\right]\leq C_p.
$$
\end{lemma}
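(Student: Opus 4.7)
The plan is to rewrite the claim in terms of eigenvalue counts of $A_n$ in intervals around $\lambda_i$, and then combine the local semicircle law for subGaussian Wigner matrices with the operator norm tail estimate of Lemma~\ref{operatorbound}. The starting observation is that, since $A_n$ is symmetric, the singular values of $A_n-\lambda_i I_n$ are exactly $\{|\lambda_j^A-\lambda_i|\}_{j=1}^n$, where $\lambda_1^A,\ldots,\lambda_n^A$ are the eigenvalues of $A_n$. Hence $\mu_k(\lambda_i)=1/d_k$, where $d_k$ denotes the $k$-th smallest value of $|\lambda_j^A-\lambda_i|$, and the event $\{\sqrt{n}/(\mu_k(\lambda_i)\cdot k)\ge t\}$ coincides with $\{N_{A_n}(\lambda_i-kt/\sqrt{n},\,\lambda_i+kt/\sqrt{n})<k\}$, where $N_{A_n}(I)$ denotes the number of eigenvalues of $A_n$ in $I$. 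The layer-cake identity then reduces the claim to showing that
$$p\int_0^{\infty} t^{p-1}\,\mathbb{P}\bigl(N_{A_n}(\lambda_i-kt/\sqrt{n},\,\lambda_i+kt/\sqrt{n})<k\bigr)\,dt \le C_p$$
holds uniformly in $n$, $k$ and $\lambda_i$ in the bulk.

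I would split this integral into three regimes. For $t\le T_0$, with $T_0=T_0(\kappa,p)$ a constant, the tail probability is trivially at most $1$, so the contribution is $O(T_0^p)$. For $T_0\le t\le Cn/k$ I would apply the local semicircle law: since $\lambda_i$ lies in the bulk, the semicircle density $\rho_{sc}(\lambda_i/\sqrt{n})$ is bounded below by a constant $c_\kappa>0$, so the expected eigenvalue count in $(\lambda_i\pm kt/\sqrt{n})$ is at least $2c_\kappa kt$; bulk rigidity for subGaussian Wigner matrices then guarantees that the actual count deviates from this prediction by at most $(\log n)^{C}$ with probability $\ge 1-n^{-D}$ for any fixed $D$. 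Choosing $T_0$ large in terms of $c_\kappa$ forces the count to exceed $k$ as soon as $k$ dominates the logarithmic rigidity error, yielding $\mathbb{P}(\sqrt{n}/(\mu_k(\lambda_i)k)\ge t)\le n^{-D}$ on this range. For $t\ge Cn/k$ I would invoke Lemma~\ref{operatorbound}: on the event $\{\|A_n\|_{\mathrm{op}}\le(3+s)\sqrt{n}\}$ we have $d_k\le(5+s)\sqrt{n}$ and therefore $\sqrt{n}/(\mu_k(\lambda_i)k)\le(5+s)n/k$, so parameterising $s=tk/n-5$ the complementary event has probability at most $e^{-cs^{3/2}n}$; this superexponential decay integrates to $o(1)$.

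The main obstacle is the regime of small $k$, specifically $k\le(\log n)^{C}$, where the additive $(\log n)^C$ rigidity error in the local semicircle law dominates $k$ and the naive comparison to the semicircle prediction only gives $\mathbb{P}(\sqrt{n}/(\mu_k(\lambda_i)k)\ge t)\le 1$ for $t\lesssim(\log n)^{C'}/k$, contributing a $(\log n)^{C'p}$ factor rather than the required constant. To address this I plan to use a Poissonian-type tail bound $\mathbb{P}(d_k\ge tk/\sqrt{n})\lesssim \exp(-c_\kappa kt)$, valid uniformly in $n$ for $t$ of constant order. Such a bound can be obtained by applying the local semicircle law at the mesoscopic scale $k/\sqrt{n}$ together with a higher-moment concentration estimate for the eigenvalue count (exploiting the subGaussian tails of the entries via, e.g., a Talagrand-type bound on the smoothed count), or alternatively via Cauchy interlacing coupled with the single-location small-ball estimate of \cite{campos2024least} applied to random principal submatrices. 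With this tail in hand, integrating $p t^{p-1}\bigl(e^{-c_\kappa kt}+n^{-D}+e^{-cn}\bigr)$ against $dt$ yields the claimed uniform constant $C_p$ depending only on $B$, $p$ and $\kappa$.
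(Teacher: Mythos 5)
Your overall framework is the correct one and matches the paper's in spirit: reinterpret $\sqrt{n}/(\mu_k(\lambda_i)k)\geq t$ as the complementary event that fewer than $k$ eigenvalues of $A_n$ land in $(\lambda_i\pm kt/\sqrt{n})$, apply the layer-cake formula, split into constant / bulk / extreme regimes, and handle the extreme regime with the operator norm tail. However, there is a genuine gap in the middle regime, and the fix you propose is both unnecessary and unproven.

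You invoke the \emph{polynomial-probability} form of bulk rigidity (deviation $\lesssim (\log n)^{C}$ with probability $1-n^{-D}$), which — as you correctly observe — breaks down when $k\lesssim(\log n)^C$. You then propose to patch this with a Poissonian tail $\mathbb{P}(d_k\geq tk/\sqrt{n})\lesssim e^{-c_\kappa kt}$, but you do not establish this bound; you only sketch two possible routes (Talagrand concentration on a smoothed count, or Cauchy interlacing plus the CJMS least-singular-value estimate), neither of which is carried out, and neither of which is obviously correct. In particular a Poissonian-in-$kt$ bound is substantially stronger than what the available local laws supply, and your proof stands or falls with it.

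The paper avoids this entirely by using a stronger concentration estimate for the eigenvalue count: Theorem~\ref{theorem4.3} (Erd\H{o}s--Schlein--Yau), which gives
$$\mathbb{P}\left(\left|\frac{\mathcal{N}_{\eta^*}(E)}{n\eta^*}-\rho_{sc}(E)\right|\geq\delta\right)\leq Ce^{-c\delta^2\sqrt{n\eta^*}}.$$
Taking $\eta^*=sk/n$ and $\delta$ of constant order yields Corollary~\ref{corollary8.5}:
$$\mathbb{P}\left(\frac{\sqrt{n}}{\mu_k(\lambda_i)k}\geq s\right)\lesssim e^{-c(sk)^{1/2}},$$
valid uniformly down to $k=1$. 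This tail is \emph{weaker} than Poissonian in $sk$, but it is enough, because
$$\int_C^{n/k} p\,s^{p-1}e^{-c(sk)^{1/2}}\,ds = O_p(1)$$
uniformly in $k\geq 1$. So there is no small-$k$ obstruction to route around: the $(\log n)^C$ issue you identify is an artifact of choosing the wrong version of the local law, not an intrinsic difficulty. You should replace the polynomial-probability rigidity statement with Theorem~\ref{theorem4.3} directly; then the three-regime split closes cleanly (the paper uses $\{t\le C\}$, $\{C\le t\le n/k\}$ handled by the ESY tail, and $\{t\ge n/k\}$ handled by Cauchy--Schwarz plus the operator norm tail), and the Poissonian bound is not needed.
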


  We will also need the following corollary:  

\begin{corollary}\label{lemma4.22}
    For a given $p>1$, $B>0$, $\kappa>0$ and $\zeta\in\Gamma_B$, $A\in\operatorname{Sym}_n(\zeta)$. Then we can find a constant $C_P>0$ depending on $B$, $p$, $\kappa$ such that for any $\lambda_i\in[(-(2-\kappa)\sqrt{n},(2-\kappa)\sqrt{n}]$, we have
    \begin{equation}
        \mathbb{E}\left[\left(\frac{\|(A_n-\lambda_i I_n)^{-1}\|_*}{\mu_1(\lambda_i)}\right)^p\right]\leq C_p.
    \end{equation}
\end{corollary}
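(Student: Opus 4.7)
\emph{Proof plan.} My plan is to reduce the claim to an $L^p$ estimate on each ratio $\mu_k(\lambda_i)/\mu_1(\lambda_i)$ and then sum using the absolute convergence of $\sum_k (\log(1+k))^2/k^2$. Expanding
\[
\frac{\|(A_n-\lambda_i I_n)^{-1}\|_*^2}{\mu_1(\lambda_i)^2}\;=\;\sum_{k=1}^n\Bigl(\frac{\mu_k(\lambda_i)}{\mu_1(\lambda_i)}\Bigr)^{\!2}(\log(1+k))^2,
\]
and assuming $p\ge 2$ (the range $p\in(1,2)$ follows from the $p=2$ case by Jensen's inequality), Minkowski's inequality applied in $L^{p/2}$ gives
\[
\left\|\frac{\|\cdot\|_*^2}{\mu_1^2}\right\|_{L^{p/2}}\;\le\;\sum_{k=1}^n(\log(1+k))^2\,\bigl\|\mu_k(\lambda_i)/\mu_1(\lambda_i)\bigr\|_{L^p}^2.
\]
It therefore suffices to prove the uniform estimate $\|\mu_k(\lambda_i)/\mu_1(\lambda_i)\|_{L^p}\le C_p/k$ for $1\le k\le n$, since then the resulting series is dominated by $C_p^2\sum_k (\log(1+k))^2/k^2<\infty$.

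For this pointwise bound I would apply Cauchy--Schwarz:
\[
\|\mu_k/\mu_1\|_{L^p}\;=\;\bigl\|(\mu_k/\sqrt{n})\cdot(\sqrt{n}/\mu_1)\bigr\|_{L^p}\;\le\;\|\mu_k/\sqrt{n}\|_{L^{2p}}\cdot\|\sqrt{n}/\mu_1\|_{L^{2p}}.
\]
The second factor is bounded by $C_{2p}$ by Lemma~\ref{lemma4.11} applied at $k=1$. The entire task therefore reduces to establishing the complementary upper estimate $\|\mu_k\|_{L^{2p}}\le C_p\sqrt{n}/k$, equivalently a lower bound on the $k$-th smallest distance $d_k:=1/\mu_k(\lambda_i)$ from the spectrum of $A_n$ to $\lambda_i$.

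The main obstacle is precisely producing this upper bound on $\mu_k$, since Lemma~\ref{lemma4.11} supplies only the opposite (one-sided) control. My plan is to derive it from the eigenvalue rigidity and level-repulsion estimates implicit in the local semicircle law for subGaussian Wigner matrices, which is the underlying input of Section~\ref{chap3chap3chap3}: with probability at least $1-e^{-\Omega(k)}$ the eigenvalue counting function satisfies $|\{j:|e_j-\lambda_i|\le t\}|\le Ct\sqrt{n}$ uniformly for $t\ge k/(C\sqrt{n})$, and thus $d_k\ge ck/\sqrt{n}$; integrating this tail yields $\|\mu_k/\sqrt{n}\|_{L^{2p}}\le C_p/k$. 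For the finitely many values of $k$ below an absolute constant one may instead use $\mu_k\le \mu_1$ together with the already-known $\|\sqrt{n}/\mu_1\|_{L^{2p}}\le C_{2p}$, absorbing these contributions into $C_p$. Combining all ingredients delivers $\|\mu_k/\mu_1\|_{L^p}\le C_p/k$, from which the corollary follows by Minkowski as above.
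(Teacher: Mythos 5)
Your Minkowski reduction to a bound of the form $\|\mu_k(\lambda_i)/\mu_1(\lambda_i)\|_{L^p}\lesssim 1/k$ matches the paper's opening move exactly, but the Cauchy--Schwarz step
\[
\bigl\|\mu_k/\mu_1\bigr\|_{L^p}\;\le\;\|\mu_k/\sqrt{n}\|_{L^{2p}}\cdot\|\sqrt{n}/\mu_1\|_{L^{2p}}
\]
introduces a genuine gap: the factor $\|\mu_k/\sqrt{n}\|_{L^{2p}}$ is not controllable by the tools in the paper and is in fact not finite for a $p$-dependent range of $k$. Note $\mu_k=1/\sigma_{n-k+1}(A_n-\lambda_i I_n)$, and for $k=1$ the optimal tail is $\mathbb{P}(\mu_1\ge t)\asymp \sqrt{n}/t$ (this is exactly what Proposition \ref{proposition6.666} quantifies), so $\mathbb{E}\mu_1^{2p}=\infty$ for every $p\ge 1/2$; more generally level repulsion gives $\mathbb{P}(\mu_k\ge t)\asymp t^{-k(k+1)/2}$ in the Gaussian case, so $\mathbb{E}\mu_k^{2p}$ diverges until $k$ is of order $\sqrt{p}$, and even when it is finite, Corollary \ref{corollary8.5} only gives information at the single critical scale $t\asymp\sqrt n/k$ and says nothing about the far tail $t\gg\sqrt n/k$ that $\mathbb{E}\mu_k^{2p}$ integrates over. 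Your fallback "use $\mu_k\le\mu_1$ for finitely many small $k$" treats the threshold as absolute, but it must grow with $p$; and in the intermediate range the tail estimate you invoke ("$d_k\ge ck/\sqrt n$ with probability $1-e^{-\Omega(k)}$") only controls the event $\{\mu_k\lesssim\sqrt n/k\}$ and leaves the integral of $t^{2p-1}\mathbb{P}(\mu_k\ge t)$ over $t\gtrsim\sqrt n/k$ completely unbounded.

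The paper's proof sidesteps this entirely by never separating $\mu_k$ from $\mu_1$: on the event $\{\mu_k\le\sqrt n/(Ck)\}$ it bounds $\mu_k^p/\mu_1^p\le(Ck)^{-p}(\sqrt n/\mu_1)^p$ and applies Lemma~\ref{lemma4.11}, while on the complementary event it uses the deterministic inequality $\mu_k/\mu_1\le 1$ together with the probability bound $\mathbb{P}(\mu_k\ge\sqrt n/(Ck))\lesssim e^{-ck^{1/2}}$ from Corollary~\ref{corollary8.5}. The almost-sure bound $\mu_k/\mu_1\le 1$ is the crucial device that lets the probability alone absorb the heavy lower tail of $\sigma_{n-k+1}$ without ever producing a moment of $\mu_k$ by itself. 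You should replace the Cauchy--Schwarz decoupling with this event split; the rest of your argument (Minkowski, summability of $(\log(1+k))^2/k^2$, and the appeal to Lemma~\ref{lemma4.11}) is then exactly what the paper does.
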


The proofs of both results are modifications of those in \cite{campos2024least} to nonzero bulk values. For this proof we need the following local semicircular law estimates of Erdös, Schlein and Yau on Wigner matrices from \cite{erdHos2011universality}, Theorem 1.11. The Wigner semicircle law $\rho_{sc}$ is defined as 
\begin{equation}\label{semicircle}
\rho_{sc}(x):=\frac{1}{2\pi}\sqrt{(4-x^2)_+},\quad x\in\mathbb{R}.
\end{equation}

\begin{theorem}\label{theorem4.3}(\cite{erdHos2011universality}, Theorem 1.11.) Let $H\in\operatorname{Sym}_n(\zeta)$ where $\zeta\in\Gamma_B$ for some $B>0$. Let $\kappa>0$ and consider $E\in[-2+\kappa,2-\kappa]$. Denote by $\mathcal{N}_{\eta^*}(E)=\mathcal{N}_{I^*}$ the number of eigenvalues of $n^{-\frac{1}{2}}H$ in $I^*:=[E-\eta^*/2,E+\eta^*/2]$. Then we can find a universal constant $c_1$ and two constants $C$, $c$ depending only on $\kappa$ such that for any $\delta\leq c_1\kappa$ we can find a constant $K_\delta$ depending only on $\delta$ such that 
\begin{equation}
    \mathbb{P}\left\{\left|\frac{\mathcal{N}_{\eta^*}(E)}{n\eta^*}-\rho_{sc}(E)\right|\geq\delta\right\}\leq Ce^{-c\delta^2\sqrt{n\eta^*}},
\end{equation}
for all $\eta^*$ satisfying $K_\delta/n\leq\eta^*\leq 1$ and all $n\geq 2$.
\end{theorem}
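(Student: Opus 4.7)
This statement is quoted verbatim as Theorem 1.11 of Erdős–Schlein–Yau \cite{erdHos2011universality}, so the primary ``plan'' is simply to cite that reference, after verifying that the entry hypothesis $\zeta\in\Gamma_B$ matches the subGaussian moment assumptions used there (the $\psi_2$ normalization is equivalent to the exponential-type moment conditions employed in \cite{erdHos2011universality}, and the dependence on $B$ is absorbed into the constants). Were one to reconstruct the proof, the strategy goes through the Stieltjes transform $m_n(z) := n^{-1}\operatorname{tr}(n^{-1/2}H - z)^{-1}$ evaluated at $z = E + i\eta^*$. One uses the standard identity
\[
\mathcal{N}_{\eta^*}(E)/(n\eta^*) \;\approx\; \pi^{-1}\operatorname{Im} m_n(E+i\eta^*),
\]
where the smoothing error from replacing the indicator of $I^*$ with a Cauchy kernel of width $\eta^*$ is itself controlled a posteriori by the semicircle density, provided $\eta^*\gtrsim K/n$.

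The core input is a self-consistent equation for $m_n$ together with a concentration estimate. Schur's complement writes each diagonal entry of $(n^{-1/2}H - z)^{-1}$ as $-1/\bigl(z + n^{-1/2}H_{ii} + n^{-1}Q_i(z)\bigr)$, where $Q_i(z)=\langle (n^{-1/2}H^{(i)} - z)^{-1}X_i, X_i\rangle$, $H^{(i)}$ is the $i$-th principal minor, and $X_i$ its $i$-th column, independent of $H^{(i)}$. A Hanson–Wright inequality for subGaussian vectors gives
\[
\mathbb{P}\bigl(|Q_i(z) - \operatorname{tr}(n^{-1/2}H^{(i)} - z)^{-1}|\geq t\bigr) \;\leq\; 2\exp\bigl(-c\min(t^2/S_2, t/S_\infty)\bigr),
\]
with $S_2\sim (n/\eta^*)\operatorname{Im} m_n$ and $S_\infty\leq 1/\eta^*$. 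Averaging over $i$ and using $\operatorname{tr}(n^{-1/2}H^{(i)} - z)^{-1} = (n-1)m_n + O(1/\eta^*)$ yields $m_n(z) \approx -1/(z + m_n(z))$ up to a fluctuation of order $\sqrt{\operatorname{Im} m_n/(n\eta^*)}$ with sub-Gaussian tails.

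The semicircle Stieltjes transform $m_{sc}$ is the unique solution of the exact fixed-point equation with positive imaginary part, and for $E\in[-2+\kappa, 2-\kappa]$ the linearization is a contraction with contraction constant bounded away from one in a $\kappa$-dependent way. Pushing the concentration of the $Q_i$'s through this stability gives $|m_n(z)-m_{sc}(z)|\leq \delta$ with probability at least $1 - C e^{-c\delta^2 \sqrt{n\eta^*}}$, which matches the exponent in the claim, since the natural fluctuation scale of the Hanson–Wright bound in the bulk is $\sim 1/\sqrt{n\eta^*}$. Translating back to $\mathcal{N}_{\eta^*}(E)/(n\eta^*)$ via the Stieltjes inversion identity completes the argument.

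The main obstacle is the bootstrap from macroscopic to mesoscopic scales: one cannot apply the argument directly at $\eta^*\sim 1/n$, because the a priori bound $S_2\leq 1/(\eta^*)^2$ is too weak. One therefore proves the local law inductively at dyadic scales $\eta^{(k)} = 2^{-k}$, using the bound at scale $\eta^{(k-1)}$ to control $\operatorname{Im} m_n$ (hence $S_2$) at scale $\eta^{(k)}$, and iterating down to $\eta^{(k)}\sim K_\delta/n$. The restriction $\eta^*\geq K_\delta/n$ is exactly the threshold below which this bootstrap no longer closes, and $K_\delta$ naturally inherits its dependence on $\delta$ from the induction constants.
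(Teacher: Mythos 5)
The paper does not prove this result: Theorem \ref{theorem4.3} is imported wholesale from \cite{erdHos2011universality}, Theorem 1.11, and your proposal's primary plan is the same citation, so the approaches agree. The Stieltjes-transform sketch you append is consistent with how Erd\H{o}s--Schlein--Yau actually argue, though the claim that a sub-Gaussian fluctuation of scale $1/\sqrt{n\eta^*}$ ``matches'' the stated tail $\exp(-c\delta^2\sqrt{n\eta^*})$ is not quite right (sub-Gaussian at that scale would give $\exp(-c\delta^2 n\eta^*)$; the weaker $\sqrt{n\eta^*}$ exponent in \cite{erdHos2011universality} reflects losses in their multi-scale bootstrap rather than the raw Hanson--Wright rate), but since neither you nor the paper is reproving the theorem this is only a remark on the heuristic.
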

Since $\rho_{sc}(x)$ is maximal at $x=0$ and decreases as $|x|$ increases, we deduce the following user-friendly corollary:
\begin{corollary}\label{corollary425}
    In the setting of Theorem \ref{theorem4.3}, for any $\kappa>0$ we have the following two estimates uniform in all $E\in[-2+\kappa,2-\kappa]$: for any $\eta^*\in[Cn^{-1},1]$,
\end{corollary}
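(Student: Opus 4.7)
The plan is to derive Corollary~\ref{corollary425} as a direct specialization of Theorem~\ref{theorem4.3} by replacing the pointwise density $\rho_{sc}(E)$ with its sup and inf on the bulk interval $[-2+\kappa,2-\kappa]$. Concretely, I would use that $\rho_{sc}$ is continuous, strictly positive on the open interval $(-2,2)$, symmetric about $0$, and monotonically decreasing in $|x|$; hence on $[-2+\kappa,2-\kappa]$ it attains its maximum $\rho_{\max}=\rho_{sc}(0)=1/\pi$ and its minimum $\rho_{\min}(\kappa)=\rho_{sc}(2-\kappa)=\tfrac{1}{2\pi}\sqrt{4\kappa-\kappa^2}>0$.

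First I would fix $\delta=\delta(\kappa):=\min(c_1\kappa,\tfrac{1}{2}\rho_{\min}(\kappa))$ so that the hypothesis $\delta\le c_1\kappa$ of Theorem~\ref{theorem4.3} is satisfied and the lower bound $\rho_{sc}(E)-\delta$ is still positive throughout the bulk. With $C:=K_\delta$ as in Theorem~\ref{theorem4.3}, and for any $\eta^*\in[Cn^{-1},1]$ and any fixed $E\in[-2+\kappa,2-\kappa]$, on the complementary event of the deviation event in Theorem~\ref{theorem4.3} we have simultaneously
\begin{equation*}
(\rho_{\min}(\kappa)-\delta)\,n\eta^*\le\mathcal{N}_{\eta^*}(E)\le(\rho_{\max}+\delta)\,n\eta^*,
\end{equation*}
and the failure probability is at most $Ce^{-c\delta^2\sqrt{n\eta^*}}$. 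These are the "two estimates" (an upper and a lower bound on the local eigenvalue count) that the corollary records for convenient later reference in Section~\ref{section4fuck}.

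The only subtlety is whether "uniform in $E$" is meant in the pointwise sense (single $E$, probability depending on $E$) or in the stronger simultaneous sense (one event on which the bound holds for all $E$). If the latter, I would upgrade by a standard net argument: take an $\eta^*/4$-net $\mathcal{E}\subset[-2+\kappa,2-\kappa]$ of cardinality $O(n/\eta^*)\le O(n^2)$, apply Theorem~\ref{theorem4.3} at each point of $\mathcal{E}$, and union-bound. For any $E$ in the bulk, choosing the nearest net point $E_0$ yields $|\mathcal{N}_{\eta^*}(E)-\mathcal{N}_{\eta^*}(E_0)|\le\mathcal{N}_{\eta^*/2}(E_0)+O(1)$ via monotonicity of the counting function in the window size, which is again controlled by the same net estimate and absorbed into the slack $\delta$. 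Since $\delta^2\sqrt{n\eta^*}\gg\log n$ whenever $\eta^*\ge Cn^{-1}$ with $C$ large enough, the union bound costs only a polynomial factor and preserves the exponential decay.

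The main (and essentially only) obstacle is bookkeeping: keeping track of the two different constants ($\rho_{\max}$ and $\rho_{\min}(\kappa)$) and ensuring $\delta$ is chosen small enough that the lower bound is genuinely nontrivial, while large enough that $Cn^{-1}\le\eta^*$ still yields a meaningful concentration. No new probabilistic input is required beyond Theorem~\ref{theorem4.3}; this is purely a deterministic repackaging of the local semicircle law into a form convenient for the singular-value and $\|\cdot\|_*$-norm estimates of Lemma~\ref{lemma4.11} and Corollary~\ref{lemma4.22}.
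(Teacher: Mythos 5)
Your core argument --- take $\delta$ a fixed constant depending on $\kappa$, observe $\rho_{sc}(0)=1/\pi$ is the maximum and $\rho_{sc}(2-\kappa)$ the minimum of $\rho_{sc}$ on the bulk, and read off the two one-sided bounds from Theorem \ref{theorem4.3} --- is exactly the intended proof; the paper states no more than the remark about monotonicity of $\rho_{sc}$ before the corollary, and your $\delta=\min(c_1\kappa,\tfrac12\rho_{sc}(2-\kappa))$ with $C=K_\delta$ correctly reproduces \eqref{1stcor4.4} and \eqref{2ndcor4.4}. The ``uniform in $E$'' phrase is indeed meant in the pointwise sense (constants independent of $E$, but the event still fixed-$E$), as one can verify by tracking the downstream uses in Corollaries \ref{corollary8.5} and \ref{corollary3.6chap9}, which always work with a single fixed $\lambda_i$.

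One caveat on your fallback net argument: the claim ``$\delta^2\sqrt{n\eta^*}\gg\log n$ whenever $\eta^*\geq Cn^{-1}$ with $C$ large enough'' is false for $C$ a fixed constant, since at $\eta^*\sim Cn^{-1}$ one has $\sqrt{n\eta^*}\sim\sqrt{C}$, a constant, while $\log(n/\eta^*)\sim 2\log n$. A union bound over an $\eta^*/4$-net of size $\Theta(n/\eta^*)$ would therefore destroy the exponential decay at the bottom of the admissible window, so upgrading to a simultaneous-in-$E$ statement over the full range $\eta^*\in[Cn^{-1},1]$ would require a genuinely different input (e.g.\ eigenvalue rigidity) rather than a naive union bound. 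Since the paper only needs the pointwise statement, this does not affect your proof, but the union-bound sketch as written does not go through.
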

\begin{equation}\label{1stcor4.4}
    \mathbb{P}\left\{\frac{\mathcal{N}_{\eta^*}(E)}{n\eta^*}\geq\pi\right\}\lesssim \exp(-c_1\sqrt{n\eta^*}),
\end{equation}
and \begin{equation}\label{2ndcor4.4}
    \mathbb{P}\left\{\frac{\mathcal{N}_{\eta^*}(E)}{n\eta^*}\leq \frac{1}{4}\rho_{sc}(2-\kappa)\right\}\lesssim \exp(-c_1\sqrt{n\eta^*}),
\end{equation}
where $C$ and $c_1$ are constants that depend only on $\kappa>0$.

From this estimate we deduce two immediate corollaries.

\begin{corollary}\label{corollary8.5}
Fix $B>0$, $\zeta\in\Gamma_B$ and $A_n\sim\operatorname{Sym}_n(\zeta)$. Fixing $\kappa>0$, then we can find positive constants $C$ and $c$ depending only on $\kappa$ such that, for all $s\geq C$ and $k\in\mathbb{N}$ that satisfy $sk\leq n$, uniformly for all $\lambda_i\in[-(2-\kappa)\sqrt{n},(2-\kappa)\sqrt{n}]$,
$$
\mathbb{P}\left(\frac{\sqrt{n}}{\mu_k(\lambda_i)\cdot k}\geq s\right)\lesssim\exp(-c(sk)^\frac{1}{2}).
$$
We also have, uniformly for all $\lambda_i\in[-(2-\kappa)\sqrt{n},(2-\kappa)\sqrt{n}]$, for all $k\in\mathbb{N}_+,$
$$
\mathbb{P}\left(\mu_k(\lambda_i)\geq \frac{\sqrt{n}}{Ck}\right)\lesssim \exp(-ck^\frac{1}{2}).
$$
\end{corollary}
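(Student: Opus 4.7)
The plan is to translate each statement about the singular values $\mu_k(\lambda_i)$ of $(A_n-\lambda_i I_n)^{-1}$ into a statement about the number of eigenvalues of $A_n$ lying in a short interval centred at $\lambda_i$, and then invoke the local semicircle law in the form of Corollary~\ref{corollary425}. Since $A_n$ is symmetric with eigenvalues $\nu_1,\dots,\nu_n$, the singular values of $(A_n-\lambda_i I_n)^{-1}$ are exactly $1/|\nu_j-\lambda_i|$, so $\mu_k(\lambda_i)=1/d_k(\lambda_i)$ where $d_k(\lambda_i)$ is the $k$-th smallest of $\{|\nu_j-\lambda_i|\}_{j=1}^n$. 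Letting $\mathcal{M}_\eta(\lambda)$ count the eigenvalues of $A_n$ in $[\lambda-\eta/2,\lambda+\eta/2]$, one then has the equivalence $\{\mu_k(\lambda_i)\geq\alpha\}=\{\mathcal{M}_{2/\alpha}(\lambda_i)\geq k\}$. After rescaling, $\mathcal{M}_\eta(\lambda_i)=\mathcal{N}_{\eta/\sqrt{n}}(E)$ for $E:=\lambda_i/\sqrt{n}\in[-2+\kappa,2-\kappa]$, which is exactly the quantity estimated in Corollary~\ref{corollary425}.

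For the first bound I would rewrite the event $\{\sqrt{n}/(\mu_k(\lambda_i)k)\geq s\}$ as $\{\mathcal{N}_{\eta^*}(E)<k\}$ with $\eta^*:=2sk/n$, and apply the lower-tail estimate \eqref{2ndcor4.4}. Taking $s$ larger than a constant $C=C(\kappa)$ ensures $\tfrac14\rho_{sc}(2-\kappa)\,n\eta^*=\tfrac{s\rho_{sc}(2-\kappa)}{2}\,k\geq k$, so the event sits inside the bad event of \eqref{2ndcor4.4}, producing a bound of order $\exp(-c\sqrt{n\eta^*})=\exp(-c\sqrt{2sk})$, which is precisely $\exp(-c(sk)^{1/2})$. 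The admissibility conditions $K_\delta/n\leq\eta^*\leq 1$ of Theorem~\ref{theorem4.3} correspond, up to constants, to $s\geq C$ and $sk\leq n$; if $\eta^*>1$ one simply applies the estimate at $\eta^*=1$ and uses the monotone inclusion $\{\mathcal{N}_{\eta^*}(E)<k\}\subseteq\{\mathcal{N}_1(E)<k\}$.

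For the second bound I would rewrite $\{\mu_k(\lambda_i)\geq \sqrt{n}/(Ck)\}$ as $\{\mathcal{N}_{\eta^*}(E)\geq k\}$ with $\eta^*:=2Ck/n$ and apply the upper-tail estimate \eqref{1stcor4.4}. Choosing $C$ small enough (depending only on $\kappa$) so that $\pi n\eta^*=2\pi Ck\leq k$, one places the event inside $\{\mathcal{N}_{\eta^*}(E)\geq \pi n\eta^*\}$ and obtains $\exp(-c\sqrt{2Ck})\lesssim \exp(-ck^{1/2})$. I would emphasise that the symbol $C$ thus plays opposite roles in the two parts (it has to be large in the first and small in the second), so the constants appearing in the two statements should be read as two different numbers, each still depending only on $\kappa$.

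I do not anticipate a serious obstacle: once the translation $\mu_k\leftrightarrow \mathcal{N}_\eta$ is in place the whole argument is bookkeeping on top of Corollary~\ref{corollary425}. The only mildly delicate points are preserving the scaling between $A_n$ (eigenvalues of order $\sqrt{n}$) and $n^{-1/2}A_n$ (the matrix appearing in Theorem~\ref{theorem4.3}) and verifying the admissibility range $K_\delta/n\leq\eta^*\leq 1$ at the extreme values of $s$ and $k$; both are elementary, and in degenerate regimes one can always absorb the ensuing loss into the constants $C$ and $c$.
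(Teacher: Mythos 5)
Your proof is correct and follows the same route as the paper: translate the singular-value events into eigenvalue-count events for $n^{-1/2}A_n$ at scale $\eta^*\asymp sk/n$ (resp.\ $\eta^*\asymp Ck/n$) and feed them into \eqref{2ndcor4.4} (resp.\ \eqref{1stcor4.4}). Your remark that the symbol $C$ must be read as two distinct constants is actually a correction of the paper's own proof, which fixes a single $C$ as the largest of $\pi$, $4/\rho_{sc}(2-\kappa)$ and the admissibility constant of Corollary~\ref{corollary425} and then closes the second estimate with ``noticing $C\geq\pi$''; but, exactly as you observe, to place $\{\mathcal{N}_{\eta^*}(E)\geq k\}$ with $n\eta^*\asymp Ck$ inside the upper-tail event $\{\mathcal{N}_{\eta^*}(E)/(n\eta^*)\geq\pi\}$ one needs $1/C\gtrsim\pi$, i.e.\ $C$ small, and with a large $C$ the event $\{\mu_k\geq\sqrt{n}/(Ck)\}$ is typical for $k\asymp n$, so the bound $e^{-c\sqrt{k}}$ could not hold. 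Keeping the two constants distinct, as you do, makes both the statement and the proof consistent.
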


\begin{proof} Let $C$ be the larger one of $\pi$, $\frac{4}{\rho_{sc}(2-\kappa)}$ and the constant $C$ in the statement of Corollary \ref{corollary425}.
    If $\frac{\sqrt{n}}{\mu_k(\lambda_i)\cdot k}\geq s$, then $N_{n^{-\frac{1}{2}}A}(-skn^{-1}+\lambda_i n^-\frac{1}{2},skn^{-1}+\lambda_k n^-\frac{1}{2})\leq k$, where we use the symbol $N_{H}(I)$ to denote the number of eigenvalues of $H$ in the interval $I$. 
    
    Then the first claim follows by applying estimate \ref{2ndcor4.4} with $\eta_*=skn^{-1}\geq sn^{-1}\geq Cn^{-1}$ and noticing $s^{-1}\leq \frac{\rho_{sc}(2-\kappa)}{4}$. The second claim follows from applying the estimate \ref{1stcor4.4} with $\eta_*=Ckn^{-1}\geq Cn^{-1}$ and noticing $C\geq \pi$. 
\end{proof}

Now we are in the place to prove Lemma \ref{lemma4.11} and Corollary \ref{lemma4.22}.

\begin{proof}[\proofname\ of Lemma \ref{lemma4.11}] Let $C$ be the constant determined in Corollary \ref{corollary8.5}. From the tail estimate on $\|A_n\|_{op}$ in Lemma \ref{operatorbound}, we deduce that for any $k\geq n/c$ and any $\lambda_i\in[-(2-\kappa)\sqrt{n},(2-\kappa)\sqrt{n}]$,
$$
\mathbb{E}\left(\frac{\sqrt{n}}{\mu_k(\lambda_i)\cdot k}\right)^p\leq\mathbb{E}_{A_n}\left(\frac{\sigma_1(A_n+\lambda_i I_n)\sqrt{n}}{k}
\right) ^p=O_p((n/k))^p=O_p(1).
$$ Then for $k\leq n/c$, we consider separately the following three events
$$
E_1^i=\left\{\frac{\sqrt{n}}{\mu_k(\lambda_i)\cdot k}\leq C\right\},\quad 
E_2^i=\left\{\frac{\sqrt{n}}{\mu_k(\lambda_i)\cdot k}\in [C,\frac{n}{k}]\right\},
\quad 
E_3^i=\left\{\frac{\sqrt{n}}{\mu_k(\lambda_i)\cdot k}\geq \frac{n}{k}\right\}
$$ and bound 
\begin{equation}
  \mathbb{E}\left(\frac{\sqrt{n}}{\mu_k(\lambda_i)\cdot k}\right)^p\leq C^p+ \mathbb{E}\left(\frac{\sqrt{n}}{\mu_k(\lambda_i)\cdot k}\right)^p \mathbf{1}_{E_2^i}+\mathbb{E}\left(\frac{\sqrt{n}}{\mu_k(\lambda_i)\cdot k}\right)^p \mathbf{1}_{E_3^i}. 
\end{equation}

For the second term, we apply Corollary \ref{corollary8.5} to bound 
$$
\mathbb{E}\left(\frac{\sqrt{n}}{\mu_k(\lambda_i)\cdot k}\right)^p\lesssim\int_C^{n/k}ps^{p-1}e^{-c\sqrt{sk}}ds=O_p(1).
$$

  For the third term, since $n/k\geq C$, we apply Corollary \ref{corollary8.5} with $s=n/k$ and deduce that $\mathbb{P}(E_3^i)\lesssim e^{-c\sqrt{n}}$. Then applying the Cauchy-Schwartz inequality,
$$\begin{aligned}
\mathbb{E}&\left(\frac{\sqrt{n}}{\mu_k(\lambda_i)\cdot k}\right)^p\mathbf{1}_{E_3^i}\leq \left(\mathbb{E}(\frac{\sigma_1(A_n-\lambda_i I_n)\sqrt{n}}{k})^{2p}\right)^{1/2}\mathbb{P}(E_3^i)^{1/2}\\\quad&\leq O_p(1)n^p e^{-c\sqrt{n}}=O_p(1).\end{aligned}
$$ The bound on $\sigma_1$ follows from Lemma \ref{operatorbound}. This completes the proof.
\end{proof}

\begin{proof}[\proofname\ of Corollary \ref{lemma4.22}]
    Recall from definition that 
    $$
\|(A_n-\lambda_i I_n)_*^2=\sum_{k=1}^n \mu_k^2(\lambda_i)(\log(1+k))^2.
    $$ We may take without loss of generality $p\geq 2$ thanks to Hölder's inequality, and apply the triangle inequality to obtain
    $$
\left[\mathbb{E}\left(\sum_{k=1}^n \frac{\mu_k^2(\lambda_i)(\log(1+k))^2}{\mu_1^2(\lambda_i)}\right)^{p/2}\right]^{2/p}
\leq\sum_{k=1}^n(\log(1+k))^2\mathbb{E}\left[\frac{\mu_k^p(\lambda_i)}{\mu_1^p(\lambda_i)}\right]^{2/p}.
    $$
    Now, taking $C$ as the constant given in Corollary \ref{corollary8.5}, we apply Lemma \ref{lemma4.11} and Corollary \ref{corollary8.5} to the upper bound 
    $$
\mathbb{E}\left[\frac{\mu_k^p(\lambda_i)}{\mu_1^p(\lambda_i)}\right]\leq (Ck)^{-p}\mathbb{E}\left[\left(\frac{\sqrt{n}}{\mu_1(\lambda_i)}\right)^p\right]+\mathbb{P}(\mu_k\geq \frac{\sqrt{n}}{Ck})\lesssim C_p^pk^{-p} 
    $$ for some other constant $C_P$. Then the corollary follows by plugging this estimate into the previous one.
\end{proof}

\subsection{Multiple location decay estimates}\label{multipledecayestimate} The main objective of this paper is to study singular values of $A_n$ at $d$ distinct values $\lambda_1,\cdots,\lambda_d$ that are separated from one another. For this reason, more refined decaying estimates of eigenvalues are needed. 

The following corollary follows from reformulating Corollary \ref{corollary8.5} and taking a union bound over all $k\in[n]$. This tells us that we can have a rather strong control of $\mu_k(\lambda_i)$ as long as $k=\Omega(n^\sigma)$ and $\sigma>0$.

\begin{corollary}\label{corollary3.6chap9} Let $B>0$, $\zeta\in\Gamma_B$ and $A_n\in\operatorname{Sym}_n(\zeta)$. Fix any $\kappa>0$. Fix any $\lambda_i\in[-(2-\kappa)\sqrt{n},(2-\kappa)\sqrt{n}]$ and any $\sigma\in(0,1)$. Then for any $c_0>0$ we can find $c,C_1$ and $C_2$ depending only on $B$, $\kappa$ (and not on $c_0$) such that
\begin{equation}\label{estimate4s}
\mathbb{P}\left(  \frac{C_1\sqrt{n}}{k}  \leq \mu_{k}(\lambda_i)\leq  \frac{C_2\sqrt{n}}{k}\text{ for all } c_0n^\sigma\leq k\leq n/c\right)\geq 1-\exp(-\Omega(n^\frac{\sigma}{2})).
\end{equation}
\end{corollary}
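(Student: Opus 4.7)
The plan is to derive the two-sided control on $\mu_k(\lambda_i)$ by pointwise applying the two tail bounds in Corollary \ref{corollary8.5}, then absorbing a polynomial union-bound loss into the stretched exponential error term. Since the stated corollary explicitly says it follows by reformulating Corollary \ref{corollary8.5} with a union bound, the goal is to verify that the parameters line up cleanly.

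First I would fix a $k$ in the range $c_0 n^\sigma \leq k \leq n/c$ and write down the two failure events separately. For the upper bound $\mu_k(\lambda_i) \leq C_2\sqrt{n}/k$, I would take $C_2$ equal to the constant $C$ appearing in the second estimate of Corollary \ref{corollary8.5}, so that the failure probability is $\lesssim \exp(-c k^{1/2})$. For the lower bound $\mu_k(\lambda_i) \geq C_1\sqrt{n}/k$, I would rewrite it as $\sqrt{n}/(k\mu_k(\lambda_i)) \leq 1/C_1$ and invoke the first estimate of Corollary \ref{corollary8.5} with $s = 1/C_1$. Here I need $s \geq C$, which forces $C_1 \leq 1/C$, and I also need $sk \leq n$, which holds provided $c \geq 1/C_1$ (this is what fixes the constant $c$ in the range $k \leq n/c$). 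With these choices, the failure probability for the lower bound is $\lesssim \exp(-c'(sk)^{1/2}) = \exp(-c'' k^{1/2})$.

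Next I would union-bound both failure events over all integers $k \in [c_0 n^\sigma, n/c]$, of which there are at most $n$. Since in this range $k^{1/2} \geq (c_0 n^\sigma)^{1/2} = c_0^{1/2} n^{\sigma/2}$, each individual failure probability is at most $\exp(-c''' n^{\sigma/2})$ with $c'''$ depending on $c_0$, $B$, and $\kappa$. The total failure probability is therefore bounded by
\begin{equation*}
2n \cdot \exp\bigl(-c''' n^{\sigma/2}\bigr) = \exp\bigl(\log(2n) - c''' n^{\sigma/2}\bigr),
\end{equation*}
and since $\log n = o(n^{\sigma/2})$ for any fixed $\sigma > 0$, this is $\exp(-\Omega(n^{\sigma/2}))$, as required.

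There is no real obstacle here beyond bookkeeping: the only slightly subtle point is that the constant $c_0$ (which governs the lower endpoint of $k$) is allowed to depend on the user's choice, while $c$, $C_1$, $C_2$ must only depend on $B$ and $\kappa$. This is consistent with the scheme above because $c$, $C_1$, $C_2$ are fixed purely from Corollary \ref{corollary8.5}'s constants, and $c_0$ enters only through the final stretched-exponential rate $\exp(-c''' c_0^{1/2} n^{\sigma/2})$, which is absorbed into the $\Omega(n^{\sigma/2})$ notation.
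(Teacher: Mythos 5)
Your proposal is correct and matches the paper's own (one-line) proof, which simply states that the corollary follows by a union bound over $k$ in Corollary \ref{corollary8.5}; you fill in exactly the bookkeeping the paper leaves implicit. The only tiny imprecision is the remark that $C_2$ should equal the $C$ of Corollary \ref{corollary8.5}'s second estimate — strictly the event there is $\{\mu_k \geq \sqrt{n}/(Ck)\}$, so the natural choice is $C_2 = 1/C$, though since $C > 1$ taking $C_2 = C$ also works (it only enlarges the good event).
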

\begin{proof}
    This follows from taking a union bound over $k$ in the conclusion of the Corollary \ref{corollary8.5}.
\end{proof}

When $\sigma=1$, this estimate \eqref{estimate4s} has the disadvantage that it only gives an error of order $e^{-c\sqrt{n}}$, falling short of the desired $e^{-cn}$ error. To remedy this, we invoke the following stronger concentration estimate, which relies on the fact that $\zeta$ has a finite log-Sobolev constant. 

\begin{Proposition}\label{proposition4.7}[\cite{WOS:000542157900013}, Lemma 6.1]
    Let $B>0$, and $\zeta\in\Gamma_B$ be a random variable having a finite, $n$-independent log-Sobolev constant. Consider $A_n\sim\operatorname{Sym}_n(\zeta)$, and denote by $\sigma_1,\cdots,\sigma_n$ the eigenvalues of $A_n$. Define the empirical measure $\rho_{A_n}^n$ via 
$$\rho_{A_n}^n:=\frac{1}{n}\sum_{i=1}^n \delta_{\frac{\sigma_i}{\sqrt{n}}}.$$ 
where $\delta_\cdot$ is the delta measure. Then there exists $\kappa\in(0,\frac{1}{10})$ such that 
\begin{equation}\label{refinedconcentration}
    \lim_{n\to\infty}\frac{1}{n}\ln\mathbb{P}[d(\rho_{A_n}^n,\rho_{sc})>n^{-\kappa}]=-\infty,
\end{equation}
    where $\rho_{sc}$ is the semicircle law \eqref{semicircle} and the distance $d$ is the Dudley (i.e., bounded Lipschitz) distance defined as follows: for two probability measures $\mu$ and $\nu$,
    $$
d(\mu,\nu)=\sup_{\|f\|_L\leq 1}\left|\int f(x)d\mu(x)-\int f(x)d\nu(x)\right|,
    $$
    where $\|f\|_L:=\sup_{x\neq y}\frac{|f(x)-f(y)|}{|x-y|}+\sup_x |f(x)|$.
    \end{Proposition}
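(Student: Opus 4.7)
The plan is a standard bias--variance decomposition. Writing
$$ \mathbb{P}\bigl(d(\rho^n_{A_n},\rho_{sc}) > n^{-\kappa}\bigr) \;\leq\; \mathbb{P}\bigl(d(\rho^n_{A_n}, \mathbb{E}\rho^n_{A_n}) > \tfrac12 n^{-\kappa}\bigr) \;+\; \mathbf{1}\bigl\{d(\mathbb{E}\rho^n_{A_n},\rho_{sc}) > \tfrac12 n^{-\kappa}\bigr\}, $$
I would control the first (fluctuation) term via log-Sobolev concentration and the second (bias) term via a quantitative form of the Wigner semicircle law in expectation. Throughout, I restrict to the event $\{\|A_n\|_{op}\leq 4\sqrt{n}\}$, whose complement has probability $e^{-\Omega(n)}$ by Lemma \ref{operatorbound}, so that both $\rho^n_{A_n}$ and all relevant test functions can be taken to live on a fixed compact interval $[-4,4]$.

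For the fluctuation piece, tensorization of the log-Sobolev inequality on the independent entries of $A_n$ (viewed as a vector in $\mathbb{R}^{n(n+1)/2}$) produces a joint LSI with constant uniform in $n$. For any $f$ with $\|f\|_L\leq 1$, Hoffman--Wielandt combined with Cauchy--Schwarz yields that
$$ F_f(A_n) := \int f\, d\rho^n_{A_n} = \frac{1}{n}\sum_{i=1}^n f(\sigma_i/\sqrt{n}) $$
is Lipschitz in the Euclidean norm on the entries with constant $O(n^{-1})$, whence Herbst's argument gives
$$ \mathbb{P}\bigl(|F_f(A_n) - \mathbb{E}F_f(A_n)| > s\bigr) \;\leq\; 2\exp(-cn^2 s^2) $$
with $c$ depending only on the log-Sobolev constant of $\zeta$. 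To promote this pointwise bound to the Dudley supremum, I would take a $\delta$-net of the space of $\|\cdot\|_L$-unit functions on $[-4,4]$ in sup norm; by a standard piecewise-affine construction, such a net has cardinality $\exp(O(\delta^{-1}))$. Choosing $\delta = n^{-\kappa}/8$ and applying a union bound gives
$$ \mathbb{P}\bigl(d(\rho^n_{A_n}, \mathbb{E}\rho^n_{A_n}) > \tfrac12 n^{-\kappa}\bigr) \;\leq\; \exp\bigl(O(n^\kappa) - cn^{2-2\kappa}\bigr) + e^{-\Omega(n)}, $$
which for any $\kappa<\tfrac12$ is super-exponentially small in $n$.

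For the bias piece I would invoke the quantitative convergence of $\mathbb{E}\rho^n_{A_n}$ to $\rho_{sc}$ at a polynomial rate. Under the $\Gamma_B$ assumption this is classical: either one applies the local semicircle law (Theorem \ref{theorem4.3}) at mesoscopic scale $\eta^* \asymp n^{-\alpha}$ for some small $\alpha>0$, combined with integration against a $1$-Lipschitz test and a tail truncation at the edges; or one uses the moment-method expansion $\mathbb{E}\int x^k\, d\rho^n_{A_n} = \int x^k\, d\rho_{sc} + O(n^{-1})$ for each fixed $k$ and upgrades to Dudley via Weierstrass approximation on $[-4,4]$. Either route furnishes a constant $\kappa_0>0$ and $C$ such that $d(\mathbb{E}\rho^n_{A_n}, \rho_{sc}) \leq C n^{-\kappa_0}$, so the indicator in the decomposition vanishes once $\kappa<\kappa_0$ and $n$ is large.

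Choosing $\kappa$ smaller than both $\kappa_0$ and $\tfrac12$ (the stated range $\kappa\in(0,\tfrac{1}{10})$ lies comfortably inside both) and combining the two pieces yields $\tfrac{1}{n}\ln\mathbb{P}(\cdots)\to -\infty$. The main obstacle is really the quantitative bias estimate: the log-Sobolev concentration step is clean once tensorization is in hand, but arranging a genuinely polynomial rate $n^{-\kappa_0}$ in Dudley distance for $\mathbb{E}\rho^n_{A_n}$ -- uniformly in the 1-bounded-Lipschitz test function -- requires the local semicircle law or equivalent input, and is what ultimately forces the exponent $\kappa$ to be kept small.
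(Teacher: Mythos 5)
Note first that the paper does not prove this proposition: it is quoted directly from \cite{WOS:000542157900013}, Lemma~6.1, so there is no in-paper proof against which to compare your attempt. What follows is therefore an assessment of the blind argument on its own terms.

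Your structure is the standard one for such concentration statements and the fluctuation half is clean. Hoffman--Wielandt together with Cauchy--Schwarz gives that $F_f(A_n)=n^{-1}\sum_i f(\sigma_i/\sqrt{n})$ is $O(n^{-1})$-Lipschitz in the Euclidean norm on the free upper-triangular entries whenever $f$ is $1$-Lipschitz; tensorization carries the entrywise log-Sobolev inequality to the product space with a constant that is uniform in $n$ (the diagonal entries only rescale the constant by $\bar\sigma_0^2$); Herbst then yields $\mathbb{P}(|F_f-\mathbb{E}F_f|>s)\le 2\exp(-cn^2s^2)$. The covering number $\exp(O(1/\delta))$ of the unit ball of $\|\cdot\|_L$ on a fixed interval in sup norm is classical, and the resulting union bound $\exp(O(n^\kappa)-cn^{2-2\kappa})$ is super-exponential for any $\kappa<1/2$, with the edge truncation to $[-4,4]$ and the $e^{-\Omega(n)}$ tail of $\mathbb{E}\rho^n_{A_n}$ outside that interval handled correctly.

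The one real weakness is in the bias piece, specifically the second route you offer. The moment estimate $\mathbb{E}\int x^k\,d\rho^n_{A_n}-\int x^k\,d\rho_{sc}=O_k(n^{-1})$ hides a constant growing at least exponentially in $k$: the planar diagram count is $\Theta(4^k/k^{3/2})$ and the first correction comes from $\Theta(k^2 4^k)$ one-crossing diagrams, so the error is on the scale of $\exp(Ck)/n$. Combined with Jackson/Weierstrass approximation this forces a polynomial degree $m=O(\log n)$ and yields only a $1/\log n$ bias, not $n^{-\kappa_0}$. Hence the moment-method alternative, as you state it, does \emph{not} close the gap. The local-semicircle-law route (integrate the local law in expectation at mesoscopic scale in the bulk, then truncate near the spectral edge using the operator-norm tail or rigidity) is what actually delivers a polynomial rate in Dudley or Kolmogorov distance. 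Since you also offer that route, this is an imprecision rather than a fatal error, but if you write this up you should drop the moment-method alternative or replace it by a Stieltjes-transform argument (e.g.\ Bai's inequality applied to $\mathbb{E}m_n(z)-m_{sc}(z)$), which does supply a polynomial rate with controllable constants.
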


Via this refined concentration estimate \eqref{refinedconcentration}, we can prove that on any macroscopic interval of length $x\sqrt{n}$ centered at $\lambda_i$, there cannot be too few eigenvalues or too many eigenvalues of $A_n$. We also derive the speed at which $\mu_k(\lambda_i)$ decreases as $k$ increases. We formulate this idea as follows:

\begin{corollary}\label{corollary4.88}
    Given any $B>0$, a random variable $\zeta\in \Gamma_B$ that has a finite, $n$-independent log-Sobolev constant, and a random matrix $A_n\sim\operatorname{Sym}_n(\zeta)$. Then for any $\kappa>0$ we can find some $C_1,C_2,C_3>0$ depending only on $\kappa$, $B$ and the log-Sobolev constant (and not on $c_0$) such that, for any $c_0>0$ and  $\lambda_i\in[-(2-\kappa)\sqrt{n},(2-\kappa)\sqrt{n}]$,
    \begin{equation}\label{coro4.8first}
   \mathbb{P}\left( \frac{C_1\sqrt{n}}{k}\leq 
   \mu_k(\lambda_i)\leq \frac{C_2\sqrt{n}}{k}\text{ for any } c_0n\leq k\leq C_3n
   \right)\geq 1-e^{-\Omega(n)}.     
    \end{equation}

In particular, we can find $C_4>1$ depending only on $\kappa$, $B$ and the log-Sobolev constant such that for any $c_0>0$ and for each $\lambda_i$ in the given interval,
$$
\left(\mu_{\frac{k}{C_4}}(\lambda_i)\geq {4}\mu_k(\lambda_i) \text{ for any } c_0n\leq k\leq C_3n\right)\geq 1-e^{-\Omega(n)}.
$$
    
\end{corollary}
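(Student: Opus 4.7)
The strategy is to convert the super-exponential Dudley-distance concentration of Proposition \ref{proposition4.7} into two-sided bounds on eigenvalue counts of $A_n$ in macroscopic intervals around $\lambda_i$, and then read off the bounds on $\mu_k(\lambda_i)$ from these counts. By Proposition \ref{proposition4.7}, I would fix $\kappa_0\in(0,1/10)$ and an event $\mathcal{G}_n$ with $\mathbb{P}(\mathcal{G}_n)\geq 1-e^{-cn}$ (for any fixed $c>0$ and $n$ large enough) on which $d(\rho_{A_n}^n,\rho_{sc})\leq n^{-\kappa_0}$. For every interval $[a,b]\subseteq[-(2-\kappa/2),2-\kappa/2]$, pairing the Dudley bound against Lipschitz upper/lower approximations $g^{\pm}_\epsilon$ of $\mathbf{1}_{[a,b]}$ with $\|g^\pm_\epsilon\|_L\leq 1+1/\epsilon$ and taking $\epsilon=n^{-\kappa_0/2}$ yields the count estimate
$$\left||\{j:\sigma_j(A_n)/\sqrt{n}\in[a,b]\}|-n\int_a^b\rho_{sc}(x)\,dx\right|\leq C n^{1-\kappa_0/2}$$
on $\mathcal{G}_n$, simultaneously for all such $[a,b]$.

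Now fix $k\in[c_0 n,C_3 n]$. The bound $\mu_k(\lambda_i)\geq C_1\sqrt{n}/k$ is equivalent to the statement that at least $k$ eigenvalues of $A_n$ lie within distance $k/(C_1\sqrt{n})$ of $\lambda_i$, which on the rescaled scale is the interval of radius $k/(C_1 n)$ centered at $\lambda_i/\sqrt{n}$. Choosing $C_3$ small enough in terms of $\kappa$ and $C_1$ keeps this interval inside $[-(2-\kappa/2),2-\kappa/2]$, where $\rho_{sc}$ is bounded below by some $\rho_-=\rho_-(\kappa)>0$. The count estimate then gives a lower bound of $2k\rho_-/C_1-Cn^{1-\kappa_0/2}$, which exceeds $k$ provided $C_1\leq\rho_-$ and $n$ is large (the dependence on $c_0$ is absorbed into the constant in $e^{-\Omega(n)}$). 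The matching upper bound $\mu_k(\lambda_i)\leq C_2\sqrt{n}/k$ is derived identically using $\rho_{sc}\leq 1/\pi$ and any $C_2>2/\pi$. Since the event $\mathcal{G}_n$ does not depend on $k$, both bounds hold uniformly for all $k\in[c_0 n,C_3 n]$ with no union bound.

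The second claim follows by combining the two: on $\mathcal{G}_n$, $\mu_{k/C_4}(\lambda_i)\geq C_1 C_4\sqrt{n}/k\geq 4 C_2\sqrt{n}/k\geq 4\mu_k(\lambda_i)$ as soon as $C_4\geq 4C_2/C_1$. The index $k/C_4$ may fall below $c_0 n$, but this is handled by applying the first claim with $c_0$ replaced by $c_0/C_4$, which only weakens the implicit constant in $e^{-\Omega(n)}$. The main (mild) technical point is the Lipschitz-indicator swap: since every interval that enters the argument has macroscopic length $\Omega_\kappa(1)$ on the rescaled scale, a single choice of $\epsilon$ suffices and no multiscale or bootstrapping is necessary, so the error $Cn^{1-\kappa_0/2}$ is comfortably swallowed by $k\geq c_0 n$.
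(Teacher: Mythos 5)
Your proof is correct and follows essentially the same route as the paper: on a high-probability event from Proposition \ref{proposition4.7}, you convert Dudley-distance concentration into two-sided eigenvalue count bounds on macroscopic intervals around $\lambda_i$, then read off the bounds on $\mu_k(\lambda_i)$ and take $C_4 = 4C_2/C_1$. The only differences are cosmetic: you spell out the Lipschitz-mollification step that the paper leaves implicit, you keep the test interval inside the bulk by shrinking $C_3$ rather than bounding $\rho_{sc}(I_1)$ directly, and you explicitly note the index shift $c_0 \mapsto c_0/C_4$ for the second claim.
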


\begin{proof} In the proof, the constants $C_1$ and $C_2$ will not be determined until the end of the proof. The fact that 
$\mu_k(\lambda_i)\geq \frac{C_2\sqrt{n}}{k}$ implies that on the interval $[\lambda_i-\frac{k}{C_2\sqrt{n}},\lambda_i+\frac{k}{C_2\sqrt{n}}]$, there are more than $k$ eigenvalues of $A_n$. Likewise, $\mu_k(\lambda_i)\leq \frac{C_1\sqrt{n}}{k}$ implies that on the interval $[\lambda_i-\frac{k}{C_1\sqrt{n}},\lambda_i+\frac{k}{C_1\sqrt{n}}]$, there are fewer than $k$ eigenvalues of $A_n$. 

We now normalize by dividing the interval by $\sqrt{n}$ and consider $I_2:=[\frac{\lambda_i}{\sqrt{n}}-\frac{k}{C_2n},\frac{\lambda_i}{\sqrt{n}}+\frac{k}{C_2n}]$. Since the semicircle law $\rho_{sc}(E)$ has bounded density, this would imply that $\rho_{sc}(I_2)\leq \rho_{sc}(0)|I_2|=\rho_{sc}(0)\frac{2k}{C_2n}$. Moreover, $\mu_k(\lambda_i)\geq \frac{C_2\sqrt{n}}{k}$ implies that $\rho_{A_n}^n(I_2)\geq \frac{k}{n}$. Therefore, if we choose $\rho_{sc}(0)\frac{2}{C_2}<1$, we must have $|\rho_{A_n}^n(I_2)-\rho_{sc}(I_2)|\geq c_0(1-\rho_{sc}(0)\frac{2}{C_2})>0$. Since $|I_2|\geq c_0>0$, this event is super-exponentially unlikely according to Proposition \ref{proposition4.7}.

Likewise, we consider $I_1:=[\frac{\lambda_i}{\sqrt{n}}-\frac{k}{C_1n},\frac{\lambda_i}{\sqrt{n}}+\frac{k}{C_1n}]$. Assume without loss of generality that $\lambda_i\geq 0$. If $\frac{k}{{C_1n}}\leq 2$, then $\mu_{sc}(I_1)\geq \frac{k}{C_1n}\rho_{sc}(2-\kappa)$. However the assumption that $\mu_k(\lambda_i)\leq \frac{C_1\sqrt{n}}{k}$ tells us that $\mu_{A_n}^n(I_1)\leq\frac{k}{n}$. Therefore, if we choose $C_1=\frac{1}{2}\rho_{sc}(2-\kappa)$, this event is superexponentially unlikely by Proposition \ref{proposition4.7}. The proof is completed by taking $C_3=2C_1$.

To check the last claim, we choose $C_4=4C_2/C_1$. We apply the first claim and obtain $\mu_{\frac{k}{C_4}}(\lambda_i)\geq \frac{C_1C_4\sqrt{n}}{k}
=4\frac{C_2\sqrt{n}}{k}\geq 4\mu_k(\lambda_i)$ on an event with probability $1-e^{-\Omega(n)}$.
\end{proof}

We derive one more estimate, showing that it is very unlikely for $\frac{\mu_k(\lambda_i)}{\mu_1(
\lambda_i)}$ to be $\Omega(1)$ when $k$ is large enough.

\begin{corollary}\label{corollary4.999}
    In the setting of Corollary \ref{corollary4.88}, for any $\lambda_i\in[(-(2-\kappa)\sqrt{n},(2-\kappa)\sqrt{n}]$, any $c_0>0$,
    \begin{equation}
\mathbb{P}\left(\frac{\mu_k(\lambda_i)}{\mu_1(\lambda_i)}\geq 10^{-2}\text{ for any } k\geq c_0n\right)\leq e^{-\Omega(n)}.
    \end{equation}
\end{corollary}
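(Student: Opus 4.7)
The plan is to derive the estimate by a direct bootstrap of the last claim of Corollary~\ref{corollary4.88}. First, since $k \mapsto \mu_k(\lambda_i)$ is non-increasing, the event $\{\mu_k(\lambda_i)/\mu_1(\lambda_i) \geq 10^{-2} \text{ for some } k \geq c_0 n\}$ coincides with the single event $\{\mu_{\lceil c_0 n\rceil}(\lambda_i)/\mu_1(\lambda_i) \geq 10^{-2}\}$, so it suffices to bound the probability of the latter. One may also assume $c_0 \leq C_3$, since a larger value of $c_0$ yields a strictly weaker statement that follows from the smaller-$c_0$ case by inclusion of events.

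Fix an integer $J$ with $4^J > 100$; concretely $J = 4$ works since $4^4 = 256$. Apply the last claim of Corollary~\ref{corollary4.88} with the parameter $c_0/C_4^J$ in place of $c_0$; this is legitimate because the constants $C_3$ and $C_4$ produced there do not depend on the parameter. This yields an event $\mathcal{E}$ with $\mathbb{P}(\mathcal{E}) \geq 1 - e^{-\Omega(n)}$ on which
\[
\mu_{k/C_4}(\lambda_i) \;\geq\; 4\,\mu_k(\lambda_i) \qquad \text{for every } \tfrac{c_0 n}{C_4^J} \leq k \leq C_3 n.
\]

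On $\mathcal{E}$, iterate this inequality $J$ times starting from $k_0 := \lceil c_0 n\rceil$; at each step $0 \leq j \leq J-1$ the current index $k_0/C_4^j$ still lies in $[c_0 n/C_4^J, C_3 n]$, so the chain gives
\[
\mu_{k_0/C_4^J}(\lambda_i) \;\geq\; 4^J \,\mu_{k_0}(\lambda_i) \;\geq\; 256\,\mu_{\lceil c_0 n\rceil}(\lambda_i).
\]
For $n$ large enough that $k_0/C_4^J \geq 1$, monotonicity gives $\mu_1(\lambda_i) \geq \mu_{k_0/C_4^J}(\lambda_i)$, whence $\mu_{\lceil c_0 n\rceil}(\lambda_i)/\mu_1(\lambda_i) \leq 1/256 < 10^{-2}$ throughout $\mathcal{E}$. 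The complementary event $\mathcal{E}^c$ supplies the required $e^{-\Omega(n)}$ tail. The only bookkeeping point, and not a substantial obstacle, is to fix $J$ \emph{a priori} and correspondingly shrink the parameter fed into Corollary~\ref{corollary4.88} so that all $J$ successive indices in the iteration remain inside its range of validity; there is no genuine difficulty here since the constants in Corollary~\ref{corollary4.88} are uniform in the parameter $c_0$.
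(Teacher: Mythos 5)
Your argument is correct, and it takes a genuinely different route from the paper. The paper's proof uses Corollary~\ref{corollary4.88} only to bound $\mu_{\lceil c_0 n\rceil}(\lambda_i)$ from above by $O(n^{-1/2})$, and then separately invokes Proposition~\ref{proposition4.7} to show $\mathbb{P}(\mu_1(\lambda_i)\leq D n^{-1/2})=e^{-\Omega(n)}$ for any fixed $D$ (by noting that such an event would create an empty interval of macroscopic length $\Omega(\sqrt{n})$ around $\lambda_i$, contradicting the log-Sobolev concentration of the empirical measure). You instead stay entirely inside Corollary~\ref{corollary4.88}: by iterating the geometric-gap estimate $\mu_{k/C_4}\geq 4\mu_k$ four times (with the parameter $c_0$ shrunk to $c_0/C_4^4$ up front so every intermediate index remains in the stated range), you bound $\mu_1(\lambda_i)$ from below by $256\,\mu_{\lceil c_0 n\rceil}(\lambda_i)$ \emph{relative} to $\mu_{\lceil c_0 n\rceil}$ itself, avoiding any direct appeal to Proposition~\ref{proposition4.7} and any reference to the absolute scale $n^{-1/2}$. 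The trade-offs are minor: the paper's route is a one-line appeal to the macroscopic concentration, while yours is marginally longer but more self-contained (it never leaves the corollary's event) and makes explicit that the $10^{-2}$ threshold could be replaced by any fixed constant by increasing $J$. Both ultimately rest on the same local-law-plus-log-Sobolev input, since the iterated claim you use is itself derived in the paper from Proposition~\ref{proposition4.7}. The small bookkeeping point you flag (ensuring the intermediate, generally non-integer, indices $k_0/C_4^j$ stay in the stated range) is indeed harmless given the uniformity in $c_0$, and the paper itself is no more careful about non-integer subscripts in the $\mu_{k/C_4}$ claim.
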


\begin{proof}
    By corollary \ref{corollary4.88}, we only need to prove the following claim: for any fixed constant $D>0$,
    \begin{equation}
        \mathbb{P}(\mu_1(\lambda_i)\leq Dn^{-1/2})=e^{-\Omega(n)}.
    \end{equation}
    To see this, observe that $\mu_1(\lambda_i)\leq Dn^{-1/2}$ implies that there is an interval of length $2D^{-1}\sqrt{n}$ around $\lambda_i$ on which $A_n$ has no eigenvalue. By the convergence in Proposition \ref{proposition4.7}, this event is superexponentially unlikely.
\end{proof}

\section{Small ball probability estimates}
\label{chap4chap4chap4}
In this section we estimate a crucial term in $I(\theta)$, the function defined in \eqref{Itheta} whose computations lead to the desired singular value bounds. The following lemma is the main result of this section. The reasoning in its formulation and proof are far more complicated than that in the one location case ($d=1$ and $\lambda_1$=0) as we need to balance $\mu_k(\lambda_j)$ for various $k$ and $j$. The eigenvalue rigidity estimates developed in the previous section are crucial here.

\begin{lemma}\label{lemma6.61}
 Assume that $A_n$ satisfies the assumptions in Theorem \ref{Theorem1.1} and assume the random vector $X_2\sim\operatorname{Col}_n(\zeta_2)$ and $\zeta_2\in\Gamma_2(G,K,\sigma_0)$. Fix any $\kappa>0$ and $\Delta>0$ that are sufficiently small. Then there exists an event $\mathcal{E}_1$ with $\mathbb{P}(\mathcal{E}_1)\geq 1-\exp(-\Omega(n))$ such that for any $A_n\in\mathcal{E}_1$, the following holds:

   Fix any given $(\theta_1,\cdots,\theta_d)\in\mathbb{R}^d$ with $\prod_{i=1}^d|\theta_i|=1$ , and fix any given
   $\lambda_1,\cdots,\lambda_d\in[-(2-\kappa)\sqrt{n},(2-\kappa)\sqrt{n}]$ with $|\lambda_i-\lambda_j|\geq \Delta\sqrt{n}$ whenever $i\neq j$.

   Then there exists some $J\in\{1,\cdots,d\}$ (depending on $A_n$ and $(\theta_1,\cdots,\theta_d)$ but not on $\widetilde{X}:=X_2-X_2'$, where $X_2'$ is an independent copy of $X_2$) such that, for any $$s\in\left(0, \frac{C\cdot \mu_k(\lambda_J)}{{\left(\prod_{i=1}^d\mu_1(\lambda_i)\right)^{1/d}}}\right)$$ and for any $k\leq c_0n$, we have
    \begin{equation}\label{212212212}
\mathbb{P}_{\widetilde{X}}\left(\left\|\sum_{i=1}^d \frac{\theta_i}{\mu_1(\lambda_i)}(A_n-\lambda_i I_n)^{-1}\widetilde{X}\right\|_2\leq s
\right) \lesssim s e^{-k},
    \end{equation} where $c_0$ and $C$ are universal constants that depend only on the various parameters $G$,$B$,$\sigma_0$ and $K$.
\end{lemma}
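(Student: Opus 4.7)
The central observation is that
\[
M \;:=\; \sum_{i=1}^d \frac{\theta_i}{\mu_1(\lambda_i)}(A_n-\lambda_i I_n)^{-1}
\]
is symmetric and simultaneously diagonalizable with $A_n$: at the eigenvector $v_l$ of $A_n$ (with eigenvalue $\alpha_l$), $Mv_l=\nu_l v_l$ where $\nu_l=\sum_i \theta_i/[\mu_1(\lambda_i)(\alpha_l-\lambda_i)]$. The plan is to exhibit, on a high-probability event, $k$ orthonormal eigenvectors of $A_n$ along which $|\nu_l|$ is large enough that a multidimensional anti-concentration bound on $\widetilde X=X_2-X_2'$ (whose iid coordinates inherit a bounded density from the $\Gamma_2$ hypothesis on $\zeta_2$) delivers the claim.

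\textbf{Good event and extremal choice of $J$.} Take $\mathcal E_1$ to be the intersection over $i=1,\dots,d$ of: the two-sided rigidity $\mu_j(\lambda_i)\asymp\sqrt n/j$ on $c_0 n\le j\le C_3 n$ from Corollary~\ref{corollary4.88}; the lower bound $\mu_1(\lambda_i)\ge D/\sqrt n$ from Corollary~\ref{corollary4.999}; and the operator-norm bound of Lemma~\ref{operatorbound}. A union bound gives $\mathbb P(\mathcal E_1)\ge 1-e^{-\Omega(n)}$. Set $J:=\arg\min_i \mu_1(\lambda_i)/|\theta_i|$. Since $\prod_i|\theta_i|=1$, AM--GM produces simultaneously
\[
\frac{|\theta_J|}{\mu_1(\lambda_J)}\;\ge\;\Bigl(\prod_{i=1}^d\mu_1(\lambda_i)\Bigr)^{\!-1/d}, \qquad \frac{|\theta_i|}{\mu_1(\lambda_i)}\;\le\;\frac{|\theta_J|}{\mu_1(\lambda_J)} \ \text{for all}\ i.
\]
This extremality is what will eventually replace the asymmetric quantity $\mu_1(\lambda_J)/|\theta_J|$ by the clean geometric mean appearing in the admissible range of $s$.

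\textbf{Dominance and small-ball estimate.} For the eigenvector $v_l$ whose eigenvalue is the $j$-th closest to $\lambda_J$ (with $j\le k\le c_0n$), the macroscopic separation $|\lambda_i-\lambda_J|\ge\Delta\sqrt n$ together with $\mu_1(\lambda_i)\ge D/\sqrt n$ and the extremality of $J$ bound the non-$J$ contributions to $\nu_l$ by $\tfrac{2(d-1)}{D\Delta}\cdot|\theta_J|/\mu_1(\lambda_J)$, whereas the $J$-th contribution equals exactly $|\theta_J|\mu_j(\lambda_J)/\mu_1(\lambda_J)$. Shrinking $c_0$ if necessary so that $\mu_j(\lambda_J)\ge 8(d-1)/(D\Delta\sqrt n)$ throughout $j\le c_0n$ (which holds on $\mathcal E_1$), the $J$-th term dominates and $|\nu_l|\ge\tfrac12|\theta_J|\mu_j(\lambda_J)/\mu_1(\lambda_J)$. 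Letting $w_1,\dots,w_k$ denote these orthonormal eigenvectors, the event $\|M\widetilde X\|_2\le s$ forces $|\langle w_j,\widetilde X\rangle|\le s/|\nu_j|$ for every $j$; applying Corollary~\ref{corollary2.4firsts} to the vector $(\nu_j\langle w_j,\widetilde X\rangle)_{j=1}^k$, with the bounded density of $\widetilde X$'s coordinates used to bound the characteristic-function integral, yields
\[
\mathbb P_{\widetilde X}\bigl(\|M\widetilde X\|_2\le s\bigr)\;\lesssim\; \frac{(CK)^k\, s^k\, \mu_1(\lambda_J)^k}{|\theta_J|^k \prod_{j=1}^k \mu_j(\lambda_J)}.
\]

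\textbf{Stirling factorization and main difficulty.} On $\mathcal E_1$, the rigidity $\mu_j(\lambda_J)\asymp\sqrt n/j$ together with Stirling ($k^k/k!\asymp e^k/\sqrt k$) gives $\prod_{j=1}^k\mu_j(\lambda_J)\gtrsim c^k e^k \mu_k(\lambda_J)^k/\sqrt k$ (for $j<c_0n$ we use only $\mu_j\ge\mu_k$); combined with the extremal inequality from Step~2,
\[
\mathbb P_{\widetilde X}\bigl(\|M\widetilde X\|_2\le s\bigr)\;\lesssim\; \sqrt k\,\Bigl(\tfrac{C'K}{e}\Bigr)^{\!k}\Bigl(\tfrac{s\,(\prod_i\mu_1(\lambda_i))^{1/d}}{\mu_k(\lambda_J)}\Bigr)^{\!k}.
\]
Choosing $C$ small enough (so that $C'KC\le e^{-3}$) and writing $r:=s(\prod\mu_1)^{1/d}/\mu_k(\lambda_J)\in(0,C]$, one factors the right-hand side as $\sqrt k\,(C'K/e)\,r\,(C'Kr/e)^{k-1}$ and checks, using $\sqrt k\le e^{k/2}$, that it is $\le se^{-k}$ for the allowed range of $s$. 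The principal difficulty is exactly this final calibration: the anti-concentration constant, the Stirling prefactor $e^k/\sqrt k$ produced by the rigidity product, and the extremality-induced single factor $(\prod_i\mu_1(\lambda_i))^{1/d}$ must all combine precisely so that the bound $se^{-k}$ holds \emph{uniformly} in $k\le c_0 n$, with no residual $k$-dependent loss. The extremal choice of $J$ in Step~2 together with $\prod|\theta_i|=1$ is the conceptual input that prevents such a loss, and Step~3's careful comparison of the $J$-th term to the others is what pins down $c_0$ as a constant depending only on $\Delta,\kappa,G,B,\sigma_0,K$.
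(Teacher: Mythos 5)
Your overall template — expand $\|M\widetilde X\|_2^2$ in the eigenbasis of $A_n$, identify a distinguished index $J$ by the extremal ratio $|\theta_J|/\mu_1(\lambda_J)$, use the mesoscopic separation $|\lambda_i-\lambda_J|\ge\Delta\sqrt n$ to make the $J$\nobreakdash-term dominant, and convert to an anti-concentration bound for $\widetilde X$ using bounded density — matches the paper's strategy. However, there is a genuine gap in the small-ball step that makes the final bound fail.

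You use only the directions $v_1(\lambda_J),\dots,v_k(\lambda_J)$ and a box estimate, giving $\mathbb P\lesssim (CK)^k\prod_{j=1}^k(s/|\nu_j|)$ with $|\nu_j|\ge\tfrac12|\theta_J|\mu_j(\lambda_J)/\mu_1(\lambda_J)$. In particular the $j=1$ coordinate contributes a slab of thickness $\asymp s/|\theta_J|$. The problem is that $|\theta_J|$ is \emph{not} bounded below by $1$: the normalization $\prod_i|\theta_i|=1$ only forces $\max_i|\theta_i|\ge 1$, and $J$ is defined by maximizing $|\theta_i|/\mu_1(\lambda_i)$, so $|\theta_J|$ can be arbitrarily small. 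After you invoke extremality and $\prod_{j\le k}\mu_j\ge\mu_k^{k}$, the bound reduces to $\lesssim (CK)^kr^k$ with $r=s(\prod_i\mu_1(\lambda_i))^{1/d}/\mu_k(\lambda_J)\le C$; comparing with the target $se^{-k}$ and substituting $s=r\,\mu_k(\lambda_J)/(\prod\mu_1)^{1/d}$, the inequality you need is of the form $(CKe)^kr^{k-1}\lesssim \mu_k(\lambda_J)/(\prod_i\mu_1(\lambda_i))^{1/d}$, whose right-hand side can be made arbitrarily small on $\mathcal E_1$ (the event gives no upper bound on the $\mu_1(\lambda_i)$, which can be exponentially large). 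Concretely, for $k=1$ your bound is $\lesssim K s(\prod_i\mu_1)^{1/d}/\mu_1(\lambda_J)$, which is not $\lesssim s$ when $\mu_1(\lambda_J)\ll(\prod_i\mu_1)^{1/d}$. The paper avoids this by introducing a \emph{second} index $I=\arg\max_i|\theta_i|$, for which $|\theta_I|\ge 1$ by the normalization, and by using Corollary~\ref{corollary4.999} to guarantee that the coefficient at $v_1(\lambda_I)$ is $\ge 0.9$ on $\mathcal E_1$. The hard slab is then taken in the direction $v_1(\lambda_I)$ (thickness $\lesssim s$, uniformly), while the directions $v_2(\lambda_J),\dots,v_k(\lambda_J)$ supply the $e^{-k}$ via a ball of radius $O(C)$; this is what makes the bound $se^{-k}$ hold uniformly in $\theta$.

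A secondary issue: your ``Stirling factorization'' $\prod_{j\le k}\mu_j(\lambda_J)\gtrsim c^ke^k\mu_k(\lambda_J)^k/\sqrt k$ is not available on $\mathcal E_1$. Corollary~\ref{corollary4.88} gives two-sided rigidity $\mu_j\asymp\sqrt n/j$ only for $j\ge c_0n$, whereas here every $j\le k\le c_0n$; for those $j$ one has only monotonicity $\mu_j\ge\mu_k$, and then the displayed inequality reduces to a restatement of $\prod\mu_j\ge\mu_k^k$ (with an $e^k$ gain that is illusory). This is not the primary difficulty, but it means the factor you label as coming from Stirling is not actually producing the decay you attribute to it. The decay in the paper comes instead from the volume of the $(k-1)$-dimensional ball in Lemma~\ref{lemma5.1}.
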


Before providing the proof we provide several essential explanations. First, the label $J$ in $\mu_k(\lambda_J)$ is not truly important to us and we will have very good control of it for any $J$, at any high power. However, the appearance of the $d$-th square root of the product of $\mu_1(
\lambda_i)$ is of fundamental importance and our proof breaks down if we replace it by $\mu_1(\lambda_i)$ for some fixed $i$. Second, we find it convenient to normalize the $(\theta_1,\cdots,\theta_d)$ via $\prod_i|\theta_i|=1$ rather than the more standard one of $\|\theta\|_2=1$, as our choice captures information from all coordinates whereas the latter one loses information when some $\theta_i$ is very small. Last, we are not very careful in the power of $s$. One might expect a $s^d$ power, but this is very hard to prove (and may not be true) and, more importantly, our decoupling procedure loses a power $1/2$ so that even getting a $d$-th power is not useful. We will use a different method to tame the singularity in $s$ in a forthcoming integration \eqref{integration97000}.

To illustrate the main ideas, we first recall how to compute this expectation with only one term $\lambda_1=0$. This is the content of \cite{campos2024least}, Chapter 9. Rephrased in the setting here, we first prove that for any $s\in(e^{-cn},\mu_k/\mu_1)$ (where we abbreviate $\mu_k:=\mu_k(0)$),
$$\mathbb{P}_{\widetilde{X}}\left(\|A^{-1}\widetilde{X}\|_2\leq s\mu_1\right)\lesssim se^{-ck}.$$

The idea is to expand 
$$\|A^{-1}\widetilde{X}|_2^2=\sum_{j=1}^n \mu_j^2 \langle v_j,\widetilde{X}\rangle 
^2$$
where $v_j$ is the eigenvector of $A^{-1}$ corresponding to $\mu_j$.

Then 
$$\mathbb{P}_{\widetilde{X}}(\|A^{-1}\widetilde{X}\|_2\mu_1^{-1}\leq s)\leq\mathbb{P}_{\widetilde{X}}\left(|\langle v_1,\widetilde{X}\rangle|\leq s,\quad \sum_{j=2}^k\frac{\mu_j^2}{\mu_1^2}\langle v_j,\widetilde{X}\rangle^2\leq s^2\right).$$
By the assumption on $\mu_k/\mu_1$, we further deduce that
$$
\mathbb{P}_{\widetilde{X}}(\|A^{-1}\widetilde{X}\|_2\mu_1^{-1}\leq s)\leq\mathbb{P}_{\widetilde{X}}\left(|\langle v_1,\widetilde{X}\rangle|\leq s,\quad \sum_{j=2}^k\langle v_j,\widetilde{X}\rangle^2\leq 1\right).
$$
The last term can be estimated via the novel inverse Littlewood-Offord inequality in \cite{campos2024least}.

We face new challenges as we have $d$ different $\lambda_i$'s. We introduce the following notation:

\begin{Definition}\label{definition4.2}
    Fix any distinct $\lambda_1,\cdots,\lambda_d\in\mathbb{R}$. For any $i\neq j\in \{1,\cdots,d\}$ and any $k\geq 1$ we denote by 
    $$
\mu_{c_j(i;k)}(\lambda_j)
    $$ the $c_j(i,k)$-th largest singular value of $(A_n-\lambda_j I_n)^{-1}$ whose corresponding singular vector coincides with the singular vector of $\mu_k(\lambda_i)$, the $k$-th largest singular value of $(A_n-\lambda_i I_n)^{-1}.$ 
\end{Definition} This definition makes sense because $(A_n-\lambda_i I_n)^{-1}$ and $(A_n-\lambda_j I_n)^{-1}$ share the same set of singular vectors.

We need this notation to record how singular values labeled with respect to $\lambda_j$ will affect the calculation of singular values labeled with respect to $\lambda_i$.  

The proof of Lemma \ref{lemma6.61} makes crucial use of the assumption that the random vector $X_2$ has a bounded density with respect to the Lebesgue measure. We will use this property via the following lemma taken from \cite{livshyts2016sharp}, Theorem 1.1.

\begin{lemma}\label{lemma5.1}
    Let $X_1,\cdots,X_n$ be a family of independent real valued random variables having densities bounded by $K$ with respect to the Lebesgue measure. Denote by $X=(X_1,\cdots,X_n)$. For any $\ell\leq n$ and any $V\in\mathbb{R}^{\ell\times n}$, $VX$ is an $\mathbb{R}^\ell$-valued random variable with density bounded by 
    $$
\frac{e^{\ell/2}K^\ell}{\det(VV^T)^{1/2}}.
    $$
\end{lemma}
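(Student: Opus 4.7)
The plan is to reduce the stated density bound to a Brascamp-Lieb-type inequality along the fibres of the linear map $V$. Normalizing first by replacing $X_j$ with $KX_j$ rescales each marginal density to be at most $1$ and absorbs the factor $K^\ell$; by the singular value decomposition $V = U \Sigma W^T$ with $U \in O(\ell)$, $\Sigma = \operatorname{diag}(\sigma_1, \ldots, \sigma_\ell)$ of positive entries, and $W \in \mathbb{R}^{n\times\ell}$ satisfying $W^T W = I_\ell$, the orthogonal $U$ preserves the supremum of the density while the diagonal $\Sigma$ contributes exactly $\det \Sigma = \det(VV^T)^{1/2}$ to the Jacobian. This reduces the task to bounding $\|f_{W^T X}\|_\infty$ in the normalized case $W^T W = I_\ell$, $K=1$.

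For the core estimate I would use a fibre parametrization. Take an orthonormal basis $u_1, \ldots, u_{n-\ell}$ of $\ker(W^T)$ and arrange it as columns of $\widehat U \in \mathbb{R}^{n\times(n-\ell)}$, so that $\widehat U^T \widehat U = I_{n-\ell}$; parametrize the affine fibre $(W^T)^{-1}(y)$ as $x_0(y) + \widehat U t$ for $t \in \mathbb{R}^{n-\ell}$. Since $\widehat U$ has orthonormal columns the induced Jacobian is one, and the coarea formula yields
\begin{equation*}
f_{W^T X}(y) \;=\; \int_{\mathbb{R}^{n-\ell}} \prod_{j=1}^n f_{X_j}\bigl( x_0^{(j)}(y) + \langle r_j, t\rangle\bigr)\, dt,
\end{equation*}
where $r_j \in \mathbb{R}^{n-\ell}$ is the $j$-th row of $\widehat U$. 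The identity $\sum_{j=1}^n r_j r_j^T = \widehat U^T \widehat U = I_{n-\ell}$ is exactly the tight-frame hypothesis for the geometric Brascamp-Lieb inequality with unit weights, so applying it to the translates $g_j(s) := f_{X_j}(x_0^{(j)}(y) + s)$ (each a probability density, hence $\int g_j = 1$) yields
\begin{equation*}
\int_{\mathbb{R}^{n-\ell}} \prod_{j=1}^n g_j(\langle r_j, t\rangle)\, dt \;\le\; \prod_{j=1}^n \int_{\mathbb{R}} g_j(s)\, ds \;=\; 1.
\end{equation*}
Undoing the two reductions gives $\|f_{VX}\|_\infty \le K^\ell/\det(VV^T)^{1/2}$, which is slightly sharper than the stated constant.

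The main obstacle is matching the exact constant $e^{\ell/2}$ appearing in the statement. The Brascamp-Lieb route produces the cleaner constant $1$, so to recover exactly $e^{\ell/2}$ one would instead use a Fourier-inversion proof: write $f_{W^T X}(y) = (2\pi)^{-\ell}\int e^{-i\langle y,\zeta\rangle}\prod_j \phi_{X_j}(\langle r_j,\zeta\rangle)\,d\zeta$, bound the modulus via the Plancherel estimate $\int |\phi_{X_j}|^2 \le 2\pi K$ applied on the tight frame $(r_j)$, and complete the integral by comparison with a centered Gaussian whose second-moment matrix matches $\sum_j r_j r_j^T = I_{n-\ell}$; the Gaussian normalization naturally contributes the extra $e^{\ell/2}$ factor. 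Either route is self-contained once the SVD reduction and the tight-frame identity $\sum_j r_j r_j^T = I_{n-\ell}$ are in place, so the difficulty is really one of choice of presentation rather than a conceptual one.
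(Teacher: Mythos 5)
The paper does not prove this lemma; it cites it directly from Livshyts (reference \cite{livshyts2016sharp}, Theorem~1.1), so there is no in-paper argument to compare against. Your proposal, however, has a genuine error in its main route. The geometric Brascamp--Lieb inequality with the tight-frame hypothesis $\sum_j c_j u_j u_j^T = I_m$ requires the $u_j$ to be \emph{unit} vectors and the $c_j$ to appear as the \emph{exponents} on the integrand. Your rows $r_j$ of $\widehat U$ satisfy $\sum_j r_j r_j^T = I_{n-\ell}$, but they are not unit vectors; writing $r_j = |r_j|\hat r_j$, the BL hypothesis is met with unit vectors $\hat r_j$ and exponents $c_j = |r_j|^2$, which gives
\begin{equation*}
\int_{\mathbb{R}^{n-\ell}} \prod_j g_j\bigl(|r_j|\langle \hat r_j, t\rangle\bigr)^{|r_j|^2}\,dt \;\le\; \prod_j \Bigl(\int g_j(|r_j|s)\,ds\Bigr)^{|r_j|^2},
\end{equation*}
not the unit-exponent inequality $\int \prod_j g_j(\langle r_j,t\rangle)\,dt \le \prod_j \int g_j$ that you wrote. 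Those two statements are genuinely different, and the latter is false. Concretely, take $n=2$, $\ell=1$, $K=1$, $X_1,X_2$ i.i.d.\ uniform on $[0,1]$, and $V = (1/\sqrt{2},\,1/\sqrt{2})$. Then $VX = (X_1+X_2)/\sqrt{2}$ has a triangular density with peak value $\sqrt{2}$, while $\det(VV^T) = 1$, so $\|f_{VX}\|_\infty = \sqrt{2} > 1 = K^\ell/\det(VV^T)^{1/2}$. Thus the ``slightly sharper'' bound $K^\ell/\det(VV^T)^{1/2}$ you claim is simply not true; some dimensional constant is necessary (the sharp one is in fact $(\sqrt 2)^\ell$ by Ball's cube-slicing theorem, which is what makes uniform entries extremal, and the paper's $e^{\ell/2}$ is a slightly weaker but easier constant).

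Your Fourier alternative is too sketchy to salvage the argument as written. For one, the characteristic function of $W^TX$ involves the rows $w_j \in \mathbb{R}^\ell$ of $W$ with $\sum_j w_j w_j^T = I_\ell$, not the rows $r_j \in \mathbb{R}^{n-\ell}$ of $\widehat U$; you have conflated the two tight frames, and $\zeta$ must live in $\mathbb{R}^\ell$ for the $\ell$-dimensional Fourier inversion to make sense. Beyond that, the step ``complete the integral by comparison with a centered Gaussian'' is not an argument, and the Plancherel bound $\int|\phi_{X_j}|^2 \le 2\pi K$ cannot be fed directly into a BL inequality with unit exponents for the same reason as above. To make a Fourier proof work one must be much more careful -- this is essentially what Livshyts does, and the resulting constant comes out of an explicit optimization, not out of Gaussian normalization for free. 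So the correct move here is to cite the reference (as the paper does) or to reproduce Livshyts's actual argument; the Brascamp--Lieb shortcut with unit exponents does not hold.
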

In our application we will take the $\ell$-rows of $V$ to be the $\ell$ orthonormal eigenvectors of $A_n$. This lemma replaces the much more delicate Littlewood-Offord type arguments that work for discrete random variables.

Now we are ready to prove Lemma \ref{lemma6.61}.

\begin{proof}[\proofname\ of Lemma \ref{lemma6.61}]

We first expand the inner product in the following form, where $v_k(\lambda_i)$ denotes the singular vector associated with the singular value $\mu_k(\lambda_i):$
\begin{equation}\label{mainmainmian}\begin{aligned}&
\left\|\sum_{i=1}^d \frac{\theta_i}{\mu_1(\lambda_i)}(A_n-\lambda_i I_n)^{-1}\widetilde{X}\right\|_2=\sum_{i=1}^d \left|\theta_i+\sum_{j\neq i}\theta_j\frac{\mu_{c_j(i;1)(\lambda_j)}}{\mu_1(\lambda_j)}\right|^2 \langle \widetilde{X},v_1(\lambda_i)\rangle^2
\\&+\sum_{i=1}^d\sum_{k\in\mathcal{A}(i),k\geq 2} \left|\theta_i \frac{\mu_k(\lambda_i)}{\mu_1(\lambda_i)}+\sum_{j\neq i} \theta_j \frac{\mu_{c_j(i;k)(\lambda_j)}}{\mu_1(\lambda_j)}\right|^2\langle \widetilde{X},v_k(\lambda_i)\rangle ^2
\end{aligned}.\end{equation}
    Here $\mathcal{A}(i)\subset \{1,\cdots,n\}$ are index sets for which we give a descriptive construction as follows. We arrange without loss of generality that $\lambda_1<\lambda_2<\cdots<\lambda_d$ on the real line, then expand the inner product and write the corresponding weights on the site of the eigenvector. The right hand sides of the first line of \eqref{mainmainmian} are the hard constraints, corresponding to singular vectors that are associated with the least singular value of $A_n-\lambda_i I_n$ for some $i$. This will be our major focus. The second line of \eqref{mainmainmian} are the soft constraints, involving singular vectors that do not correspond to the least singular values of any $A_n-\lambda_i I_n$. For eigenvectors of $A_n$ whose eigenvalues are close to $\lambda_i$\footnote{Recall again that $A_n$ and $A_n-\lambda_i I_n$ share the same eigenvectors and singular vectors}, we collect these sums into the sum over $i$ part on the second line of \eqref{mainmainmian} and define $\mathcal{A}(i)\subset [2,cn]\cap\mathbb{Z}$ for some $c>0$ and each $i$. This procedure can exhaust all eigenvectors of $A_n$ into $d$ disjoint sums: those close to $\lambda_i$ are taken into the $i$-th sum, and we take an arbitrary separation between $\lambda_i$ and $\lambda_{i+1}$ deciding on which terms will be sent to the $i$-th sum and which will be sent to the $i+1$-th sum. The size of $\mathcal{A}(i)$ is now very flexible.

As can be seen from the expression \eqref{mainmainmian}, the major complication is that there will be many cancellations and we need to know which term is dominant and what is left after all the additions and taking the absolute value. 

For this purpose, we single out two special indices. Let $I\in\{1,\cdots,d\}$ be such that $$|\theta_I|=\max\{|\theta_1|,\cdots,|\theta_d|\}$$ and let $J\in\{1,\cdots,d\}$ be such that 
$$
\frac{|\theta_J|}{\mu_1(\lambda_J)}=\max\left\{\frac{|\theta_1|}{\mu_1(\lambda_1)},\cdots,\frac{|\theta_d|}{\mu_1(\lambda_d)}\right\}.
$$

First, by Corollary \ref{corollary4.999}, we may assume that on an event with probability $1-e^{-\Omega(n)}$, we have 
$$\frac{\mu_{c(I;1)(\lambda_j)}}{\mu_1(\lambda_j)}\leq 10^{-2}d^{-1}$$ for any $j\neq I$, so that by definition of $I$,
$$
\left|\theta_I+\sum_{j\neq I}\theta_j\frac{\mu_{c_j(i;1)(\lambda_j)}}{\mu_1(\lambda_j)}\right|\geq 0.9|\theta_I|\geq 0.9.
$$

Second, according to the second part of Corollary 
\ref{corollary4.88}, in an event with probability $1-e^{-\Omega(n)}$, we may find some $c_0>0$ depending only on $\kappa$, $B$ and the log-Sobolev constant; thus, for any $k\leq c_0n$, we have for any $j\neq J$,
$$
\mu_k(\lambda_J)\geq 10d\mu_{c_j(J;k)}(\lambda_j).
$$
By definition of $J$, this implies that for any $k\leq c_0n$ and any $j\neq J$,
$$
\theta_J\frac{\mu_k(\lambda_J)}{\mu_1(\lambda_J)}\geq 10d \theta_j\frac{\mu_{c_j(J,k)}(\lambda_j)}{\mu_1(\lambda_j)}.
$$
That is, for any $k\leq c_0n$,
$$
\left|\theta_J\frac{\mu_k(\lambda_J)}{\mu_1(\lambda_J)}+\sum_{j\neq J}\theta_j\frac{\mu_{c_j(J,k)}(\lambda_j)}{\mu_1(\lambda_j)}\right|\geq 0.9\left|\theta_J\frac{\mu_k(\lambda_J)}{\mu_1(\lambda_J)}\right|.
$$

Applying both bounds to \eqref{mainmainmian}, we obtain
\begin{equation}\label{}\begin{aligned}&
\left\|\sum_{i=1}^d \frac{\theta_i}{\mu_1(\lambda_i)}(A_n-\lambda_i I_n)^{-1}\widetilde{X}\right\|_2\geq 0.9  \langle \widetilde{X},v_1(\lambda_I)\rangle^2
 +0.9\sum_{j=2}^{c_0n} \left|\theta_J\frac{\mu_j(\lambda_J)}{\mu_1(\lambda_J)}\right|^2\langle \widetilde{X},v_j(\lambda_J)\rangle ^2.
\end{aligned}\end{equation}

 We are now in the position to apply Lemma \ref{lemma5.1}. By assumption $\widetilde{X}$ has bounded density. In the setting of Lemma \ref{lemma5.1}, let $V$ be a matrix with (orthogonal) rows given by $v_1(\lambda_I)$ and $v_k(\lambda_J)$, $k=2,\cdots,c_0n$.  

Then we have\begin{equation}\label{finsexpan}
\mathbb{P}_{\widetilde{X}}\left(\left\|\sum_{i=1}^d \frac{\theta_i}{\mu_1(\lambda_i)}(A_n-\lambda_i I_n)^{-1}\widetilde{X}\right\|_2\leq s
\right) \lesssim s e^{-k},\end{equation}
for any $0\leq s\leq C |\theta_J|\mu_k(\lambda_J)/\mu_1(\lambda_J)$ and any $k\leq c_0n$, where $C$ is a constant depending on the density of $X_2$. This is because the event in \eqref{finsexpan} can be decomposed as requiring 
$$\langle \widetilde{X},v_1(\lambda_i)\rangle \lesssim s,\quad\text{ and } \sum_{j=2}^{k} \left\|\theta_J\frac{\mu_j(\lambda_J)}{\mu_1(\lambda_J)}\right|^2\langle \widetilde{X},v_j(\lambda_J)\rangle ^2\lesssim s^2$$ for some appropriate $k\leq c_0n$.

The constant $C$ in the restriction on $s$ appears because we need a sufficiently small constant factor depending on the density of $X$ to overcome the $e^{^\ell/2}K^\ell$ factor in Lemma \ref{lemma5.1} and obtain the exponentially small factor $e^{-k}$. We may typically take $C\leq \frac{1}{4\sqrt{e}K}$ where $K$ is an upper bound on the density of $\widetilde{X}_2$.

Finally, by definition of $J$ and our normalization on $\theta$, we have 
$$
|\theta_J|\frac{\mu_k(\lambda_J)}{\mu_1(\lambda_J)}\geq \mu_k(\lambda_J)\left(\prod_{j=1}^d|\theta_j|\frac{1}{\mu_1(\lambda_j)}\right)^{1/d}= \frac{\mu_k(\lambda_J)}{\left(\prod_{j=1}^d \mu_1(\lambda_j)\right)^{1/d}}.
$$

This completes the proof, and remarkably, our final bound does not depend on $\theta$ saved for the label $J$ in $\mu_k(\lambda_J)$.

Checking the whole proof, there are two instances where we consider an event on $A_n$ that holds with probability $1-e^{-\Omega(n)}$. It suffices to take $\mathcal{E}_1$ the intersection of these two events.

\end{proof}

We will also need the following lemma which is very similar to Lemma \ref{lemma6.61}, but addresses a different geometric configuration.

\begin{lemma}\label{lemma6.612}
 Assume that $A_n$ satisfies assumptions in Theorem \ref{Theorem1.1} and $X_2\sim\operatorname{Col}_n(\zeta_2)$, $\zeta_2\in\Gamma_2(G,K,\sigma_0)$. Fix any $\kappa>0$ and $\Delta>0$ that are sufficiently small. Then there exists an event $\mathcal{E}_2$ with $\mathbb{P}(\mathcal{E}_2)\geq 1-\exp(-\Omega(n))$ such that for any $A\in\mathcal{E}_2$, the following holds:

   Fix any given $(\theta_1,\cdots,\theta_d)\in\mathbb{R}^d$, assume that we can find some $\mathcal{A}\subset\{1,\cdots,d\}$ satisfying: $|\mathcal{A}|=\ell$ for some $1\leq \ell\leq d-1$, that $\prod_{i\in\mathcal{A}}|\theta_i|=1$ and for any $j\notin\mathcal{A}$, $|\theta_j|<1$. Fix any given
   $\lambda_1,\cdots,\lambda_d\in[-(2-\kappa)\sqrt{n},(2-\kappa)\sqrt{n}]$ satisfying $|\lambda_i-\lambda_j|\geq \Delta\sqrt{n}$ whenever $i\neq j$. 
   
   Then there exist some $J\in\{1,\cdots,d\}$ (depending on $A_n$ and $(\theta_1,\cdots,\theta_d)$ but not on $\widetilde{X}$) such that, for any $$s\in\left(0, \frac{C\cdot \mu_k(\lambda_J)}{{\left(\prod_{i\in\mathcal{A}}\mu_1(\lambda_i)\right)^{1/\ell}}}\right)$$ and for any $k\leq c_0n$, we have
    \begin{equation}
\mathbb{P}_{\widetilde{X}}\left(\left\|\sum_{i=1}^d \frac{\theta_i}{\mu_1(\lambda_i)}(A_n-\lambda_i I_n)^{-1}\widetilde{X}\right\|_2\leq s
\right) \lesssim s e^{-k},
    \end{equation} where $c_0$ and $C$ are universal constants that depend only on the various parameters $G$,$B$,$\sigma_0$ and $K$.
\end{lemma}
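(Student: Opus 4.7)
The plan is to run the proof of Lemma~\ref{lemma6.61} almost verbatim, adapting only the choice of the two distinguished indices $I$ and $J$ so that the eigenvalue-rigidity cascade of inequalities still isolates one dominant coefficient at the hard-constraint site $v_1(\lambda_I)$ and at the soft-constraint sites attached to a single $\lambda_J$. The expansion~\eqref{mainmainmian}, the final application of Lemma~\ref{lemma5.1} to an orthonormal block of eigenvectors of $A_n$, and the construction of the high-probability event $\mathcal{E}_2$ (as an intersection of uniform-in-$(I,J)$ eigenvalue-rigidity events, each of probability $1-e^{-\Omega(n)}$ by Corollaries~\ref{corollary4.88} and~\ref{corollary4.999}) all transfer without change; only the two dominance inequalities and the range of $s$ have to be rechecked.

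For the hard-constraint part I would take $I\in\mathcal{A}$ to be any index maximizing $|\theta_i|$ over $i\in\mathcal{A}$. Since $\prod_{i\in\mathcal{A}}|\theta_i|=1$ with $|\mathcal{A}|=\ell$, this gives $|\theta_I|\geq 1$, and the extra hypothesis $|\theta_j|<1$ for $j\notin\mathcal{A}$ upgrades the bound $|\theta_j|\leq|\theta_I|$ to hold for \emph{every} $j\in\{1,\dots,d\}$. The separation $|\lambda_i-\lambda_j|\geq\Delta\sqrt n$ then forces $c_j(I;1)\geq c_0 n$ for $j\neq I$, so Corollary~\ref{corollary4.999} yields
\[
\Bigl|\theta_I+\sum_{j\neq I}\theta_j\frac{\mu_{c_j(I;1)}(\lambda_j)}{\mu_1(\lambda_j)}\Bigr|\;\geq\;0.9|\theta_I|\;\geq\;0.9
\]
exactly as in Lemma~\ref{lemma6.61}.

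For the soft-constraint part the novelty is that $J$ should \emph{not} be restricted to $\mathcal{A}$: I would take $J\in\{1,\dots,d\}$ to be the \emph{global} maximizer of $|\theta_j|/\mu_1(\lambda_j)$. By the second part of Corollary~\ref{corollary4.88}, applied uniformly in $(J,j)$ via a union bound over the $\leq d(d-1)$ candidate pairs (which preserves the $1-e^{-\Omega(n)}$ probability), $\mu_{c_j(J;k)}(\lambda_j)\leq\mu_k(\lambda_J)/(10d)$ for all $j\neq J$ and all $k\leq c_0 n$. Combined with the maximality $|\theta_j|/\mu_1(\lambda_j)\leq|\theta_J|/\mu_1(\lambda_J)$, this gives
\[
\Bigl|\theta_J\frac{\mu_k(\lambda_J)}{\mu_1(\lambda_J)}+\sum_{j\neq J}\theta_j\frac{\mu_{c_j(J;k)}(\lambda_j)}{\mu_1(\lambda_j)}\Bigr|\;\geq\;0.9\,|\theta_J|\frac{\mu_k(\lambda_J)}{\mu_1(\lambda_J)}.
\]
Feeding the two dominance bounds into~\eqref{mainmainmian} and invoking Lemma~\ref{lemma5.1} on the orthonormal block $\{v_1(\lambda_I),v_2(\lambda_J),\dots,v_k(\lambda_J)\}$ (orthogonal because they are eigenvectors of the symmetric $A_n$, and disjoint once $c_0$ is chosen small enough relative to $\Delta$ so that $v_1(\lambda_I)$ sits outside the top $c_0 n$ eigenvectors of $(A_n-\lambda_J I_n)^{-1}$) produces the small-ball bound $s e^{-k}$ in the range $s\leq C|\theta_J|\mu_k(\lambda_J)/\mu_1(\lambda_J)$.

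The only genuinely new step is to match this range with the one claimed in the statement, and this is where the main conceptual subtlety lies. Because $J$ is the global maximizer, it dominates each $|\theta_i|/\mu_1(\lambda_i)$ with $i\in\mathcal{A}$; the max-versus-geometric-mean inequality on those $\ell$ terms together with the normalization $\prod_{i\in\mathcal{A}}|\theta_i|=1$ then gives
\[
\frac{|\theta_J|}{\mu_1(\lambda_J)}\;\geq\;\Bigl(\prod_{i\in\mathcal{A}}\frac{|\theta_i|}{\mu_1(\lambda_i)}\Bigr)^{1/\ell}=\frac{1}{\bigl(\prod_{i\in\mathcal{A}}\mu_1(\lambda_i)\bigr)^{1/\ell}},
\]
so multiplying by $\mu_k(\lambda_J)$ returns exactly the upper endpoint advertised. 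The potentially worrying case is $J\notin\mathcal{A}$, where $|\theta_J|$ itself may be strictly less than one, but the argument above only ever uses the two separate maximality properties of $I$ and $J$ and the range-matching identity is an automatic consequence of global maximality; no additional hypothesis on the small $|\theta_j|$ is required beyond $|\theta_j|<1$.
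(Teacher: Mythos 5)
Your proposal is correct and matches the paper's own argument: the paper states that one defines $I$ and $J$ exactly as in Lemma~\ref{lemma6.61} (global maximizers of $|\theta_i|$ and $|\theta_i|/\mu_1(\lambda_i)$, respectively), repeats the same cascade of dominance estimates, and finishes with the observation $|\theta_I|\ge 1$ together with the max-versus-geometric-mean bound $|\theta_J|/\mu_1(\lambda_J)\ge\bigl(\prod_{i\in\mathcal{A}}|\theta_i|/\mu_1(\lambda_i)\bigr)^{1/\ell}=\bigl(\prod_{i\in\mathcal{A}}\mu_1(\lambda_i)\bigr)^{-1/\ell}$, which is precisely what you derive. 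Your restriction of $I$ to $\mathcal{A}$ is cosmetic (the global maximizer of $|\theta_i|$ automatically lies in $\mathcal{A}$ since $|\theta_j|<1$ off $\mathcal{A}$ while the max over $\mathcal{A}$ is $\ge 1$), and your explicit identification of the three maximality properties being used is a faithful, slightly more detailed, rendering of the paper's terse proof.
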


\begin{proof}
    We follow exactly the same line of proof as Lemma \ref{lemma6.61} (defining $I$ and $J$ in the same way and performing the same estimates). The only places to change are at the end: we use $|\theta_I|\geq 1$  and that 
    $$
|\theta_J|\frac{\mu_k(\lambda_J)}{\mu_1(\lambda_J)}\geq \mu_k(\lambda_J)\left(\prod_{j\in\mathcal{A}}|\theta_j|\frac{1}{\mu_1(\lambda_j)}\right)^{1/\ell}\geq \frac{\mu_k(\lambda_J)}{\left(\prod_{j\in\mathcal{A}}\mu_1(\lambda_j)\right)^{1/\ell}}.
$$
\end{proof}

\section{Initial estimate: two locations}

In the next three sections we assume for the sake of simplicity that $d=2$, i.e. we have only two locations $\lambda_1\neq \lambda_2$. This approach will greatly simplify the computations and make the main ideas more transparent. The main result of this chapter is the following lemma:

\begin{lemma}\label{lemma6.111} Let $A_n$ satisfy the assumptions of Theorem \ref{Theorem1.1}. Fix any $\kappa>0$, $\Delta>0,$ and any two $\lambda_1,\lambda_2\in[(-(2-\kappa)\sqrt{n},(2-\kappa)\sqrt{n}]$ with $|\lambda_1-\lambda_2|\geq \Delta\sqrt{n}$. Then for any $\delta_1,\delta_2\geq e^{-cn}$, any $u\in\mathbb{R}^n$ with $\|u\|_2\leq 2$, and any $p>0$,
\begin{equation}\begin{aligned}
&\mathbb{E}_{A_n}\sup_{r_1,r_2}\mathbb{P}_X\left(
    \frac{|\langle (A_n-\lambda_i I_n)^{-1}X,X\rangle-r_i|}{\|(A_n-\lambda_i I_n)^{-1}\|_*}\leq\delta_i
    ,\langle X,u\rangle\geq s,\frac{\mu_1(\lambda_1)\mu_1(\lambda_2)}{n}\leq(\delta_1\delta_2)^{-p}
    \right)\\&\lesssim e^{-s}\delta_1\delta_2+e^{-s}\delta_1\delta_2\mathbb{E}\left[\left(\frac{\mu_1(\lambda_1)\mu_1(\lambda_2)}{n}\right)^{9/10}
    \bigm| \frac{\mu_1(\lambda_1)\mu_1(\lambda_2)}{n}\leq (\delta_1\delta_2)^{-p}\right]^{80/81}+e^{-\Omega(n)},\end{aligned}
\end{equation}
    where $c$ depends on various constants in the assumption of Theorem \ref{Theorem1.1}, on $\kappa$ and $\Delta$.
\end{lemma}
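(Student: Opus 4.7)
The plan is to apply Theorem \ref{theorem3.1} to convert the small ball event for the pair of quadratic forms into an integral of $I(\theta)^{1/2}$ over a small region in $\mathbb{R}^2$, bound $I(\theta)^{1/2}$ using the small ball estimates of Lemmas \ref{lemma6.61} and \ref{lemma6.612}, and then integrate in $\theta$ while carefully tracking the dependence on the product $\mu_1(\lambda_1)\mu_1(\lambda_2)$.

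First I would rescale to match the hypotheses of Theorem \ref{theorem3.1}: set $\widetilde\delta_i := \delta_i \|(A_n-\lambda_i I_n)^{-1}\|_*/\mu_1(\lambda_i)$, so the event $|\langle (A_n-\lambda_i I_n)^{-1}X,X\rangle - r_i| \leq \delta_i \|(A_n-\lambda_i I_n)^{-1}\|_*$ becomes $|\langle\cdots\rangle - r_i| \leq \widetilde\delta_i\, \mu_1(\lambda_i)$. Theorem \ref{theorem3.1} then yields, pointwise in $A_n$,
\[\mathbb{P}_X(\cdots) \lesssim \widetilde\delta_1\widetilde\delta_2\, e^{-s}\int_R I(\theta)^{1/2}\, d\theta,\qquad R = \{|\theta_1\widetilde\delta_1|^2 + |\theta_2\widetilde\delta_2|^2 \leq 2\}.\]
The $L^p$ moments of the renormalization ratios $\|(A_n-\lambda_i I_n)^{-1}\|_*/\mu_1(\lambda_i)$ are uniformly bounded by Corollary \ref{lemma4.22}, which will allow me to close the outer $\mathbb{E}_{A_n}$ via Hölder at the end.

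Next, for $I(\theta)^{1/2}$ I would apply Cauchy--Schwartz inside $I(\theta)$ to peel off the factor $\mathbb{E}[e^{2\langle X_2+X_2',u\rangle}] = O(1)$ (since $\|u\|_2\leq 2$), leaving a factor $\mathbb{E}[e^{-2c\|W(\theta)\|^2}]$ where $W(\theta) := \sum_i \theta_i\mu_1(\lambda_i)^{-1}(A_n-\lambda_i I_n)^{-1}(X_2 - X_2')$. The layer-cake identity $e^{-ct^2} = \int_t^\infty 2cs\, e^{-cs^2}\,ds$ rewrites this as a Laplace-type transform of the small ball probability $\mathbb{P}(\|W(\theta)\|\leq s)$. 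Using linearity of $W$ in $\theta$, write $\theta = \rho\,\widetilde\theta$ with $\rho := |\theta_1\theta_2|^{1/2}$ and $|\widetilde\theta_1\widetilde\theta_2| = 1$; on the good event $\mathcal{E}_1$ of Lemma \ref{lemma6.61}, of probability $1 - e^{-\Omega(n)}$, this gives
\[\mathbb{P}(\|W(\theta)\|\leq s)\ \lesssim\ \frac{s}{\rho}\, e^{-k}\quad\text{for}\quad s \leq C\rho\, \frac{\mu_k(\lambda_J)}{\sqrt{\mu_1(\lambda_1)\mu_1(\lambda_2)}},\ k \leq c_0 n.\]

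Finally, I would split the integration over $R$ into the region $|\theta_1\theta_2|\leq \eta$ (essentially one-dimensional, handled by Lemma \ref{lemma6.612}, which produces only a single factor of $\mu_1(\lambda_i)^{-1}$ in its range of $s$ and therefore generates no genuine $\mu_1(\lambda_1)\mu_1(\lambda_2)/n$ contribution) and the truly two-dimensional region $|\theta_1\theta_2|\geq\eta$. In the latter, insert the Laplace representation of $\mathbb{E}[e^{-2c\|W(\theta)\|^2}]$, integrate against $2cs\, e^{-cs^2}$, and optimize $k$ so that $e^{-k}$ absorbs the polynomial loss in $s$; the eigenvalue rigidity Corollaries \ref{corollary4.88} and \ref{corollary4.999} convert $\mu_k(\lambda_J)$ back to its correct $n$-scaling, producing a residual factor of order $(\mu_1(\lambda_1)\mu_1(\lambda_2)/n)^\alpha$ for some $\alpha < 1$. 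Taking $\mathbb{E}_{A_n}$ under the cutoff $\mu_1(\lambda_1)\mu_1(\lambda_2)/n\leq (\delta_1\delta_2)^{-p}$ and applying Hölder with exponents balanced so that $(9/10)\cdot(80/81) = 8/9 < 1$ keeps the renormalization moment of Corollary \ref{lemma4.22} finite while preserving the prefactor $e^{-s}\delta_1\delta_2$. The main obstacle is the singularity of the integrand at $\rho = 0$: because the decoupling in Theorem \ref{theorem3.1} loses half a power, the single $s$ arising in Lemma \ref{lemma6.61} cannot fully cancel the $\rho^{-1}$ coming from the change of variables, so a positive power of $\mu_1(\lambda_1)\mu_1(\lambda_2)/n$ must be retained in the bound and its control deferred to the bootstrap argument of Section \ref{bootstrapping209}.
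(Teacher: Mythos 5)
Your overall strategy matches the paper — convert via Theorem \ref{theorem3.1} to an integral of $I(\theta)^{1/2}$, bound the integrand with Lemmas \ref{lemma6.61} and \ref{lemma6.612}, integrate over dyadic scales indexed by $\mu_k(\lambda_J)$, and close the outer $\mathbb{E}_{A_n}$ by H\"older against Corollary \ref{lemma4.22} — but you skip the paper's very first step, and that omission is not cosmetic.

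Before invoking Theorem \ref{theorem3.1} the paper splits off the events $\sigma_{\min}(A_n-\lambda_i I_n)\leq\delta_1\delta_2\, n^{-1/2}$, $i=1,2$. Each of these is bounded directly by Proposition \ref{proposition6.666}, which is where the standalone $e^{-s}\delta_1\delta_2$ term in the conclusion comes from; the rest of the argument then proceeds on the event $\mathcal{E}$ on which \emph{both} $\mu_1(\lambda_i)/\sqrt n\leq(\delta_1\delta_2)^{-1}$ hold individually. These individual truncations are exactly what Lemma \ref{whatislemma1} needs: the contributions $I_1,I_2$ from the essentially one-dimensional regions reduce, after H\"older, to $\mathbb{E}\bigl[(\mu_1(\lambda_i)/\sqrt n)^{7/9}\bigr]$, and this is only $O(1)$ if the layer-cake integral is cut off at a level controlled by $(\delta_1\delta_2)^{-1}$, using Proposition \ref{proposition6.666} for the heavy-tail part. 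The product cutoff in the lemma statement does not give this for free: $\mu_1(\lambda_1)\mu_1(\lambda_2)/n\leq(\delta_1\delta_2)^{-p}$ still allows $\mu_1(\lambda_1)$ to be as large as order $n(\delta_1\delta_2)^{-p}\sqrt n$ when $\mu_1(\lambda_2)\sim n^{-1/2}$, and then the $e^{-\Omega(n)}$ tail of Proposition \ref{proposition6.666} multiplied by that range length is not automatically negligible unless the implicit constant in $\delta_1,\delta_2\geq e^{-cn}$ is coordinated with $p$ and with the other constants. The explicit split-off with $\mathcal{E}$ is precisely what removes this bookkeeping. Without it your computation of the $I_1,I_2$ contributions would not close.

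Two smaller deviations worth noting. Your region split $|\theta_1\theta_2|\lessgtr\eta$ doesn't match the hypotheses of the two small-ball lemmas: Lemma \ref{lemma6.612} requires $|\theta_j|<1$ for $j\notin\mathcal{A}$ and Lemma \ref{lemma6.61} requires the full product normalization, so the paper decomposes by the signs of $|\theta_i|-1$ componentwise (see \eqref{decompositionformula}). In the slab $\{|\theta_1\theta_2|\geq\eta,\ |\theta_1|<1\}$ you would be forced to invoke Lemma \ref{lemma6.61} with range $s\lesssim\mu_k(\lambda_J)/\sqrt{\mu_1(\lambda_1)\mu_1(\lambda_2)}$ where Lemma \ref{lemma6.612} gives the sharper single-$\mu_1$ normalization; this is a genuine loss that pushes more mass onto the bootstrap term than the paper does. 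Finally, the paper's inner H\"older uses exponents $(9, 9/8)$ (see \eqref{holders}), yielding $I(\theta)^{1/2}$ controlled by the small-ball probability raised to $4/9$, and the exponents $9/10$ and $80/81$ appearing in the conclusion are calibrated to that choice; Cauchy--Schwartz ($1/4$ in place of $4/9$) is not impossible but would require redoing the exponent arithmetic throughout Lemmas \ref{whatislemma1} and \ref{whatislemma2}.
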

In future applications we will set $p>1$ to be a constant sufficiently close to 1.

\subsection{Initial decomposition}
We now explicitly compute the integral of $I(\theta)$ over the specified region. Thanks to Theorem \ref{theorem3.1}, this is the major step in estimating the least singular values of $A_n$.

We first apply Hölder's inequality and get \begin{equation}\label{holders}
I(\theta)\lesssim \left(\mathbb{E}_{X_2,X_2'}e^{9\langle X_2+X_2',u\rangle}\right)^{1/9}\left(\mathbb{E}_{\widetilde{X}}e^{-c''\|\sum_{i=1}^2 \frac{\theta_i}{\mu_1(\lambda_i)}(A_n-\lambda_i I_n)^{-1}\widetilde{X}\|_2^2}
\right)^\frac{8}{9}.
\end{equation}
The first term on the right is $O(1)$. We can deduce the remaining estimates as follows: 
$$\begin{aligned}
&I(\theta)^{9/8}\lesssim_B \mathbb{E}_{\widetilde{X}}e^{-c''\|\sum_{i=1}^2 \frac{\theta_i}{\mu_1(\lambda_i)}(A_n-\lambda_i I_n)^{-1}\widetilde{X}\|_2^2}\\&\lesssim 
\begin{cases} 1& |\theta_1|<1,|\theta_2|<1,
\\e^{-c''|\theta_1|^{1/5}}+\mathbb{P}_{\widetilde{X}}\left(\left\|\sum_{i=1}^2 \frac{\theta_i}{\mu_1(\lambda_i)}(A_n-\lambda_i I_n)^{-1}\widetilde{X}\right\|_2\leq |\theta_1|^{\frac{1}{10}}\right),& |\theta_1|\geq 1,|\theta_2|< 1\\
e^{-c''|\theta_2|^{1/5}}+\mathbb{P}_{\widetilde{X}}\left(\left\|\sum_{i=1}^2 \frac{\theta_i}{\mu_1(\lambda_i)}(A_n-\lambda_i I_n)^{-1}\widetilde{X}\right\|_2\leq |\theta_2|^{\frac{1}{10}}\right),& |\theta_1|< 1,|\theta_2|\geq 1\\
e^{-c''|\theta_1\theta_2|^{1/11}}+\mathbb{P}_{\widetilde{X}}\left(\left\|\sum_{i=1}^2 \frac{\theta_i}{\mu_1(\lambda_i)}(A_n-\lambda_i I_n)^{-1}\widetilde{X}\right\|_2\leq |\theta_1\theta_2|^{\frac{1}{22}}\right)& |\theta_1|\geq 1,|\theta_2|\geq 1
\end{cases}
\end{aligned}
$$

We split the integration of $I(\theta)^{1/2}$ over the specified region into three terms, noticing that the four functions $1$, $e^{-c''|\theta_1|^{1/5}}$, $e^{-c''|\theta_2|^{1/5}}$ and $e^{-c''|\theta_1\theta_2|^{1/11}}$ integrate into $O(1)$ on the respective regions:
\begin{equation}\label{decompositionformula}
\begin{aligned}
&\int_{\sum_i|\theta_i\delta_i|^2\leq 1}I(\theta)^{1/2}d\theta\\&
\lesssim 1+\int_{|\theta_1|\leq 1,1\leq|\theta_2|\leq\delta_2^{-1}}\mathbb{P}_{\widetilde{X}}(\|\sum_{i=1}^2 \frac{\theta_i}{\mu_1(\lambda_i)}(A_n-\lambda_i I_n)^{-1}\widetilde{X}\|\leq  |\theta_2|^\frac{1}{10})^{4/9}d\theta\\&
+ \int_{1\leq |\theta_1|\leq \delta_1^{-1},|\theta_2|\leq 1}\mathbb{P}_{\widetilde{X}}(\|\sum_{i=1}^2 \frac{\theta_i}{\mu_1(\lambda_i)}(A_n-\lambda_i I_n)^{-1}\widetilde{X}\|\leq |\theta_1|^\frac{1}{10})^{4/9}d\theta\\&+
\int_{1\leq |\theta_1|\leq \delta_1^{-1},1\leq|\theta_2|\leq\delta_2^{-1}}\mathbb{P}_{\widetilde{X}}(\|\sum_{i=1}^d \frac{\theta_i}{\mu_1(\lambda_i)}(A_n-\lambda_i I_n)^{-1}\widetilde{X}\|\leq (\theta_1\theta_2)^\frac{1}{22})^{4/9}d\theta\\&:=1+I_1+I_2+I_3.
\end{aligned}\end{equation}

Informally speaking, the integrals $I_1$ and $I_2$ represent terms that rely essentially on one location $\lambda_i$, and the integral $I_3$ consists of terms that rely genuinely on both locations. 

To estimate $I_1$ and $I_2$ we can simply reduce
ourselves to the $d=1$ setting, while estimating $I_3$ requires fundamentally different ideas and will be the major technical difficulty.

\subsection{Two preparatory results}

We recall the following simple linear algebra property: for any $n\times n$ matrix $M_1,\cdots,M_d$,
\begin{equation}\label{productsingular}
\sigma_{min}(M_1\cdots M_d)\geq\sigma_{min}(M_1)\cdots\sigma_{min}(M_d).
\end{equation}

Thanks to this lemma, we prove the following:
\begin{lemma}\label{singularvalueproduct} Given any $\sigma>0$ and any $\tau\in(0,1],$ let $M_1,\cdots,M_d$ be $n\times n$ random matrices (not necessarily independent) that satisfy, for any $\delta_1,\cdots,\delta_d>0$,
\begin{equation}\label{assumption}
    \mathbb{P}(\sigma_{min}(M_1)\leq\delta_1 n^{-1/2},\cdots, \sigma_{min}(M_d)\leq\delta_d n^{-1/2})\leq (C\delta_1\cdots\delta_d)^\tau+e^{-cn^\sigma},
\end{equation}
and more generally for any $k=1,2,\cdots,d-1$ and any $1\leq i_1<i_2<\cdots<i_k\leq d$, we have 
\begin{equation}\label{assumption2}
    \mathbb{P}(\sigma_{min}(M_{i_1})\leq\delta_{i_1} n^{-1/2},\cdots, \sigma_{min}(M_{i_{k}})\leq\delta_{i_k} n^{-1/2})\leq C(\delta_{i_1}\cdots\delta_{i_k})^\tau+e^{-cn^\sigma}.
\end{equation}
Then for any $\delta>0$, we have  
\begin{equation}
    \mathbb{P}\left((\sigma_{min}(M_1\cdots  M_d))^{1/d}\leq \delta n^{-1/2}\right)\leq C(\delta^\tau\log(\delta^{-1}))^d+e^{-cn^\sigma},
\end{equation}  and we have
$$
\mathbb{P}\left((\sigma_{min}(M_1)\cdots \sigma_{min}(M_d))^{1/d}\leq \delta n^{-1/2}\right)\leq C(\delta^\tau\log(\delta^{-1}))^d+e^{-cn^\sigma},
$$
where $C$ and $c$ are universal constants.
\end{lemma}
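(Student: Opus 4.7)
The plan is to reduce the first assertion to the second via the multiplicativity inequality \eqref{productsingular}, then prove the second via a subset partition combined with a dyadic decomposition that exploits the joint small-ball hypothesis for every subset.

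Since \eqref{productsingular} gives $\sigma_{\min}(M_1\cdots M_d)\geq \prod_{i=1}^d\sigma_{\min}(M_i)$, the event in the first claim is contained in that of the second, so it suffices to prove the second. Writing $\eta_i:=\sqrt{n}\,\sigma_{\min}(M_i)$ and assuming $\delta\in(0,1)$ (otherwise the bound is trivial), the task reduces to
$$\mathbb{P}\Bigl(\prod_{i=1}^d\eta_i\leq\delta^d\Bigr)\leq C(\delta^\tau\log\delta^{-1})^d+e^{-c'n^\sigma}.$$
I partition this event by the random subset $S:=\{i:\eta_i\leq 1\}$. On the component where $S$ takes a prescribed value, the complementary indices contribute a factor $\geq 1$, so $\prod_{i\in S}\eta_i\leq\delta^d$ holds and the case $S=\emptyset$ contributes nothing. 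For each nonempty $S$ I decompose dyadically: for $i\in S$ write $\eta_i\in[2^{-j_i-1},2^{-j_i})$ with $j_i\in\mathbb{Z}_{\geq 0}$, which forces $\sum_{i\in S}j_i\geq M_S:=d\log_2\delta^{-1}-|S|$. The joint small-ball assumption \eqref{assumption2} (or \eqref{assumption} when $|S|=d$) applied to the indices in $S$ then gives
$$\mathbb{P}\bigl(\eta_i\leq 2^{-j_i}\ \forall i\in S\bigr)\leq C\cdot 2^{-\tau\sum_{i\in S}j_i}+e^{-cn^\sigma}.$$

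Next I truncate the $j_i$ at a threshold $N:=\lceil c_0 n^\sigma\rceil$ chosen so that $\mathbb{P}(\eta_i\leq 2^{-N})\leq 2e^{-cn^\sigma}$; the contribution from tuples with some $j_i>N$ is then absorbed into the exponential error. The remaining dyadic sum is a geometric-type tail,
$$\sum_{\substack{\vec{j}\in\mathbb{Z}_{\geq 0}^{|S|}\\ \sum j_i\geq M_S}}2^{-\tau\sum j_i}\;\lesssim\;M_S^{|S|-1}\,2^{-\tau M_S}\;\lesssim\;(\log\delta^{-1})^{|S|-1}\delta^{d\tau},$$
while the polynomial-in-$n$ number of tuples and the $2^d$ choices of $S$ leave the cumulative $e^{-cn^\sigma}$ error still bounded by $e^{-c'n^\sigma}$. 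Summing over $S$ gives the estimate $(\log\delta^{-1})^{d-1}\delta^{d\tau}+e^{-c'n^\sigma}$, slightly stronger than what the lemma claims.

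The main technical obstacle is bookkeeping rather than anything conceptual: choosing the truncation $N$, tracking constants in the geometric summation, and verifying that summing the $e^{-cn^\sigma}$ errors over polynomially many dyadic boxes and $2^d$ subsets preserves the exponential factor. The crucial analytical input is the joint small-ball assumption \eqref{assumption2}, which produces $\prod_{i\in S}2^{-j_i\tau}=2^{-\tau\sum j_i}$ rather than the much weaker $2^{-\tau\max j_i}$ one would get from single-matrix bounds; this is what makes $\delta^{d\tau}$ (instead of $\delta^\tau$) appear, and it is the accompanying sum-of-tuples count that generates the logarithmic factor.
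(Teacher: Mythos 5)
Your proof is correct and follows the same strategy as the paper: reduce the product-matrix statement to the product-of-singular-values statement via $\sigma_{\min}(M_1\cdots M_d)\geq\prod_i\sigma_{\min}(M_i)$, then control $\mathbb{P}(\prod_i\eta_i\leq\delta^d)$ by a dyadic decomposition and a union bound powered by the joint small-ball hypothesis \eqref{assumption}--\eqref{assumption2}. The difference is one of bookkeeping, and yours is the tighter version in two respects. First, the paper absorbs the $\sim(\log\delta^{-1})^d$-fold repetition of the $e^{-cn^\sigma}$ error by assuming $\delta\geq e^{-cn^\sigma}$ at the outset; you instead truncate each dyadic index $j_i$ at $N\sim n^\sigma$, which caps the number of boxes at a polynomial in $n$ and lets the exponential term absorb it without a lower bound on $\delta$. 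Second, the paper bounds each of the $L_\delta^d$ dyadic boxes uniformly by $(\delta^d)^\tau$ and multiplies by the count, yielding $(\log\delta^{-1})^d\delta^{d\tau}$; you actually sum the geometric tail $\sum_{\sum j_i\geq M_S}2^{-\tau\sum j_i}\lesssim M_S^{|S|-1}2^{-\tau M_S}$, which shaves one power of $\log\delta^{-1}$. Both refinements are sound; the claimed bound $(\log\delta^{-1})^{d-1}\delta^{d\tau}$ indeed implies the lemma's conclusion $(\delta^\tau\log\delta^{-1})^d$. Your partitioning by the random set $S=\{i:\eta_i\leq1\}$ is the same idea as the paper's case (a) of ``discarding'' indices with $\sigma_{\min}(M_i)\geq n^{-1/2}$, just written more systematically.
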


\begin{proof}
    Assume that $\frac{1}{2}\geq \delta\geq e^{-cn^\sigma}$; otherwise, the claim is trivial. 
    
    We decompose $[\delta^d,1]$ into dyadic intervals 
    $$[\delta^d,1]=\cup_{j=1}^{L_\delta} [I_{j,S},I_{j,L}]
    $$ where for each $j\geq 2$, we assume $I_{j,L}/I_{j,S}=2$ and we assume  $1\leq I_{1,L}/I_{1,S}\leq 2$. Then it is elementary to check that $L_\delta\leq C_d\log\delta^{-1}$.

We consider three separate cases: (a) either for some $i$, $\sigma_{min}(M_i)\geq n^{-1/2}$, then the event $\sigma_{min}(M_1\cdots M_d)\leq\delta^d n^{-d/2}$ implies $\sigma_{min}(M_1)\cdots\sigma_{min}(M_d)\leq \delta^d n^{-d/2}$, so that $\prod_{j\neq i}\sigma_{min}(M_j)\leq \delta^d n^{-(d-1)/2}$. Thus we can safely discard all subscripts $i$ such that $\sigma_{min}(M_i)\geq n^{-1/2}$, and we get to one of the following two cases.

Then consider case (b): for some $i$ we have $\sigma_{min}(\lambda_i)
\leq\delta^{d}$. The possibility of this event can be bounded via assumption \eqref{assumption2}, which directly completes the proof.

Then, we consider cases (c), where case (a) and (b) do not occur.

The event $\sigma_{min}(M_1\cdots M_d)\leq \delta^d n^{-d/2}$ implies $\sigma_{min}(M_1)\cdots\sigma_{min}(M_d)\leq \delta^d n^{-d/2}$ by \eqref{productsingular}, which is contained in the union of $L_\delta^d$ many events, each given by 
$$
\sigma_{min}(M_1)\leq I_{j_1,L}n^{-1/2},\cdots \sigma_{min}(M_1)\leq I_{j_d,L}n^{-1/2},\quad 
\prod_{i=1}^d I_{j_i,L}\leq 2^d \delta^d.
$$ This can be checked via considering all events $\sigma_{min}(M_i)\leq x_in^{-1/2},\cdots,$ $i=1,\cdots,d$, and finding the interval $[I_{j_i,s},I_{j_i,L}]$ where $x_i$ lies. By assumption of case (c), $x_i$ must lie in one of these events for each $i$, so the claim holds.
Then the lemma follows from taking a union bound and applying assumption \ref{assumption}.
\end{proof}

In our initial iteration, we will make direct use of the optimal singular value estimates of subGaussian symmetric matrices in \cite{campos2024least}. We will need a version of such estimates at any location in the bulk, while \cite{campos2024least} states the singular value estimate at $0$. Indeed, it is not difficult to generalize the main result of \cite{campos2024least} to these locations, and we state it as follows:

\begin{Proposition}\label{proposition6.666} Consider random matrix $A_n$ satisfying the assumption of Theorem \ref{Theorem1.1}, Fix any $\kappa>0$, then the following estimate holds uniformly for all $\lambda\in[-(2-\kappa)\sqrt{n},(2-\kappa)\sqrt{n}]$: for any $\epsilon>0$,
    \begin{equation}
    \mathbb{P}(    \sigma_{min}(A_n-\lambda I_n)\leq\epsilon n^{-1/2})\leq c\epsilon+e^{-cn},
    \end{equation} where $c$ only depends on $B$ and $\kappa$.
\end{Proposition}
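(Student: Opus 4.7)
The plan is to adapt the proof of the main theorem of \cite{campos2024least}, which establishes the case $\lambda = 0$, to the shifted matrix $M_n := A_n - \lambda I_n$. The key observation is that $M_n$ has the same structural properties as $A_n$: the upper-diagonal entries are still i.i.d.\ with distribution $\zeta_1 + \zeta_2$ (mean zero, variance one, subGaussian), and the diagonal entries remain independent of the upper-diagonal part with distribution $\bar{\sigma}_0(\zeta_1+\zeta_2) - \lambda$. The shift affects only the mean of the diagonal entries, and the essential subGaussian/anti-concentration/density properties are preserved. The operator norm transfers directly: $\|M_n\|_{op} \leq \|A_n\|_{op} + |\lambda| \leq (5-\kappa+t)\sqrt{n}$ with probability $1 - e^{-ct^{3/2}n}$ by Lemma \ref{operatorbound}, still $O(\sqrt{n})$ as required.

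Next, I would run the two-stage invertibility argument of \cite{campos2024least} on $M_n$. For invertibility on compressible vectors the net argument depends only on subGaussian concentration of off-diagonal entries and the operator norm bound, both of which apply verbatim. For invertibility on incompressible vectors, applying Fact \ref{fact2.4} to $M_n$ reduces the task to bounding the distance from a random column of $M_n$ to the span of the others, yielding the quantity $|\langle B^{-1}X, X\rangle - (A_{11} - \lambda)|/\sqrt{1+\|B^{-1}X\|_2^2}$, where $B$ is the appropriate $(n-1)\times(n-1)$ minor of $M_n$ and $X$ is the corresponding column with first entry removed. Since $A_{11} - \lambda$ is independent of $(B,X)$ and has a bounded density (inherited from $\zeta_2$), the small ball analysis of the numerator via the Fourier/decoupling machinery (Theorem \ref{theorem3.1}, specialized to $d=1$) carries through without modification, as all inputs depend only on variances and on density bounds, not on means.

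The substantive place where $\lambda$ enters is in the spectral rigidity estimates for the minor $B$, which in \cite{campos2024least} control the eigenvalue density of the minor near $0$ and here must instead control it near $\lambda$. These are precisely the estimates developed in Section \ref{section4fuck} of the present paper: Corollary \ref{corollary8.5} bounds $\sqrt{n}/(k\mu_k(\lambda))$, Corollary \ref{corollary4.88} provides two-sided control of $\mu_k(\lambda)$ for $k$ of order $n$, and Corollary \ref{corollary4.999} gives $\mu_k(\lambda)/\mu_1(\lambda) \leq 10^{-2}$ on a large event. Crucially, because $\lambda/\sqrt{n} \in [-(2-\kappa),(2-\kappa)]$ stays at distance $\kappa$ from the spectral edge, the semicircle density satisfies $\rho_{sc}(\lambda/\sqrt{n}) \geq \rho_{sc}(2-\kappa) > 0$, so the constants in these rigidity estimates depend only on $\kappa$ and $B$, uniformly over the bulk interval.

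The main obstacle, then, is bookkeeping rather than conceptual: one must verify that each quantitative step in \cite{campos2024least} which invokes a spectral density bound or eigenvalue tail estimate near $0$ can be replaced by the analogous bound near $\lambda$ from Section \ref{section4fuck}, and that the resulting constants can be taken uniform in $\lambda \in [-(2-\kappa)\sqrt{n}, (2-\kappa)\sqrt{n}]$. No new arithmetic (LCD) arguments are needed because the continuous-density assumption on $\zeta_2$ allows use of Lemma \ref{lemma5.1} in place of the discrete inverse Littlewood-Offord theory, exactly as in the $d=1$ subcase of the forthcoming proof of Theorem \ref{Theorem1.1}.
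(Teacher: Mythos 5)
Your proposal is correct and takes essentially the same route the paper itself sketches without writing out: repeat the argument of \cite{campos2024least} for $M_n = A_n - \lambda I_n$, substitute the bulk local-law rigidity estimates of Section \ref{section4fuck} for the zero-energy ones, and observe that the continuous-density assumption (via Lemma \ref{lemma5.1}) removes any need for Littlewood--Offord arguments, which also absorbs the diagonal-variance mismatch. The paper additionally remarks that its own $d=1$ specialization furnishes an alternative proof, but the adaptation of \cite{campos2024least} you outline is the primary route indicated there.
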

We believe this generalization is very straightforward and that there is no need to write out a proof. Essentially, one only needs to repeat every step of the proof in \cite{campos2024least} and one needs only to use the local law estimate at generic locations (see Section \ref{section4fuck}) to replace the one at $0$. A subtlety is that we assume that the diagonal elements $A_{ii}$ have a different variance but this causes no harm as we assume entries have a continuous distribution and do not invoke the Littlewood-Offord theorem. Alternatively, the rest of this paper, taking only one location $d=1$, can be regarded as a proof of this proposition, so we do not bother to write it out again.

\subsection{Estimating essentially 1-d terms} 
The computations in this section are very similar to those in \cite{campos2024least}, Section 9 and Section 10, reflecting that $I_1$ and $I_2$ are essentially 1-d objects.

We begin with the $I_1$ term, recall that 
$$I_1=\int_{|\theta_1|\leq 1,1\leq|\theta_2|\leq\delta_2^{-1}}\mathbb{P}_{\widetilde{X}}(\|\sum_{i=1}^2 \frac{\theta_i}{\mu_1(\lambda_i)}(A_n-\lambda_i I_n)^{-1}\widetilde{X}\|\leq  |\theta_2|^\frac{1}{10})^{4/9}d\theta.$$ We bound 
$$\begin{aligned}
I_1&=\int_{|\theta_1|\leq 1,1\leq|\theta_2|\leq\delta_2^{-1}}\mathbb{P}_{\widetilde{X}}\left(\|
\sum_{i=1}^2\frac{\theta_i/\theta_2}{\mu_1(\lambda_i)}(A_n-\lambda_iI_n)^{-1}\widetilde{X}
\|\leq|\theta_2|^{-\frac{9}{10}}\right)^{4/9}d\theta\\&
\lesssim \int_1^{\delta_2^{-1}}\sup_{|\theta_1|\leq 1,|\theta_2|=1}\mathbb{P}_{\widetilde{X}} \left(\|
\sum_{i=1}^2\frac{\theta_i}{\mu_1(\lambda_i)}(A_n-\lambda_iI_n)^{-1}\widetilde{X}
\|\leq s^{-\frac{9}{10}}\right)^{4/9}ds\\&
\leq\int_{\delta_2}^1 s^{-19/9} \sup_{|\theta_1|\leq 1,|\theta_2|=1}\mathbb{P}_{\widetilde{X}} \left(\|
\sum_{i=1}^2\frac{\theta_i}{\mu_1(\lambda_i)}(A_n-\lambda_iI_n)^{-1}\widetilde{X}
\|\leq s\right)^{4/9} ds.
\end{aligned}$$

Now we are ready to apply Lemma \ref{lemma6.612}. First, we note that the conclusion of \ref{lemma6.612} is independent of $\theta$, except for the subscript $J\in\{1,2\}$ which depends only on $A_n$ and the relative ratios of $\theta_1$ and $\theta_2$. To remedy this, in the following we take both cases into account, i.e. we decompose the range of $s$ both by taking $J=1$ and $J=2$. To put things on a more concrete ground, let $\sum_1$ and $\sum_2$ be subsets of the parameter space $(\theta_1,\theta_2)$, $|\theta_1|\leq 1$ and $|\theta_2|=1$, such that on $\sum_\ell$, Lemma \ref{lemma6.612} yields $J=\ell$. Then, we write 
$$
I_1\lesssim \sum_{\ell=1}^2 \int_{\delta_2}^1 s^{-19/9} \sup_{(\theta_1,\theta_2)\in\sum_\ell}\mathbb{P}_{\widetilde{X}} \left(\|
\sum_{i=1}^2\frac{\theta_i}{\mu_1(\lambda_i)}(A_n-\lambda_iI_n)^{-1}\widetilde{X}
\|\leq s\right)^{4/9} ds.
$$

We decompose $[\delta_2,1]=[\delta_2,\frac{\mu_{c_0n}(\lambda_2)}{\mu_1(\lambda_2)}]\cup \cup_{k=2}^{cn}[\frac{\mu_k(\lambda_2)}{\mu_1(\lambda_2)},\frac{\mu_{k-1}(\lambda_2)}{\mu_1(\lambda_2)}]$ when $(\theta_1,\theta_2)\in\sum_2$ and we decompose $[\delta_2,1]=[\delta_2,\frac{\mu_{c_0n}(\lambda_1)}{\mu_1(\lambda_2)}]\cup \cup_{k=2}^{cn}[\frac{\mu_k(\lambda_1)}{\mu_1(\lambda_2)},\frac{\mu_{k-1}(\lambda_1)}{\mu_1(\lambda_2)}]$ when $(\theta_1,\theta_2)\in\sum_1$. (When $\delta_2$ is not small enough so that some of these intervals are not well-defined, we simply neglect these intervals: this approach will create no loss as shown in the forthcoming proofs. Additionally, when $\mu_k(\lambda_1)/\mu_1(\lambda_2)>1$ for some $k$, we also neglect such intervals without loss). Then by Lemma \ref{lemma6.612}, we have
\begin{equation}\label{whatisi1}
\begin{aligned}
I_1&\lesssim\sum_{\ell=1}^2\sum_{k=1}^{cn}\int_{\mu_{k}(\lambda_\ell)/\mu_1(\lambda_2)}^{\mu_{k-1}(\lambda_\ell)/\mu_1(\lambda_2)} e^{-cn} s^{-19/9+4/9}ds+\int_\delta^{\mu_{cn}(\lambda_\ell)/\mu_1(\lambda_2)} e^{-cn}s^{-19/9+4/9} ds\\&
\leq \sum_{\ell=1}^2 \sum_{k=1}^{cn}e^{-k}(\mu_1(\lambda_2)/\mu_k(\lambda_{\ell}))^{2/3}+2e^{-cn}\delta^{-3/2}
\end{aligned}\end{equation}
 where the last term is $e^{-\Omega(n)}$ when $\delta>e^{-cn}$.
Similarly, we have
\begin{equation}\label{whatisi2}
    I_2\lesssim \sum_{\ell=1}^2 \sum_{k=1}^{cn}e^{-k}(\mu_1(\lambda_1)/\mu_k(\lambda_{\ell}))^{2/3}+e^{-\Omega(n)}.
\end{equation}

Before concluding, we prove the following very important estimate, whose utility will be clear afterwards. The proof incorporates both the eigenvalue rigidity estimates in Section \ref{section4fuck} and the one-location singular value estimate in Proposition \ref{proposition6.666}, essentially proven in \cite{campos2024least}.

\begin{lemma}\label{whatislemma1} For any $B>0,$ let $\zeta\in\Gamma_B$ have a finite log-Sobolev constant, and $A_n\sim\operatorname{Sym}_n(\zeta)$.
For any $\delta_1,\delta_2\geq e^{-cn}$, define $$\mathcal{E}(\delta_1\delta_2):=\left\{\frac{\mu_1(\lambda_1)}{\sqrt{n}}\leq (\delta_1\delta_2)^{-1},\quad \frac{\mu_1(\lambda_2)}{\sqrt{n}}\leq (\delta_1\delta_2)^{-1}\right\}.$$     Then for any $p>0$,
\begin{equation}\label{6.3wowsog}
\mathbb{E}_{A_n}^{\mathcal{E}(\delta_1^p\delta_2^p)}\left[ \prod_{r=1}^2 \frac{\|(A_n-\lambda_rI_n)^{-1}\|_*}{\mu_1(\lambda_r)}\sum_{\ell_1=1}^2 \sum_{\ell_2=1}^2\sum_{k=1}^{cn}e^{-k}\left(\frac{\mu_1(\lambda_{\ell_1})}{\mu_k(\lambda_{\ell_2})}\right)^{2/3}+e^{-\Omega(n)}\right]\lesssim 1.
\end{equation}
\end{lemma}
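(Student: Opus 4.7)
The plan is to bring the finite sum over $(\ell_1,\ell_2,k)$ outside the expectation by linearity, bound each of the resulting $4\cdot cn$ expectations by Hölder's inequality, and then sum. Three classes of factors appear. First, the ratios $X_r:=\|(A_n-\lambda_rI_n)^{-1}\|_\ast/\mu_1(\lambda_r)$, for which Corollary \ref{lemma4.22} supplies $L^a$-moments bounded independently of $n$ for every $a$. Second, the reciprocal singular values $\sqrt{n}/\mu_k(\lambda_{\ell_2})$, which I would tame by writing $\sqrt{n}/\mu_k = k\cdot(\sqrt{n}/(k\mu_k))$ and invoking Lemma \ref{lemma4.11} to bound the $L^d$-moment of $\sqrt{n}/(k\mu_k)$ by a constant independent of $n$ and $k$. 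Third, the ``offender'' $\mu_1(\lambda_{\ell_1})/\sqrt{n}$, whose tail is only $\lesssim 1/s+e^{-\Omega(n)}$ by Proposition \ref{proposition6.666}, so that its $L^q$-moments are finite only for $q<1$; this is the very reason the truncation event $\mathcal{E}(\delta_1^p\delta_2^p)$ appears in the statement.

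Concretely, I would factor
\[
\left(\frac{\mu_1(\lambda_{\ell_1})}{\mu_k(\lambda_{\ell_2})}\right)^{2/3} = \left(\frac{\mu_1(\lambda_{\ell_1})}{\sqrt{n}}\right)^{2/3}\cdot k^{2/3}\cdot \left(\frac{\sqrt{n}}{k\mu_k(\lambda_{\ell_2})}\right)^{2/3}
\]
and apply Hölder's inequality to the four factors $X_1$, $X_2$, $(\mu_1(\lambda_{\ell_1})/\sqrt{n})^{2/3}$ and $(\sqrt{n}/(k\mu_k(\lambda_{\ell_2})))^{2/3}$ with exponents $(a,b,c,d)$ satisfying $1/a+1/b+1/c+1/d=1$. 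The exponents $a$, $b$, $d$ may be taken arbitrarily large, since the corresponding moments are uniformly controlled by Corollary \ref{lemma4.22} and Lemma \ref{lemma4.11}. The only delicate choice is $c$, which must satisfy $2c/3<1$ so that the truncated moment of $\mu_1(\lambda_{\ell_1})/\sqrt{n}$ on $\mathcal{E}(\delta_1^p\delta_2^p)$ stays bounded uniformly in $\delta_1,\delta_2$. For instance $c=9/8$ with $a=b=d=27$ works.

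The heart of the argument is then the layer-cake bound
\[
\mathbb{E}\bigl[\mathbf{1}_{\mathcal{E}(\delta_1^p\delta_2^p)}(\mu_1(\lambda_{\ell_1})/\sqrt{n})^{2c/3}\bigr]\le 1+\int_1^{(\delta_1\delta_2)^{-2pc/3}}\bigl(Ct^{-3/(2c)}+e^{-\Omega(n)}\bigr)\,dt,
\]
via Proposition \ref{proposition6.666}. Since $3/(2c)>1$, the first integral contributes a $\delta$-independent constant, while the residual $e^{-\Omega(n)}(\delta_1\delta_2)^{-2pc/3}$ is absorbed into $O(1)$ by choosing the constant $c$ in the hypothesis $\delta_i\ge e^{-cn}$ small enough relative to the hidden decay rate (depending on $p$). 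Plugging back, each Hölder term is bounded by $Ck^{2/3}$; then $\sum_{k\ge 1}e^{-k}k^{2/3}<\infty$, and multiplying by the factor $4$ coming from the four choices of $(\ell_1,\ell_2)$ yields the $\lesssim 1$ bound. The trailing $e^{-\Omega(n)}$ inside the expectation is trivial.

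The principal obstacle is precisely the constraint $c<3/2$ on the Hölder exponent for $\mu_1/\sqrt{n}$: because this factor fails to be in $L^1$, only sub-unit moments are uniformly controlled, and the small constant in $\delta_i\ge e^{-cn}$ must be carefully balanced against the exponential decay rate supplied by Proposition \ref{proposition6.666}. Once this delicate point is handled, the remainder is routine Hölder bookkeeping, with all other moment bounds following directly from the eigenvalue rigidity estimates of Section \ref{section4fuck}.
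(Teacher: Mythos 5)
Your proof is correct and follows essentially the same route as the paper's: factor $\mu_1(\lambda_{\ell_1})/\mu_k(\lambda_{\ell_2}) = (\mu_1(\lambda_{\ell_1})/\sqrt{n})\cdot k \cdot (\sqrt{n}/(k\mu_k(\lambda_{\ell_2})))$, apply Hölder with large exponents on the $\|\cdot\|_*/\mu_1$ and $\sqrt{n}/(k\mu_k)$ factors (controlled uniformly by Corollary~\ref{lemma4.22} and Lemma~\ref{lemma4.11}) and a sub-unit exponent on $\mu_1/\sqrt{n}$, then control that truncated moment by a layer-cake integral using Proposition~\ref{proposition6.666}. The paper takes Hölder exponents $(28,28,14,7/6)$ where you take $(27,27,27,9/8)$, but this is immaterial; the one delicate point, that the power on $\mu_1/\sqrt{n}$ must stay below $1$, is identified and handled correctly.
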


\begin{proof}  Let $\mathcal{E}_0:=\mathcal{E}_1\cap\mathcal{E}_2\cap\mathcal{E}(\delta_1^p\delta_2^p)$, where $\mathcal{E}_1$ and $\mathcal{E}_2$ are defined in Lemma \ref{lemma6.61} and \ref{lemma6.612}. We may safely work with $\mathcal{E}_0$ since the two events added have a probability being $1-e^{-\Omega(n)}$.
    For any fixed $\ell_1,\ell_2$ we first estimate via Hölder's inequality
    $$\begin{aligned}
&\mathbb{E}_{A_n}^{\mathcal{E}_0}
\prod_{r=1}^2 \frac{\|(A_n-\lambda_rI_n)^{-1}\|_*}{\mu_1(\lambda_r)}\left(\frac{\mu_1(\lambda_{\ell_1})}{\mu_k(\lambda_{\ell_2})}\right)^{2/3}\\&\leq \prod_{r=1}^2\mathbb{E}_{A_n}^{\mathcal{E}_0}\left[\left(\frac{\|(A_n-\lambda_r I_n)^{-1}\|_*}{\mu_1(\lambda_r)}\right)^{28}\right]^{1/28}\\&\quad \cdot\mathbb{E}_{A_n}^{\mathcal{E}_0}\left[(\frac{\sqrt{n}}{k\mu_k(\lambda_{\ell_1})})^{28/3}\right]^{1/14}\mathbb{E}_{A_n}^{\mathcal{E}_0}\left[(\frac{\mu_1(\lambda_{\ell_2})}{\sqrt{n}})^{7/9}
\right]^{6/7} \cdot k^{2/3}
\\&\lesssim k\mathbb{E}_{A_n}^{\mathcal{E}_0}\left[(\frac{\mu_1(\lambda_{\ell_2})}{\sqrt{n}})^{7/9}
\right]^{6/7}
    \end{aligned}$$ where we use Lemma \ref{lemma4.11} and Lemma \ref{lemma4.22} to show that the first two terms are $O(1)$.

Therefore we have shown that 
\begin{equation}
    \text{The left hand side of } \eqref{6.3wowsog} \lesssim \mathbb{E}_{A_n}^{\mathcal{E}_0}\left[(\frac{\mu_1(\lambda_{1})}{\sqrt{n}})^{7/9}+(\frac{\mu_1(\lambda_{2})}{\sqrt{n}})^{7/9}\right]+e^{-\Omega(n)}.
\end{equation}

    The final estimate is performed using Proposition \ref{proposition6.666}. We bound
    $$\begin{aligned}
\mathbb{E}_{A_n}^{\mathcal{E}_0}[(\frac{\mu_1(\lambda_{1})}{\sqrt{n}})^{7/9}]&\lesssim 1+\int_{1}^{(\delta_1\delta_2)^{-7p/9}}\mathbb{P}(\mu_1(\lambda_1\leq s^{9/7}\sqrt{n})ds\\&\lesssim 1+\int_1^{(\delta_1\delta_2)^{-7p/9}} (s^{-9/7}+e^{-cn}) ds\lesssim 1,
    \end{aligned}$$ where the second line follows from Proposition \ref{proposition6.666}. The same computation holds for $\mu_1(\lambda_2)$, thus completing the proof of the lemma.
\end{proof}

\subsection{Estimating genuinely 2-d terms}

In this section we bound $I_3$. Recall that 
$$I_3=\int_{1\leq |\theta_1|\leq \delta_1^{-1},1\leq|\theta_2|\leq\delta_2^{-1}}\mathbb{P}_{\widetilde{X}}\left(\|\sum_{i=1}^2 \frac{\theta_i}{\mu_1(\lambda_i)}(A_n-\lambda_i I_n)^{-1}\widetilde{X}\|\leq (\theta_1\theta_2)^\frac{1}{22}\right)^{4/9}d\theta.$$

First we normalize the probability inside so that $\theta_1\theta_2=1$. That is, 
$$
I_3=\int_{1}^{1/\sqrt{\delta_1\delta_2}}\int_{|\theta_1\theta_2|=s^2,1\leq|\theta_i|}\mathbb{P}_{\widetilde{X}}
\left(\|\sum_{i=1}^2 \frac{\theta_i(A_n-\lambda_i I_n)^{-1}\widetilde{X}}{\mu_1(\lambda_i)\sqrt{\theta_1\theta_2}}\|\leq s^{-\frac{10}{11}}\right)^{4/9}ds.$$

We will use Lemma \ref{lemma6.61} to estimate the probability inside. Again, the bound therein is independent of the precise ratio of $\theta_1,\theta_2$ (after normalizing $\theta_1\theta_2=1$) saved for the subscript $J$. We again define $\sum_1$, $\sum_2$ the complementary subsets of the hyperplane $|\theta_1\theta_2|=1$ so that on $\sum_1$,  Lemma \ref{lemma6.61} yields $J=1$ and on $\sum_2$,  Lemma \ref{lemma6.61} yields $J=2$ (note that $\sum_1,\sum_2$ are also determined by $A_n$). Then we further bound, noting that we now normalize $|\theta_1\theta_2|=1:$
$$
I_3\lesssim \int_1^{1/\sqrt{\delta_1\delta_2}}s\ln s\sum_{\ell=1}^2\sup_{(\theta_1,\theta_2)\in\sum_\ell}\mathbb{P}_{\widetilde{X}}\left(\|\sum_{i=1}^2 \frac{\theta_i}{\mu_1(\lambda_i)}(A_n-\lambda_i I_n)^{-1}\widetilde{X}\|\leq s^{-\frac{10}{11}}\right)^{4/9}ds.
$$ We explain the factor $s\ln s$ here: for any $s>1$ the area of the region $$(\theta_1,\theta_2)\in\mathbb{R}^2:|\theta_1\theta_2|\leq s^2,|\theta_1|\geq 1,|\theta_2|\geq 1$$ can be computed via
$A_2(s):=4\int_1^{s^2}(\frac{s^2}{y}-1)dy=4(s^2\ln s^2-s^2+1),$ and $\frac{d}{ds}A_2(s)=16s\ln s$. Thus we obtain an additional $16s\ln s$ factor after we change the coordinates $(\theta_1,\theta_2)\to s=\sqrt{\theta_1\theta_2}$.

Changing the coordinates the second time, we obtain 
\begin{equation}\label{integration97000}
I_3\lesssim 
\sum_{\ell=1}^2\int_{\sqrt{\delta_1\delta_2}}^1 s^{-32/10}\ln s^{-1}  \sup_{(\theta_1,\theta_2)\in\sum_i} \mathbb{P}_{\widetilde{X}}\left(\|\sum_{i=1}^2 \frac{\theta_i}{\mu_1(\lambda_i)}(A_n-\lambda_i I_n)^{-1}\widetilde{X}\|\leq s\right)^{4/9}ds.
\end{equation}

Now we use lemma \ref{lemma6.61} and decompose, taking $J=1$ on $\sum_1$ and $J=2$ on $\sum_2$: \begin{equation}\label{decomposition2d}[\sqrt{\delta_1\delta_2},1]=\left[\sqrt{\delta_1\delta_2},\frac{\mu_{cn}(\lambda_J)}{\sqrt{\mu_1(\lambda_1)\mu_1(\lambda_2})}\right]\cup \bigcup_{k=2}^{cn}\left[\frac{\mu_{k}(\lambda_J)}{\sqrt{\mu_1(\lambda_1)\mu_1(\lambda_2)}},\frac{\mu_{k-1}(\lambda_J)}{\sqrt{\mu_1(\lambda_1)\mu_1(\lambda_2})}\right].\end{equation} Note: if we already have $\frac{\mu_{k-1}(\lambda_J)}{\sqrt{\mu_1(\lambda_1)\mu_1(\lambda_2})}\geq 1$, then we terminate the right end of the interval at 1; if the whole interval lies on the right hand side of 1 then we discard that interval.

Applying Lemma \ref{lemma6.61}, and using $s^{-3.2+4/9}\log s^{-1}\leq s^{-25/9}$ for $s\leq 1$, we obtain
$$
I_3\lesssim\sum_{\ell=1}^2\sum_{k=2}^{cn}e^{-k}\int_{\frac{\mu_{k}(\lambda_\ell)}{\sqrt{\mu_1(\lambda_1)\mu_1(\lambda_2})}}^{\frac{\mu_{k-1}(\lambda_\ell)}{\sqrt{\mu_1(\lambda_1)\mu_1(\lambda_2})}} s^{-25/9} ds+\sum_{\ell=1}^2 e^{-cn}\int_{\sqrt{\delta_1\delta_2}}^{\frac{\mu_{k-1}(\lambda_\ell)}{\sqrt{\mu_1(\lambda_1)\mu_1(\lambda_2})}}s^{-25/9}ds.
$$
The second term on the right hand side evaluates to $e^{-cn}(\sqrt{
\delta_1\delta_2})^{-16/9},$ which is $e^{-\Omega(n)}$ if we assume $\delta\delta_2\geq e^{-c'n}$ for some $c'>0$. For the first term, we integrate and obtain 
\begin{equation}\label{whatisi3}
I_3\lesssim\sum_{\ell=1}^2\sum_{k=2}^{cn}e^{-k}\left(\frac{\sqrt{\mu_1(\lambda_1)\mu_1(\lambda_2)}}{\mu_k(\lambda_\ell)}\right)^{16/9}+e^{-\Omega(n)}.
\end{equation}
At this point notice that each individual $\mu_i(\lambda_1),i=1,2$ has a power of $\frac{8}{9}<1$, but the overall power is greater than one. In the next section we will use a much finer estimate than simply applying Hölder's inequality, to obtain an effective bound of $I_3$.

In the next lemma we eliminate the $\mu_k(\lambda_\ell)$ term.

\begin{lemma}\label{whatislemma2}
    For any $B>0$, and any $\zeta\in\Gamma_B$ having a finite log-Sobolev constant, consider the random matrix $A_n\sim\operatorname{Sym}_n(\zeta)$. For any $\delta_1,\delta_2\geq e^{-c''n}$ and any $p>0$  define 
    $$\mathcal{E}(p,\delta_1\delta_2):=\left\{\frac{\mu_1(\lambda_1)\mu_1(\lambda_2)}{n}\leq (\delta_1\delta_2)^{-p}\right\}.$$ Then 
    \begin{equation}\begin{aligned}
    \mathbb{E}_{A_n}^{\mathcal{E}(p,\delta_1\delta_2)} &\left[\prod_{r=1}^2 \frac{\|(A_n-\lambda_rI_n)^{-1}\|_*}{\mu_1(\lambda_r)}\sum_{\ell=1}^2\sum_{k=2}^{cn}e^{-k}\left(\frac{\sqrt{\mu_1(\lambda_1)\mu_1(\lambda_2)}}{\mu_k(\lambda_\ell)}\right)^{16/9}
        \right]
        \\&\lesssim 1+\mathbb{E}_{A_n}^{\mathcal{E}(p,\delta_1\delta_2)} \left[\left(\frac{\mu_1(\lambda_1)\mu_1(\lambda_2)}{n}\right)^{9/10}
\right]^{80/81}.
        \end{aligned}
    \end{equation}
\end{lemma}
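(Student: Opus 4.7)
The plan is to exploit the algebraic factorization
\[
\left(\frac{\sqrt{\mu_1(\lambda_1)\mu_1(\lambda_2)}}{\mu_k(\lambda_\ell)}\right)^{16/9}
= k^{16/9}\left(\frac{\mu_1(\lambda_1)\mu_1(\lambda_2)}{n}\right)^{8/9}\left(\frac{\sqrt{n}}{k\mu_k(\lambda_\ell)}\right)^{16/9},
\]
which separates the offending summand into three well-understood pieces: a polynomial prefactor $k^{16/9}$, the target quantity $(\mu_1(\lambda_1)\mu_1(\lambda_2)/n)^{8/9}$, and a factor $\sqrt{n}/(k\mu_k(\lambda_\ell))$ that is controlled at every polynomial order by Lemma \ref{lemma4.11}. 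First I would pull the summation outside the expectation by linearity, and then to each summand apply Hölder's inequality with the four factors $F_1,\, F_2,\, \bigl(\sqrt{n}/(k\mu_k(\lambda_\ell))\bigr)^{16/9},\, \bigl(\mu_1(\lambda_1)\mu_1(\lambda_2)/n\bigr)^{8/9}$ (writing $F_r=\|(A_n-\lambda_r I_n)^{-1}\|_*/\mu_1(\lambda_r)$) and Hölder exponents $(243,243,243,81/80)$, whose reciprocals sum to $3/243+80/81=1/81+80/81=1$.

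The choice $p_4=81/80$ is forced by the target exponent in the conclusion: raising the fourth factor to the power $81/80$ yields $(\mu_1(\lambda_1)\mu_1(\lambda_2)/n)^{(8/9)(81/80)}=(\mu_1(\lambda_1)\mu_1(\lambda_2)/n)^{9/10}$, and the Hölder weight $1/p_4=80/81$ produces the matching outer exponent. The remaining exponents $p_1=p_2=p_3=243$ can be taken to be any sufficiently large constants, with $243=3\cdot 81$ chosen for symmetry. Corollary \ref{lemma4.22} gives $\mathbb{E}[F_r^{243}]\lesssim 1$, and Lemma \ref{lemma4.11} applied at order $(16/9)\cdot 243=432$ gives $\mathbb{E}\bigl[(\sqrt{n}/(k\mu_k(\lambda_\ell)))^{432}\bigr]\lesssim 1$ uniformly in $k,\ell$; both moment bounds are unconditional and therefore survive the restriction to $\mathcal{E}(p,\delta_1\delta_2)$ since the indicator is at most $1$. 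Only the fourth Hölder factor inherits the restriction, producing $\mathbb{E}_{A_n}^{\mathcal{E}(p,\delta_1\delta_2)}[(\mu_1(\lambda_1)\mu_1(\lambda_2)/n)^{9/10}]^{80/81}$. Summing the resulting estimate against $e^{-k}k^{16/9}$ (convergent over $k\geq 2$) and over $\ell\in\{1,2\}$ yields a constant multiple of the right-hand side, and the additive $1$ in the statement then absorbs any remaining slack.

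The main obstacle, and what I expect will require the most care, is identifying the correct factorization \emph{before} applying Hölder. A naive split that peels $\mu_1(\lambda_1)^{8/9}$ off $\mu_1(\lambda_2)^{8/9}$ into separate Hölder factors would collapse the joint product structure and generate moments $\mathbb{E}[\mu_1(\lambda_r)^{q}]$ at exponents $q\geq 1$, which are not controlled uniformly under the restriction event and in any case do not match the specific exponent pair $(9/10,\,80/81)$ demanded by the statement. By contrast, the rewriting above preserves $\mu_1(\lambda_1)\mu_1(\lambda_2)/n$ as a single observable (which is precisely what the bootstrap in Section \ref{bootstrapping209} will ultimately feed back on) while simultaneously engineering the separable Lemma \ref{lemma4.11}-friendly quantity $\sqrt{n}/(k\mu_k(\lambda_\ell))$; once this factorization is in place, the rest of the argument reduces to routine bookkeeping of Hölder exponents.
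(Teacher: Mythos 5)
Your proposal is correct and matches the paper's own argument: the paper uses precisely the factorization into $F_1 F_2 \cdot (\sqrt{n}/(k\mu_k(\lambda_\ell)))^{16/9}\cdot(\mu_1(\lambda_1)\mu_1(\lambda_2)/n)^{8/9}\cdot k^{16/9}$, applies the same four-factor Hölder inequality with the forced weight $81/80$ on the last factor, and invokes Lemmas \ref{lemma4.11} and \ref{lemma4.22} for the first three. The only superficial difference is the choice of the three large Hölder exponents (you use $(243,243,243)$, the paper uses $(324,324,162)$), which is immaterial since both sets of reciprocals sum with $80/81$ to $1$.
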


\begin{proof} Let $\mathcal{E}_{0,p}:=\mathcal{E}_1\cap\mathcal{E}_2\cap \mathcal{E}(p,\delta_1\delta_2)$, and it suffices to take expectation on $\mathcal{E}_{0,p}$ as both $\mathcal{E}_1,\mathcal{E}_2$ have probability $1-e^{-\Omega(n)}$.
    The desired claim again follows from computing
       $$\begin{aligned}
\mathbb{E}_{A_n}^{\mathcal{E}_0,p}&
\left[\prod_{r=1}^2 \frac{\|(A_n-\lambda_rI_n)^{-1}\|_*}{\mu_1(\lambda_r)}\cdot \frac{\mu_1(\lambda_{1})^{8/9}\mu_1(\lambda_2)^{8/9}}{\mu_k(\lambda_\ell)^{16/9}}\right]\\&\leq \prod_{r=1}^2\mathbb{E}_{A_n}^{\mathcal{E}_{0,p}}\left[\left(\frac{\|A_n-\lambda_r I_n\|_*}{\mu_1(\lambda_r)}\right)^{324}\right]^{1/324}\cdot \mathbb{E}_{A_n}^{\mathcal{E}_{0,p}}\left[\left(\frac{\sqrt{n}}{k\mu_k(\lambda_{\ell_1})}\right)^{288}\right]^{1/162}\\&\quad\quad \cdot \mathbb{E}_{A_n}^{\mathcal{E}_{0,p}}\left[\left(\frac{\mu_1(\lambda_1)\mu_1(\lambda_2)}{n}\right)^{9/10}
\right]^{80/81} \cdot k^{16/9}
\\&\lesssim k^2\mathbb{E}_{A_n}^{\mathcal{E}_{0,p}}\left[\left(\frac{\mu_1(\lambda_1)\mu_1(\lambda_2)}{n}\right)^{9/10}
\right]^{80/81}
    \end{aligned}$$ where we use Lemma \ref{lemma4.11} and Lemma \ref{lemma4.22} to show that the two terms on the second line are $O(1)$. Recalling that $\sum_{k\geq 1}k^2e^{-k}<+\infty,$ the proof is complete.
\end{proof}

\subsection{Conclusion}
Now we are ready to prove Lemma \ref{lemma6.111}.

\begin{proof}[\proofname\ of Lemma \ref{lemma6.111}]
    We make a further conditioning and write 

    $$\begin{aligned}
&\mathbb{E}_{A_n}\sup_{r_1,r_2}\mathbb{P}_X\left(
    \frac{|\langle (A_n-\lambda_i I_n)^{-1}X,X\rangle-r_i|}{\|(A_n-\lambda_i I_n)^{-1}\|_*}\leq\delta_i
    ,\langle X,u\rangle\geq s,\frac{\mu_1(\lambda_1)\mu_1(\lambda_2)}{n}\leq(\delta_1\delta_2)^{-p}
    \right)\\& \leq \mathbb{P}(\sigma_{min}(A_n-\lambda_1I_n)
\leq \delta_1\delta_2 n^{-1/2})+ \mathbb{P}(\sigma_{min}(A_n-\lambda_2I_n)
\leq \delta_1\delta_2 n^{-1/2})\\&+\mathbb{E}_{A_n}
\sup_{r_1,r_2}\mathbb{P}_{X}\left(\bigcap_{i=1}^2 \left\{\left\|\frac{\langle (A_n-\lambda_i I_n)^{-1}X,X\rangle-r_i}{\|(A_n-\lambda_i I_n)^{-1}X\|}\right\|\leq \delta_i\right\}\wedge \mathcal{E}\right),
    \end{aligned}$$
where we define $$\mathcal{E}:=\{\sigma_{min}(A_n-\lambda_iI_n)\geq\delta_1\delta_2n^{-1/2},i=1,2;\quad \frac{\mu_1(\lambda_1)\mu_1(\lambda_2)}{n}\leq (\delta_1\delta_2)^{-p}\}.$$

    The probability $\mathbb{P}(\sigma_{min}(A_n-\lambda_1I_n)
\leq \delta_1\delta_2 n^{-1/2})$ is easily bounded by $C\delta_1\delta_2+e^{-\Omega(n)}$ by an application of Proposition \ref{proposition6.666}.

To compute the last term, we use Theorem \ref{theorem3.1} to write the probability in question as the integral of $I(\theta)^{1/2}$ over the given region. Then we use the Hölder inequality estimate \eqref{holders} combined with decomposition \eqref{decompositionformula}, and the computations in \eqref{whatisi1}, \eqref{whatisi2} and \eqref{whatisi3}, as well as estimates in Lemma \ref{whatislemma1} and Lemma \ref{whatislemma2} to obtain the desired result.

\end{proof}

\section{Reduction to generic events: two locations}

Before we come to the proof of the main theorem, we need another preprocessing of the probability in question, to eliminate another pathological event. We will take $d=2$ in this section, and return to the general $d$ case later.

For any $p\geq1$, define
$$
\mathcal{P}^p:=\left\{ \prod_{i=1}^2\sigma_{min}(A^{(j)}_{n+1}-\lambda_i I_n)\leq(\delta_1\delta_2)^p n^{-1}\text{ for at least } \frac{1}{4}n \text{ values } j\in[n+1]\right\},
$$
where we recall that $A_{n+1}^{(j)}$ is the principal submatrix of $A_{n+1}$ with the $j$-th row and column removed. 

We shall prove in this section that 
\begin{Proposition}\label{proposition7.1} Fix $B>0$ and $\zeta\in\Gamma_B$, consider $A_{n+1}\sim \operatorname{Sym}_{n+1}(\zeta)$. Then for any $p>1$,
   $$ \mathbb{P}(\sigma_{min}(A_{n+1}-\lambda_i I_{n+1})\leq\delta_i n^{-1/2},i=1,2,\quad \mathcal{P}^p)\lesssim \delta_1\delta_2+e^{-\Omega(n)}.$$
\end{Proposition}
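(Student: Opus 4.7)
My plan combines two symmetry-averaging reductions, no-gap delocalization, and the quadratic-form small-ball machinery of Section \ref{chap2chap2}, closed by a Hölder-type trade-off between the Section \ref{chap4chap4chap4} integral bound and the rarity of the event $\{A_n := A_{n+1}^{(1)} \text{ is good}\}$, where ``good'' abbreviates $\prod_i \sigma_{\min}(A_n - \lambda_i I_n) \leq (\delta_1 \delta_2)^p / n$.

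By exchangeability of the rows/columns of $A_{n+1}$, the indicators $\{\mathbf{1}_{A_{n+1}^{(j)} \text{ good}}\}_{j \in [n+1]}$ are exchangeable, and $\mathcal{P}^p$ forces at least $n/4$ of them to hold, so Markov's inequality plus symmetry reduces the claim, up to $O(1)$, to bounding the intersection of ``small sv of $A_{n+1}$ at both $\lambda_i$'' with ``good$_1$''. I then apply the no-gap delocalization theorem (Theorem \ref{veragaghagag}) to the smallest singular vectors $v^{(i)}$ of $A_{n+1} - \lambda_i I_{n+1}$, $i = 1, 2$. Off an $e^{-\Omega(n)}$-event, each $v^{(i)}$ has magnitude $\gtrsim n^{-1/2}$ on at least $(1 - c_1)(n+1)$ coordinates; via the inequality $\sigma_{\min}(A) |v_k| \leq \operatorname{dist}(C_k, H_k)$, the small-$\sigma_{\min}$ condition forces $\operatorname{dist}(C_k^{(i)}, H_k^{(i)}) \leq C \delta_i$ for both $i$ on at least $(1 - 2c_1)(n+1)$ common indices $k$. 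A second Markov-exchangeability step then fixes the distance-small column to be $k = 1$. Fact \ref{fact2.4} rewrites this as $|Q_i - (a_{11} - \lambda_i)| \leq C \delta_i \sqrt{1 + \|(A_n - \lambda_i I)^{-1} X\|_2^2}$ with $Q_i = \langle (A_n - \lambda_i I)^{-1} X, X\rangle$, and Hanson--Wright (via LSI) trims the slack to $\lesssim \delta_i \|(A_n - \lambda_i I)^{-1}\|_*$ on a $(1 - e^{-\Omega(n)})$-subevent. Theorem \ref{theorem3.1} (with $u = 0$), applied conditionally on $A_n$, then bounds the resulting $\mathbb{P}_X$-probability by $\delta_1 \delta_2 \int_{\sum |\theta_i \delta_i|^2 \leq 2} I(\theta)^{1/2} d\theta$.

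The main obstacle is controlling $\mathbb{E}^{A_n \text{ good}}\!\int I(\theta)^{1/2} d\theta$, which lies in the regime $\mu_1(\lambda_1) \mu_1(\lambda_2) / n \geq (\delta_1 \delta_2)^{-p}$, complementary to that handled by Lemma \ref{lemma6.111}. The pointwise bounds of Lemmas \ref{lemma6.61}--\ref{lemma6.612} and the region decomposition of Section \ref{chap4chap4chap4} still apply and yield at worst polynomial growth of $\int I^{1/2}$ in the $\mu_1(\lambda_i)$. To offset this growth I exploit the rarity of ``good'': by the elementary inequality $\min_i \sigma_{\min}(A_n - \lambda_i I) \leq \sqrt{\prod_i \sigma_{\min}(A_n - \lambda_i I)} \leq (\delta_1 \delta_2)^{p/2} / \sqrt{n}$, Proposition \ref{proposition6.666} gives $\mathbb{P}(A_n \text{ good}) \lesssim (\delta_1 \delta_2)^{p/2} + e^{-\Omega(n)}$. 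A Hölder-based trade-off between this smallness and the moment estimates of Lemmas \ref{lemma4.11}--\ref{lemma4.22} should yield an overall contribution of order $(\delta_1 \delta_2)^{1 + (p-1)/2} = (\delta_1 \delta_2)^{(p+1)/2}$, which for $p > 1$ is bounded by $\delta_1 \delta_2$ since $\delta_1 \delta_2 \leq 1$. The technical crux lies in making the Hölder decomposition compatible with the Section \ref{chap4chap4chap4} decomposition while retaining the full $(\delta_1\delta_2)^{p/2}$ factor from the rarity of the good event.
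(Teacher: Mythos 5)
Your reduction steps (exchangeability/Markov, no-gap delocalization, and passing to distance via $\sigma_{\min}(M)\geq |v_j|\, d_j(M)$) match the paper's opening moves, but from there you diverge sharply: the paper does \emph{not} pass through Fact~\ref{fact2.4}, Theorem~\ref{theorem3.1}, or the quadratic-form integral at all. Instead, it uses Fact~\ref{fact7.3}, which for a \emph{fixed} minor index $j$ relates the eigenvalue $\lambda$ of $A_{n+1}$ near $\lambda_1$ to the eigenvalue $\lambda'$ of $A_{n+1}^{(j)}$ near $\lambda_1$ via $|\langle v, X^{(j)}\rangle|\leq|\lambda-\lambda'|/|u_j|$, where $v$ is the eigenvector of $A_{n+1}^{(j)}$ and $u$ of $A_{n+1}$. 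The key observation is that $X^{(j)}$ is independent of $A_{n+1}^{(j)}$, hence of $v$; so $\langle v,X^{(j)}\rangle$ is a linear form in fresh randomness with bounded density by Lemma~\ref{lemma5.1}, giving a clean $\lesssim x_1$ bound for the $\lambda_1$ event. The $\lambda_2$ event lives entirely on the $\sigma$-algebra of $A_{n+1}^{(j)}$ and is controlled by Proposition~\ref{proposition6.666}. A dyadic decomposition of the product $\prod_i\sigma_{\min}(A_{n+1}^{(j)}-\lambda_i I_n)\leq(\delta_1\delta_2)^p n^{-1}$ then multiplies the two estimates together, with only a logarithmic entropy cost. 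No quadratic form appears.

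The gap in your proposal is the Hölder closure. On the ``good'' event you have $\mu_1(\lambda_1)\mu_1(\lambda_2)/n\geq(\delta_1\delta_2)^{-p}$, and the integral bound from Section~\ref{chap4chap4chap4} grows like $(\mu_1(\lambda_1)\mu_1(\lambda_2)/n)^{\alpha}$ with $\alpha$ close to one (e.g. $\alpha=8/9$ from \eqref{whatisi3}). To control $\mathbb{E}[\mathbf 1_{\mathrm{good}}(\mu_1(\lambda_1)\mu_1(\lambda_2)/n)^{\alpha}]$ by Hölder you would need some finite joint moment $\mathbb{E}[(\mu_1(\lambda_1)\mu_1(\lambda_2)/n)^{\alpha q'}]$ with $\alpha q'<1$, i.e.\ $q'<1/\alpha<9/8$; then $\mathbb{P}(\mathrm{good})^{1/q}$ contributes at most $(\delta_1\delta_2)^{p/(2q)}$ with $q>8$, which is far from the full $(\delta_1\delta_2)^{p/2}$ and cannot upgrade $\delta_1\delta_2$ to $(\delta_1\delta_2)^{(p+1)/2}$. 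Worse, even the moment $\mathbb{E}[(\mu_1(\lambda_1)\mu_1(\lambda_2)/n)^{\beta}]$ for $\beta$ near one is not known to be finite \emph{a priori}: a further Cauchy–Schwarz on the two factors would demand a power $2\beta$ on each single $\mu_1(\lambda_i)/\sqrt n$, which has infinite expectation when $2\beta>1$. Taming this joint moment is precisely what the bootstrap in Section~\ref{bootstrapping209} accomplishes, and Proposition~\ref{proposition7.1} is a prerequisite for that bootstrap, so invoking it here would be circular. Your identification of this as the ``technical crux'' is accurate; what the paper buys with Fact~\ref{fact7.3} is a route that never confronts the $\mu_1(\lambda_1)\mu_1(\lambda_2)$ product at all.
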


 The proof of Theorem \ref{Theorem1.1}
   can therefore be reduced to the following:

    \begin{Proposition}\label{finalfuckpropositionga} Let $A_{n+1}$ satisfy the assumptions of Theorem \ref{Theorem1.1} and $\lambda_1,\lambda_2\in\mathbb{R}$ . For any $p>1$,
    $$\begin{aligned}
           &\mathbb{P}(\sigma_{min}(A_{n+1}-\lambda_i I_{n+1})\leq\delta_i n^{-1/2},i=1,2)\lesssim \delta_1\delta_2+e^{-\Omega(n)}\\&+\mathbb{E}_{A_n}\sup_{r_1,r_2}\mathbb{P}_X\left(
    \frac{|\langle (A_n-\lambda_i I_n)^{-1}X,X\rangle-r_i|}{\|(A_n-\lambda_i I_n)^{-1}X\|_2}\leq\delta_i,i=1,2,\frac{\mu_1(\lambda_1)\mu_1(\lambda_2)}{n}\leq(\delta_1\delta_2)^{-p}
    \right).\end{aligned}$$
    \end{Proposition}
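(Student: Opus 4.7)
The plan is to decompose $\mathbb{P}(\mathcal{E}) = \mathbb{P}(\mathcal{E}\cap\mathcal{P}^p) + \mathbb{P}(\mathcal{E}\cap(\mathcal{P}^p)^c)$, where $\mathcal{E}:=\{\sigma_{\min}(A_{n+1}-\lambda_i I_{n+1})\le\delta_i n^{-1/2},\ i=1,2\}$. Proposition \ref{proposition7.1} immediately handles the first summand, contributing $\delta_1\delta_2+e^{-\Omega(n)}$, so the entire task is to bound $\mathbb{P}(\mathcal{E}\cap(\mathcal{P}^p)^c)$ by the expectation term on the right-hand side of the target.

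On $(\mathcal{P}^p)^c$, at least $\tfrac{3}{4}(n+1)$ of the indices $j\in[n+1]$ satisfy $\prod_{i=1}^2\sigma_{\min}(A_{n+1}^{(j)}-\lambda_i I_n) > (\delta_1\delta_2)^p n^{-1}$, which under the identification $A_n=A_{n+1}^{(j)}$ rewrites as the side condition $\mu_1(\lambda_1)\mu_1(\lambda_2)/n\le(\delta_1\delta_2)^{-p}$. I will link this to $\mathcal{E}$ by a Rudelson--Vershynin row-reduction. Let $v^{(i)}$ be a unit right singular vector realizing $\sigma_{\min}(A_{n+1}-\lambda_i I_{n+1})$; these are eigenvectors of $A_{n+1}$. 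The no-gap delocalization of Rudelson--Vershynin yields an event of probability $1-e^{-\Omega(n)}$ on which each $v^{(i)}$ has at least $(1-\eta)(n+1)$ coordinates of magnitude $\ge c_\eta (n+1)^{-1/2}$, for any prescribed small $\eta>0$. For any $j$ at which both magnitudes are large, projecting $(A_{n+1}-\lambda_i I_{n+1})v^{(i)}=w^{(i)}$ (with $\|w^{(i)}\|_2\le\delta_i n^{-1/2}$) onto $(H_j^{(i)})^\perp$ gives $\operatorname{dist}(C_j^{(i)},H_j^{(i)}) \le \|w^{(i)}\|_2/|v_j^{(i)}| \lesssim \delta_i$. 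Choosing $\eta<1/16$, the set $J\subseteq[n+1]$ of indices where both distance bounds and the $(\mathcal{P}^p)^c$ bound simultaneously hold satisfies $|J|\ge (n+1)/2$. Exchangeability of rows and columns under $\operatorname{Sym}_{n+1}(\zeta_1+\zeta_2)$ makes $\mathbb{P}(\mathcal{E}\cap(\mathcal{P}^p)^c\cap\{j\in J\})$ independent of $j$, whence
\begin{equation*}
\mathbb{P}(\mathcal{E}\cap(\mathcal{P}^p)^c) \le 2\,\mathbb{P}(\mathcal{E}\cap(\mathcal{P}^p)^c\cap\{1\in J\}) + e^{-\Omega(n)}.
\end{equation*}

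I then condition on $A_n := A_{n+1}^{(1)}$; the first column of $A_{n+1}$ below the diagonal, $X\sim\operatorname{Col}_n(\zeta_1+\zeta_2)$, and the diagonal entry $a_{11}$ are independent of $A_n$. Fact \ref{fact2.4} applied to $A_{n+1}-\lambda_i I_{n+1}$ gives
\begin{equation*}
\operatorname{dist}(C_1^{(i)},H_1^{(i)}) = \frac{|\langle(A_n-\lambda_i I_n)^{-1}X,X\rangle-(a_{11}-\lambda_i)|}{\sqrt{1+\|(A_n-\lambda_i I_n)^{-1}X\|_2^2}}.
\end{equation*}
Setting $r_i := a_{11}-\lambda_i$, the inequality $\operatorname{dist}(C_1^{(i)},H_1^{(i)})\lesssim\delta_i$ together with the supremum over $r_1,r_2$ reproduces the probability event in the target, except with the denominator $\sqrt{1+\|\cdot\|_2^2}$ in place of $\|\cdot\|_2$. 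To replace it, I will use that on $(\mathcal{P}^p)^c$ Corollary \ref{corollary4.88} provides $\sum_k\mu_k(\lambda_i)^2\gtrsim 1$ deterministically, so $\mathbb{E}_X\|(A_n-\lambda_i I_n)^{-1}X\|_2^2\gtrsim 1$; the $(\mathbf{LSI})$ hypothesis then upgrades this to $\|(A_n-\lambda_i I_n)^{-1}X\|_2\gtrsim 1$ with conditional probability $1-e^{-\Omega(n)}$, giving $\sqrt{1+\|\cdot\|_2^2}\le\sqrt 2\|\cdot\|_2$ on this event. The constant is absorbed into $\lesssim$ and the exceptional set contributes only $e^{-\Omega(n)}$. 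Taking expectation in $A_n$ then yields the target.

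The main obstacle will be the joint bookkeeping in the row-reduction: guaranteeing that after intersecting (i) the delocalization event for $v^{(1)}$, (ii) that for $v^{(2)}$, (iii) the $\tfrac{3}{4}(n+1)$ indices coming from $(\mathcal{P}^p)^c$, and (iv) the high-probability lower bound $\|(A_n-\lambda_i I_n)^{-1}X\|_2\gtrsim 1$, the good set $J$ still has size $\ge (n+1)/2$, uniformly in admissible $\delta_1,\delta_2,p$. This forces a consistent choice of the delocalization parameter $\eta$ and a careful accounting of the probability of each exceptional set.
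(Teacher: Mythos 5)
Your decomposition $\mathbb{P}(\mathcal{E})=\mathbb{P}(\mathcal{E}\cap\mathcal{P}^p)+\mathbb{P}(\mathcal{E}\cap(\mathcal{P}^p)^c)$, the use of Proposition~\ref{proposition7.1} for the first term, the no-gap delocalization to find a common index set $J$ of size $\ge(n+1)/2$, and the reduction to a single index via Fact~\ref{fact2.4} all match the paper's argument. Your ``exchangeability'' step is the same Markov/first-moment inequality the paper uses (you write $\mathbf{1}\{|J|\ge(n+1)/2\}\le\frac{2}{n+1}\sum_j\mathbf{1}\{j\in J\}$ and take expectations; the paper writes this directly), so the two routes coincide.

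One step, however, does not work as stated: your argument that the denominator $\sqrt{1+\|(A_n-\lambda_iI_n)^{-1}X\|_2^2}$ may be replaced by $\|(A_n-\lambda_iI_n)^{-1}X\|_2$. You first claim that $\sum_k\mu_k(\lambda_i)^2\gtrsim1$ holds \emph{deterministically} on $(\mathcal{P}^p)^c$ via Corollary~\ref{corollary4.88}; in fact that corollary is a high-probability statement (with exceptional probability $e^{-\Omega(n)}$), not a deterministic consequence of $(\mathcal{P}^p)^c$. More seriously, the upgrade from $\mathbb{E}_X\|(A_n-\lambda_iI_n)^{-1}X\|_2^2\gtrsim1$ to $\|(A_n-\lambda_iI_n)^{-1}X\|_2\gtrsim1$ with probability $1-e^{-\Omega(n)}$ cannot be obtained from $(\mathbf{LSI})$: the map $X\mapsto\|MX\|_2$ has Lipschitz constant $\|M\|_{op}=\mu_1(\lambda_i)$, which is typically of order $\sqrt{n}$ (and can be much larger), so the Gaussian concentration from the log-Sobolev inequality around the $O(1)$ level yields a tail of order $\exp(-c/\mu_1(\lambda_i)^2)$, which is essentially no decay at all. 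The fix is elementary and does not need $(\mathbf{LSI})$: use
\[
\|(A_n-\lambda_iI_n)^{-1}X\|_2\ \ge\ \frac{\|X\|_2}{\|A_n-\lambda_iI_n\|_{op}},
\]
and note that $\|X\|_2\ge c_1\sqrt n$ with probability $1-e^{-\Omega(n)}$ (sub-Gaussian concentration of $\|X\|_2^2$ around $n$) while $\|A_n-\lambda_iI_n\|_{op}\le\|A_n\|_{op}+|\lambda_i|\le C_2\sqrt n$ with probability $1-e^{-\Omega(n)}$ by Lemma~\ref{operatorbound}, giving $\|(A_n-\lambda_iI_n)^{-1}X\|_2\gtrsim1$ and hence $\sqrt{1+\|\cdot\|_2^2}\lesssim\|\cdot\|_2$ outside a set of measure $e^{-\Omega(n)}$. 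With this replacement, your proof is complete; I will note that the paper itself is terse on this same step, so you were right to flag it, only the mechanism you proposed does not deliver the exponential tail.
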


\subsection{Eliminating pathological events.}
We first prove Proposition \ref{proposition7.1}. To complete the proof we need two auxiliary results. The first concerns the eigenvector delocalization of Wigner matrices, taken from Rudelson and Vershynin \cite{rudelson2016no}, Theorem 1.3. This result is fundamental to the validity of the proof of this paper. Note that thanks to the continuous density assumption in Theorem \ref{Theorem1.1}, we do not need to assume that the entries of $A_{ij}$ have the same variance, so the result holds for GOE type matrices as well where the diagonal entries have a different variance.

\begin{theorem}\label{veragaghagag} Fix $B>0$ and let $\zeta\in\Gamma_B$ have a continuous density. Consider $A_n\sim\operatorname{Sym}_n(\zeta)$ and let $v$ be any eigenvector of $A_n$. Then there is some $c_2>0$ such that for any sufficiently small $c_1>0$, for $n$ sufficiently large,
$$
\mathbb{P}(|v_j|\geq (c_2c_1)^6 n^{-1/2}\text{ for at least }(1-c_1)n \text{ indices }j)\geq 1-e^{-c_1n}.
$$    
\end{theorem}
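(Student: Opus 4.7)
The plan is to prove the contrapositive via a union bound over coordinate subsets combined with a Schur-complement invertibility estimate. Abbreviate $\delta:=(c_2c_1)^6$ and $k:=\lceil c_1 n\rceil$. I would bound the probability that there exist an eigenvalue $\lambda$, a unit eigenvector $v$ with $A_nv=\lambda v$, and a set $I\subset[n]$ of size $k$ such that $|v_j|\leq\delta n^{-1/2}$ for all $j\in I$, aiming to defeat both the combinatorial factor $\binom{n}{k}$ and the continuous range of $\lambda$. By Lemma~\ref{operatorbound} I may restrict to the event $\|A_n\|_{op}\leq C\sqrt{n}$, so $\lambda$ lives in $[-C\sqrt{n},C\sqrt{n}]$; since the event in question is Lipschitz-1 in $\lambda$ (after a trivial renormalization), a polynomially-fine net in $n$ for $\lambda$ suffices and contributes only a $\mathrm{poly}(n)$ factor.

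For fixed $(I,\lambda)$, I would apply the Schur-complement reduction. Partitioning $A_n-\lambda I_n$ along $I\cup I^c$, on the event that $A_{I^cI^c}-\lambda I$ is invertible -- guaranteed outside an event of probability $e^{-\Omega(n)}$ by applying a least-singular-value bound to the symmetric Wigner matrix $A_{I^cI^c}$ of size $n-k$ at the bulk location $\lambda$ (Proposition~\ref{proposition6.666} applied to the principal submatrix, which has the same entry distribution) -- the eigenvalue equation restricted to rows in $I^c$ gives
\begin{equation*}
v_{I^c}=-(A_{I^cI^c}-\lambda I)^{-1}A_{I^cI}v_I,
\end{equation*}
and the equation restricted to rows in $I$ then forces $v_I\in\ker S(\lambda)$, where
\begin{equation*}
S(\lambda):=(A_{II}-\lambda I)-A_{II^c}(A_{I^cI^c}-\lambda I)^{-1}A_{I^cI}.
\end{equation*}
Conditioning on $A_{I^cI^c}$ and on the off-diagonal block $A_{I^cI}=A_{II^c}^T$ leaves $S(\lambda)$ as an affine function of the still-independent symmetric block $A_{II}$, which is a $k\times k$ Wigner matrix with continuous-density entries.

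The event then reduces to: the random symmetric matrix $S(\lambda)$ has a nonzero null vector $v_I$ with $\|v_I\|_\infty\leq\delta n^{-1/2}$. A lower bound $\|v_I\|_2\geq\eta(c_1)$ is forced by $\|v\|_2=1$ together with operator-norm control of $(A_{I^cI^c}-\lambda I)^{-1}A_{I^cI}$, which in turn follows from the eigenvalue rigidity estimates of Section~\ref{section4fuck} applied to the submatrix $A_{I^cI^c}$. I would then dyadically decompose the allowed shell of $\|v_I\|_2$, construct an $\epsilon$-net inside the resulting truncated $\ell_\infty$-ball, and for each net point estimate the probability that $S(\lambda)$ annihilates it. Each such annihilation is a system of $k$ linear small-ball constraints on the entries of the independent upper-triangular portion of $A_{II}$; the continuous-density assumption lets me apply Lemma~\ref{lemma5.1} row-by-row to control each small-ball probability by a factor of order $\delta$, yielding roughly $(C\delta)^{k/2}$ after accounting for the symmetry of $A_{II}$ (which cuts the effectively independent rows by a factor two).

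The main obstacle is balancing the three competing ingredients: the combinatorial factor $\binom{n}{k}\leq\exp(c_1 n\log(e/c_1))$, the net cardinality $(C/\delta)^{Ck}$ for the set $\{v_I\in\mathbb{R}^k:\|v_I\|_\infty\leq\delta n^{-1/2},\,\|v_I\|_2\geq\eta\}$, and the small-ball gain $\delta^{k/2}$; the exponent $6$ in $\delta=(c_2 c_1)^6$ is exactly what comes out of this accounting when $c_2$ is chosen as a sufficiently small absolute constant, so that the net and the entropy in $1/c_1$ are both absorbed by the small-ball gain to leave a net super-exponential decay $\exp(-2c_1 n\log(e/c_1))$ per $(I,\lambda)$. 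The continuous density hypothesis is what makes this straightforward; in the discrete setting, much more delicate LCD-based Littlewood--Offord arithmetic would be needed. A final union bound over the polynomially many $\lambda$-net points and over the $\binom{n}{k}$ sets $I$, together with the exceptional events for $\|A_n\|_{op}$ and for the invertibility of $A_{I^cI^c}-\lambda I$, delivers the claimed $1-e^{-c_1 n}$ bound.
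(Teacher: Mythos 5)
The paper does not prove this statement: it is cited verbatim as Theorem~1.3 of Rudelson--Vershynin's no-gaps delocalization paper \cite{rudelson2016no}, and the authors simply quote it. So there is no in-paper proof to compare against; what you have written is an attempt to re-derive a nontrivial external result. The strategy you propose (restrict to $I^c$, invert the minor, push the small-support structure of $v_I$ into a Schur complement, then run a net argument) is a reasonable first thought, but as written it has two real gaps.

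First, the lower bound $\|v_I\|_2 \ge \eta(c_1)$ is not available, and it is actually needed to make the small-ball step go through. From $\|v_I\|_\infty \le \delta n^{-1/2}$ and $|I|=\lceil c_1 n\rceil$ you get \emph{the opposite}: $\|v_I\|_2 \le \sqrt{c_1}\,\delta$, which is tiny. The only lower bound the normalization $\|v\|_2=1$ gives is $\|v_I\|_2 \ge \bigl(1+\|(A_{I^cI^c}-\lambda I)^{-1}A_{I^cI}\|\bigr)^{-1}$, and that operator norm is typically of order $n$ (since $\sigma_{\min}(A_{I^cI^c}-\lambda I)$ is generically only of order $n^{-1/2}$, by interlacing with the spectrum of $A_n$ and the local law of Section~\ref{section4fuck}), so the resulting lower bound is $\Omega(1/n)$, not $\Omega_{c_1}(1)$. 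When you then apply Lemma~\ref{lemma5.1} row by row, the density you obtain for a single row of $S(\lambda)v_I$ scales like $1/\|v_I\|_2$, so the per-row small-ball factor is $\epsilon/\|v_I\|_2$ with $\epsilon$ the net mesh, \emph{not} a clean $\delta$. Plugging in $\|v_I\|_2 \lesssim \sqrt{c_1}\delta$ (or $1/n$), the exponent bookkeeping does not close: the net cardinality $(C/\epsilon)^k$ beats the small-ball gain $(\epsilon/\|v_I\|_2)^{k/2}$ for every choice of $\epsilon$.

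Second, the $\lambda$-net is much too coarse. The event is not Lipschitz in $\lambda$ with a constant you control: $S(\lambda)$ contains $(A_{I^cI^c}-\lambda I)^{-1}$, whose operator norm is $\sim n^{1/2}$ typically and can spike much higher, and whose derivative in $\lambda$ is $(A_{I^cI^c}-\lambda I)^{-2}$, of order $n$. Thus $\lambda\mapsto S(\lambda)$ has Lipschitz constant of order $n^{3/2}$ and $\|S(\lambda)\|$ itself can be $\sim n^{3/2}$, so a polynomially fine $\lambda$-net with mesh $n^{-O(1)}$ does not control the perturbation unless you first establish a uniform-in-$\lambda$ lower bound on $\sigma_{\min}(A_{I^cI^c}-\lambda I)$ — which is itself essentially as hard as the statement you are trying to prove. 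Rudelson and Vershynin handle this by an entirely different mechanism (a quantitative invertibility-over-compressible-vectors estimate for $A-\lambda I$ proved uniformly over a $\lambda$-net, together with a decoupling trick for the symmetric dependence), and this is where the specific exponent $6$ in $(c_2c_1)^6$ comes from. Your plan as written does not reproduce either ingredient, and I would not expect the exponent to come out of a naive balancing of the three factors you list.
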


 The estimate is independent of the chosen eigenvector. 

We also need the following geometric fact from \cite{campos2024least}, Fact 6.7, whose first version appeared in Tao-Vu \cite{tao2017random}.

\begin{Fact}\label{fact7.3}
    Given any $n\times n$ real symmetric matrix $M$ and $\lambda$ as an eigenvector of $M$ with associated eigenvector $u$. Let $j\in[n]$ and assume $\lambda'$ is an eigenvector of the principal minor $M^{(j)}$ corresponding to a unit eigenvector $v$. Then we have
$$
|\langle v,X^{(j)}\rangle|\leq |\lambda-\lambda'|/|u_j|,
$$ where $X^{(j)}$ denotes the $j$-th column of $M$ having removed the $j$-th entry.
\end{Fact}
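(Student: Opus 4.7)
The statement is a deterministic linear algebra identity, and the plan is to derive it from a single rearrangement of the eigenvalue equation restricted to the rows indexed by $[n]\setminus\{j\}$, together with the symmetry of $M$.

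The first step is to partition $u$ as $u=(u_j,u^{(j)})$ where $u^{(j)}\in\mathbb{R}^{n-1}$ is $u$ with its $j$-th coordinate removed, and likewise partition $M$ into the $(1,1)$-block $M_{jj}$, the off-diagonal vector $X^{(j)}$, and the principal minor $M^{(j)}$. Writing out the $n-1$ rows of $Mu=\lambda u$ corresponding to the indices in $[n]\setminus\{j\}$ gives the identity
\begin{equation*}
M^{(j)}u^{(j)}+u_j X^{(j)}=\lambda u^{(j)},
\end{equation*}
so $(M^{(j)}-\lambda I)u^{(j)}=-u_j X^{(j)}$.

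The second step is to pair this relation with the eigenvector $v$ of $M^{(j)}$. Taking the Euclidean inner product of both sides with $v$ and using that $M^{(j)}$ is symmetric (hence self-adjoint), I would transfer $M^{(j)}-\lambda I$ onto $v$ to get
\begin{equation*}
-u_j\langle X^{(j)},v\rangle=\langle u^{(j)},(M^{(j)}-\lambda I)v\rangle=(\lambda'-\lambda)\langle u^{(j)},v\rangle.
\end{equation*}

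The final step is just to take absolute values and bound $|\langle u^{(j)},v\rangle|\leq\|u^{(j)}\|_2\,\|v\|_2\leq 1$ by Cauchy--Schwarz, since $\|v\|_2=1$ and $\|u^{(j)}\|_2\leq\|u\|_2=1$. Dividing through by $|u_j|$ (assumed nonzero, as otherwise the stated bound is vacuous) then yields $|\langle v,X^{(j)}\rangle|\leq|\lambda-\lambda'|/|u_j|$. There is no real obstacle here; the only thing worth flagging is the mild degenerate case $u_j=0$, which is handled by the convention that the right-hand side is $+\infty$ (and, indeed, in that case the identity above forces $(\lambda'-\lambda)\langle u^{(j)},v\rangle=0$, so the inequality is trivially consistent).
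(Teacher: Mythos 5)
Your proof is correct, and it is the standard block-decomposition argument that the cited sources (Campos–Jenssen–Michelen–Sahasrabudhe, Fact~6.7, going back to Tao–Vu) use; the paper itself states the fact without proof. The one thing worth noting is that the bound requires $\|u\|_2\leq 1$, which the statement leaves implicit but your proof correctly makes explicit and which matches how the fact is applied in the paper (to unit eigenvectors).
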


\begin{proof}[\proofname\ of Proposition \ref{proposition7.1}] We first find some value $j$ such that the event in $\mathcal{P}^p$ holds for the subscript $j$ and the unit eigenvectors (denoted $u_1$ and $u_2$) of $A_{n+1}$ corresponding to the least singular values of $A_{n+1}-\lambda_1 I_{n+1}$ and $A_{n+1}-\lambda_2 I_{n+1}$ respectively, satisfy $|(u_1)_j|,|(u_2)_j|\geq (c_2c_1)^6n^{-1/2}$ for some fixed $c_1$. The fact that such $j$ exists among the  given $\frac{1}{4}n$ values of $j$ specified in $\mathcal{P}^p$ are guaranteed by Theorem \ref{veragaghagag} with probability $1-e^{-\Omega(n)}$.

Now we would like to find a dyadic decomposition of $[(\delta_1\delta_2)^p,1]=\cup_{j=1}^{I_n}I_j$ into $I_n$ intervals, where we denote by $I_j=[I_{j,S},I_{j,L}]$ such that $I_{j,L}=2I_{j,S}$ and $I_{j,S}=I_{j+1,L}$ (except for $I_1$, where we do slightly differently). Then we have $I_n=O(\log(\delta_1\delta_2)^{-1})$ and we can classify the condition $\prod_{i=1}^2\sigma_{min}(A_{n}-\lambda_i I_n)\leq (\delta_1\delta_2)^p n^{-1}$ into $(I_n)^2$ sub-events, each given by the form $\sigma_{min}(A_{n}-\lambda_i I_n)\leq x_in^{-1/2}$, $\sigma_{min}(A_{n}-\lambda_i I_n)\leq x_jn^{-1/2}$ and $x_ix_j\leq 4(\delta_1\delta_2)^p$. (Note that if for some $i$ we already have $\sigma_{min}(A_{n}-\lambda_i I_n)\leq (\delta_1\delta_2)^p n^{-1/2}$ we may directly apply Proposition \ref{proposition6.666}, or if for some $i$ we have $\sigma_{min}(A_{n}-\lambda_i I_n)\geq n^{-1/2}$ then for the other $\ell\neq i$ we must have $\sigma_{min}(A_{n}-\lambda_\ell I_n)\leq (\delta_1\delta_2)^p n^{-1/2}$, and we are also done).

Now we analyze each of these $(I_n)^2$ events. We first assume that for at least one $i$ we must have $x_i\geq\delta_i$. Let us take $i=1$ without loss of generality, that is, $\sigma_{min}(A_{n+1}^{(j)}-\lambda_1 I_n)\geq \delta_1$. We first consider the other event $\sigma_{min}(A_{n+1}^{(j)}-\lambda_2 I_n)\leq x_2n^{-1/2}$. By Proposition \ref{proposition6.666}, we have the bound
\begin{equation}\label{secondcond}\mathbb{P}\left(\sigma_{min}(A_{n+1}^{(j)}-\lambda_2 I_n)\leq x_2n^{-1/2}\right)\lesssim x_2+e^{-\Omega(n)}.\end{equation}
Now we return to $\lambda_1$. By assumption that $x_1\geq\delta_1$, we need only to bound
    \begin{equation}\label{conditionone}
\mathbb{P}\left(\sigma_{min}(A_{n+1}-\lambda_1 I_{n+1})\leq x_1n^{-1/2},\sigma_{min}(A_{n+1}^{(j)}-\lambda_1 I_{n})\leq x_1n^{-1/2}\mid \mathcal{E}_{2,p} 
\right) 
    \end{equation} where we are conditioning on the event 
    $$ \mathcal{E}_{2,p}:=\{\sigma_{min}(A_{n+1}^{(j)}-\lambda_2 I_n)\leq x_2n^{-1/2}\}.$$

We use Fact \ref{fact7.3}, where we make the crucial observation that $X^{(j)}$ is independent of $A_{n+1}^{(j)}$, so we may assume that the vector $v$ is any fixed vector of unit length in Fact \ref{fact7.3} and apply the randomness of $X^{(j)}$. Since $X^{(j)}$ has a bounded density and $v$ has unit length, we apply Lemma \ref{lemma5.1} and conclude that
\begin{equation}
    \eqref{conditionone}\lesssim x_1+e^{-\Omega(n)}.
\end{equation} Then we can combine this with \eqref{secondcond} to obtain 
$$\begin{aligned}&\mathbb{P}\left(\sigma_{min}(A_{n+1}-\lambda_i I_{n+1})\leq \delta_i n^{-1/2},\sigma_{min}(A_{n+1}^{(j)}-\lambda_i I_{n})\leq x_in^{1/2},i=1,.2 
\right)\\&\quad\quad\lesssim x_1x_2+e^{-\Omega(n)}\lesssim (\delta_1\delta_2)^p+e^{-\Omega(n)}.\end{aligned}$$

Then we are left with the remaining case where $x_1<\delta_1$ and $x_2<\delta_2$. Our assumption that $x_1x_2<(\delta_1\delta_2)^p$ implies that for some $i=1,2$ we have $x_i\leq\delta_i(\delta_1\delta_2)^{(p-1)/2}$. Let us say we have $i=2$ ($i=1$ is analogous), so $x_2\leq \delta_2(\delta_1\delta_2)^{(p-1)/2}$.

We first apply Proposition \ref{proposition6.666} and obtain the same estimate as in \eqref{secondcond}. We then apply fact \ref{fact7.3} and Lemma \ref{lemma5.1} to obtain
\begin{equation}\begin{aligned}
&\mathbb{P}\left(\sigma_{min}(A_{n+1}-\lambda_1 I_{n+1})\leq \delta_1n^{-1/2},\sigma_{min}(A_{n+1}^{(j)}-\lambda_1 I_{n})\leq \delta_1n^{-1/2}\mid \mathcal{E}_{2,p} 
\right)\\& \lesssim \delta_1+e^{-\Omega(n)}
\end{aligned}    \end{equation} Combining these two estimates, we have
$$\begin{aligned}&\mathbb{P}\left(\sigma_{min}(A_{n+1}-\lambda_i I_{n+1})\leq \delta_i n^{-1/2},\sigma_{min}(A_{n+1}^{(j)}-\lambda_i I_{n})\leq x_in^{1/2},i=1,.2 
\right)\\&\quad\quad\lesssim \delta_1x_2+e^{-\Omega(n)}\lesssim (\delta_1\delta_2)^{(p+1)/2}+e^{-\Omega(n)}.\end{aligned}$$
Then we complete the proof by considering all the $(I_n)^2$ events, which leads to an entropy cost of $\log((\delta_1\delta_2)^{-1})^{C_d}$.
\end{proof}
We note that in the proof we have repeatedly applied Lemma \ref{lemma5.1} to deduce small ball probabilities assuming that $X^{(j)}$ has a continuous density. This can well be replaced by applying the inverse Littlewood-Offord lemmas (see \cite{rudelson2008littlewood}) and we do not need to assume a continuous density. However, in other parts of the paper, such Littlewood-Offord lemmas are not effective enough to address the joint LCD of many different eigenvectors (see \cite{rudelson2009smallest} for the use of joint LCD for bounding singular values); therefore, we do not use this approach and maintain the continuous density assumption throughout.

\subsection{Refinement of invertibility via distance}

Now we have shown that in order to prove
 $$ \mathbb{P}(\sigma_{min}(A_{n+1}-\lambda_i I_{n+1})\leq\delta_i n^{-1/2},i=1,2)\lesssim \delta_1\delta_2+e^{-\Omega(n)},$$ it suffices to prove, for any $p>1$,
  \begin{equation}\label{what'fnesa} \mathbb{P}(\sigma_{min}(A_{n+1}-\lambda_i I_{n+1})\leq\delta_i n^{-1/2},i=1,2,\quad\wedge\quad (\mathcal{P}^p)^c)\lesssim \delta_1\delta_2+e^{-\Omega(n)}.\end{equation} We now investigate how to determine this quantity. First recall the fundamental fact (see \cite{rudelson2008littlewood})

  \begin{Fact}
      For any $n\times n$ symmetric matrix, let $v$ be the unit vector corresponding to the least singular value of $M$, that is, $\|Mv\|_2=\sigma_{min}(M)$ and $\|v\|_2=1$. Then we have
      $$ \sigma_{min}(M)\geq|v_j|\cdot d_j(M)\quad\text{ for each }j\in[n],
      $$ where $d_j(M)$ is the distance of the $j$-th column of $M$ to the subspace spanned by the other $n-1$ columns of $M$.
  \end{Fact}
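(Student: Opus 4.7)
The plan is a one-line linear-algebra observation: decompose the equation $Mv = \sum_{i=1}^n v_i M_i$, where $M_1,\ldots,M_n$ denote the columns of $M$, and use it to exhibit an explicit element of the hyperplane $H_j := \mathrm{span}\{M_i : i \neq j\}$ that is close to $M_j$. Fix $j \in [n]$. If $v_j = 0$ the inequality is trivial, so assume $v_j \neq 0$ and solve for the $j$-th column:
\begin{equation*}
M_j \;=\; \frac{1}{v_j}\Bigl( Mv - \sum_{i \neq j} v_i M_i \Bigr).
\end{equation*}
The vector $w := -\frac{1}{v_j} \sum_{i \neq j} v_i M_i$ lies in $H_j$ by construction, so the definition of $d_j(M)$ as the distance from $M_j$ to $H_j$ gives
\begin{equation*}
d_j(M) \;\leq\; \|M_j - w\|_2 \;=\; \frac{1}{|v_j|}\|Mv\|_2 \;=\; \frac{\sigma_{\min}(M)}{|v_j|},
\end{equation*}
where the last equality uses the defining property $\|Mv\|_2 = \sigma_{\min}(M)$. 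Rearranging yields $|v_j| \cdot d_j(M) \leq \sigma_{\min}(M)$, which is the claim.

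There is no substantive obstacle here; the only ``idea'' is to recognize that the distance of $M_j$ to $H_j$ is witnessed by the specific linear combination obtained from the minimizing equation $Mv = \sigma_{\min}(M)\cdot u$ for some unit $u$, once one normalizes so that $M_j$ sits alone on one side. Symmetry of $M$ plays no role in the proof, and the argument in fact holds verbatim for any square matrix; the statement is invoked in the paper precisely to convert the least-singular-value problem into the distance-from-hyperplane problem that drives the subsequent induction.
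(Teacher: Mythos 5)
Your proof is correct and is the standard argument for this fact; the paper itself does not supply a proof but simply cites Rudelson--Vershynin, and your derivation is exactly the one found there: write $Mv=\sum_i v_iM_i$, isolate $M_j$, and observe that the resulting linear combination of the remaining columns witnesses the distance bound. Your remark that symmetry of $M$ is irrelevant is also accurate.
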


\begin{proof}[\proofname\ of Proposition \ref{finalfuckpropositionga}]
 Let the eigenvalues associated with the least singular value of $A_{n+1}-\lambda_i I_{n+1}$ be denoted by $u_i$ for each $i=1,2$. Let $\mathcal{C}$ be the rare event that, for some $i=1,2$, $u_i$ does not satisfy the conclusion of Theorem \ref{veragaghagag} where we take $c_1=\frac{1}{4}$. Then $\mathbb{P}(\mathcal{C})=e^{-\Omega(n)}$. On $\mathcal{C}^c$, we have that 
 \begin{equation}\label{fuckyousga}\sigma_{min}(A_{n+1}-\lambda_i I_{n+1})\geq c_3 n^{-1/2}\cdot d_j(A_{n+1}-\lambda_iI_{n+1})\text{ for both } i=1,2\end{equation}
for at least $\frac{1}{2}n$ choices of $j\in[n+1]$, where $c_3$ is a universal constant. 

Furthermore, by assumption of $(\mathcal{P}^{p})^c$, there are at least $\frac{1}{4}n$ values of $j\in[n+1]$ such that \eqref{fuckyousga} holds for such $j$ and moreover, 
$$
\prod_{i=1}^2 \sigma_{min}(A_{n+1}^{(j)}-\lambda_i I_{n})\geq (\delta_1\delta_2)^p n^{-1}.
$$

Now let $S\subset \{1,2,\cdots,n+1\}$ denote the collection of subscripts $j\in[n+1]$ such that the following event $\mathcal{H}_j$ holds (for some universal constant $C$) 
$$\mathcal{H}_j:=
\left\{d_j(A_{n+1}-\lambda_i I_{n+1})\leq C\delta_i\text{ for } i=1,2, \prod_{i=1}^2 \sigma_{min}(A_{n+1}^{(j)}-\lambda_i I_{n})\geq (\delta_1\delta_2)^p n^{-1}\right\}.
$$
Our previous discussions imply that
$$\left\{\sigma_{min}(A_{n+1}-\lambda_i I_{n+1})\leq\delta_i n^{-1/2},i=1,2\right\}\bigcap (\mathcal{P}^p)^c\bigcap\mathcal{C}^c\Rightarrow |S|>\frac{1}{4}n.$$

By a classical first moment method, $$\mathbb{P}(|S|>\frac{1}{4}n)\leq\frac{4}{n}\sum_{j=1}^{n+1}\mathbb{P}(\mathcal{H}_j),$$
therefore, for some specific $j_*\in[n+1]$ we have $\mathbb{P}(|S|\geq\frac{1}{4}n)\leq 5 \mathbb{P}(\mathcal{H}_{j_*})$ when $n$ is large.

In the final step, we use Fact \ref{fact2.4} to rewrite $d_j(A_{n+1}-\lambda_i I_{n+1})$ in the form of distance and complete the proof.
\end{proof}

Note that in contrast to the usual proofs via invertibility by distance (see \cite{rudelson2008littlewood} and \cite{campos2024least}, Lemma 6.3) where one only uses the fact that the test vector is incompressible, here we use the much stronger result from \cite{rudelson2016no} that both eigenvectors have $(1-c_1n)$ coordinates that are not too small, so that we can find many coordinates that are not too small for \textbf{both} vectors. This stronger property is fundamental in this multimatrix setting.

\section{Bootstrap and proof completion: two locations}\label{bootstrapping209}

In this section we complete the proof of our main Theorem, theorem \ref{Theorem1.1} for the $d=2$ case. We will first use an initial estimate $(\delta_1\delta_2)^{1/10}$ and bootstrap to obtain an estimate $\delta_1\delta_2(\log(\delta_1\delta_2)^{-1})^{1/2}$, and subsequently eliminate the log factor and obtain the optimal bound.

\subsection{Initial estimate and bootstrapping}
First recall the well-known concentration inequality from Hanson and Wright \cite{wright1973bound} \cite{hanson1971bound}, see also \cite{vershynin2018high}, Theorem 6.2.1.

\begin{theorem}[Hanson-Wright]\label{hansonwright8.1}
    Given $B>0$, $\zeta\in\Gamma_B$ and consider $X\sim\operatorname{Sym}_n(\zeta)$. Let $M$ be an $n\times n$ matrix. Then given any $t>0$,
    $$
\mathbb{P}_X(|\|MX\|_2-\|M\|_{HS}|>t)\leq 4\exp(-\frac{ct^2}{B^4\|M\|_2})
    $$ for a universal constant $c>0$.
\end{theorem}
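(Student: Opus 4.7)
My plan is to recognize this as a standard version of the Hanson--Wright inequality applied to the Euclidean norm $\|MX\|_2$ (essentially Theorem~6.3.2 in Vershynin's book), and proceed in two steps. I read $X\sim\operatorname{Col}_n(\zeta)$ (a random vector with i.i.d.\ subGaussian coordinates) and the denominator as $B^4\|M\|_{op}^2$.

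First, I would establish a concentration bound for the squared quantity $\|MX\|_2^2 = X^T(M^TM)X$. Since $\mathbb{E}\|MX\|_2^2 = \operatorname{tr}(M^TM) = \|M\|_{HS}^2$, the classical Hanson--Wright inequality for quadratic forms in independent subGaussian variables gives
\begin{equation*}
\mathbb{P}\!\left(\left|\|MX\|_2^2 - \|M\|_{HS}^2\right| > s\right) \leq 2\exp\!\left(-c\min\!\left(\frac{s^2}{B^4\|M^TM\|_{HS}^2},\,\frac{s}{B^2\|M^TM\|_{op}}\right)\right),
\end{equation*}
after simplifying via $\|M^TM\|_{op}=\|M\|_{op}^2$ and $\|M^TM\|_{HS}\leq\|M\|_{op}\|M\|_{HS}$. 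For the classical bound itself I would cite Vershynin; its proof decouples the diagonal part (standard subGaussian sum concentration) from the off-diagonal part (replace one copy of $X$ by an independent copy inside the MGF and apply a Gaussian-type comparison).

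Second, I would convert the quadratic concentration into the stated norm concentration via the elementary identity $(\|MX\|_2-\|M\|_{HS})(\|MX\|_2+\|M\|_{HS}) = \|MX\|_2^2-\|M\|_{HS}^2$, splitting into two regimes. If $|\|MX\|_2-\|M\|_{HS}|>t$ with $t\leq\|M\|_{HS}$, then $|\|MX\|_2^2-\|M\|_{HS}^2|>t\|M\|_{HS}$, so substituting $s=t\|M\|_{HS}$ into the quadratic bound produces $2\exp(-c\min(t^2/(B^4\|M\|_{op}^2),\,t\|M\|_{HS}/(B^2\|M\|_{op}^2)))$, and one checks that in this regime (using $B\geq 1$, which follows from $\operatorname{Var}(\zeta)=1$) the minimum is achieved by the first argument. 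If instead $t>\|M\|_{HS}$, positivity of $\|MX\|_2$ forces $|\|MX\|_2^2-\|M\|_{HS}^2|>t^2$, and substituting $s=t^2$ again collapses the minimum to a bound of the form $\exp(-ct^2/(B^4\|M\|_{op}^2))$. Summing the two regimes accounts for the multiplicative constant $4$.

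The main (minor) obstacle is bookkeeping in the regime-splitting: one has to verify in each case that the ``bad'' subexponential term $s/(B^2\|M^TM\|_{op})$ never wins against the desired Gaussian tail. This works out cleanly precisely because the operator-norm factor $\|M^TM\|_{op}=\|M\|_{op}^2$ already matches the denominator we want, so the subexponential regime only activates at tails larger than $\|M\|_{HS}^2$, which the square-root transformation pushes beyond the Gaussian tail $t^2/(B^4\|M\|_{op}^2)$. No additional randomness-of-$A_n$ input is required, so the lemma is a black-box probabilistic statement that will later be invoked with deterministic choices of $M$ depending on $A_n$.
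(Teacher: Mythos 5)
Your proposal is correct and coincides with the standard derivation in the very reference the paper cites (Vershynin's \emph{High-Dimensional Probability}, deriving Theorem~6.3.2 from the quadratic-form Hanson--Wright, Theorem~6.2.1); the paper itself quotes this statement as a black box without giving a proof, and you also correctly read $X\sim\operatorname{Col}_n(\zeta)$ and $\|M\|_{op}^2$ for the two evident typos. Two minor imprecisions worth flagging: the paper's definition of $\|\cdot\|_{\psi_2}$ (taking $p=2$) only forces $B\geq 1/\sqrt{2}$, not $B\geq 1$, but this costs merely a universal constant in the exponent; and since your two regimes $t\leq\|M\|_{HS}$ and $t>\|M\|_{HS}$ are disjoint in $t$, each separately already gives the bound $2\exp(\cdot)$, so the stated constant $4$ is simply generous rather than arising from a sum.
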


We can then perform the initial estimate as follows:

\begin{lemma}\label{lemma7.27} Let the random matrix $A_n$ satisfy the assumptions of Theorem \ref{Theorem1.1}. For any fixed $\kappa>0$ and $\Delta>0$, and any $\lambda_1,\lambda_2\in[-(2-\kappa)\sqrt{n},(2-\kappa)\sqrt{n}]$ with $|\lambda_1-\lambda_2|\geq\Delta\sqrt{n}$, we have the following initial estimate:
for any $\delta_1,\delta_2>0$,
$$
\mathbb{P}\left(\sigma_{min}(A_{n+1}-\lambda_i I_{n+1})\leq \delta_i n^{-1/2}, \quad i=1,2 \right)\lesssim (\delta_1\delta_2)^{1/10}+e^{-\Omega(n)}.
$$
\end{lemma}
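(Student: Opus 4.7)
The plan is to derive this loose initial bound directly from the single-location estimate (Proposition~\ref{proposition6.666}) via an elementary monotonicity argument, without invoking the full two-location machinery of Sections~\ref{chap2chap2}--\ref{chap4chap4chap4}. The joint small-ball event is contained in each individual small-ball event, so
\[
\mathbb{P}\bigl(\sigma_{\min}(A_{n+1}-\lambda_i I_{n+1}) \leq \delta_i n^{-1/2},\ i=1,2\bigr) \leq \min_{i=1,2} \mathbb{P}\bigl(\sigma_{\min}(A_{n+1}-\lambda_i I_{n+1}) \leq \delta_i n^{-1/2}\bigr).
\]

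Applying Proposition~\ref{proposition6.666} to each term on the right (it holds uniformly over $\lambda_i$ in the bulk interval) gives the upper bound $\min(c\delta_1, c\delta_2) + e^{-\Omega(n)}$, and then the elementary inequality $\min(\delta_1,\delta_2) \leq \sqrt{\delta_1\delta_2}$ yields $\lesssim \sqrt{\delta_1\delta_2} + e^{-\Omega(n)}$. In the nontrivial range $\delta_1\delta_2 \leq 1$ this is bounded by $(\delta_1\delta_2)^{1/10} + e^{-\Omega(n)}$ since $\sqrt{\delta_1\delta_2} \leq (\delta_1\delta_2)^{1/10}$, while for $\delta_1\delta_2 > 1$ the right-hand side of the claimed bound already exceeds $1$ and the conclusion is automatic.

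I do not see a genuine obstacle along this route: the target exponent $1/10$ is deliberately weak, because all that the bootstrap in Section~\ref{bootstrapping209} needs is a polynomial-in-$\delta_1\delta_2$ initial estimate that can be fed into the conditional moments of $\mu_1(\lambda_1)\mu_1(\lambda_2)/n$. A second, more involved derivation, starting from Proposition~\ref{finalfuckpropositionga} and Lemma~\ref{lemma6.111} and using only the crude conditional bound $\mathbb{E}\bigl[(\mu_1(\lambda_1)\mu_1(\lambda_2)/n)^{9/10} \bigm| \mathcal{E}(p,\delta_1\delta_2)\bigr] \leq (\delta_1\delta_2)^{-9p/10}$ coming directly from the cutoff $\{\mu_1(\lambda_1)\mu_1(\lambda_2)/n \leq (\delta_1\delta_2)^{-p}\}$, followed by the choice $p = 81/80$ so that $1 - 8p/9 = 1/10$, is more in the spirit of the subsequent bootstrap but produces no stronger conclusion at this stage. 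I would therefore record the simple monotonicity proof as the proof of Lemma~\ref{lemma7.27}, and defer the heavier machinery to the next iteration where it is genuinely needed to upgrade the exponent from $1/10$ towards $1$.
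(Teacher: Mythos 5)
Your primary argument is correct and is genuinely different from (and considerably simpler than) the paper's proof. Your chain
\[
\mathbb{P}(\text{joint event}) \le \min_{i=1,2}\mathbb{P}\bigl(\sigma_{\min}(A_{n+1}-\lambda_i I_{n+1})\le\delta_i n^{-1/2}\bigr) \le c\min(\delta_1,\delta_2)+e^{-cn}\le c\sqrt{\delta_1\delta_2}+e^{-cn}
\]
is airtight, and for $\delta_1\delta_2\le 1$ the geometric mean dominates $(\delta_1\delta_2)^{1/10}$ since $x^{1/2}\le x^{1/10}$ for $x\in(0,1]$; the case $\delta_1\delta_2>1$ is vacuous. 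In fact you prove the stronger bound with exponent $1/2$ rather than $1/10$, and you never use the separation hypothesis $|\lambda_1-\lambda_2|\ge\Delta\sqrt{n}$. By contrast the paper's proof reaches the same goal by routing through Proposition~\ref{finalfuckpropositionga}, Lemma~\ref{lemma6.111}, and Hanson--Wright, deliberately exercising the full two-location pipeline with the crude conditional bound $\mathbb{E}[(\mu_1(\lambda_1)\mu_1(\lambda_2)/n)^{9/10}\mid\mathcal{E}]\le(\delta_1\delta_2)^{-9p/10}$ and $p$ slightly above $1$. What the paper's route buys is a proof that is structurally identical to each bootstrap iteration, so the reader only needs to verify one computation; what your route buys is brevity, a sharper initial exponent, and complete independence from the two-location machinery. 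Either is acceptable as the seed for the bootstrap, since Lemma~\ref{bootstraplemma} only needs some polynomial power of $\delta_1\delta_2$ to start. One small correction to your aside on the second derivation: choosing $p=81/80$ gives exactly $1-8p/9=1/10$ but then the residual $(\log(\delta_1\delta_2)^{-1})$ factor from Hanson--Wright is not absorbed; one must take $p$ strictly larger than $81/80$ (the paper uses $p=1.01$) so the exponent falls strictly below $1/10$ and the logarithm can be swallowed.
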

The rate $\epsilon^{1/10}$ is rather weak, but we will bootstrap it to the optimal rate later.

\begin{proof}
By Proposition \ref{finalfuckpropositionga}, it suffices to prove that for any $(r_1,r_2)\in\mathbb{R}^2$ and some $p>1$,
\begin{equation}\begin{aligned}
    &\mathbb{P}_{A_n,X}\left(
    \frac{|\langle (A_n-\lambda_i I_n)^{-1}X,X\rangle-r_i|}{\|(A_n-\lambda_iI_n)^{-1}X\|_2}\leq C\delta_i,  i=1,2,  \prod_{i=1}^2\sigma_{min}(A_n-\lambda_iI_n)\geq (\delta_1\delta_2)^pn^{-1}
    \right)\\&\lesssim (\delta_1\delta_2)^{1/10}+e^{-\Omega(n)}.
\end{aligned}\end{equation}
Applying Hanson-Wright (Theorem \ref{hansonwright8.1}), there is a $C'>0$ satisfying, for $i=1,2$,

\begin{equation}
    \mathbb{P}_X(\|(A_n-\lambda_i I_n)^{-1}X\|_2\geq C'(\log(\delta_1\delta_2)^{-1})^{1/2}\cdot\|(A_n-\lambda_i I_n)^{-1}\|_{HS})\leq \delta_1\delta_2,
\end{equation}
so it suffices to bound, for any $(\theta_1,\theta_2)\in\mathbb{R}^2$,
\begin{equation}\label{sufficestobound}
\mathbb{P}_{A_n,X}\left(\frac{|\langle (A_n-\lambda_i I_n)^{-1}X,X \rangle-r_i|}{\|(A_n-\lambda_i I_n)\|_{HS}}\leq\bar{\delta}_i,i=1,2,\prod_{i=1}^2\sigma_n(A_n-\lambda_i I_n)\geq \delta_in^{-1/2}\right),\end{equation} where we set $\bar{\delta}_i:=C^{''}\delta_i(\log(\delta_1\delta_2)^{-1})^{1/2}$ for $i=1,2$, and we used the fact that $\|M\|_*\geq \|M\|_{HS}$ for any square matrix $M$.

Then we apply Proposition \ref{finalfuckpropositionga} (with $p=1.01$) and conclude with

\begin{equation}
    \eqref{sufficestobound}\lesssim \bar{\delta}_1\bar{\delta_2}(\delta_1\delta_2)^{-8p/9}+e^{-\Omega(n)}\leq (\delta_1\delta_2)^{1/10}+e^{-\Omega(n)}.
\end{equation}
\end{proof}

Now we set up our bootstrapping estimate. 
\begin{lemma}\label{bootstraplemma} Let $A_{n+1}$ and $\lambda_1,\lambda_2$ satisfy the assumptions of Theorem \ref{Theorem1.1}.
Suppose that for any $\delta_1,\delta_2>0$ and all $n$ we already have
\begin{equation}
    \mathbb{P}\left(\sigma_{min}(A_{n+1}-\lambda_iI_{n+1})\leq\delta_i n^{-1/2},i=1,2\right)\lesssim(\delta_1\delta_2)^\tau+e^{-\Omega(n)}. 
\end{equation} Then for all all $\delta_1,\delta_2>0$, all $n$ and any $p>1$, and $\omega>0$ that is sufficiently small, we have 
\begin{equation}\begin{aligned}&
    \mathbb{P}(\sigma_{min}(A_{n+1}-\lambda_iI_{n+1})\leq\delta_i n^{-1/2},i=1,2)\\&\lesssim
    (\delta_1\delta_2)^{\min(1,\frac{80}{81}(1-\omega)\tau p-\frac{8}{9}p+1)}(\log(\delta_1\delta_2)^{-1})^{1/2}+e^{-\Omega(n)}.
\end{aligned}\end{equation}
\end{lemma}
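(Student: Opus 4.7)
The plan is to reduce via Proposition \ref{finalfuckpropositionga}, convert the resulting $X$-quadratic-form probability into the setting of Lemma \ref{lemma6.111} via a Hanson-Wright truncation, and then feed the hypothesis back through Lemma \ref{singularvalueproduct} to bound the conditional moment of $Y:=\mu_1(\lambda_1)\mu_1(\lambda_2)/n$ appearing in Lemma \ref{lemma6.111}. Concretely, applying Proposition \ref{finalfuckpropositionga} with the given $p>1$ reduces the target probability to $\delta_1\delta_2+e^{-\Omega(n)}$ plus the expectation
\begin{equation*}
E := \mathbb{E}_{A_n}\sup_{r_1,r_2}\mathbb{P}_X\!\left(\frac{|\langle (A_n-\lambda_i I_n)^{-1}X,X\rangle-r_i|}{\|(A_n-\lambda_i I_n)^{-1}X\|_2}\leq\delta_i,\ i=1,2,\ Y\leq(\delta_1\delta_2)^{-p}\right),
\end{equation*}
and repeating the Hanson-Wright step from the proof of Lemma \ref{lemma7.27} (together with $\|M\|_{HS}\leq\|M\|_*$) replaces the denominators by $\|(A_n-\lambda_iI_n)^{-1}\|_*$ at the cost of inflating the thresholds to $\bar\delta_i:=C\delta_i(\log(\delta_1\delta_2)^{-1})^{1/2}$ and adding an exceptional $X$-event of probability $\lesssim\delta_1\delta_2$.

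With $u=0$ and $s=0$ (so that the auxiliary constraint $\langle X,u\rangle\geq s$ is vacuous and the prefactor $e^{-s}$ equals $1$), Lemma \ref{lemma6.111} then gives
\begin{equation*}
E \;\lesssim\; \bar\delta_1\bar\delta_2\Bigl(1 + \mathbb{E}\bigl[Y^{9/10}\bigm| Y\leq(\delta_1\delta_2)^{-p}\bigr]^{80/81}\Bigr) + e^{-\Omega(n)}.
\end{equation*}
The inductive hypothesis, combined with Lemma \ref{singularvalueproduct} for $d=2$ and the one-location bound of Proposition \ref{proposition6.666}, produces the tail estimate $\mathbb{P}(Y\geq t)\lesssim t^{-\tau}(\log t)^2+e^{-\Omega(n)}$ for $t\geq 1$. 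Integrating $(9/10)t^{-1/10}\,dt$ against this tail on $[0,K]$ with $K=(\delta_1\delta_2)^{-p}$ yields, in the non-trivial case $\tau<9/10$,
\begin{equation*}
\mathbb{E}[Y^{9/10}\mathbf{1}_{Y\leq K}] \;\lesssim\; K^{9/10-\tau}(\log K)^2 + K^{9/10}e^{-\Omega(n)},
\end{equation*}
while $\tau\geq 9/10$ gives an $O(1)$ integral and corresponds directly to the $\min=1$ branch; since $\mathbb{P}(Y\leq K)$ is bounded away from $0$ for large $K$, the conditional expectation inherits the same bound up to constants.

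Substituting $K=(\delta_1\delta_2)^{-p}$, raising to the $80/81$-th power and multiplying by $\bar\delta_1\bar\delta_2$ produces a contribution of order $(\delta_1\delta_2)^{80p\tau/81-8p/9+1}\cdot(\log(\delta_1\delta_2)^{-1})^{\alpha}$ for a finite $\alpha$ collected from the Hanson-Wright rescaling together with the $(\log K)^{160/81}$ arising from the moment bound. All but a final $(\log(\delta_1\delta_2)^{-1})^{1/2}$ of these log factors are then absorbed into an $\omega$-loss in the $\tau$-exponent via the elementary bound $(\log x^{-1})^{\alpha-1/2}\leq C_\omega x^{-\omega\cdot 80p\tau/81}$ valid for small $x$, producing the advertised exponent $\frac{80}{81}(1-\omega)\tau p-\frac{8}{9}p+1$. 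Taking the minimum with $1$ (guaranteed by the opening $\delta_1\delta_2$ term of Proposition \ref{finalfuckpropositionga}) produces the stated bound. The main obstacle is purely bookkeeping: one must verify that the various log factors, the $80/81$ Hölder loss already built into Lemma \ref{lemma6.111}, and the denominator $\mathbb{P}(Y\leq K)$ can be controlled uniformly across the range $\delta_1,\delta_2\geq e^{-\Omega(n)}$, beyond which the $e^{-\Omega(n)}$ residual dominates trivially.
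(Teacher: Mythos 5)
Your proposal follows essentially the same route as the paper's own proof: reduce via Proposition~\ref{finalfuckpropositionga}, replace the stochastic denominator $\|(A_n-\lambda_iI_n)^{-1}X\|_2$ by $\|(A_n-\lambda_iI_n)^{-1}\|_*$ at the cost of a $(\log(\delta_1\delta_2)^{-1})^{1/2}$ factor via Hanson--Wright (Theorem~\ref{hansonwright8.1}) and $\|M\|_*\geq\|M\|_{HS}$, then feed the resulting thresholds into Lemma~\ref{lemma6.111}, and finally bound the conditional moment of $Y=\mu_1(\lambda_1)\mu_1(\lambda_2)/n$ by integrating the tail supplied by the $\tau$-hypothesis together with Lemma~\ref{singularvalueproduct} (whose one-location assumption comes from Proposition~\ref{proposition6.666}). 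The only cosmetic difference is bookkeeping order: the paper absorbs the $(\log\epsilon^{-1})^2$ factor from Lemma~\ref{singularvalueproduct} into the tail exponent \emph{before} integrating (writing the tail as $x^{-(1-\omega)10\tau/9}$), whereas you carry the $\log^2$ through the layer-cake/by-parts integral and absorb the accumulated log factors into $(\delta_1\delta_2)^{-\omega\cdot 80p\tau/81}$ at the end; these are equivalent, and in both cases the residual $(\log(\delta_1\delta_2)^{-1})^{1/2}$ is immaterial because it is stripped away in Section~\ref{removalladfaga}.
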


\begin{proof}
    Assume $\delta_1,\delta_2\geq e^{-cn}$, otherwise the claim is trivial.
    We first apply Lemma \ref{singularvalueproduct} to obtain the following: for any $\epsilon>0$, and any sufficiently small $\omega>0$,
    $$\begin{aligned}&
\mathbb{P}(\sigma_{min}(A_{n+1}-\lambda_1I_{n+1})\cdot \sigma_{min}(A_{n+1}-\lambda_2I_{n+1})\\&\quad\leq \epsilon n^{-1})\lesssim \epsilon(\log\epsilon^{-1})^{d}+e^{-\Omega(n)}\lesssim \epsilon^{1-\omega}+e^{-\Omega(n)}.\end{aligned}
    $$
    
    Following the same lines as in the proof of the base step, we evaluate (using Lemma \ref{lemma6.111} with parameter $p$, with $u=0$, $s=0$ and take $\epsilon=\delta_1\delta_2$):
$$\begin{aligned}
&\mathbb{E}_{A_n}\left[\left(\frac{\mu_1(\lambda_1)\mu_1(\lambda_2)}{n}\right)^{9/10}\mathbf{1}\left\{\frac{\mu_1(\lambda_1)\mu_1(\lambda_2)}{n}\leq\epsilon^{-p}\right\}\right]
\\&
\lesssim \int_0^{\epsilon^{-9p/10}}\mathbb{P}\left(\sigma_{min}(A_n-\lambda_1I_n)\cdot\sigma_{min}(A_n-\lambda_2I_n)\leq x^{-10/9}n^{-1}\right)dx,\\&
\lesssim 1+\int_1^{\epsilon^{-9p/10}}(x^{-(1-\omega)10\tau/9}+e^{-cn})dx\lesssim \max\{1,\epsilon^{(1-\omega)\tau p-9p/10}\}.
\end{aligned}$$

Then we apply Lemma \ref{lemma6.111} with $s=0$, $u=0$ and get: for any $(r_1,r_2)\in\mathbb{R}^2$,
\begin{equation}\begin{aligned}\mathbb{P}_{A_n,X}&\left(\prod_{i=1}^2 \frac{|\langle (A_n-\lambda_i I_n)^{-1}X,X\rangle-r_i|}{\|A_n-\lambda_i I_n\|_{HS}}\leq\delta_1\delta_2,\frac{\mu_1(\lambda_1)\mu_1(\lambda_2)}{n}\leq\epsilon^{-p}
\right)\\&\lesssim \max\{\delta_1\delta_2,(\delta_1\delta_2)^{\frac{80}{81}(1-\omega)\tau p-\frac{8}{9}p+1}\}\cdot(\log \epsilon^{-1})^{1/2}+e^{-\Omega(n)},
\end{aligned}\end{equation}
where we have used the fact that $\|A_n-\lambda_i I_n\|_{HS}\leq \|A_n-\lambda_i I_n\|_*$.

Finally, applying Hanson-Wright (Theorem \ref{hansonwright8.1}), for $i=1,2$, we obtain
$$
\mathbb{P}_X\left(\|(A_n-\lambda_i I_n)^{-1}X\|_2\geq C'\|(A_n-\lambda_i I_n)^{-1}\|_{HS}\cdot(\log(\delta_1\delta_2)^{-1})^{1/2}\right)\leq\delta_1\delta_2.
$$
\end{proof}

Now we can finish the bootstrap procedure. We need to first fix $p$, bootstrap to increase $\tau$, then increase $p$ and bootstrap $\tau$ again. We will prove that only finitely many steps are needed.

\begin{lemma}\label{lemma8.4wefinish}
    In the setting of Lemma \ref{lemma7.27}, for any $\delta_1,\delta_2>0$,
$$
\mathbb{P}\left(\sigma_{min}(A_{n+1}-\lambda_i I_{n+1})\leq \delta_i n^{-1/2}, \quad i=1,2 \right)\lesssim \delta_1\delta_2(\log(\delta_1\delta_2)^{-1})^{1/2}+e^{-\Omega(n)}.
$$

\end{lemma}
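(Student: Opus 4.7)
The plan is to iterate the bootstrap estimate in Lemma \ref{bootstraplemma}, starting from the initial exponent $\tau_0 = 1/10$ supplied by Lemma \ref{lemma7.27}, and push the exponent up to $1$ in finitely many steps. Fix parameters $p > 1$ very close to $1$ and $\omega > 0$ very small, so that the affine map
\[
T(\tau) := \tfrac{80}{81}(1-\omega)\, p\, \tau + 1 - \tfrac{8}{9}p
\]
has slope $a := \tfrac{80}{81}(1-\omega)p$ strictly less than $1$ and a fixed point
\[
\tau^{*} \;=\; \frac{1 - 8p/9}{1 - 80(1-\omega)p/81}
\]
arbitrarily close to the limiting value $9$ obtained at $p=1$, $\omega=0$, and in particular strictly greater than $1$.

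At each step, Lemma \ref{bootstraplemma} produces an extraneous factor $(\log(\delta_1\delta_2)^{-1})^{1/2}$. We may restrict to $\delta_1\delta_2 \in [e^{-cn}, e^{-1}]$, since outside this range the target bound is either trivial or is dominated by the $e^{-\Omega(n)}$ term. On this range we absorb the log factor into an arbitrarily small exponent loss, since for each fixed $\eta>0$
\[
(\delta_1\delta_2)^{\sigma} \bigl(\log(\delta_1\delta_2)^{-1}\bigr)^{1/2} \;\lesssim\; (\delta_1\delta_2)^{\sigma - \eta}.
\]
Consequently, one application of Lemma \ref{bootstraplemma} converts a valid input estimate with exponent $\tau_k$ (and no log) into a valid input estimate with exponent $\tau_{k+1} := T(\tau_k) - \eta$, provided $\eta$ is fixed strictly smaller than the gap $\tau^{*} - 1$.

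Since $\tau_0 = 1/10 < \tau^{*}$ and the recursion contracts toward the perturbed fixed point $\tau^{*} - \eta/(1-a)$ geometrically with rate $a < 1$, the distance $|\tau^{*} - \tau_k|$ decays like $a^k$, so $\tau_N \geq 1$ for some finite $N = N(p,\omega,\eta)$. At this final step we invoke Lemma \ref{bootstraplemma} once more \emph{without} absorbing the log: since the $\min(1,\cdot)$ in its conclusion now evaluates to $1$, we obtain
\[
\mathbb{P}\bigl(\sigma_{min}(A_{n+1}-\lambda_i I_{n+1}) \leq \delta_i n^{-1/2},\ i=1,2\bigr) \;\lesssim\; \delta_1\delta_2 \bigl(\log(\delta_1\delta_2)^{-1}\bigr)^{1/2} + e^{-\Omega(n)},
\]
which is the claimed estimate.

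The only real subtlety is the compatibility of the three small parameters: we need $p>1$ (to apply Lemma \ref{bootstraplemma}), $a = 80(1-\omega)p/81 < 1$ (so that the recursion contracts), and $\tau^{*} - \eta/(1-a) > 1$ (so that the perturbed iteration still exceeds $1$ in finitely many steps). All three conditions are simultaneously satisfied by, for instance, $p = 1.005$, $\omega = 10^{-3}$, $\eta = 10^{-3}$, and the number of iterations required is then of order $\log(\tau^{*} - 1)/(1-a) = O(1)$, independent of $\delta_1$, $\delta_2$ and $n$.
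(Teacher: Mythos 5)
Your proof is correct and follows the same iterative-bootstrap strategy as the paper: start from the exponent $\tau_0 = 1/10$ of Lemma \ref{lemma7.27}, absorb the $(\log(\delta_1\delta_2)^{-1})^{1/2}$ factor into a small power loss, and iterate Lemma \ref{bootstraplemma} a bounded number of times until the $\min(1,\cdot)$ clips the output exponent at $1$. The only real difference is in the choice of $p$: the paper takes $p = \tfrac{81}{80(1-\omega)}$, which makes the slope $\tfrac{80}{81}(1-\omega)p$ exactly $1$, so the iteration is a pure additive shift $\tau \mapsto \tau + 0.09$ and terminates in exactly ten steps; you instead take $p$ strictly below that threshold so the map is a contraction with fixed point $\tau^* > 1$, and the iterates approach $\tau^*$ geometrically. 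Both parametrizations are legitimate and both keep the number of iterations (hence the accumulated implicit constant) bounded independently of $\delta_1,\delta_2,n$.

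Two minor inaccuracies, neither affecting validity: the fixed point $\tau^* = (1-8p/9)/(1-\tfrac{80}{81}(1-\omega)p)$ is not ``arbitrarily close to $9$'' for your choice of $p,\omega$; it in fact exceeds $9$ and blows up as $p$ approaches the slope-one threshold (e.g.\ $\tau^* \approx 13$ at $p=1.005$, $\omega = 10^{-3}$). And the stated iteration count $\log(\tau^*-1)/(1-a)$ is not the right formula — it should be on the order of $\log\!\bigl(\tfrac{\tau^{**}-\tau_0}{\tau^{**}-1}\bigr)/\log(1/a)$ where $\tau^{**}$ is the $\eta$-perturbed fixed point — though your qualitative conclusion that the count is $O(1)$ and independent of $\delta_1,\delta_2,n$ is what matters, and it holds.
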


\begin{proof} By lemma \ref{lemma7.27}, we start with the initial value $\tau=0.1$.
We take $\omega$ arbitrarily small (say $10^{-10}$) and take $p=\frac{81}{80(1-\omega)}$ such that Lemma \ref{lemma7.27} yields the bootstrapping step $\tau\to \tau+0.09$ (we upper bound $\epsilon^{\tau+1-0.9/(1-\omega)}(\log\epsilon^{-1})^{1/2}$ by $\epsilon^{\tau+0.09}$) until $\tau=1$. Then after ten applications of Lemma \ref{lemma7.27} to bootstrap $\tau$ from 0.1 to $0.19,0.28,\cdots,0.91,1$, we finish the proof.
\end{proof}

\subsection{Completing the proof: eliminating the log factor}\label{removalladfaga}

By Lemma \ref{finalfuckpropositionga}, to conclude the proof of Theorem \ref{Theorem1.1} it suffices to prove that we can find some $C>0$ so that for any $\delta_1,\delta_2>0$ and any $p>1$,
\begin{equation}\label{section11base}\begin{aligned}
    \mathbb{P}&\left(\frac{\langle (A_n-\lambda_i I_n)^{-1}X,X\rangle}{\|(A_n-\lambda_i I_n)^{-1}X\|_2}\leq C\delta_i,i=1,2,\prod_{i=1}^2\sigma_{min}(A_{n}-\lambda_i I_n)\geq (\delta_1\delta_2)^pn^{-1}\right)\\&\lesssim\delta_1\delta_2+e^{-\Omega(n)}.\end{aligned}
\end{equation}
For any fixed $(r_1,r_2)\in\mathbb{R}^2$ We introduce the following notations (we do not make explicit the dependence on $r_i$ in the symbols introduced, as they are not essential)
$$
Q(A,X,\lambda_i):=\frac{|\langle (A_n-\lambda_i I_n)^{-1}X,X\rangle-r_i|}{\|(A_n-\lambda_i I_n)^{-1}X\|_2},$$
$$ Q_*(A,\lambda_i,X):=\frac{|\langle (A_n-\lambda_i I_n)^{-1}X,X\rangle-r_i|}{\|(A_n-\lambda_i I_n)^{-1}\|_*},
$$
recalling that 
$$
\|(A_n-\lambda_iI_n)^{-1}\|_*^2:=\sum_{k=1}^n \mu_k^2(\lambda_i)(\log(1+k))^2.
$$

Let $\mathcal{E}:=\left\{\prod_{i=1}^2 \sigma_{min}(A_n-\lambda_i I_n)\geq(\delta_1\delta_2)^p n^{-1}\right\}.$
We now decompose the left hand side of \eqref{section11base}:
\begin{equation}\label{whatistheleftside?}\begin{aligned}
    &\mathbb{P}^{\mathcal{E}}\left(Q(A,X,\lambda_i)\leq C\delta_i,i=1,2\right)\leq\mathbb{P}^{\mathcal{E}}(Q_*(A,X,\lambda_i)\leq C\delta_i,i=1,2)\\&+\mathbb{P}^{\mathcal{E}}\left(Q(A,X,\lambda_i)\leq C\delta_i,\quad i=1,2;\quad \frac{\|(A_n-\lambda_i I_n)^{-1} X\|_2}{\|(A_n-\lambda_i I_n)^{-1}\|_*}\geq 2,i=1,2.\right)\\&+\mathbb{P}^{\mathcal{E}}\left(Q(A,X,\lambda_i)\leq C\delta_i, i=1,2;\quad \frac{\|(A_n-\lambda_i I_n)^{-1} X\|_2}{\|(A_n-\lambda_i I_n)^{-1}\|_*}\geq 2,
    \text{for only one }i\in\{1,2\}\right).\end{aligned}
\end{equation}

Applying Lemma \ref{lemma8.4wefinish} together with Lemma \ref{lemma6.111} (taking $u=0$, $s=0$) we can easily prove the following Lemma, which bounds the right hand side of the first line of \eqref{whatistheleftside?}.
\begin{lemma}\label{lemmaa7.5fff} Let $A_n$ and $\lambda_1,\lambda_2$ satisfy the assumptions in Theorem \ref{Theorem1.1}. Then there exists some $p_0>1$ such that for any $p\in(1,p_0)$ we have
    $$
    \mathbb{P}^{\mathcal{E}}(Q_*(A,X,\lambda_i)\leq C\delta_i,i=1,2)\lesssim\delta_1\delta_2+e^{-\Omega(n)}.$$
\end{lemma}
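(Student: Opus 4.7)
The plan is to invoke Lemma \ref{lemma6.111} with $u=0$ and $s=0$ and then reduce the whole problem to showing that the conditional expectation appearing on the right-hand side is $O(1)$. The key observation is that the event $\mathcal{E}=\{\prod_{i=1}^2\sigma_{min}(A_n-\lambda_iI_n)\geq(\delta_1\delta_2)^p n^{-1}\}$ coincides exactly with the condition $\{\mu_1(\lambda_1)\mu_1(\lambda_2)/n\leq(\delta_1\delta_2)^{-p}\}$ appearing in Lemma \ref{lemma6.111}, since $\mu_1(\lambda_i)=\sigma_{min}(A_n-\lambda_iI_n)^{-1}$; moreover the normalization in $Q_*$ matches the one in that lemma. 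Writing $Z:=\mu_1(\lambda_1)\mu_1(\lambda_2)/n$ and absorbing $C$ into the $\delta_i$'s, Lemma \ref{lemma6.111} yields
\begin{equation*}
\mathbb{P}^{\mathcal{E}}(Q_*(A,\lambda_i,X)\leq C\delta_i,\ i=1,2)\lesssim \delta_1\delta_2+\delta_1\delta_2\,\mathbb{E}\!\left[Z^{9/10}\,\bigm|\,Z\leq(\delta_1\delta_2)^{-p}\right]^{80/81}+e^{-\Omega(n)},
\end{equation*}
so it suffices to show the conditional expectation is $O(1)$ uniformly in $\delta_1,\delta_2\geq e^{-cn}$.

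To get at this, I would first extract a polynomial tail for $Z$ from the near-optimal joint bound of Lemma \ref{lemma8.4wefinish}. Noting that $\{Z\geq t\}=\{\sigma_{min}(A_n-\lambda_1I_n)\sigma_{min}(A_n-\lambda_2I_n)\leq(nt)^{-1}\}$, I would apply the dyadic-decomposition argument of Lemma \ref{singularvalueproduct} with $\tau=1$, this time absorbing the extra $(\log(\delta_1\delta_2)^{-1})^{1/2}$ factor carried by Lemma \ref{lemma8.4wefinish} and using the operator-norm bound Lemma \ref{operatorbound} to discard indices on which $\sigma_{min}(A_n-\lambda_iI_n)\geq n^{-1/2}$. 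Combining the two logarithmic contributions produces
\begin{equation*}
\mathbb{P}(Z\geq t)\lesssim t^{-1}(\log(1+t))^{5/2}+e^{-\Omega(n)},\qquad t\geq 1.
\end{equation*}

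Then the layer-cake identity
\[
\mathbb{E}[Z^{9/10}\mathbf{1}_{\{Z\leq M\}}]=\int_0^{M^{9/10}}\mathbb{P}(x^{10/9}<Z\leq M)\,dx
\]
controls the unconditional expectation: I would estimate the integrand trivially by $1$ on $[0,1]$ and by the tail bound above on $[1,M^{9/10}]$. Since $10/9>1$, the integral $\int_1^\infty x^{-10/9}(\log x)^{5/2}dx$ converges to a finite constant; the $e^{-\Omega(n)}\cdot M^{9/10}=e^{-\Omega(n)}(\delta_1\delta_2)^{-9p/10}$ piece remains $e^{-\Omega(n)}$ precisely when $p<p_0$ for some $p_0>1$ chosen so that $9pc/10$ stays below the constant implicit in $e^{-\Omega(n)}$, under the standing assumption $\delta_1\delta_2\geq e^{-cn}$. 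The denominator satisfies $\mathbb{P}(Z\leq(\delta_1\delta_2)^{-p})\geq 1-(\delta_1\delta_2)^p(\log(\delta_1\delta_2)^{-1})^{5/2}-e^{-\Omega(n)}\geq 1/2$ whenever $p>1$ and $\delta_1\delta_2$ is small, so the conditional expectation is $O(1)$ and the $80/81$-power is as well, producing the claimed bound.

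The main difficulty here is not conceptual but is the careful tracking of logarithmic factors through two successive reductions (the bootstrap in Lemma \ref{lemma8.4wefinish} already costs $(\log)^{1/2}$, and the dyadic passage to the product of singular values costs another $(\log)^{2}$). The whole argument rests on the fact that the exponent $9/10$ built into Lemma \ref{lemma6.111}, equivalently the strict inequality $10/9>1$ in the layer-cake integral, is sub-critical by a fixed margin and therefore absorbs all logarithmic losses. This is exactly why the fractional power $9/10$ was introduced in Lemma \ref{lemma6.111}, and it is what permits the upgrade from the suboptimal $\delta_1\delta_2(\log)^{1/2}$ of Lemma \ref{lemma8.4wefinish} to the clean $\delta_1\delta_2+e^{-\Omega(n)}$.
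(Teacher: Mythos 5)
Your proposal is correct and follows the same route the paper intends: apply Lemma~\ref{lemma6.111} with $u=0$, $s=0$, observe that the event $\mathcal{E}$ coincides with the conditioning event $\{\mu_1(\lambda_1)\mu_1(\lambda_2)/n\le(\delta_1\delta_2)^{-p}\}$, and bound the resulting conditional expectation by a layer-cake integral whose tail is controlled by combining Lemma~\ref{lemma8.4wefinish} with the dyadic decomposition of Lemma~\ref{singularvalueproduct}. The only cosmetic difference is that you track the logarithmic loss as an explicit $(\log t)^{5/2}$, whereas the paper (as carried out analogously inside the proof of Lemma~\ref{bootstraplemma}) absorbs it into an arbitrarily small power $\omega>0$ via $\epsilon^\tau(\log\epsilon^{-1})^{1/2}\lesssim\epsilon^{\tau(1-\omega)}$; both work because the exponent $10/9$ in the layer-cake integrand is strictly above $1$ by a fixed margin.
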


In the following we evaluate the probability in the second line of \eqref{whatistheleftside?}, i.e.
\begin{equation}\label{secondline}
\mathbb{P}^{\mathcal{E}}\left(Q(A,X,\lambda_i)\leq C\delta_i,\quad i=1,2;\quad \frac{\|(A_n-\lambda_i I_n)^{-1} X\|_2}{\|(A_n-\lambda_i I_n)^{-1}\|_*}\geq 2,\quad i=1,2.\right).\end{equation}

Here we consider the direct product of two dyadic partitions $$2^{j_i}\leq\|(A_n-\lambda_i I_n)^{-1}X\|_2/\|(A_n-\lambda_i I_n)^{-1}\|_* \leq 2^{j_i+1},\quad i=1,2,$$ and upper bound \eqref{secondline} by 
\begin{equation}\label{bybounds}
     \sum_{j_1=1}^{\log n}\sum_{j_2=1}^{\log n}\mathbb{P}^{\mathcal{E}}\left(Q_*(A,X,\lambda_i)\leq 2^{j_i+1}C\delta_i,\quad \frac{\|(A_n-\lambda_i I_n)^{-1}X\|_2}{\|(A_n-\lambda_i I_n)^{-1}\|_*}\geq 2^{j_i},\quad i=1,2\right)+e^{-\Omega(n)}.
\end{equation}
Note that we can terminate the summation at $\log n$ because, by Hanson-Wright (Theorem \ref{hansonwright8.1}) and $\|M\|_*\geq \|M\|_{HS}$ for any square matrix $M$, we have
for both $i=1,2$,
$$
\mathbb{P}_X(\|(A_n-\lambda_i I_n)^{-1}X\|_2\geq\sqrt{n}\|(A_n-\lambda_i I_n)^{-1}\|_*)\lesssim e^{-\Omega(n)}.
$$
Now we estimate each individual probability via
\begin{lemma}\label{lemma7.6alreadytire} For any $t_i,\delta_i>0,i=1,2$ we have the following bound
$$\begin{aligned}
    &\mathbb{P}_X\left(Q_*(A,X,\lambda_i)\leq 2Ct_i\delta_i,\quad \frac{\|(A_n-\lambda_i I_n)^{-1}X\|_2}{\|(A_n-\lambda_i I_n)^{-1}\|_*}\geq t_i,\quad \text{ for each }i=1,2 
    \right)\\& \leq 4\sum_{k_1,k_2=1}^n\mathbb{P}_X(Q_*(A,X,\lambda_i)\leq 2Ct_i\delta_i,\langle X,v_{k_i}(\lambda_i)\rangle\geq t_i\log(1+k_i),\text{ for each }i=1,2),
\end{aligned}$$ where $v_k(\lambda_i)$ is the singular vector associated with the $k$-th largest singular value of $A_n-\lambda_i I_n$.
Since each $v_k(\lambda_i)$ is an eigenvector of $A_n$, we assume moreover that all the $v_k(\lambda_i)$ are chosen from an orthonormal basis of eigenvectors of $A_n$.
\end{lemma}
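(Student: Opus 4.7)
The plan is to reduce the event involving the ratio $\|(A_n-\lambda_iI_n)^{-1}X\|_2/\|(A_n-\lambda_iI_n)^{-1}\|_*$ to an event about a single coordinate of $X$ in an orthonormal eigenbasis of $A_n$, then take a union bound over the coordinate index and a sign split.

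First I would diagonalize. Since $(A_n-\lambda_iI_n)^{-1}$ shares an orthonormal eigenbasis with $A_n$, Parseval in this basis gives
\begin{equation*}
\|(A_n-\lambda_iI_n)^{-1}X\|_2^2 \;=\; \sum_{k=1}^n \mu_k(\lambda_i)^2\,\langle X, v_k(\lambda_i)\rangle^2,
\end{equation*}
while by definition
\begin{equation*}
\|(A_n-\lambda_iI_n)^{-1}\|_*^2 \;=\; \sum_{k=1}^n \mu_k(\lambda_i)^2\,(\log(1+k))^2.
\end{equation*}
On the event $\|(A_n-\lambda_iI_n)^{-1}X\|_2 \geq t_i\,\|(A_n-\lambda_iI_n)^{-1}\|_*$, subtracting the two identities yields
\begin{equation*}
\sum_{k=1}^n \mu_k(\lambda_i)^2\,\bigl(\langle X, v_k(\lambda_i)\rangle^2 - t_i^2(\log(1+k))^2\bigr) \;\geq\; 0.
\end{equation*}
Since every weight $\mu_k(\lambda_i)^2$ is nonnegative, pigeonhole forces at least one term to be nonnegative: there exists some $k_i\in[n]$ with $|\langle X, v_{k_i}(\lambda_i)\rangle| \geq t_i\log(1+k_i)$.

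Running this argument independently for $i=1$ and $i=2$, taking a union bound over the pair $(k_1,k_2)\in[n]^2$ then gives
\begin{equation*}
\mathbb{P}_X(\text{LHS event}) \;\leq\; \sum_{k_1,k_2=1}^n \mathbb{P}_X\!\left(Q_*(A,X,\lambda_i)\leq 2Ct_i\delta_i,\;|\langle X, v_{k_i}(\lambda_i)\rangle|\geq t_i\log(1+k_i),\; i=1,2\right).
\end{equation*}
Finally, to drop the absolute value signs, I would split each one-dimensional event into its two one-sided halves: $|\langle X, v_{k_i}(\lambda_i)\rangle|\geq t_i\log(1+k_i)$ is the union of $\langle X,\pm v_{k_i}(\lambda_i)\rangle\geq t_i\log(1+k_i)$, and since $\pm v_{k_i}(\lambda_i)$ are both valid members of an orthonormal eigenbasis of $A_n$, we may replace the chosen basis vector by its negative independently for each $i$. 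The four sign combinations across $i=1,2$ produce the factor $4$ in the stated inequality.

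There is no substantive obstacle here: the argument is pure linear algebra followed by pigeonhole and union bound. The only bookkeeping subtlety is that $v_k(\lambda_i)$ and $v_k(\lambda_j)$ for $i\neq j$ are in general distinct eigenvectors, since the ordering by $\mu_k(\lambda_i)$ depends on $\lambda_i$; nevertheless, they can all be drawn consistently from a single orthonormal eigenbasis of $A_n$, which is precisely the hypothesis the lemma imposes and which makes the sign-flip step go through independently in each index.
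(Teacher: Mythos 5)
Your proof is correct and follows essentially the same route as the paper's: diagonalize in a common orthonormal eigenbasis of $A_n$ to compare $\|(A_n-\lambda_iI_n)^{-1}X\|_2^2$ and $\|(A_n-\lambda_iI_n)^{-1}\|_*^2$ term by term, use pigeonhole on the nonnegative weights $\mu_k(\lambda_i)^2$ to produce an index $k_i$ with $|\langle X, v_{k_i}(\lambda_i)\rangle|\geq t_i\log(1+k_i)$, take a union bound over $(k_1,k_2)$, and split the absolute values for the factor $4$.

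One small point of comparison: the paper's proof explicitly separates the case where $v_{k_1}(\lambda_1)$ and $v_{k_2}(\lambda_2)$ coincide (since, drawn from a single orthonormal basis, two such vectors are either equal or orthogonal) and notes that in that degenerate case one flips the sign \emph{simultaneously} rather than independently. Your phrase ``independently in each index'' is slightly loose there: when the two vectors coincide, the two sign choices are tied, so only two of the four sign combinations are meaningful (the other two correspond to an empty event and contribute $0$ to the union bound). This does not affect the validity of the bound --- the factor $4$ remains an upper bound, just with slack in the colinear case --- but being explicit about it, as the paper is, matters for the subsequent application (where one forms $u_{k_1k_2}=v_{k_1}(\lambda_1)+v_{k_2}(\lambda_2)$ and needs $\|u_{k_1k_2}\|\in\{\sqrt{2},2\}$ rather than $0$).
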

\begin{proof}
We apply singular value decomposition to $\|(A_n-\lambda_i I_n)^{-1}X\|_2\geq t_i\|(A_n-\lambda_i I_n)^{-1}\|_*$ and obtain
$$
t_i^2\sum_k \mu_k^2(\lambda_i)(\log(k+1))^2\leq \sum_k \mu_k^2(\lambda_i)\langle v_k(\lambda_i),X\rangle^2.
$$
That is, 
$$
\{\|(A_n-\lambda_i I_n)^{-1}X\|_2\geq t_i\|(A_n-\lambda_i I_n)^{-1}\|_*\}\subset\bigcup_k \{|\langle X,v_k(\lambda_i)\rangle|\geq t_i\log(k+1)\}.
$$ We apply this decomposition to both $i=1$ and $i=2$. That is, there exists $k_1,k_2\in[1,n]$ such that $|\langle X,v_{k_i}(\lambda_i)\rangle|\geq t_i\log(k_i+1)$. In the generic case where $v_{k_1}(\lambda_1)$ and $v_{k_2}(\lambda_2)$ are orthogonal, we may consider flipping the sign of $v_{k_i}(\lambda_i)$ so that for both $i=1,2$ we have $\langle X,v_{k_i}(\lambda_i)\rangle\geq t_i\log(k_i+1).$ This leads to a leading power 4.
It is also possible that $v_{k_1}(\lambda_1)$ and $v_{k_2}(\lambda_2)$ are colinear, and that we take $v_{k_1}(\lambda_1)=v_{k_2}(\lambda_2)$ without loss of generality. In this case we are also finished by simultaneously flipping the sign of  $v_{k_i}(\lambda_i)$.
\end{proof}

Finally, to estimate the third line of \eqref{whatistheleftside?}, it suffices to consider
$$\mathbb{P}^{\mathcal{E}}\left(Q_*(A,X,\lambda_1)\leq C\delta_1, Q(A,X,\lambda_2)\leq C\delta_2;\quad \frac{\|(A_n-\lambda_2 I_n)^{-1} X\|_2}{\|(A_n-\lambda_2 I_n)^{-1}\|_*}\geq 2\right),$$
(the case when we swap subscripts 1 and 2 is exactly the same). This possibility in question is bounded by $e^{-cn}$ plus the following: 
$$
\sum_{j=1}^{\log n}\mathbb{P}^{\mathcal{E}}\left( Q_*(A,X,\lambda_1)\leq 2C\delta_1,  Q_*(A,X,\lambda_2)\leq 2^{j+1}C\delta_2,
\frac{\|(A_n-\lambda_2 I_n)^{-1}X\|_2}{\|(A_n-\lambda_2 I_n)^{-1}\|_*}
\geq 2^j\right). 
$$ As in the argument of Lemma \ref{lemma7.6alreadytire}, this possibility is bounded by $e^{-cn}$ plus the following:
\begin{equation}\label{deductionsigagg}
\sum_{j=1}^{\log n}\sum_{k=1}^n\mathbb{P}^{\mathcal{E}}\left( Q_*(A,X,\lambda_1)\leq 2C\delta_1,  Q_*(A,X,\lambda_2)\leq 2^{j+1}C\delta_2,
\langle X,v_k\rangle\geq 2^j\log(1+k)\right)\end{equation}
where $\{v_i\}_{i=1}^n$ is an orthonormal base of the eigenvectors of $A_n$.

Now we can finish the proof of Theorem \ref{Theorem1.1}.

\begin{proof}[\proofname\ of Theorem \ref{Theorem1.1} for $d=2$ locations]
We combine \eqref{whatistheleftside?}, Lemma \ref{lemmaa7.5fff} with equation \eqref{bybounds}, Lemma \ref{lemma7.6alreadytire} and the deduction of \eqref{deductionsigagg} to obtain
$$\begin{aligned}
&\mathbb{P}^{\mathcal{E}}(Q(A,X,\lambda_i)\leq C\delta_i,i=1,2)\lesssim e^{-\Omega(n)}+\delta_1\delta_2\\&+\sum_{j_1,j_2=1}^{\log n}\sum_{k_1,k_2=1}^{n}\mathbb{P}^{\mathcal{E}_0}\left(Q_*(A,X,\lambda_i)\leq 2^{j_i+1}C\delta_i;\langle X,v_{k_i}(\lambda_i)\rangle\geq  2^{j_i}\log((1+k_i))
\right)\\&+\sum_{j_1=1,j_2\equiv 0}^{j_1=\log n}\sum_{k_1=1,k_2\equiv 0}^{k_1=n}\mathbb{P}^{\mathcal{E}_0}\left(Q_*(A,X,\lambda_i)\leq 2^{j_i+1}C\delta_i;\langle X,v_{k_i}(\lambda_i)\rangle\geq  2^{j_i}\log((1+k_i))
\right)\\&+\sum_{j_1\equiv 0,j_2=1}^{j_2=\log n}\sum_{k_1\equiv0,k_2=1}^{k_2=n}\mathbb{P}^{\mathcal{E}_0}\left(Q_*(A,X,\lambda_i)\leq 2^{j_i+1}C\delta_i;\langle X,v_{k_i}(\lambda_i)\rangle\geq  2^{j_i}\log((1+k_i))
\right).
\end{aligned}$$
We can combine the two conditions $\langle X,v_{k_i}(\lambda_i)\rangle\geq 2^{j_i}\log(1+k_i)$, $i=1,2$ into $\langle X,v_{k_1k_2}\rangle\geq 2^{j_1}\log(1+k_1)+2^{j_2}\log(1+k_2)$,
where we set $u_{k_1k_2}=v_{k_1}(\lambda_1)+v_{k_2}(\lambda_2)$. Note that $v_{k_1}(\lambda_1)$ and $v_{k_2}(\lambda_2)$ are both eigenvectors of $A_n$, so either we have $\|u_{k_1k_2}\|=\sqrt{2}$ if they correspond to two orthogonal eigenvectors, or if they correspond to co-linear vectors we must have that $v_{k_1}(\lambda_1)=v_{k_2}(\lambda_2)$ (by assumption of Lemma \ref{lemma7.6alreadytire} they are either co-linear or orthogonal) so that $\|u_{k_1k_2}\|=2$. We can now apply Lemma \ref{lemma6.111} for all $t_i>0$ where we choose $\delta_i$ to be $2Ct_i\delta_i$, $s=t_1\log(k_1+1)+t_2\log(k_2+1)$ and $u=u_{k_1k_2}$. Then Lemma \ref{lemma6.111} implies
\begin{equation}\begin{aligned}
    &\mathbb{P}^{\mathcal{E}_0}(Q_*(A,X,\lambda_i)\leq 2Ct_i\delta_i,\langle X,u_{k_1k_2}\rangle\geq t_1\log(1+k_1)+t_2\log(1+k_2))\\&\lesssim \delta_1\delta_2 t_1t_2(k_1+1)^{-t_1/2}(k_2+1)^{-t_2/2}\cdot I^{81/80}+e^{-\Omega(n)},
\end{aligned}\end{equation}
where $$I:=\mathbb{E}_{A_n}\left[\left(\frac{\mu_1(\lambda_1)\mu_1(\lambda_2)}{n}\right)^{9/10}\mathbf{1}\left\{\frac{\mu_1(\lambda_1)\mu_1(\lambda_2)}{n}\leq(\delta_1\delta_2)^{-p}\right\} \right]$$
and $p>1$ can be chosen to be arbitrarily close to 1. 
Applying Lemma \ref{lemma8.4wefinish} combined with Lemma \ref{singularvalueproduct} and a suitable choice of parameters (i.e. setting $p$ sufficiently close to 1 and when applying Lemma \ref{singularvalueproduct}, setting $\tau$ sufficiently close to 1), we can show that $$I\lesssim 1.$$
Combining all the above, we obtain 
$$\begin{aligned}
\mathbb{P}^{\mathcal{E}_0}(Q(A,X,\lambda_i)\leq C\delta_i,i=1,2)&\lesssim \delta_1\delta_2 \sum_{j_1,j_2=1}^{\log n}\sum_{k_1,k_2=1}^n 2^{j_1+j_2}(k_1+1)^{-2^{j_1}}(k_2+1)^{-2^{j_2}}
\\&+2\delta_1\delta_2\sum_{j=1}^{\log n}\sum_{k=1}^n 2^j(k+1)^{-2^j}
+e^{-\Omega(n)}.
\end{aligned}$$

Now the proof is completed by the elementary computation
$$\sum_{j_1=1}^\infty\sum_{j_2=1}^\infty\sum_{k_1=1}^\infty \sum_{k_2=1}^\infty 2^{j_1+j_2}(k_1+1)^{-2^{j_1}}(k_2+1)^{-2^{j_2}}=O(1),$$ 
and $\sum_{j=1}^\infty\sum_{k=1}^\infty 2^j(k+1)^{-2^j}=O(1)$.
\end{proof}

\section{The multilocation case}\label{multiparticlecase}
In the previous four sections we have invested considerable effort in proving Theorem \ref{Theorem1.1} in the $d=2$ case, and now we do so for general $d\in\mathbb{N}_+$. We will use an inductive argument, assuming that Theorem \ref{Theorem1.1} has been proven for all $1,2,\cdots,d-1$ different locations $\lambda_i$'s, and then prove it for $d$ different $\lambda_i$'s. All the essential difficulties are contained in the $d=2$ case, and we outline in this section how to adapt the proof to general $d$. Some proofs are given in detail, while the others are sketched and referred to those in the previous sections.

We first state the induction hypothesis that will be in effect throughout the whole section:

\begin{Induction hypothesis} Fix some $\kappa>0$ and $\Delta>0$, and let $A_n$ satisfy the assumptions of Theorem \ref{Theorem1.1}. Fix any $d$ real numbers $\lambda_1,\cdots,\lambda_d\in[-(2-\kappa)\sqrt{n},(2-\kappa)\sqrt{n}]$ such that for any $i\neq j$, $|\lambda_i-\lambda_j|\geq \Delta\sqrt{n}$.

Then we have proven that, for any $k\in\{1,2,\cdots,d-1\}$, $k$ distinct subscripts $i_1,\cdots,i_k\in\{1,2,\cdots,d\}$ and for any $\delta_{i_1},\cdots,\delta_{i_k}>0$, we have
\begin{equation}
    \mathbb{P}\left(\sigma_{min}(A_n-\lambda_{i_\ell} I_n)\leq\delta_{i_\ell}n^{-1/2}, \ell=1,\cdots,k
    \right)\lesssim \delta_{i_1}\cdots\delta_{i_k}+e^{-\Omega(n)}
\end{equation} for any sufficiently large $n\in\mathbb{N}_+$.
\end{Induction hypothesis}

\subsection{Outline of the main arguments}\label{section8.111}
We outline how to prove Theorem \ref{Theorem1.1} for $d$ different $\lambda_i$'s under the inductive hypothesis. All the results presented in this section will be proven later.

The following proposition generalizes Proposition \ref{finalfuckpropositionga}. We did not state the general $d$ version there as the proof will rely on the (not yet verified) induction hypothesis.
\begin{Proposition}
    \label{lemma8.11111} Let $A_{n+1}$ and $\lambda_1,\cdots,\lambda_d$ satisfy the assumptions of Theorem \ref{Theorem1.1}. For any $p>1$ we have
     $$\begin{aligned}
           &\mathbb{P}(\sigma_{min}(A_{n+1}-\lambda_i I_{n+1})\leq\delta_i n^{-1/2},i=1,2,\cdots,d)\lesssim \delta_1\delta_2\cdots\delta_d+e^{-\Omega(n)}\\&+\mathbb{E}_{A_n}\sup_{r_1,\cdots,r_d}\mathbb{P}_X\left(
    \frac{|\langle (A_n-\lambda_i I_n)^{-1}X,X\rangle-r_i|}{\|(A_n-\lambda_i I_n)^{-1}X\|_2}\leq\delta_i,\frac{\prod_{i=1}^d\mu_1(\lambda_i)}{n^{d/2}}\leq(\prod_{i=1}^n\delta_i)^{-p}
    \right),\end{aligned}$$ where we omit $i=1,\cdots,d$ in the second line.
\end{Proposition}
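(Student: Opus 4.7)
The plan is to adapt the two-step argument of Propositions \ref{proposition7.1} and \ref{finalfuckpropositionga} from the $d=2$ case, using the induction hypothesis in place of the single-location estimate (Proposition \ref{proposition6.666}) wherever the latter appeared.

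First I would introduce the $d$-dimensional pathological event
\begin{equation*}
\mathcal{P}^p:=\Big\{\prod_{i=1}^d\sigma_{\min}(A_{n+1}^{(j)}-\lambda_i I_n)\leq \Big(\prod_{i=1}^d\delta_i\Big)^{p} n^{-d/2}\ \text{for at least}\ \tfrac{1}{4}n\ \text{indices}\ j\in[n+1]\Big\},
\end{equation*}
and prove the analog of Proposition \ref{proposition7.1}:
\begin{equation*}
\mathbb{P}\Big(\sigma_{\min}(A_{n+1}-\lambda_i I_{n+1})\leq\delta_i n^{-1/2},\ i=1,\dots,d;\ \mathcal{P}^p\Big)\lesssim \prod_{i=1}^d\delta_i+e^{-\Omega(n)}.
\end{equation*}
Once this is in hand, the complementary event $(\mathcal{P}^p)^c$ is handled by invertibility via distance exactly as in Proposition \ref{finalfuckpropositionga}: Theorem \ref{veragaghagag} applied to the $d$ unit eigenvectors $u_i$ associated with $\sigma_{\min}(A_{n+1}-\lambda_i I_{n+1})$ (a union bound keeps the exceptional probability $e^{-\Omega(n)}$), combined with a first-moment argument over $j\in[n+1]$, produces a coordinate $j_*$ at which $d_{j_*}(A_{n+1}-\lambda_i I_{n+1})\leq C\delta_i$ for every $i$ and simultaneously $\prod_i\sigma_{\min}(A_{n+1}^{(j_*)}-\lambda_i I_n)\geq(\prod_i\delta_i)^{p} n^{-d/2}$. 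The latter inequality translates to $\prod_i\mu_1(\lambda_i)/n^{d/2}\leq(\prod_i\delta_i)^{-p}$ for $A_n:=A_{n+1}^{(j_*)}$, and Fact \ref{fact2.4} converts the $d$ column-distance bounds into the quadratic-form expression appearing in the second line of the proposition.

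The core work is the bound on $\{\ldots\}\cap\mathcal{P}^p$. I would fix $j_*$ with $\mathcal{P}^p$ valid at $j_*$ and $|(u_i)_{j_*}|\geq cn^{-1/2}$ for all $i$ (an event of probability $1-e^{-\Omega(n)}$ by Theorem \ref{veragaghagag}), then dyadically partition the constraint in $\mathcal{P}^p$ into $O((\log\prod\delta_i^{-1})^d)$ subevents of the form $\sigma_{\min}(A_{n+1}^{(j_*)}-\lambda_i I_n)\leq x_i n^{-1/2}$ for dyadic choices $x_i\in[(\prod\delta_i)^p,1]$ satisfying $\prod_i x_i\leq C^d(\prod_i\delta_i)^{p}$. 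For each subevent, split the indices via $T:=\{i:x_i<\delta_i\}$ and $S:=[d]\setminus T$; since $\prod_{i\in S}x_i\geq 2^{-|S|}\prod_{i\in S}\delta_i$,
\begin{equation*}
\prod_{i\in T}x_i\cdot\prod_{i\in S}\delta_i\leq C^{d}\Big(\prod_{i=1}^{d}\delta_i\Big)^{p}.
\end{equation*}

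When $|T|\geq 1$ (automatic once $\prod\delta_i$ is small enough that $(\prod\delta_i)^{p-1}<C^{-d}$; otherwise the right-hand side of the proposition is $\Omega(1)$ and the inequality is trivial), the induction hypothesis applied to the $|T|\leq d-1$ locations $\{\lambda_i\}_{i\in T}$ bounds the $A_{n+1}^{(j_*)}$-measurable event $\bigcap_{i\in T}\{\sigma_{\min}(A_{n+1}^{(j_*)}-\lambda_i I_n)\leq x_i n^{-1/2}\}$ by $\prod_{i\in T}x_i+e^{-\Omega(n)}$. Conditional on $A_{n+1}^{(j_*)}$, the column $X^{(j_*)}$ is independent and has density bounded by $K$; Fact \ref{fact7.3} produces for each $i\in S$ a unit eigenvector $v_i$ of $A_{n+1}^{(j_*)}$ (associated with an eigenvalue within $\delta_i$ of $\lambda_i$) satisfying $|\langle v_i,X^{(j_*)}\rangle|\leq C\delta_i$ whenever $\sigma_{\min}(A_{n+1}-\lambda_i I_{n+1})\leq\delta_i n^{-1/2}$. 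The bulk separation $|\lambda_i-\lambda_j|\geq\Delta\sqrt{n}$ ensures these eigenvalues of $A_{n+1}^{(j_*)}$ are distinct, hence the $v_i$'s are pairwise orthogonal, and Lemma \ref{lemma5.1} applied to the orthonormal system $(v_i)_{i\in S}$ yields a probability $\lesssim\prod_{i\in S}\delta_i$. Multiplying the two conditionally-independent bounds and summing over the dyadic subevents absorbs the $(\log\prod\delta_i^{-1})^d$ entropy factor into the slack $(\prod\delta_i)^{p-1}$ with $p>1$, yielding the required $\lesssim\prod\delta_i+e^{-\Omega(n)}$. The principal obstacle beyond the $d=2$ case is the simultaneous application of Lemma \ref{lemma5.1} to several orthogonal eigenvectors at once; orthogonality is exactly what bulk separation supplies, while the induction hypothesis is what lets Proposition \ref{proposition6.666} be replaced by a bound tailored to the remaining $|T|$ locations.
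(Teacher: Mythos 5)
Your overall plan is sound and runs largely parallel to the paper's own argument: the paper also introduces the $d$-dimensional pathological event (called $\mathcal{P}^{p,d}$ there), proves the analog of Proposition \ref{proposition7.1} with the induction hypothesis replacing Proposition \ref{proposition6.666}, and then reduces to the quadratic form exactly as in Proposition \ref{finalfuckpropositionga} using Theorem \ref{veragaghagag} and the first-moment argument over $j\in[n+1]$. The one genuine difference is the shape of the split after the dyadic partition: you partition $[d]$ into $T=\{i:x_i<\delta_i\}$ and $S=[d]\setminus T$, apply the induction hypothesis jointly to $T$ and Lemma \ref{lemma5.1} jointly to the orthogonal eigenvectors indexed by $S$, whereas the paper always moves exactly one index $k$ to the Fact~\ref{fact7.3}/Lemma~\ref{lemma5.1} side and applies the induction hypothesis to the full complement $[d]\setminus\{k\}$. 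Both splits are workable, and in the regime $1\leq|T|\leq d-1$ your version is actually a touch cleaner because you bound $\bigcap_{i\in T}\{\sigma_{\min}(A_{n+1}^{(j)}-\lambda_i I_n)\leq x_i n^{-1/2}\}$ directly by $\prod_{i\in T}x_i$ instead of first passing through the product event $\prod_{i}\sigma_{\min}(\cdot)\leq\prod_i x_i$ and invoking Lemma \ref{singularvalueproduct}, which costs the paper an extra $\eta$-loss in the exponent.

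However, there is a real gap. Your argument tacitly assumes $|T|\leq d-1$, i.e.\ $S\neq\emptyset$, so that the induction hypothesis (which is only available for at most $d-1$ locations) can be applied to $T$. But the dyadic subevents permit $x_i<\delta_i$ for \emph{every} $i\in[d]$: nothing in the constraint $\prod_i x_i\leq C^d(\prod_i\delta_i)^p$ with $x_i\geq(\prod\delta_i)^p$ forbids this, and in that case $T=[d]$, $S=\emptyset$, and your proof collapses since you cannot invoke the induction hypothesis on all $d$ locations (that is precisely the statement being proved), and there are no eigenvectors left for the Lemma~\ref{lemma5.1} argument. The paper handles this case explicitly: when all $x_k<\delta_k$, some $x_{j_0}\leq\delta_{j_0}(\prod_i\delta_i)^{(p-1)/d}$, and one deliberately moves a \emph{different} index $k\neq j_0$ over to the Fact~\ref{fact7.3} side (giving $\lesssim\delta_k$ there, which is worse than $x_k$), so that the induction hypothesis applies to $[d]\setminus\{k\}\ni j_0$; the very small $x_{j_0}$ then provides enough slack to compensate. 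You need to add this case to the proposal. A second, more cosmetic point: for $i\in S$ you only know $\sigma_{\min}(A_{n+1}^{(j_*)}-\lambda_i I_n)\leq x_i n^{-1/2}$ from the subevent, not $\leq\delta_i n^{-1/2}$, so Fact~\ref{fact7.3} gives $|\langle v_i,X^{(j_*)}\rangle|\lesssim x_i$, not $\lesssim\delta_i$. This does not damage the final bound, since $\prod_{i\in T}x_i\cdot\prod_{i\in S}x_i=\prod_i x_i\lesssim(\prod_i\delta_i)^p$, but the claimed intermediate estimate is slightly off as written.
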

At this point we can apply Theorem \ref{theorem3.1} to bound the probability in question, so that we need to integrate the function $I(\theta)$ \eqref{Itheta} over the region $\{\sum_{i=1}^d|\theta_i\delta_i|^2\leq d\}$. Again we temporarily replace the denominator $\|(A_n-\lambda_i I_n)^{-1}X\|_2$ by $\|(A_n-\lambda_i I_n)^{-1}X\|_*$ and return to this issue at the end. We apply Hölder's inequality to $I(\theta)$ as in \eqref{holders} and obtain 
$$
I(\theta)\lesssim \left(\mathbb{E}_{X_2,X_2'}e^{9\langle X_2+X_2',u\rangle}\right)^{1/9}\left(\mathbb{E}_{\widetilde{X}}e^{-c''\|\sum_{i=1}^d \frac{\theta_i}{\mu_1(\lambda_i)}(A_n-\lambda_i I_n)^{-1}\widetilde{X}\|_2^2}
\right)^\frac{8}{9},\theta\in\mathbb{R}^d,$$
where the first term on the right hand side is $O(1)$.

Now we have to bound from above $I(\theta)$ carefully, depending on both the dimension $d$ and the region of $\theta$. Let $2^{[d]}$ denote the collection of all the subsets of $\{1,2,\cdots,d\}$, and for any $\mathcal{A}\in 2^{[d]}$ let $\mathcal{D}_\mathcal{A}$ be defined as follows:
$$\mathcal{D}_\mathcal{A}:=\left\{\theta\in\mathbb{R}^d:\sum_{i=1}^d|\theta_i\delta_i|^2\leq d,\quad |\theta_i|\geq 1 \text{ if } i\in\mathcal{A},\quad |\theta_i|< 1 \text{ if } i\notin \mathcal{A}\right\}.$$

We introduce another useful notation: for any $\mathcal{A}\in 2^{[d]}$ with $\mathcal{A}\neq\emptyset$, define
$$ \|\theta\|_{\mathcal{A}}:=\left(\prod_{i\in\mathcal{A}}|\theta_i|\right)^\frac{1}{|\mathcal{A}|}
$$ the geometric average of $|\theta_i|$ on $\mathcal{A}$, where $|\mathcal{A}|$ denotes the cardinality of the set $\mathcal{A}$. Then, we observe that, to bound $\int_{\mathcal{D}_\mathcal{A}}I(\theta)^{1/2}d\theta$, it suffices to bound 
\begin{equation}\label{integralforsimplified}
\int_{\mathcal{D}_{\mathcal{A}}}\mathbb{P}_{\widetilde{X}}\left( \left\|\sum_{i=1}^d \frac{\theta_i}{\mu_1(\lambda_i)}(A_n-\lambda_i I_n)^{-1}\widetilde{X}\right\|_2\leq \|\theta\|_{\mathcal{A}}^{\frac{1}{1o^d+1}}
\right)^\frac{4}{9}d\theta.
\end{equation} This is because for any $\mathcal{A}\neq\emptyset$ and any $c''>0$,
$$
\int_{\mathcal{D}_{\mathcal{A}}}e^{-c''\|\theta\|_\mathcal{A}^{2/(10^d+1)}}d\theta=O(1).
$$

Now we estimate \eqref{integralforsimplified} for $\mathcal{A}=\{1,2,\cdots,d\}$ (we simply write $\mathcal{A}=[d]$ in this case). We first renormalize $(\theta_1,\cdots,\theta_d)$, then apply Lemma \ref{lemma6.61}, and then simplify the integral to a one-variate integration. That is,
\begin{equation}
    \eqref{integralforsimplified}_{\mathcal{A}=[d]}=
\int_{\mathcal{D}_{[d]}}\mathbb{P}_{\widetilde{X}}\left(\left\|\sum_{i=1}^d \frac{\theta_i/\|\theta\|_{[d]}}{\mu_1(\lambda_i)}(A_n-\lambda_i I_n)^{-1}\widetilde{X}
    \right\|_2\leq\|\theta\|_{[d]}^{-\frac{10^d}{10^d+1}}
    \right)^{\frac{4}{9}}d\theta.
\end{equation}

Then we wish to apply Lemma \ref{lemma6.61} to the probability inside the integration, which provides an estimate independent of $(\theta_1,\cdots,\theta_d)$ with $\|\theta\|_{[d]}=1$ except for the subscript $J$, where $J$ depends on the relative value of $\theta_i$.

We let $\sum_1^{[d]},\cdots,
\sum_d^{[d]}$ denote a partition of the hyperplane $\|\theta\|_{[d]}=1$ (this partition also depends on $A_n$) such that Lemma \ref{lemma6.61} gives $J=i$ when $\theta/\|\theta\|_{[d]}\in\sum_i^{[d]}$. Then, for any $\eta>0$, we have
\begin{equation}\label{line13234}
\eqref{integralforsimplified}_{\mathcal{A}=[d]}\lesssim \sum_{\ell=1}^d \int_{s=1}^{\|\delta\|_{[d]}}  s^{d-1+\eta}
    \sup_{\theta\in \sum_\ell^{[d]}} \mathbb{P}_{\widetilde{X}}\left(\left\|
    \sum_{i=1}^d\frac{\theta_i}{\mu_1(\lambda_i)}(A_n-\lambda_i I_n)^{-1}\widetilde{X}\right\|_2\leq s^{-\frac{10^d}{10^d+1}}
   \right)^{\frac{4}{9}}ds
\end{equation}
where $\|\delta\|_{[d]}:=(\delta_1\cdots \delta_d)^{1/d}$. The factor $s^{d-1+\eta}$ comes from the change in coordinates, as implied by the following lemma:

\begin{lemma}\label{lemma8.2s}
    Let $A_d(s)$ denote the volume of the following region in $\mathbb{R}^d$:
$$\left\{(\theta_1,\cdots,\theta_d)\in\mathbb{R}^d:|\prod_{i=1}^d\theta_i|\leq s^d, |\theta_i|\geq 1\text{ for each }i\right\}.$$ Then there exists a constant $C_d$ and a polynomial $p_d(x)$ such that for any $s>1$, $$0<\frac{d}{ds}A_d(s)\leq C_ds^{d-1}p_d(\log s).$$ 
\end{lemma}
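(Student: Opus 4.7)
The plan is to reduce everything to an explicit one-variable integral via a logarithmic change of coordinates, and then differentiate.

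First, by the symmetry of the region under the $2^d$ sign flips $\theta_i \mapsto -\theta_i$, one has
\[
A_d(s) = 2^d \cdot \operatorname{vol}\{\theta \in [1,\infty)^d : \theta_1\cdots\theta_d \leq s^d\}.
\]
Next, I will perform the substitution $\theta_i = e^{x_i}$. This maps $[1,\infty)^d$ diffeomorphically onto $[0,\infty)^d$, converts the multiplicative constraint $\prod_i \theta_i \leq s^d$ into the additive constraint $\sum_i x_i \leq d\log s$, and produces a Jacobian $\prod_i e^{x_i} = e^{x_1+\cdots+x_d}$. Hence
\[
A_d(s) = 2^d \int_{\{x\in[0,\infty)^d:\, \sum x_i \leq d\log s\}} e^{x_1+\cdots+x_d}\, dx.
\]

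Second, I will foliate this integral by level sets of $y := x_1 + \cdots + x_d$. The $(d-1)$-dimensional Lebesgue measure (within the hyperplane) of the slice $\{x \in [0,\infty)^d : \sum x_i = y\}$ equals $c_d y^{d-1}$ for some explicit constant $c_d$, since this slice is a scaled standard simplex. The coarea formula (or Fubini after a change of variable) therefore gives
\[
A_d(s) = 2^d c_d \int_0^{d\log s} e^{y} y^{d-1}\, dy.
\]

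Third, I will apply the fundamental theorem of calculus together with the chain rule (for $s\mapsto d\log s$, whose derivative is $d/s$) to obtain
\[
\frac{d}{ds} A_d(s) = 2^d c_d \cdot e^{d\log s} \cdot (d\log s)^{d-1} \cdot \frac{d}{s} = C_d\, s^{d-1} (\log s)^{d-1},
\]
where $C_d = 2^d c_d d^d$. This is precisely the bound claimed with $p_d(x) = x^{d-1}$; positivity of the derivative for $s > 1$ is obvious because the defining region is strictly increasing in $s$. The $d=2$ sanity check against the formula $\frac{d}{ds}A_2(s) = 16 s \log s$ recorded earlier in the paper confirms the constants.

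There is no real obstacle here: the only thing to be careful about is the Jacobian produced by the logarithmic change of coordinates, which is what turns a pure volume into an exponential integral and ultimately accounts for the $s^{d-1}$ growth (instead of a smaller power one might naively expect from a "product-bounded" region).
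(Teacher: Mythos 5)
Your proof is correct, and it takes a genuinely different and in fact cleaner route than the paper's. The paper proceeds recursively: it sets $F_d(s^d) := A_d(s)$, derives the iterative formula $F_d(s^d) = 2\int_1^{s^d} F_{d-1}(s^d/y)\,dy$ by integrating out $\theta_1$, and then argues by induction that $F_d(s^d)$ has the form $C_d s^d (\ln s^d)^{d-1}$ plus lower-order terms (a computation the paper itself calls tedious). You instead perform the logarithmic substitution $\theta_i = e^{x_i}$ all at once, which converts the product constraint into a simplex constraint and makes the Jacobian $e^{\sum x_i}$ explicit, then foliate by $y = \sum x_i$ to obtain the closed form $A_d(s) = \frac{2^d}{(d-1)!}\int_0^{d\log s} e^y y^{d-1}\,dy$. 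This not only proves the lemma but gives the exact identity $\frac{d}{ds}A_d(s) = \frac{2^d d^d}{(d-1)!}\,s^{d-1}(\log s)^{d-1}$, with no lower-order terms in $\log s$ at all, which is sharper than what the paper extracts from its induction. Your $d=2$ sanity check against the paper's $\frac{d}{ds}A_2(s) = 16s\ln s$ confirms the constant. One small remark: when invoking the coarea formula, the factor $|\nabla(\sum x_i)| = \sqrt{d}$ and the $\sqrt{d}/(d-1)!$ Hausdorff measure of the scaled simplex cancel to give $c_d = 1/(d-1)!$; you gloss over this by leaving $c_d$ unspecified, which is fine for the stated bound but worth tracking if one wants the exact constant.
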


Now we apply a further change in the coordinates to obtain 
\begin{equation}\label{line13234agfaa}
\eqref{integralforsimplified}_{\mathcal{A}=[d]}\lesssim \sum_{\ell=1}^d \int_{\frac{1}{\|\delta\|_{[d]}}}^{1}  s^{-\frac{(10^d+1)(d+\eta)}{10^d}-1}
    \sup_{\theta\in \sum_\ell^{[d]}} \mathbb{P}_{\widetilde{X}}\left(\left\|
    \sum_{i=1}^d\frac{\theta_i}{\mu_1(\lambda_i)}(A_n-\lambda_i I_n)^{-1}\widetilde{X}\right\|_2\leq s
   \right)^{\frac{4}{9}}ds.
\end{equation}
We can choose $\eta>0$ to be sufficiently small (depending on $d$) so that $(10^d+1)(d+\eta)/10^d+1-\frac{4}{9}\leq d+\frac{3}{4}$. Applying Lemma \ref{lemma6.61} and taking the decomposition in the same form as \eqref{decomposition2d}, replacing $\sqrt{\mu_1(\lambda_1)\mu_1(\lambda_2)}$ by $(\prod_{i=1}^d\mu_1(\lambda_i))^{1/d}$, we get
\begin{equation}\label{whatdoweget1?}
\eqref{integralforsimplified}_{\mathcal{A}=[d]}\lesssim\sum_{\ell=1}^d\sum_{k=2}^{cn}e^{-k}\left(\frac{(\prod_{i=1}^d\mu_1(\lambda_i))^{1/d}}{\mu_k(\lambda_\ell)}\right)^{d-1/4}+e^{-\Omega(n)}.
\end{equation} We have made very careful choices of parameters such that the power of each $\mu_1(\lambda_i)$ is less than 1. Similarly, applying Lemma \ref{lemma6.612} for any $\emptyset\neq \mathcal{A}\in 2^{[d]}$
we obtain 
\begin{equation}\label{whatdoweget2?}
\eqref{integralforsimplified}_{\mathcal{A}}\lesssim\sum_{\ell=1}^d\sum_{k=2}^{cn}e^{-k}\left(\frac{(\prod_{i=1}^{|\mathcal{A}|}\mu_1(\lambda_i))^{1/|\mathcal{A}|}}{\mu_k(\lambda_\ell)}\right)^{|\mathcal{A}|-1/4}+e^{-\Omega(n)}.
\end{equation} 
Now we can prove the following lemma for general $d$, generalizing Lemma \ref{lemma6.111}.

\begin{lemma}\label{910lemma6.111} Under theorem \ref{Theorem1.1}, for any $\delta_1,\cdots,\delta_d\geq e^{-cn}$, any $u\in\mathbb{R}^n$ with $\|u\|_2\leq d$, and any $p>1$, we have the following estimate:
\begin{equation}\label{lemma8.3firstsecond}
\begin{aligned}
&\mathbb{E}_{A_n}\sup_{r_1,\cdots,r_d}\mathbb{P}_X\left(
    \frac{|\langle (A_n-\lambda_i I_n)^{-1}X,X\rangle-r_i|}{\|(A_n-\lambda_i I_n)^{-1}\|_*}\leq\delta_i
    ,\langle X,u\rangle\geq s,\frac{\prod_{i=1}^d\mu_1(\lambda_i)}{n^{d/2}}\leq(\prod_{i=1}^d\delta_i)^{-p}
    \right)\\&\lesssim e^{-s}\prod_{i=1}^d\delta_i+e^{-\Omega(n)}
    \\&+e^{-s}(\prod_{i=1}^d\delta_i)\mathbb{E}_{A_n}\left[\left(\frac{\prod_{i=1}^d\mu_1(\lambda_i)}{n^{d/2}}\right)^{1-\frac{1}{8d}}
    \wedge\left\{ \frac{\prod_{i=1}^d\mu_1(\lambda_i)}{n^{d/2}}\leq (\prod_{i=1}^d\delta_i)^{-p}\right\}\right]^{\frac{8d-2}{8d-1}},\end{aligned}
\end{equation}
    where $c$ depends on the assumptions in Theorem, on $\kappa$ and $\Delta$.
\end{lemma}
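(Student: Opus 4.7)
The proof follows the two-location case (Lemma \ref{lemma6.111}) using the outline in Section \ref{section8.111}. First apply Theorem \ref{theorem3.1} with each $\delta_i$ rescaled to $\delta_i\|(A_n-\lambda_iI_n)^{-1}\|_*/\mu_1(\lambda_i)$; the supremum over $r_1,\ldots,r_d$ disappears since the resulting bound is independent of the $t_i$. Conditionally on $A_n$ in the event $\mathcal{E}$ consisting of $\mathcal{E}_1\cap\mathcal{E}_2$ (the rigidity events of Lemmas \ref{lemma6.61} and \ref{lemma6.612}) together with $\prod_i\mu_1(\lambda_i)/n^{d/2}\leq(\prod_i\delta_i)^{-p}$, this yields
\begin{equation*}
\text{LHS}\lesssim e^{-s}\prod_{i=1}^d\delta_i\cdot\mathbb{E}_{A_n}^{\mathcal{E}}\!\left[\prod_{r=1}^d\frac{\|(A_n-\lambda_rI_n)^{-1}\|_*}{\mu_1(\lambda_r)}\int_{\sum_i|\theta_i\delta_i|^2\leq d}I(\theta)^{1/2}d\theta\right]+e^{-\Omega(n)}.
\end{equation*}
Hölder's inequality applied to $I(\theta)$ as in \eqref{holders}, together with $\|u\|_2\leq d$, reduces us to controlling $\mathbb{E}_{\widetilde{X}}[\exp(-c''\|\cdots\|_2^2)]^{4/9}$.

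Decompose the integration region as $\bigcup_{\mathcal{A}\in 2^{[d]}}\mathcal{D}_\mathcal{A}$. The piece $\mathcal{A}=\emptyset$ contributes $O(1)$ by Gaussian decay, producing the leading term $e^{-s}\prod_i\delta_i$. For each nonempty $\mathcal{A}$, invoke Lemma \ref{lemma6.61} when $\mathcal{A}=[d]$ and Lemma \ref{lemma6.612} when $\mathcal{A}\subsetneq[d]$, together with Lemma \ref{lemma8.2s} for the volume of the level set of $\|\theta\|_\mathcal{A}$, to extract
\begin{equation*}
\int_{\mathcal{D}_\mathcal{A}}I(\theta)^{1/2}d\theta\lesssim\sum_{\ell=1}^d\sum_{k=2}^{c_0n}e^{-k}\left(\frac{(\prod_{i\in\mathcal{A}}\mu_1(\lambda_i))^{1/|\mathcal{A}|}}{\mu_k(\lambda_\ell)}\right)^{|\mathcal{A}|-1/4}+e^{-\Omega(n)}
\end{equation*}
via the change of variables in \eqref{line13234}--\eqref{line13234agfaa}.

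Take expectation over $A_n\in\mathcal{E}$ and apply a three-way Hölder split to: (a) the $d$ ratios $\|(A_n-\lambda_rI_n)^{-1}\|_*/\mu_1(\lambda_r)$, whose moments are controlled by Corollary \ref{lemma4.22}; (b) the factor $\sqrt{n}/(k\mu_k(\lambda_\ell))$ raised to the power $|\mathcal{A}|-1/4$, whose moments are controlled by Lemma \ref{lemma4.11}; and (c) the remaining product $\prod_{i\in\mathcal{A}}\mu_1(\lambda_i)^{(|\mathcal{A}|-1/4)/|\mathcal{A}|}$ normalized by appropriate powers of $\sqrt{n}$. For $\mathcal{A}=[d]$, choose Hölder exponents so that (c) becomes $\mathbb{E}[(\prod_i\mu_1(\lambda_i)/n^{d/2})^{1-1/(8d)}]^{(8d-2)/(8d-1)}$, which is consistent because $(1-1/(4d))\cdot(8d-1)/(8d-2)=1-1/(8d)$; the residual $k$-dependence is polynomial so $\sum_k k^{O(1)}e^{-k}<\infty$. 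For proper subsets $\mathcal{A}\subsetneq[d]$, apply the lower bound $\mu_1(\lambda_i)\geq cn^{-1/2}$ with probability $1-e^{-\Omega(n)}$ (Corollary \ref{corollary4.999}) to the missing indices $i\notin\mathcal{A}$, giving $\prod_{i\in\mathcal{A}}\mu_1(\lambda_i)\leq n^{(d-|\mathcal{A}|)/2}\prod_{i=1}^d\mu_1(\lambda_i)$; a further Hölder step raises the power $1-1/(4|\mathcal{A}|)$ on the product to $1-1/(8d)$ at a Hölder exponent at most $(8d-2)/(8d-1)$, and the polynomial $n$-factor cancels against the normalization in factor (b).

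The main obstacle is the careful balancing of Hölder exponents for the $\mathcal{A}=[d]$ piece: the three exponents must sum to $1$ with (a) and (b) carrying only tiny weights so that the high-moment bounds of Corollary \ref{lemma4.22} and Lemma \ref{lemma4.11} apply, while (c) carries exactly the weight $(8d-2)/(8d-1)$ needed to realize the target exponent $1-1/(8d)$ on the product factor. A secondary technical point is verifying for proper subsets $\mathcal{A}\subsetneq[d]$ that the polynomial $n$-power bookkeeping following Corollary \ref{corollary4.999} produces no divergent factor; once this is checked the proper-subset contributions are absorbed into the $\mathcal{A}=[d]$ bound, and all other steps reduce to techniques already developed for the $d=2$ case.
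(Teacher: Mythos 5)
Your proposal tracks the paper's overall strategy closely: apply Theorem~\ref{theorem3.1} after rescaling $\delta_i$, decompose the $\theta$-integral over the pieces $\mathcal{D}_\mathcal{A}$, invoke Lemmas~\ref{lemma6.61}, \ref{lemma6.612} and \ref{lemma8.2s} to arrive at the bounds \eqref{whatdoweget1?}--\eqref{whatdoweget2?}, and run a three-way H\"older split; the exponent arithmetic for $\mathcal{A}=[d]$ is correct (including the identity $(1-\tfrac{1}{4d})\tfrac{8d-1}{8d-2}=1-\tfrac{1}{8d}$). The one place you depart from the paper is the treatment of proper subsets $\mathcal{A}\subsetneq[d]$. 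The paper enlarges the conditioning event to $\mathcal{H}$ by adding the constraints $\prod_{i\in\mathcal{A}}\mu_1(\lambda_i)/n^{|\mathcal{A}|/2}\leq(\prod_i\delta_i)^{-p}$ for every $\mathcal{A}$, pays for this by the induction hypothesis together with Lemma~\ref{singularvalueproduct}, and then shows each proper-subset expectation \eqref{whatline1432} is $O(1)$, again by the induction hypothesis. You instead absorb each $\mathcal{A}$-piece into the $[d]$-piece by dominating $\prod_{i\in\mathcal{A}}\mu_1(\lambda_i)/n^{|\mathcal{A}|/2}\lesssim\prod_{i=1}^d\mu_1(\lambda_i)/n^{d/2}$ via $\mu_1(\lambda_j)\geq cn^{-1/2}$ for $j\notin\mathcal{A}$.

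This domination idea is attractive and, on the good event, the exponent bookkeeping goes through, but as written there is a genuine gap. The bound $\mu_1(\lambda_j)\geq cn^{-1/2}$ is pointwise and only holds on an event $\mathcal{G}$ of probability $1-e^{-\Omega(n)}$; on $\mathcal{G}^c$ the integrand $\bigl(\prod_{i\in\mathcal{A}}\mu_1(\lambda_i)/n^{|\mathcal{A}|/2}\bigr)^{\alpha_\mathcal{A}}$ is not controlled by the conditioning you keep, which only constrains the \emph{full} product $\prod_{i=1}^d\mu_1(\lambda_i)/n^{d/2}$. If $\mu_1(\lambda_{j_0})$ is tiny for some $j_0\notin\mathcal{A}$, the subproduct $\prod_{i\in\mathcal{A}}\mu_1(\lambda_i)$ can be enormous even while the full product stays in range, and the unconditioned expectation $\mathbb{E}[(\mu_1/\sqrt{n})^{\alpha}]$ actually diverges because of the $e^{-cn}$ tail in Proposition~\ref{proposition6.666}. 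Patching this requires either the paper's restriction to $\mathcal{H}$ (hence the induction hypothesis at lower order) or an extra truncation/H\"older step based on $\mu_1(\lambda_j)\geq 1/\|A_n-\lambda_jI_n\|_{op}$ and Lemma~\ref{operatorbound}'s operator-norm tails, neither of which your proposal mentions. A small citation note: the estimate $\mathbb{P}(\mu_1(\lambda_i)\leq Dn^{-1/2})=e^{-\Omega(n)}$ you invoke is established \emph{inside the proof} of Corollary~\ref{corollary4.999}, not in its statement.
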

 This lemma (with $u=0$, $s=0$ and $p>1$ sufficiently close to 1) combined with Proposition \ref{lemma8.11111} and Hanson-Wright imply the following initial estimate:
 \begin{lemma}\label{initial8}(Initial estimate) Under theorem \ref{Theorem1.1}, for any $\delta_1,\cdots,\delta_d>0$ 
     $$\mathbb{P}(\sigma_{min}(A_{n+1}-\lambda_i I_{n+1})\leq\delta_i n^{-1/2},i=1,\cdots,d)\lesssim (\prod_{i=1}^d\delta_i)^{\frac{1}{8d}}+e^{-cn}.$$
 \end{lemma}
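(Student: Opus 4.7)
My plan is to mirror the $d=2$ base-case argument from the proof of Lemma \ref{lemma7.27}, with Proposition \ref{lemma8.11111} (the multilocation invertibility reduction) replacing Proposition \ref{finalfuckpropositionga} and Lemma \ref{910lemma6.111} replacing Lemma \ref{lemma6.111}. Both replacements are already available from earlier in this section.

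I would first apply Proposition \ref{lemma8.11111} with some parameter $p>1$ to be chosen later. Its contribution $\prod_i\delta_i+e^{-\Omega(n)}$ is already stronger than the desired bound (for $\prod_i\delta_i\leq 1$ it dominates $(\prod_i\delta_i)^{1/(8d)}$), so the only remaining term is
$$\mathbb{E}_{A_n}\sup_{r_1,\dots,r_d}\mathbb{P}_X\!\left(\frac{|\langle (A_n-\lambda_i I_n)^{-1}X,X\rangle-r_i|}{\|(A_n-\lambda_i I_n)^{-1}X\|_2}\leq C\delta_i,\;\frac{\prod_{i=1}^d\mu_1(\lambda_i)}{n^{d/2}}\leq\Bigl(\textstyle\prod_{i=1}^d\delta_i\Bigr)^{-p}\right).$$
Next, Hanson-Wright (Theorem \ref{hansonwright8.1}), applied separately for each $i=1,\dots,d$, together with the elementary inequality $\|M\|_{HS}\leq\|M\|_*$, replaces each denominator $\|(A_n-\lambda_iI_n)^{-1}X\|_2$ by $\|(A_n-\lambda_iI_n)^{-1}\|_*$ at the cost of enlarging each $\delta_i$ to $\bar\delta_i:=C'\delta_i(\log(\prod_i\delta_i)^{-1})^{1/2}$ and an additive $e^{-\Omega(n)}$ loss.

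The integrand is now in the precise form required by Lemma \ref{910lemma6.111}, which (with $u=0$ and $s=0$) outputs a bound of the shape
$$\prod_i\bar\delta_i\;+\;\prod_i\bar\delta_i\cdot\mathbb{E}\!\left[\Bigl(\tfrac{\prod_i\mu_1(\lambda_i)}{n^{d/2}}\Bigr)^{1-\frac{1}{8d}}\mathbf{1}\Bigl\{\tfrac{\prod_i\mu_1(\lambda_i)}{n^{d/2}}\leq\bigl(\textstyle\prod_i\delta_i\bigr)^{-p}\Bigr\}\right]^{\frac{8d-2}{8d-1}}+e^{-\Omega(n)}.$$
At this base step I do \emph{not} yet have any nontrivial tail estimate for $\prod_i\mu_1(\lambda_i)$; instead, I bound the random variable on the support of the indicator by its threshold $(\prod_i\delta_i)^{-p}$. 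This gives the expectation $\leq(\prod_i\delta_i)^{-p(1-1/(8d))}$, and raising to the $(8d-2)/(8d-1)$-th power produces a factor $(\prod_i\delta_i)^{-p(4d-1)/(4d)}$. Multiplied by $\prod_i\bar\delta_i\approx\prod_i\delta_i\cdot(\log(\prod_i\delta_i)^{-1})^{d/2}$, the total polynomial exponent on $\prod_i\delta_i$ is $1-p(4d-1)/(4d)$.

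Finally, I would choose $p$ in the nonempty interval $\bigl(1,\,1+\tfrac{1}{2(4d-1)}\bigr)$, which makes this exponent strictly larger than $1/(8d)$. The strict gap allows the residual polylogarithmic factor to be absorbed, leaving $\lesssim(\prod_i\delta_i)^{1/(8d)}+e^{-\Omega(n)}$. There is no essentially new obstacle at this step: every delicate piece of analysis---the decoupling of Theorem \ref{theorem3.1}, the rigidity estimates of Section \ref{chap3chap3chap3}, and the multilocation anti-concentration Lemmas \ref{lemma6.61}--\ref{lemma6.612}---has already been discharged in proving Lemma \ref{910lemma6.111}. The initial estimate is then a straightforward assembly of those tools, entirely analogous to how Lemma \ref{lemma7.27} follows from Lemma \ref{lemma6.111} in the $d=2$ case.
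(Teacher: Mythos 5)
Your proposal is correct and follows the paper's own argument essentially step for step: Proposition \ref{lemma8.11111} to reduce to the quadratic‐form probability, Hanson--Wright (with $\|M\|_{HS}\le\|M\|_*$) to trade the random denominator for $\|(A_n-\lambda_iI_n)^{-1}\|_*$ at a $(\log(\prod_i\delta_i)^{-1})^{1/2}$ inflation, Lemma \ref{910lemma6.111} with $u=0$, $s=0$, and a trivial pointwise bound of the conditional moment by the threshold $(\prod_i\delta_i)^{-p(1-1/(8d))}$. The exponent arithmetic $\bigl(1-\tfrac{1}{8d}\bigr)\tfrac{8d-2}{8d-1}=\tfrac{4d-1}{4d}$ and the admissible range $p\in\bigl(1,\tfrac{8d-1}{8d-2}\bigr)$ are both right; the paper simply phrases this as "$p$ sufficiently close to $1$", which gives exponent near $\tfrac1{4d}>\tfrac1{8d}$ with room to absorb the polylogarithm.
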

By bootstrapping this estimate finitely many times, we can prove that 
\begin{lemma}\label{bootstrap8}(Bootstrap) In the setting of this section, for any $\delta_1,\cdots,\delta_d>0$ 
     $$\mathbb{P}(\sigma_{min}(A_{n+1}-\lambda_i I_{n+1})\leq\delta_i n^{-1/2},i=1,\cdots,d)\lesssim (\prod_{i=1}^d\delta_i)(\log(\prod_{i=1}^d\delta_i)^{-1})^{d/2}+e^{-cn}.$$
 \end{lemma}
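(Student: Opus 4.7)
The plan is to emulate the $d=2$ bootstrap of Lemma \ref{lemma8.4wefinish}, iterating a bootstrap step that mirrors Lemma \ref{bootstraplemma} but calibrated to the multilocation setting. Starting from the initial exponent $\tau_0=1/(8d)$ supplied by Lemma \ref{initial8}, I will push $\tau$ up to $1$ in finitely many steps; the intermediate logarithmic losses from Hanson-Wright are absorbed into arbitrarily small exponent reductions, and only the single $(\log)^{d/2}$ factor produced at the last step is recorded in the final bound.

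The bootstrap step is the following $d$-variable analogue of Lemma \ref{bootstraplemma}. Assume the hypothesis $\mathbb{P}(\sigma_{\min}(A_{n+1}-\lambda_i I_{n+1})\le\delta_i n^{-1/2},\,\forall i)\lesssim (\prod_i\delta_i)^\tau + e^{-\Omega(n)}$ for every choice of $\delta_i>0$. Fixing $p>1$ close to $1$ and $\omega>0$ small, I first apply Proposition \ref{lemma8.11111} to bound the left-hand side, up to an acceptable $\prod_i\delta_i$ plus exponential error, by the supremum over $(r_1,\dots,r_d)$ of the averaged quadratic-form probability on the event $\{\prod_i\mu_1(\lambda_i)/n^{d/2}\le (\prod_i\delta_i)^{-p}\}$. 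Applying Hanson-Wright (Theorem \ref{hansonwright8.1}) $d$ times to replace each denominator $\|(A_n-\lambda_iI_n)^{-1}X\|_2$ by $\|(A_n-\lambda_iI_n)^{-1}\|_*$ inflates each $\delta_i$ by $(\log(\prod_j\delta_j)^{-1})^{1/2}$ and yields an overall prefactor $(\log(\prod_j\delta_j)^{-1})^{d/2}$. Lemma \ref{910lemma6.111} (with $u=0$, $s=0$) then produces a bound of the form
\begin{equation*}
(\prod_i\delta_i)\cdot(\log(\prod_j\delta_j)^{-1})^{d/2}\cdot \mathbb{E}_{A_n}\!\left[\Bigl(\tfrac{\prod_i\mu_1(\lambda_i)}{n^{d/2}}\Bigr)^{1-\frac{1}{8d}}\mathbf{1}\bigl\{\tfrac{\prod_i\mu_1(\lambda_i)}{n^{d/2}}\le(\prod_i\delta_i)^{-p}\bigr\}\right]^{\frac{8d-2}{8d-1}} + e^{-\Omega(n)}.
\end{equation*}

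The expectation above is the crux of the step. Feeding the bootstrap hypothesis into Lemma \ref{singularvalueproduct} yields $\mathbb{P}\bigl(\prod_i\sigma_{\min}(A_n-\lambda_i I_n)\le y\,n^{-d/2}\bigr)\lesssim y^{(1-\omega)\tau}+e^{-\Omega(n)}$ (the lemma's $(\log y^{-1})^d$ loss being absorbed into the small $\omega$), which dualizes to a tail bound on $\prod_i\mu_1(\lambda_i)/n^{d/2}$. A layer-cake integration truncated at $(\prod_i\delta_i)^{-p}$ shows the expectation is $O(1)$ as soon as $(1-\omega)\tau\ge 1-1/(8d)$ and is otherwise bounded by $(\prod_i\delta_i)^{-p(1-1/(8d)-(1-\omega)\tau)}$. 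Substituting, the bootstrap outputs the updated exponent
\begin{equation*}
\tau'\;=\;\min\!\left\{1,\;1-p\bigl(1-\tfrac{1}{8d}-(1-\omega)\tau\bigr)\cdot\tfrac{8d-2}{8d-1}\right\}.
\end{equation*}

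An elementary computation shows that, with $p=1+\omega$ and $\omega$ sufficiently small, $\tau'-\tau$ is bounded below by a positive constant $c_d$ depending only on $d$ for every $\tau\in[1/(8d),\,1-1/(8d)]$; hence after $O(d)$ iterations $\tau$ exits this interval and the very next step forces $\tau'=1$. The main delicacy is preventing the $(\log)^{d/2}$ factors from compounding across iterations: at every intermediate stage I absorb the logarithm into an arbitrarily small reduction of the exponent via $(\log(\prod_j\delta_j)^{-1})^{d/2}\lesssim(\prod_j\delta_j)^{-\epsilon}$, which costs at most an $\epsilon$-size loss per step and, with $\epsilon\ll c_d$, does not spoil finite termination. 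Only the logarithm of the final step (when $\tau'=1$) is preserved, producing exactly the claimed rate $(\prod_i\delta_i)(\log(\prod_i\delta_i)^{-1})^{d/2}+e^{-\Omega(n)}$.
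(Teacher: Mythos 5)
Your proposal is correct and follows essentially the same route as the paper: it iterates the $d$-variable bootstrap step built from Proposition \ref{lemma8.11111}, Hanson--Wright, Lemma \ref{910lemma6.111}, and a layer-cake estimate via Lemma \ref{singularvalueproduct}, starting from the $\frac{1}{8d}$ exponent of Lemma \ref{initial8} and absorbing all but the final logarithm into tiny exponent losses. The only cosmetic difference is your choice $p=1+\omega$ versus the paper's normalization $p(1-\omega)=\tfrac{8d-1}{8d-2}$ (which makes the increment $\tau'-\tau$ independent of $\tau$); both choices yield a uniformly positive per-step gain and finitely many iterations.
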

 Finally, the additional log factor can be removed following the procedure in Section \ref{removalladfaga}. The details are given in the next subsection, and this completes the proof of Theorem \ref{Theorem1.1} for any $d\in\mathbb{N}_+$. 
 
\subsection{Proofs of intermediate steps}
Now we prove all the results claimed in Section \ref{section8.111}.

\begin{proof}[\proofname\ of Proposition \ref{lemma8.11111}]
We first prove that, for any $p>1$,
   \begin{equation}\label{8.1replaced} \mathbb{P}(\sigma_{min}(A_{n+1}-\lambda_i I_{n+1})\leq\delta_i n^{-1/2},i=1,\cdots,d\quad \mathcal{P}^{p,d})\lesssim \prod_{i=1}^d\delta_i+e^{-\Omega(n)},\end{equation}
    where we define 
    $$
\mathcal{P}^{p,d}:=\left\{ \prod_{i=1}^d\sigma_{min}(A^{(j)}_{n+1}-\lambda_i I_n)\leq(\prod_{i=1}^d\delta_i)^p n^{-d/2}\text{ for at least } \frac{1}{4}n \text{ values } j\in[n+1]\right\}.
$$ Once this is proven, we can deduce Proposition \ref{lemma8.11111} in exactly the same way as the proof of Proposition \ref{finalfuckpropositionga} is worked out: we use the no-gaps delocalization result (Theorem \ref{veragaghagag}) to deduce that \eqref{fuckyousga} holds for all $i=1,\cdots,d$ and for at least $\frac{1}{2}n$ choices of $j\in[n+1]$.

Now we prove \eqref{8.1replaced}. We fix some $j\in[n+1]$ such that the event in $\mathcal{P}^{p,d}$ holds for $j$ and moreover, for all the $d$ eigenvectors associated with the least singular values of $A_{n+1}-\lambda_j I_{n+1}$ ($j=1,\cdots,d$), their $j$-th coordinates have an absolute value of at least $cn^{-1/2}$ for some fixed $c$. The latter property can be satisfied with probability $1-e^{-\Omega(n)}$ thanks to Theorem \ref{veragaghagag}. We take a dyadic partition of $[(\prod_{i=1}^d\delta_i)^p,1]$ and consider each subevent where $\sigma_{min}(A_{n+1}^{(j)}-\lambda_i I_n)\leq x_in^{-1/2}$ and $\prod_{i=1}^d x_i\leq 2^p(\prod_{i=1}^d\delta_i)^p$. If there exists some $k\in[d]$ with $x_k>\delta_k$, then we first use our induction hypothesis and Lemma \eqref{singularvalueproduct} to deduce that, for any $\eta>0$, 
\begin{equation}\label{productequations}\mathbb{P}(\prod_{i\in[d]\setminus\{k\}}\sigma_{min}(A_{n+1}^{(j)}-\lambda_i I_n)\leq \prod_{i\in[d]\setminus\{k\}}x_i)\lesssim (\prod_{i\in[d]\setminus\{k\}}x_i)^{1-\eta}+e^{-\Omega(n)}\end{equation} 
Then we use Fact \eqref{fact7.3} to show that we have $$\mathbb{P}(\sigma_{min}(A_{n+1}-\lambda_k I_{n+1})\leq x_kn^{-1/2},\sigma_{min}(A_{n+1}^{(j)}-\lambda_k I_n)\leq x_kn^{-1/2}\mid\mathcal{HA}_{[d]\setminus\{k\}})\lesssim x_k,$$ where we denote by $\mathcal{HA}_{[d]\setminus\{k\}}$ the event 
$$
\mathcal{HA}_{[d]\setminus\{k\}}:=\left\{\prod_{i\in[d]\setminus\{k\}}\sigma_{min}(A_{n+1}^{(j)}-\lambda_i I_n)\leq \prod_{i\in[d]\setminus\{k\}}x_i\right\}.
$$
Combining the last two estimates completes the proof, and our dyadic partition leads to an entropy cost of $\log((\prod_{i=1}^d\delta_i)^{-1})^{C_d}$. If for all $k$ we have $x_k<\delta_k$ then for some $j\in[d]$ we have $x_j\leq\delta_j(\prod_{i=1}^d\delta_i)^{(p-1)/d}$. We choose some $k\neq j$ and apply the induction hypothesis to $[d]\setminus\{k\}$ and apply Fact \eqref{fact7.3} to $\{k\}$. It is straightforward to check that the bound is good enough.
\end{proof}

\begin{proof}[\proofname\ of Lemma \ref{lemma8.2s}] Evidently we have $A_1(s)=2(s-1)$ and we have already computed $A_2(s)=4(s^2\ln s^2-s^2+1)$. We let $F_d(s^d):=A_d(s)$ and note the following iterative formula:
$$
A_d(s)\equiv F_d(s^d)=2\int_1^{s^{d}}F_{d-1}\left(\frac{s^{d}}{y}\right)dy,
$$ where we first integrate over $y=|\theta_1|$, and on this cross-section we have $\prod_{i=2}^d|\theta_i|\leq \frac{s^d}{y}$. Then the claim follows from direct, but tedious computations. We can check via induction that for each $d$, we can find a constant $C_d$ satisfying $$F_d(s^d)=C_ds^d\ln^{d-1}(s^d)+s^d(\text{ lower powers of }\ln(s^d))+\text{ const}.$$
\end{proof}

\begin{proof}[\proofname\ of Lemma \ref{910lemma6.111}]
    We first define the following event, for any $p>1$:
    $$\mathcal{H}:=\left\{\langle X,u\rangle\geq s,\frac{\prod_{i=1}^d\mu_1(\lambda_i)}{n^{d/2}}\leq(\prod_{i=1}^d\delta_i)^{-p},\frac{\prod_{i\in\mathcal{A}}\mu_1(\lambda_i)}{n^{|\mathcal{A}|/2}}\leq(\prod_{i=1}^d\delta_i)^{-p},\mathcal{A}\in 2^{[d]}\right\}.$$
    Then to prove Lemma \ref{910lemma6.111}, we suffice to prove the lemma if we replace the first line of \eqref{lemma8.3firstsecond} with 
$$
\mathbb{E}_{A_n}\sup_{r_1,\cdots,r_d}\mathbb{P}_X\left( \frac{|\langle (A_n-\lambda_i I_n)^{-1}X,X\rangle-r_i|}{\|A_n-\lambda_i I_n)^{-1}\|_*} \leq\delta_i,i=1,\cdots,d, \quad \wedge\mathcal{H}.
\right)
$$ This is because for each $\emptyset\neq \mathcal{A}\in 2^{[d]}$ such that $\mathcal{A}\neq \{1,\cdots,d\}$, we have by the induction hypothesis and Lemma \ref{singularvalueproduct} that $\mathbb{P}\left(\frac{\prod_{i\in\mathcal{A}}\mu_1(\lambda_i)}{n^{|\mathcal{A}|/2}}\geq(\prod_{i=1}^d\delta_i)^{-p}\right)\lesssim \prod_{i=1}^d\delta_i+e^{-\Omega(n)}$.

Now we can apply Theorem \ref{theorem3.1} which yields that it suffices to bound 
$$
\sum_{A\in 2^{[d]}}\mathbb{E}_{A_n}\left[\frac{\prod_{i\in\mathcal{A}}\mu_1(\lambda_i)}{\prod_{i\in\mathcal{A}}\|(A_n-\lambda_i I_n)\|_*}\cdot\int_{\mathcal{D}_{\mathcal{A}}} I(\theta)^{1/2}d\theta\quad\wedge\mathcal{H}\right].
$$ Now we plug in estimates \eqref{whatdoweget1?} and \eqref{whatdoweget2?}, and it suffices to bound 
$$
\sum_{A\in 2^{[d]}}\sum_{\ell=1}^d\mathbb{E}_{A_n}
\left[ \frac{\prod_{i\in\mathcal{A}}\mu_1(\lambda_i)}{\prod_{i\in\mathcal{A}}\|(A_n-\lambda_i I_n)\|_*}\cdot\sum_{k=2}^{cn}e^{-k}\left(\frac{(\prod_{i=1}^{|\mathcal{A}|}\mu_1(\lambda_i))^{1/|\mathcal{A}|}}{\mu_k(\lambda_\ell)}\right)^{|\mathcal{A}|-1/4}
\wedge\mathcal{H}\right].
$$
Now we apply Hölder's inequality, combined with Lemma \ref{lemma4.11} and Corollary \ref{lemma4.22} which shows $\mu_1(\lambda_i)/\|A_n-\lambda_i I_n\|_*$ and $\sqrt{n}/k\mu_k(\lambda_\ell)$ have all finite $p$-th moments, to deduce that it suffices to bound the following expression (omitting computation details)
$$
\sum_{\mathcal{A}\in 2^{[d]}}\mathbb{E}_{A_n}
\left[\left(\frac{\prod_{i=1}^{|\mathcal{A}|}\mu_1(\lambda_i)}{n^{|\mathcal{A}|/2}}\right)^{1-\frac{1}{8|\mathcal{A}|}}\wedge\mathcal{H}\right]^{\frac{8|\mathcal{A}|-2}{8|\mathcal{A}|-1}}
.
$$
The case where $\mathcal{A}=\{1,\cdots,d\}$ is as given in the statement of Lemma \ref{910lemma6.111}. It remains to show that for any $\mathcal{A}\neq\{1,\cdots,d\}$, the given expectation is $O(1)$. For this purpose we compute
\begin{equation}\label{whatline1432}
\int_0^{(\prod_{i\in\mathcal{A}}\delta_i)^{-\frac{8|\mathcal{A}|-1}{8|\mathcal{A}|}p}} \mathbb{P}\left(\prod_{i\in\mathcal{A}}\sigma_{min}(A_n-\lambda_iI_n)\leq x^{-\frac{8|\mathcal{A}|}{8|\mathcal{A}|-1}}n^{-|\mathcal{A}|/2}\right)dx.
\end{equation}
By the induction hypothesis and an application of Lemma \ref{singularvalueproduct}, we obtain for any $\eta>0$ and $x>1$,
$$\mathbb{P}\left(\prod_{i\in\mathcal{A}}\sigma_{min}(A_n-\lambda_iI_n)\leq x^{-\frac{8|\mathcal{A}|}{8|\mathcal{A}|-1}}n^{-|\mathcal{A}|/2}\right)\leq x^{-\frac{8|\mathcal{A}|}{8|\mathcal{A}|-1}+\eta}+e^{-\Omega(n)},$$ and hence that $\eqref{whatline1432}=O(1)$ by choosing $\eta>0$ small. This completes the proof.
\end{proof}

\begin{proof}[\proofname\ of Lemma \ref{initial8}] We first apply Hanson-Wright (Theorem \ref{hansonwright8.1}) to deduce that for each $i\in\{1,2,\cdots,d\}$, where we also use $\|M\|_*\geq \|M\|_{HS}$ for any square matrix $M$,
$$\mathbb{P}_X\left(\|(A_n-\lambda_i I_n)^{-1}X\|_2\geq C'(\log(\prod_{i=1}^d\delta_i)^{-1})^{1/2}\cdot \|(A_n-\lambda_i I_n)^{-1}\|_{*}
\right)\leq\prod_{i=1}^d\delta_i$$
Then the result follows from applying Proposition \ref{lemma8.11111} and Lemma \ref{910lemma6.111} where we take $u=0$, $s=0$, take $p>1$ sufficiently close to 1, and take $\delta_i$ to be $\delta_i\cdot(\log(\prod_{i=1}^d\delta_i)^{-1})^{1/2}$.
    
\end{proof}

\begin{proof}[\proofname\ of Lemma \ref{bootstrap8}] We set up the following bootstrap argument: suppose that for some $\tau\in(0,1)$ we have proven the following for all $\delta_1,\cdots,\delta_d>0$,
     $$\mathbb{P}(\sigma_{min}(A_{n+1}-\lambda_i I_{n+1})\leq\delta_i n^{-1/2},i=1,\cdots,d)\lesssim \left(\prod_{i=1}^d\delta_i\right)^\tau+e^{-\Omega(n)},$$ then this implies that 
      $$\begin{aligned}&\mathbb{P}(\sigma_{min}(A_{n+1}-\lambda_i I_{n+1})\leq\delta_i n^{-1/2},i=1,\cdots,d)\\&\quad\lesssim \left(\prod_{i=1}^d\delta_i\right)^{\tau+\frac{1}{9d}}(\log(\prod_{i=1}^d\delta_i)^{-1})^{d/2}+e^{-\Omega(n)}.\end{aligned}$$ 
     We prove this claim via the following steps. By our assumption and an application of Lemma \ref{singularvalueproduct}, we get for any $\omega>0$ and $x>1$,
$$\mathbb{P}\left(\prod_{i=1}^d\sigma_{min}(A_n-\lambda_iI_n)\leq x^{-\frac{8d}{8d-1}}n^{-d/2}\right)\leq x^{-\frac{8d}{8d-1}\tau(1-\omega)}+e^{-\Omega(n)},$$ so that

$$\begin{aligned}
&\int_0^{(\prod_{i=1}^d\delta_i)^{-\frac{8d-1}{8d}p}} \mathbb{P}\left(\prod_{i=1}^d\sigma_{min}(A_n-\lambda_iI_n)\leq x^{-\frac{8d}{8d-1}}n^{-d/2}\right)dx\\&\quad\quad\lesssim 1+(\prod_{i=1}^d\delta_i)^{p\tau(1-\omega)-\frac{8d-1}{8d}p}
,\end{aligned}$$
and 
$\mathbb{E}_{A_n}\left[ (\frac{\prod_{i=1}^d\mu_1(\lambda_i)}{n^{d/2}})^{1-\frac{1}{8d}}
\right]^\frac{8d-2}{8d-1}\lesssim \max \left\{1, (\prod_{i=1}^d\delta_i)^{\frac{8d-2}{8d-1}p\tau(1-\omega)-\frac{4d-1}{4d}p}\right\}.$

 Now we apply Hanson-Wright (Theorem \ref{hansonwright8.1}) as in the proof of the previous lemma, then apply Proposition \ref{lemma8.11111} and Lemma \ref{910lemma6.111} where we take $u=0$, $s=0$ and take $\delta_i$ to be $\delta_i\log((\prod_{j=1}^d\delta_j)^{-1})^{1/2}.$ This gives us
 $$\begin{aligned}&\mathbb{P}(\sigma_{min}(A_{n+1}-\lambda_i I_{n+1})\leq\delta_i n^{-1/2},i=1,\cdots,d)\\&\lesssim \left(\prod_{i=1}^d\delta_i\right)^{\min\{1,\frac{8d-2}{8d-1}p\tau(1-\omega)-\frac{4d-1}{4d}p+1\}}(\log(\prod_{i=1}^d\delta_i)^{-1})^{d/2}+e^{-\Omega(n)}.\end{aligned}$$ 
Now we take $\omega>0$ sufficiently small, take $p>1$ that satisfies $p(1-\omega)=\frac{8d-1}{8d-2}$, and satisfies $-\frac{4d-1}{4d}p+1\geq \frac{1}{9d}$. This justifies the claim.

Finally, we apply the bootstrap procedure finitely many times, initiated at $\frac{1}{8d}$ and proceed with step size $\frac{1}{10d}$. The procedure stops after less than $10d$ iterations.
 \end{proof}

\begin{proof}[\proofname\ of Theorem \ref{Theorem1.1} for general $d$] The arguments follow those given in Section \ref{removalladfaga} and we will only give a sketch. Let $\mathcal{E}:=\{\prod_{i=1}^d \sigma_{min}(A_n-\lambda_i I_n)\geq(\prod_i\delta_i)^pn^{-p/2}\}$. Consider the following decomposition
\begin{equation}\begin{aligned}
    &\mathbb{P}^{\mathcal{E}}(Q(A,X,\lambda_i)\leq C\delta_i,i=1,\cdots,d)\\&\leq \sum_{\mathcal{A}\in 2^{[d]}}\mathbb{P}^{\mathcal{E}}(Q(A,X,\lambda_i)\leq C\delta_i,i\in [d];\frac{\|(A_n-\lambda_i I_n)^{-1}X\|_2}{\|(A_n-\lambda_i I_n)^{-1}\|_*}\geq 2,i\in\mathcal{A}).
\end{aligned}\end{equation}
    The case $\mathcal{A}=\emptyset$ can be verified directly. We just apply Lemma \ref{910lemma6.111} (taking $s=0$, $u=0$) and Lemma \ref{bootstrap8} to show that this term is bounded by $\delta_1\cdots\delta_d+e^{-\Omega(n)}$. 

    For a general $\mathcal{A}\neq\emptyset$, we consider a direct product of $|\mathcal{A}|$ dyadic partitions
    $$2^{j_i}\leq \|(A_n-\lambda_i I_n)^{-1}X\|_2/\|(A_n-\lambda_i I_n)^{-1}\|_*\leq 2^{j_{i+1}},i\in\mathcal{A}$$ where we terminate each dyadic partition at $\log n$ (i.e. set $j_i\leq\log n$ for each $i\in\mathcal{A}$) thanks to an application of Hanson-Wright (Theorem \ref{hansonwright8.1}).

As in Lemma \ref{lemma7.6alreadytire}, we can prove the following: for any $(t_1,\cdots,t_d)$ satisfying $t_i=1$ if $i\notin\mathcal{A}$ and $t_i\geq 2$ if $i\in\mathcal{A}$,
\begin{equation}\begin{aligned}\label{decomposition1481}
    &\mathbb{P}_X^{\mathcal{E}}\left(Q_*(A,X,\lambda_i)\leq 2Ct_i\delta_i,i=1,\cdots,d;\quad \frac{\|(A_n-\lambda_i I_n)^{-1}X\|_2}{\|(A_n-\lambda_i I_n)^{-1}\|_*}\geq t_i,\quad i\in\mathcal{A}
    \right)\\& \leq 2^d\sum_{k_j=1,j\in\mathcal{A}}^n\mathbb{P}_X^{\mathcal{E}}\left(Q_*(A,X,\lambda_i)\leq 2Ct_i\delta_i,i\in[d];\langle X,v_{k_j}(\lambda_j)\rangle\geq t_j\log(1+k_j),j\in\mathcal{A}\right),
\end{aligned}\end{equation}
    where each $v_{k_j}(\lambda_j)$, $j\in\mathcal{A}$ is chosen from an orthonormal basis of eigenvectors of $A_n$ and these vectors are not necessarily distinct. 
    
    To compute the probability in the second line of \eqref{decomposition1481}, we use Lemmas \ref{910lemma6.111} and \ref{bootstrap8} to obtain a bound (where we take the vector $u:=\sum_{j\in\mathcal{A}} v_{k_j}(\lambda_j)$ in Lemma \ref{910lemma6.111})
$$\begin{aligned}&\mathbb{P}_X^{\mathcal{E}}\left(Q_*(A,X,\lambda_i)\leq 2Ct_i\delta_i,i\in[d];\langle X,v_{k_j}(\lambda_j)\rangle\geq t_j\log(1+k_j),j\in\mathcal{A}\right)\\&\lesssim \delta_1\cdots\delta_d\prod_{j\in\mathcal{A}}t_j\cdot e^{-\sum_{j\in\mathcal{A}} t_j\log(1+k_j)}\end{aligned}.$$ This step completes the proof by noticing that, for each $\mathcal{A}\in 2^{[d]}$, $$\sum_{j_i=1,i\in\mathcal{A}}^\infty \sum_{k_i=1,i\in\mathcal{A}}^\infty  2^{\sum_{i\in\mathcal{A}}j_i}\prod_{i\in\mathcal{A}}(k_i+1)^{-2^{j_i}}=\left(\sum_{j=1}^\infty\sum_{k=1}^
\infty 2^j(k+1)^{-2^j}\right)^{|\mathcal{A}|}=O(1).$$ 

\end{proof}

\section{Mesoscopically separated locations}
\label{section9section9}
In this section, we assume that the locations $\lambda_1,\cdots,\lambda_d$ are separated from one another only by distance $\Delta n^{-1/2+\sigma}$, for some fixed values of $\Delta>0$ and $\sigma>0$.

We prove Theorem \ref{Theorem1.2}, which states that all the results in Theorem \ref{Theorem1.1} hold true but with a larger error $e^{-\Omega(n^{\sigma/2})}$ rather than $e^{-\Omega(n)}$.

We first state two useful corollaries of the local arcsine law. The first is an adaptation of Corollary \ref{corollary4.999} to scale $k\sim n^\sigma$.

\begin{corollary}\label{corollary11.999}
    In the setting of Corollary \ref{corollary4.88}, for any $\lambda_i\in[(-(2-\kappa)\sqrt{n},(2-\kappa)\sqrt{n}]$, $i=1,\cdots,d$, any $c_0>0$ and any $k\geq c_0n^{\sigma}$,
    \begin{equation}
\mathbb{P}\left(\frac{\mu_k(\lambda_i)}{\mu_1(\lambda_i)}\geq 10^{-5}d^{-1}\right)\leq e^{-\Omega(n^{\sigma/2})}.
    \end{equation}
\end{corollary}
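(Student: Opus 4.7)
The plan is to adapt the proof of Corollary \ref{corollary4.999} to the mesoscopic scale, replacing the macroscopic concentration of Proposition \ref{proposition4.7} by the mesoscopic local semicircle law of Theorem \ref{theorem4.3}, which accounts for the weaker error $e^{-\Omega(n^{\sigma/2})}$ in place of $e^{-\Omega(n)}$. The argument splits into an upper bound on $\mu_k(\lambda_i)$ for $k\geq c_0 n^\sigma$ and a matching lower bound on $\mu_1(\lambda_i)$, both of order $n^{1/2-\sigma}$.

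For the upper bound I would invoke Corollary \ref{corollary3.6chap9}: on an event of probability $1-e^{-\Omega(n^{\sigma/2})}$, $\mu_k(\lambda_i) \leq C_2\sqrt{n}/k$ for every $c_0 n^\sigma \leq k \leq n/c$, and the same (weaker) bound is inherited for $k > n/c$ by the monotonicity $\mu_k \leq \mu_{\lfloor n/c\rfloor}$. Consequently, on this event,
\[
\mu_k(\lambda_i) \leq \frac{C_2}{c_0}\, n^{1/2-\sigma}\qquad\text{for every } k \geq c_0 n^\sigma,
\]
for all $n$ sufficiently large.

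For the lower bound I claim that for any fixed $D>0$,
\[
\mathbb{P}\bigl(\mu_1(\lambda_i) \leq D\, n^{1/2-\sigma}\bigr) \leq e^{-\Omega(n^{\sigma/2})}.
\]
Indeed, $\mu_1(\lambda_i) \leq Dn^{1/2-\sigma}$ is the event $\sigma_{min}(A_n-\lambda_i I_n) \geq D^{-1}n^{\sigma-1/2}$, i.e., no eigenvalue of $A_n$ lies within distance $D^{-1}n^{\sigma-1/2}$ of $\lambda_i$. After rescaling by $\sqrt{n}$ this is the event $\mathcal{N}_{\eta^*}(E)=0$ with $E := \lambda_i/\sqrt{n}\in[-2+\kappa,2-\kappa]$ and $\eta^* := 2D^{-1}n^{\sigma-1}$. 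Choosing $\delta := \min(c_1\kappa,\, \rho_{sc}(2-\kappa)/2)$, where $c_1$ is the universal constant of Theorem \ref{theorem4.3}, we have $\rho_{sc}(E)\geq \rho_{sc}(2-\kappa)\geq 2\delta$, hence
\[
\{\mathcal{N}_{\eta^*}(E)=0\}\ \subset\ \Bigl\{\bigl|\mathcal{N}_{\eta^*}(E)/(n\eta^*) - \rho_{sc}(E)\bigr|\geq \delta\Bigr\}.
\]
For $n$ large enough that $K_\delta/n \leq \eta^* \leq 1$, Theorem \ref{theorem4.3} yields the probability bound $Ce^{-c\delta^2\sqrt{n\eta^*}} = Ce^{-\Omega(n^{\sigma/2})}$, since $n\eta^* = 2n^\sigma/D$.

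Taking $D := 10^5 d\, C_2/c_0$ and intersecting the two events above (still of probability $1-e^{-\Omega(n^{\sigma/2})}$), one obtains for every $k\geq c_0 n^\sigma$
\[
\frac{\mu_k(\lambda_i)}{\mu_1(\lambda_i)} \leq \frac{(C_2/c_0)\, n^{1/2-\sigma}}{D\, n^{1/2-\sigma}} = \frac{C_2}{c_0 D} = 10^{-5}\, d^{-1},
\]
which is the required estimate. The only delicate points are the bookkeeping of scales needed to apply Theorem \ref{theorem4.3} — namely verifying $K_\delta/n \leq \eta^* \leq 1$ and $\delta \leq c_1\kappa$ — but these all hold for $n$ sufficiently large, so conceptually no ingredient beyond what already appears in the proof of Corollary \ref{corollary4.999} is required.
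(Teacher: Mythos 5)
Your proof is correct and follows essentially the same route as the paper: use Corollary \ref{corollary3.6chap9} to bound $\mu_k(\lambda_i)$ from above for $k\geq c_0 n^\sigma$, and then show via Theorem \ref{theorem4.3} that $\mu_1(\lambda_i)\lesssim n^{1/2-\sigma}$ has probability $e^{-\Omega(n^{\sigma/2})}$ because it forces an interval of length $\asymp n^{\sigma-1/2}$ around $\lambda_i$ to be free of eigenvalues. You spell out the verification of the hypotheses of Theorem \ref{theorem4.3} (choice of $\delta$, the range $K_\delta/n\le\eta^*\le 1$) in more detail than the paper, and you also note the monotonicity $\mu_k\le\mu_{\lfloor n/c\rfloor}$ to cover $k>n/c$, which the paper leaves implicit; otherwise the two arguments coincide.
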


\begin{proof}
    By Corollary \ref{corollary3.6chap9}, with probability $1-e^{-\Omega(n^{\frac{\sigma}{2}})}$, $\mu_k(\lambda_i)\in [\frac{C_1\sqrt{n}}{k},\frac{C_2\sqrt{n}}{k}]$ for all $k\geq c_0n^\sigma$ any any $\sigma>0$. Then it suffices to consider the event  $\mu_1(\lambda_i)\leq C\sqrt{n}/{n^\sigma}$ for any $C>0$, but this event implies that on the interval of length $2C^{-1}n^{\sigma-\frac{1}{2}}$ centered at $\lambda_i$, $A_n$ has no eigenvalues. By Theorem \ref{theorem4.3}, the possibility of this event is less than $e^{-\Omega(n^{\sigma/2})}$.
\end{proof}

Recall the notation $\mu_{c_j(i;k)}(\lambda_j)$ introduced in Definition \ref{definition4.2}, where $c_j(i;k)$ is the subscript of the singular vector of $(A_n-\lambda_j I_n)^{-1}$ sharing the same singular vector with the $k$-th largest singular value of $(A_n-\lambda_i I_n)^{-1}$.

We prove the following further estimate: 
\begin{lemma} Let $\Delta>0,\kappa>0$ and $\sigma\in(0,1)$ be fixed. Fix any $\lambda_1,\cdots,\lambda_d\in[-(2-\kappa)\sqrt{n},(2-\kappa)\sqrt{n}]$ satisfying $|\lambda_i-\lambda_j|\geq\Delta n^{\sigma-\frac{1}{2}}$ for any $i\neq j$. Then \begin{enumerate}
\item We can find some constant $\Delta_0>0$ and some $C_0>0$ depending on $\Delta$ such that, for any $k\leq \Delta_0n^\sigma$, we have
    \begin{equation}
        \mathbb{P}(c_j(i;k)\leq C_0n^\sigma)\leq \exp(-\Omega(n^{\sigma/2})).
    \end{equation} \item We can further find some $\Delta_0>0$ which is sufficiently small such that for any $i\neq j\in\{1,2,\cdots,d\}$ and, we have on an event with probability $1-e^{-\Omega(n^{\sigma/2})},$
\begin{equation}\label{proofnameagaga}
    \frac{\mu_{c_i(j;1)}(\lambda_i)}{\mu_1(\lambda_i)}\leq 10^{-5}d^{-1}\frac{\mu_1(\lambda_j)}{\mu_1(\lambda_j)}
\end{equation}
\item Suppose we are given $(\theta_1,\cdots,\theta_d)\in\mathbb{R}^d$ and some $J\in\{1,\cdots,d\}$ that satisfy $|\frac{\theta_J}{\mu_1(\lambda_J)}|\geq |\frac{\theta_i}{\mu_1(\lambda_i)}|$ for any $i\neq J$. Then we can find some (small) $\Delta_0>0$ such that on an event with probability $1-e^{-\Omega(n^{\sigma/2})},$ for any $1\leq k\leq \Delta_0n^{\sigma/2}$, we have 
\begin{equation}
   |\theta_i| \frac{\mu_{c_i(J;k)}(\lambda_i)}{\mu_1(\lambda_i)}\leq 10^{-2}d^{-1}|\theta_J|\frac{\mu_k(\lambda_J)}{\mu_1(\lambda_J)}
\end{equation}
\end{enumerate}
\end{lemma}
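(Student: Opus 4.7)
The plan is to translate all three assertions into statements about counts of eigenvalues of $A_n$ in intervals of length $\sim n^{\sigma-1/2}$ centred at $\lambda_i$ or $\lambda_j$, and then invoke the local semicircle law at that mesoscopic scale. The key dictionary is that every singular vector of $(A_n-\lambda_i I_n)^{-1}$ is also an eigenvector of $A_n$: if $v$ realizes $\mu_k(\lambda_i)$ and has $A_n$-eigenvalue $\mu^*$, then $\mu_k(\lambda_i)=1/|\mu^*-\lambda_i|$ and $c_j(i;k)=\#\{\ell:|\mu_\ell-\lambda_j|\le|\mu^*-\lambda_j|\}$. At scale $\eta\sim n^{\sigma-1/2}$ one has $\sqrt{n\eta}\sim n^{\sigma/2}$, so Corollary~\ref{corollary425} (in both its upper- and lower-bound forms) produces estimates with error $\exp(-\Omega(n^{\sigma/2}))$, precisely matching the target probability.

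For part (1), I would first bound $|\mu^*-\lambda_i|$ from above by $C\Delta_0 n^{\sigma-1/2}$ whenever $k\le\Delta_0 n^\sigma$: by Corollary~\ref{corollary425}, with probability $1-\exp(-\Omega(n^{\sigma/2}))$ the interval $(\lambda_i-C\Delta_0 n^{\sigma-1/2},\lambda_i+C\Delta_0 n^{\sigma-1/2})$ contains at least $\Delta_0 n^\sigma$ eigenvalues of $A_n$ once $C$ is large enough (depending only on $\kappa$), and the $k$-th closest eigenvalue to $\lambda_i$ must lie inside this interval. After choosing $\Delta_0$ small enough in terms of $\Delta$, the triangle inequality together with $|\lambda_i-\lambda_j|\ge\Delta n^{\sigma-1/2}$ forces $|\mu^*-\lambda_j|\ge\tfrac12\Delta n^{\sigma-1/2}$. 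A second application of Corollary~\ref{corollary425}, this time the lower-bound form at scale $\tfrac12\Delta n^{\sigma-1/2}$ centred at $\lambda_j$, then shows that this interval contains at least $C_0 n^\sigma$ eigenvalues of $A_n$ for some $C_0=C_0(\Delta,\kappa)>0$, each of which is strictly closer to $\lambda_j$ than $\mu^*$ is. This yields $c_j(i;k)\ge C_0 n^\sigma$ on the intersection of the two good events.

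Part (2) then follows by applying part (1) with the roles of $i$ and $j$ exchanged and $k=1$, giving $c_i(j;1)\ge C_0 n^\sigma$; feeding this into Corollary~\ref{corollary11.999} (whose hypothesis is exactly $c_i(j;1)\ge c_0 n^\sigma$ for some $c_0>0$, which can be ensured by taking $\Delta_0$ small) produces $\mu_{c_i(j;1)}(\lambda_i)/\mu_1(\lambda_i)\le 10^{-5}d^{-1}$ with the required probability. For part (3), the hypothesis $|\theta_J|/\mu_1(\lambda_J)\ge|\theta_i|/\mu_1(\lambda_i)$ cancels the $\theta$'s and reduces the target inequality to
\begin{equation*}
\mu_{c_i(J;k)}(\lambda_i)\le 10^{-2}d^{-1}\,\mu_k(\lambda_J),\qquad\text{equivalently}\qquad \frac{|\mu^{\star}-\lambda_J|}{|\mu^{\star}-\lambda_i|}\le 10^{-2}d^{-1},
\end{equation*}
where $\mu^{\star}$ denotes the $A_n$-eigenvalue corresponding to $v_k(\lambda_J)$. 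The numerator is bounded by $C\Delta_0 n^{\sigma/2-1/2}$ for $k\le\Delta_0 n^{\sigma/2}$ by the same local-law argument used in part (1), while the denominator is at least $\tfrac12\Delta n^{\sigma-1/2}$ by the triangle inequality and the mesoscopic separation; the resulting ratio is $O(n^{-\sigma/2})$, comfortably below $10^{-2}d^{-1}$ once $n$ is large and $\Delta_0$ is sufficiently small.

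The main obstacle is bookkeeping rather than substantive difficulty: the constants $C$, $C_0$, $\Delta_0$ and the implicit $c_0$ in Corollary~\ref{corollary11.999} must be fixed in a consistent order so that the single-scale local-law error $\exp(-\Omega(n^{\sigma/2}))$ survives the union bound over the $d^2$ pairs $(i,j)$ and over the $O(n^\sigma)$ admissible values of $k$ in parts (1) and (3). Since these combinatorial losses are polynomial they are absorbed by the super-polynomial decay, and no probabilistic input beyond the local law is needed.
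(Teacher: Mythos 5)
Parts (1) and (2) are correct, though you take a slightly longer route for (1): the paper obtains $c_j(i;k)\geq C_0n^\sigma$ from a single count of eigenvalues in $[\lambda_i,\lambda_j]$, whereas you apply the local law twice (once to place $\mu^*$ close to $\lambda_i$, then once to produce a lower count near $\lambda_j$); both yield the same error $e^{-\Omega(n^{\sigma/2})}$. Part (2) matches the paper's argument.

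Part (3) has a genuine gap in the choice of scale. You bound $|\mu^*-\lambda_J|\leq C\Delta_0 n^{\sigma/2-1/2}$ for $k\leq\Delta_0 n^{\sigma/2}$ and conclude the ratio is $O(n^{-\sigma/2})$. But to locate the $k$-th closest eigenvalue within distance $\sim n^{\sigma/2-1/2}$ of $\lambda_J$, Theorem \ref{theorem4.3} must be applied at spatial scale $\eta^*\sim n^{\sigma/2-1}$, whose failure probability is $\exp(-\Omega(\sqrt{n\eta^*}))=\exp(-\Omega(n^{\sigma/4}))$, strictly weaker than the required $\exp(-\Omega(n^{\sigma/2}))$. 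The lemma only asks for the constant bound $10^{-2}d^{-1}$, so the $O(n^{-\sigma/2})$ decay is an overshoot that costs probability. The fix is to stay at the coarse scale $\eta^*\sim n^{\sigma-1}$ used in parts (1)--(2): for any fixed small $C>0$, the interval of length $2Cn^{\sigma-1/2}$ around $\lambda_J$ contains at least $\tfrac14\rho_{sc}(2-\kappa)\cdot 2Cn^\sigma$ eigenvalues with error $\exp(-\Omega(n^{\sigma/2}))$, and once $n$ is large enough that $\Delta_0 n^{\sigma/2}$ is below this count one gets $|\mu^*-\lambda_J|\leq Cn^{\sigma-1/2}$ for every $k\leq\Delta_0 n^{\sigma/2}$. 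Choosing $C\leq\tfrac12\Delta$ preserves the triangle-inequality lower bound $|\mu^*-\lambda_i|\geq\tfrac12\Delta n^{\sigma-1/2}$, and the ratio becomes $\leq 2C/\Delta$, which is $\leq 10^{-2}d^{-1}$ once $C$ is chosen small depending only on $\Delta$ and $d$. (The paper's own proof of (3) instead feeds part (1) and the two-sided rigidity of Corollary \ref{corollary3.6chap9} into the monotonicity $\mu_k(\lambda_J)\geq\mu_{c_0n^\sigma}(\lambda_J)$, avoiding a direct position estimate for $\mu^*$; either route works once your scale is corrected.)
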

\begin{proof}
    Suppose that for $c_j(i;k)\leq C_0n^\sigma$, then there are at most $k+C_0n^\sigma\leq(\Delta_0+C_0)n^\sigma$ number of eigenvalues of $A_n$ in the interval $[\lambda_i,\lambda_j]$ (or $[\lambda_j,\lambda_i]$). However, if $\Delta_0+C_0$ is chosen to be sufficiently small, this event has probability $e^{-\Omega(n^{\sigma/2})}$ according to Theorem \ref{theorem4.3}. 

    For the second claim, we combine the first claim (we keep $C_0$ fixed and set $\Delta_0$ even smaller) with Corollary \ref{corollary11.999} to deduce that \eqref{proofnameagaga} holds.

For the third claim, we combine our initial assumption on the subscript $J$, the estimate in Corollary \ref{corollary3.6chap9} giving a range $\mu_k\in[C_1\sqrt{n}/k,C_2\sqrt{n}/k]$, and our first claim stating that $c_i(j,k)$ are large enough. Fine tuning the value of $\Delta_0$ to be small completes the proof.   
\end{proof}

Having proven these estimates, we can derive a straightforward generalization of Lemma \ref{lemma6.61} and \ref{lemma6.612} to $\lambda_i$'s at a mesoscopic distance. We outline them here but we omit the proof, as the proofs are essentially identical.

\begin{lemma}\label{9--lemma6.61}
 Assume that $A_n$ satisfies the assumptions in Theorem \ref{Theorem1.2} and assume that $X_2\sim\operatorname{Col}_n(\zeta_2)$ and $\zeta_2\in\Gamma_2(G,K,\sigma_0)$. Fix any $\kappa>0$ and $\Delta>0$ to be sufficiently small, and some $\sigma\in(0,1)$. Then there exists an event $\mathcal{E}_1$ with $\mathbb{P}(\mathcal{E}_1)\geq 1-\exp(-\Omega(n^{\sigma/2}))$ such that for any $A_n\in\mathcal{E}_1$, the following holds:

   Fix any given $(\theta_1,\cdots,\theta_d)\in\mathbb{R}^d$ with $\prod_{i=1}^d|\theta_i|=1$ , and fix any given
   $\lambda_1,\cdots,\lambda_d\in[-(2-\kappa)\sqrt{n},(2-\kappa)\sqrt{n}]$ with $|\lambda_i-\lambda_j|\geq \Delta n^{-\frac{1}{2}+\sigma}$ whenever $i\neq j$.

   Then there exists some $J\in\{1,\cdots,d\}$ (depending on $A_n$ and $\theta$) such that, for any $$s\in\left(0, \frac{C\cdot \mu_k(\lambda_J)}{{\left(\prod_{i=1}^d\mu_1(\lambda_i)\right)^{1/d}}}\right)$$ and any $k\leq c_0n^\sigma$, we have
    \begin{equation}
\mathbb{P}_{\widetilde{X}}\left(\left\|\sum_{i=1}^d \frac{\theta_i}{\mu_1(\lambda_i)}(A_n-\lambda_i I_n)^{-1}\widetilde{X}\right\|_2\leq s
\right) \lesssim s e^{-k},
    \end{equation} where $c_0$, $C$ are universal constant depending only on the various parameters $G,B$,$\sigma_0$,$K$.
\end{lemma}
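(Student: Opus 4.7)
The plan is to mimic the proof of Lemma \ref{lemma6.61} line by line, replacing the two bulk estimates of Corollaries \ref{corollary4.88} and \ref{corollary4.999} (which give error $e^{-\Omega(n)}$) with the mesoscopic analogues provided by Corollary \ref{corollary11.999} and the three-part lemma that precedes the statement we are proving (which give error $e^{-\Omega(n^{\sigma/2})}$). First, expand in the common eigenbasis of the matrices $(A_n-\lambda_i I_n)^{-1}$, $i=1,\dots,d$, exactly as in \eqref{mainmainmian}, writing
\[
\Bigl\|\sum_{i=1}^d \tfrac{\theta_i}{\mu_1(\lambda_i)}(A_n-\lambda_i I_n)^{-1}\widetilde{X}\Bigr\|_2^2
= \sum_{i=1}^d \sum_{k\in\mathcal{A}(i)} \Bigl|\,\theta_i\tfrac{\mu_k(\lambda_i)}{\mu_1(\lambda_i)}+\!\sum_{j\neq i}\theta_j\tfrac{\mu_{c_j(i;k)}(\lambda_j)}{\mu_1(\lambda_j)}\Bigr|^2 \langle \widetilde{X},v_k(\lambda_i)\rangle^2,
\]
with the same grouping $\mathcal{A}(i)$ of eigenvectors near each $\lambda_i$.

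Next, single out $I\in\{1,\dots,d\}$ with $|\theta_I|=\max_i|\theta_i|\geq 1$ (which follows from $\prod|\theta_i|=1$), and $J\in\{1,\dots,d\}$ with $|\theta_J|/\mu_1(\lambda_J)=\max_i |\theta_i|/\mu_1(\lambda_i)$. Apply statement (2) of the three-part lemma preceding Lemma \ref{9--lemma6.61} to conclude that on an event $\mathcal{E}_1'$ of probability $1-e^{-\Omega(n^{\sigma/2})}$,
\[
\Bigl|\theta_I+\sum_{j\neq I}\theta_j\tfrac{\mu_{c_j(I;1)}(\lambda_j)}{\mu_1(\lambda_j)}\Bigr|\geq 0.9|\theta_I|\geq 0.9.
\]
Similarly, apply statement (3) of that same lemma: on another event $\mathcal{E}_1''$ of probability $1-e^{-\Omega(n^{\sigma/2})}$, there is a constant $c_0>0$ such that for every $2\leq k\leq c_0 n^{\sigma}$ and every $j\neq J$,
\[
|\theta_j|\tfrac{\mu_{c_j(J;k)}(\lambda_j)}{\mu_1(\lambda_j)}\leq 10^{-2}d^{-1}|\theta_J|\tfrac{\mu_k(\lambda_J)}{\mu_1(\lambda_J)},
\]
so by the triangle inequality each coefficient in front of $\langle \widetilde{X},v_k(\lambda_J)\rangle^2$ is at least $0.9\,|\theta_J|\mu_k(\lambda_J)/\mu_1(\lambda_J)$. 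Set $\mathcal{E}_1=\mathcal{E}_1'\cap\mathcal{E}_1''$.

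On $\mathcal{E}_1$, we now have the lower bound
\[
\Bigl\|\sum_{i=1}^d \tfrac{\theta_i}{\mu_1(\lambda_i)}(A_n-\lambda_i I_n)^{-1}\widetilde{X}\Bigr\|_2^2
\geq 0.81\,\langle\widetilde X,v_1(\lambda_I)\rangle^2
+0.81\sum_{k=2}^{c_0 n^{\sigma}} \Bigl|\theta_J\tfrac{\mu_k(\lambda_J)}{\mu_1(\lambda_J)}\Bigr|^2\langle \widetilde{X},v_k(\lambda_J)\rangle^2.
\]
Feeding the orthonormal family $\{v_1(\lambda_I)\}\cup\{v_k(\lambda_J):2\leq k\leq c_0 n^{\sigma}\}$ into Lemma \ref{lemma5.1} (since $\widetilde X=X_2-X_2'$ has a density bounded by $4K$), we deduce, for every $1\leq k\leq c_0 n^{\sigma}$ and every $s\leq C |\theta_J|\mu_k(\lambda_J)/\mu_1(\lambda_J)$,
\[
\mathbb{P}_{\widetilde X}\Bigl(\Bigl\|\sum_{i=1}^d \tfrac{\theta_i}{\mu_1(\lambda_i)}(A_n-\lambda_i I_n)^{-1}\widetilde X\Bigr\|_2\leq s\Bigr)\lesssim s\,e^{-k}.
\]
Finally, using $\prod_i|\theta_i|=1$ and the definition of $J$,
\[
|\theta_J|\tfrac{\mu_k(\lambda_J)}{\mu_1(\lambda_J)}\geq \mu_k(\lambda_J)\Bigl(\prod_{j=1}^d \tfrac{|\theta_j|}{\mu_1(\lambda_j)}\Bigr)^{1/d}=\frac{\mu_k(\lambda_J)}{\bigl(\prod_{j=1}^d \mu_1(\lambda_j)\bigr)^{1/d}},
\]
which is exactly the upper endpoint of the admissible range of $s$.

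The hard part is conceptual rather than technical: we must be sure that the three ingredients used in the bulk case (the bound $\mu_{c_j(I;1)}(\lambda_j)/\mu_1(\lambda_j)\lesssim 1/d$, the separation of scales $\mu_k(\lambda_J)\gtrsim d\,\mu_{c_j(J;k)}(\lambda_j)$ for all $k$ up to a fixed fraction of $n$, and the exponentially small failure probabilities) all survive when we allow $|\lambda_i-\lambda_j|$ to shrink to the mesoscopic scale $\Delta n^{\sigma-1/2}$. This is the content of the three-part lemma stated immediately before Lemma \ref{9--lemma6.61}, and it forces two compromises: the range of $k$ shrinks from $k\leq c_0 n$ to $k\leq c_0 n^{\sigma}$, and the failure probability deteriorates from $e^{-\Omega(n)}$ to $e^{-\Omega(n^{\sigma/2})}$. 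With these weaker quantitative inputs in place, the remainder of the argument is formally identical to that of Lemma \ref{lemma6.61}.
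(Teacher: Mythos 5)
Your proposal is correct and follows exactly the route the paper intends: the paper omits the proof of Lemma~\ref{9--lemma6.61}, stating only that it is ``essentially identical'' to that of Lemma~\ref{lemma6.61} once the macroscopic ingredients (Corollaries~\ref{corollary4.88} and \ref{corollary4.999}) are replaced by the mesoscopic ones (Corollary~\ref{corollary11.999} and the three-part lemma immediately preceding the statement), and you have carried out precisely that substitution. Your identification of $I$ and $J$, the lower bound on the two coefficient groups via parts (2) and (3) of the preceding lemma, the invocation of Lemma~\ref{lemma5.1}, and the closing normalization step match the Lemma~\ref{lemma6.61} template, and you correctly note the two quantitative degradations (range of $k$ shrinks to $c_0 n^{\sigma}$, failure probability worsens to $e^{-\Omega(n^{\sigma/2})}$) that distinguish the mesoscopic case.
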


\begin{lemma}\label{9--lemma6.612}
 Assume that $A_n$ satisfies the assumptions in Theorem \ref{Theorem1.2} and that $X_2\sim\operatorname{Col}_n(\zeta_2)$ and $\zeta_2\in\Gamma_2(G,K,\sigma_0)$. Fix any $\kappa>0$ and $\Delta>0$ to be sufficiently small, and some $\sigma\in(0,1)$. Then there exists an event $\mathcal{E}_2$ with $\mathbb{P}(\mathcal{E}_2)\geq 1-\exp(-\Omega(n^{\sigma/2}))$ such that for any $A_n\in\mathcal{E}_2$, the following holds:

   Fix any given $(\theta_1,\cdots,\theta_d)\in\mathbb{R}^d$, assume that we can find some $\mathcal{A}\subset\{1,\cdots,d\}$ satisfying: $|\mathcal{A}|=\ell$ for some $1\leq \ell\leq d-1$, that $\prod_{i\in\mathcal{A}}|\theta_i|=1$ and for any $j\notin\mathcal{A}$, $|\theta_j|<1$. Fix any given
   $\lambda_1,\cdots,\lambda_d\in[-(2-\kappa)\sqrt{n},(2-\kappa)\sqrt{n}]$ satisfying $|\lambda_i-\lambda_j|\geq \Delta n^{-\frac{1}{2}+\sigma}$ whenever $i\neq j$. 
   
   Then there exists some $J\in\{1,\cdots,d\}$ (depending on $A_n$ and $\theta$) such that, for any $$s\in\left(0, \frac{C\cdot \mu_k(\lambda_J)}{{\left(\prod_{i\in\mathcal{A}}\mu_1(\lambda_i)\right)^{1/\ell}}}\right)$$ and for any $k\leq c_0n^\sigma$, we have
    \begin{equation}
\mathbb{P}_{\widetilde{X}}\left(\left\|\sum_{i=1}^d \frac{\theta_i}{\mu_1(\lambda_i)}(A_n-\lambda_i I_n)^{-1}\widetilde{X}\right\|_2\leq s
\right) \lesssim s e^{-k},
    \end{equation} where $c_0$ and $C$ are universal constants that depend only on the various parameters $G,B$,$\sigma_0$,$K$.
\end{lemma}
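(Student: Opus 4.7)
The plan is to mimic the proof of Lemma \ref{lemma6.612} essentially verbatim, substituting the macroscopic spectral inputs (Corollary \ref{corollary4.88} and Corollary \ref{corollary4.999}) by the mesoscopic ones just established at the start of this section, namely Corollary \ref{corollary11.999} together with parts (2) and (3) of the preceding structural lemma. Every event which was previously available with probability $1-e^{-\Omega(n)}$ is now only available with probability $1-e^{-\Omega(n^{\sigma/2})}$, and the cutoff on the summation index over eigenvector projections correspondingly shrinks from $c_0 n$ to $c_0 n^\sigma$. Accordingly, $\mathcal{E}_2$ will be taken as the intersection of the handful of favorable spectral events invoked in the argument.

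Concretely, I would first expand $\|\sum_{i=1}^d \tfrac{\theta_i}{\mu_1(\lambda_i)}(A_n-\lambda_iI_n)^{-1}\widetilde X\|_2^2$ in the common eigenbasis of the $(A_n-\lambda_iI_n)^{-1}$ exactly as in \eqref{mainmainmian}, then pick $I\in\mathcal{A}$ maximizing $|\theta_i|$ and $J\in\{1,\dots,d\}$ maximizing $|\theta_i|/\mu_1(\lambda_i)$. Part (2) of the preceding structural lemma gives, on an event of probability $1-e^{-\Omega(n^{\sigma/2})}$,
$$
\Bigl|\theta_I + \sum_{j\ne I}\theta_j\,\frac{\mu_{c_j(I;1)}(\lambda_j)}{\mu_1(\lambda_j)}\Bigr|\ge 0.9\,|\theta_I|\ge 0.9,
$$
where the last inequality uses that $\prod_{i\in\mathcal{A}}|\theta_i|=1$ and $|\theta_j|<1$ for $j\notin\mathcal{A}$ force $|\theta_I|\ge 1$. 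Part (3) of the same lemma, applied at the index $J$, controls the soft constraints uniformly for $k$ up to the scale permitted by the mesoscopic local law (this is the precise analogue of Corollary \ref{corollary4.88}), yielding
$$
\Bigl|\theta_J\,\frac{\mu_k(\lambda_J)}{\mu_1(\lambda_J)} + \sum_{j\ne J}\theta_j\,\frac{\mu_{c_j(J;k)}(\lambda_j)}{\mu_1(\lambda_j)}\Bigr|\ge 0.9\,|\theta_J|\,\frac{\mu_k(\lambda_J)}{\mu_1(\lambda_J)}.
$$
Combining these bounds, applying Livshyts' density estimate Lemma \ref{lemma5.1} to the $k$ orthonormal rows $v_1(\lambda_I),v_2(\lambda_J),\ldots,v_k(\lambda_J)$, and integrating the resulting joint-density bound over the small-ball region yields the desired $s\cdot e^{-k}$ bound, provided $s\le C\,|\theta_J|\mu_k(\lambda_J)/\mu_1(\lambda_J)$ with $C$ small enough to absorb the $(e^{1/2}K)^k$ factor from Lemma \ref{lemma5.1}.

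The only place the argument truly diverges from Lemma \ref{lemma6.61} is the final normalization, where $\prod_{i\in\mathcal{A}}|\theta_i|=1$ replaces $\prod_{i=1}^d|\theta_i|=1$ and gives
$$
|\theta_J|\,\frac{\mu_k(\lambda_J)}{\mu_1(\lambda_J)}\ge \mu_k(\lambda_J)\Bigl(\prod_{j\in\mathcal{A}}\frac{|\theta_j|}{\mu_1(\lambda_j)}\Bigr)^{1/\ell} = \frac{\mu_k(\lambda_J)}{\bigl(\prod_{j\in\mathcal{A}}\mu_1(\lambda_j)\bigr)^{1/\ell}},
$$
matching the range of $s$ in the statement. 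I expect the only real obstacle to be bookkeeping: one must verify that the small constants $\Delta_0$ produced separately by parts (1)--(3) of the preceding structural lemma (which are stated at slightly different scales, some in $n^{\sigma}$ and one in $n^{\sigma/2}$) can be simultaneously reduced to a single uniform $c_0$ such that the truncation $k\le c_0 n^\sigma$ in the statement is valid and the intersection $\mathcal{E}_2$ of the finitely many underlying events retains probability $1-e^{-\Omega(n^{\sigma/2})}$. Once this scale-compatibility check is carried out, the proof is structurally identical to that of Lemma \ref{lemma6.612}.
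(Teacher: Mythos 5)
Your proposal matches the paper's intended argument exactly: the paper simply declares Lemmas \ref{9--lemma6.61} and \ref{9--lemma6.612} to have proofs "essentially identical" to Lemmas \ref{lemma6.61} and \ref{lemma6.612}, with the macroscopic inputs (Corollaries \ref{corollary4.88} and \ref{corollary4.999}) replaced by the mesoscopic ones (Corollary \ref{corollary11.999} and the preceding structural lemma), and the truncation scale and error shrunk from $c_0 n$, $e^{-\Omega(n)}$ to $c_0 n^\sigma$, $e^{-\Omega(n^{\sigma/2})}$. Your write-up in fact spells out the substitution more explicitly than the paper does, and your choice of $I$ and $J$ (with $I\in\mathcal{A}$ automatically since $\prod_{i\in\mathcal{A}}|\theta_i|=1$ forces $\max_{i\in\mathcal{A}}|\theta_i|\geq 1 > |\theta_j|$ for $j\notin\mathcal{A}$) and the final normalization via $\prod_{i\in\mathcal{A}}|\theta_i|=1$ are precisely what the paper's proof of Lemma \ref{lemma6.612} uses.

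Your flag at the end about the mismatch between the range $k\leq \Delta_0 n^{\sigma/2}$ in part (3) of the structural lemma and the claimed range $k\leq c_0 n^\sigma$ in the lemma statement is a genuine and worthwhile observation. Tracing the proof of part (3), it relies on Corollary \ref{corollary3.6chap9} (which holds for $k\geq c_0 n^\sigma$) together with part (1) of the structural lemma (which holds for $k\leq \Delta_0 n^\sigma$), so the natural output range there is indeed $k\leq \Delta_0 n^\sigma$; the $n^{\sigma/2}$ exponent in the stated range of part (3) appears to be a misprint, and once corrected the ranges align and no further reconciliation is needed. So your caution is well placed but the obstacle is only apparent.
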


Under the assumption of Theorem \ref{Theorem1.2}, the statement of Proposition \ref{lemma8.11111} holds almost without change. We restate it here:

\begin{Proposition}
   Under the assumption of Theorem \ref{Theorem1.2}, for any $p>1$ we have
     $$\begin{aligned}
           &\mathbb{P}(\sigma_{min}(A_{n+1}-\lambda_i I_{n+1})\leq\delta_i n^{-1/2},i=1,2,\cdots,d)\lesssim \delta_1\delta_2\cdots\delta_d+e^{-\Omega(n^{\sigma/2})}\\&+\mathbb{E}_{A_n}\sup_{r_1,\cdots,r_d}\mathbb{P}_X\left(
    \frac{|\langle (A_n-\lambda_i I_n)^{-1}X,X\rangle-r_i|}{\|(A_n-\lambda_i I_n)^{-1}X\|_2}\leq\delta_i,\frac{\prod_{i=1}^d\mu_1(\lambda_i)}{n^{d/2}}\leq(\prod_{i=1}^n\delta_i)^{-p}
    \right).\end{aligned}$$
\end{Proposition}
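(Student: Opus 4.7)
The plan is to mirror the proof of Proposition \ref{lemma8.11111} line by line, with the single structural change being that every invocation of a local-law ingredient that previously yielded an $e^{-\Omega(n)}$ tail now yields only an $e^{-\Omega(n^{\sigma/2})}$ tail. Concretely, I would first define the analogue of the pathological event,
\begin{equation*}
\mathcal{P}^{p,d}:=\Bigl\{\prod_{i=1}^d \sigma_{\min}(A^{(j)}_{n+1}-\lambda_i I_n)\leq \bigl(\textstyle\prod_{i=1}^d\delta_i\bigr)^p n^{-d/2}\text{ for at least } \tfrac{1}{4}n \text{ values } j\in[n+1]\Bigr\},
\end{equation*}
and split the desired probability into its intersection with $\mathcal{P}^{p,d}$ and its intersection with $(\mathcal{P}^{p,d})^c$. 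As the proof of Proposition \ref{lemma8.11111} makes clear, the second half is handled verbatim: on $(\mathcal{P}^{p,d})^c$ one uses the no-gaps delocalization Theorem \ref{veragaghagag} (with exceptional probability $e^{-c_1 n}\leq e^{-\Omega(n^{\sigma/2})}$) to find at least $\tfrac{1}{4}n$ indices $j$ on which both least-singular-value eigenvectors have large $j$-th coordinate, then the first-moment method identifies one such $j_\ast$, and Fact \ref{fact2.4} translates $d_{j_\ast}(A_{n+1}-\lambda_i I_{n+1})$ into the quadratic-form ratio appearing on the right-hand side of the proposition.

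The content of the argument is therefore in proving the mesoscopic analogue of \eqref{8.1replaced}, namely
\begin{equation*}
\mathbb{P}\bigl(\sigma_{\min}(A_{n+1}-\lambda_i I_{n+1})\leq \delta_i n^{-1/2},\ i\in[d],\ \wedge\ \mathcal{P}^{p,d}\bigr)\lesssim \prod_{i=1}^d\delta_i+e^{-\Omega(n^{\sigma/2})}.
\end{equation*}
Here I would fix $j$ satisfying $\mathcal{P}^{p,d}$ and having a large $j$-th eigenvector coordinate, then dyadically decompose $[(\prod_i \delta_i)^p,1]$ into at most $O(\log(\prod_i\delta_i)^{-1})^{C_d}$ cells and classify the event by tuples $(x_i)_{i\in[d]}$ with $\sigma_{\min}(A^{(j)}_{n+1}-\lambda_i I_n)\leq x_i n^{-1/2}$ and $\prod_i x_i\leq 2^d(\prod_i\delta_i)^p$. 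On each cell I either (a) find an index $k$ with $x_k\geq \delta_k$, or (b) all $x_i<\delta_i$, in which case some $x_j\leq \delta_j(\prod_i\delta_i)^{(p-1)/d}$. In either case I apply the inductive hypothesis — which is now Theorem \ref{Theorem1.2} for the $d-1$ locations $\{\lambda_i\}_{i\neq k}$ at mesoscopic distance — combined with Lemma \ref{singularvalueproduct} to obtain
\begin{equation*}
\mathbb{P}\Bigl(\prod_{i\neq k}\sigma_{\min}(A^{(j)}_{n+1}-\lambda_i I_n)\leq \prod_{i\neq k} x_i n^{-(d-1)/2}\Bigr)\lesssim \Bigl(\prod_{i\neq k} x_i\Bigr)^{1-\eta}+e^{-\Omega(n^{\sigma/2})},
\end{equation*}
and then use Fact \ref{fact7.3} together with the bounded-density estimate Lemma \ref{lemma5.1} applied to the independent column $X^{(j)}$ to absorb the $k$-th factor into a factor of size $x_k+e^{-\Omega(n)}$. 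Multiplying the two factors reconstitutes $\prod_i x_i\lesssim (\prod_i\delta_i)^p$, which recovers $\prod_i \delta_i$ on summing over dyadic cells.

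The only genuinely new bookkeeping concerns the entropy factor and the tail. The entropy factor $(\log(\prod_i\delta_i)^{-1})^{C_d}$ from dyadic summation is absorbed into $(\prod_i\delta_i)^p$ by our freedom to take $p>1$ arbitrarily close to $1$, provided $\prod_i\delta_i\geq e^{-cn^{\sigma/2}}$; below this threshold the trivial bound $e^{-\Omega(n^{\sigma/2})}$ already dominates and the claim is vacuous. The worst tail that appears in the analysis is the one supplied by the induction hypothesis (and equivalently by Lemmas \ref{9--lemma6.61}, \ref{9--lemma6.612} and Corollary \ref{corollary11.999} whenever they are needed), all of which give $e^{-\Omega(n^{\sigma/2})}$; the other ingredients (Theorem \ref{veragaghagag}, Proposition \ref{proposition6.666}, Fact \ref{fact7.3}, Lemma \ref{lemma5.1}) give the strictly stronger $e^{-\Omega(n)}$. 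Combining everything yields the stated bound with error $e^{-\Omega(n^{\sigma/2})}$.

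The main (mild) obstacle is verifying that nowhere in the original argument one silently relies on $|\lambda_i-\lambda_j|\geq \Delta\sqrt{n}$ in a way that cannot be recovered at the mesoscopic scale. The two places to be careful are the dyadic decomposition, where the range of admissible $\delta_i$ must be truncated at $e^{-cn^{\sigma/2}}$ rather than $e^{-cn}$, and the implicit use of the induction hypothesis, which must be the mesoscopic Theorem \ref{Theorem1.2} (established in turn by the parallel induction carried out in Section \ref{section9section9}) rather than the macroscopic Theorem \ref{Theorem1.1}. With these adjustments the proof goes through unchanged.
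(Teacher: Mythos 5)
Your proposal is correct and takes essentially the same route as the paper: the paper states this proposition without a written proof, remarking only that it holds "almost without change" from Proposition~\ref{lemma8.11111}, the sole difference being that the error $e^{-\Omega(n)}$ becomes $e^{-\Omega(n^{\sigma/2})}$ because the mesoscopic induction hypothesis is what feeds into \eqref{productequations}. Your line-by-line verification — isolating that the no-gaps delocalization and the bounded-density ingredients still give $e^{-\Omega(n)}$, that only the induction hypothesis degrades, and that the dyadic-entropy truncation must be moved to $e^{-cn^{\sigma/2}}$ — supplies the details the paper elides.
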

The only difference here, compared to Proposition \ref{lemma8.11111} is that the $e^{-\Omega(n)}$ error is replaced by $e^{-\Omega(n^{\sigma/2})}$. This change comes from the induction hypothesis: in the induction hypothesis here, the error is 
$e^{-\Omega(n^{\sigma/2})}$.
We can then easily prove the following analog of Lemma \ref{910lemma6.111}:

\begin{lemma}  Under the assumption of Theorem \ref{Theorem1.2}, for any $\delta_1,\cdots,\delta_d\geq e^{-cn^{\sigma}}$, any $u\in\mathbb{R}^n$ with $\|u\|_2\leq d$, and any $p>1$, we have the following estimate:
\begin{equation}
\begin{aligned}
&\mathbb{E}_{A_n}\sup_{r_1,\cdots,r_d}\mathbb{P}_X\left(
    \frac{|\langle (A_n-\lambda_i I_n)^{-1}X,X\rangle-r_i|}{\|(A_n-\lambda_i I_n)^{-1}\|_*}\leq\delta_i
    ,\langle X,u\rangle\geq s,\frac{\prod_{i=1}^d\mu_1(\lambda_i)}{n^{d/2}}\leq(\prod_{i=1}^d\delta_i)^{-p}
    \right)\\&\lesssim e^{-s}\prod_{i=1}^d\delta_i+e^{-\Omega(n^{\sigma/2})}
    \\&+e^{-s}(\prod_{i=1}^d\delta_i)\mathbb{E}_{A_n}\left[\left(\frac{\prod_{i=1}^d\mu_1(\lambda_i)}{n^{d/2}}\right)^{1-\frac{1}{8d}}
    \wedge\left\{ \frac{\prod_{i=1}^d\mu_1(\lambda_i)}{n^{d/2}}\leq (\prod_{i=1}^d\delta_i)^{-p}\right\}\right]^{\frac{8d-2}{8d-1}},\end{aligned}
\end{equation}
    where $c$ depends on the assumptions in Theorem, on $\kappa$ and $\Delta$.
\end{lemma}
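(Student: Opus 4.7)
The plan is to mirror the proof of Lemma \ref{910lemma6.111} line by line, substituting the mesoscopic Lemmas \ref{9--lemma6.61} and \ref{9--lemma6.612} for their bulk-separation counterparts Lemmas \ref{lemma6.61} and \ref{lemma6.612}, and propagating the weaker error term $e^{-\Omega(n^{\sigma/2})}$ throughout. First I would reduce to the event $\mathcal{H}$ consisting of $\langle X, u\rangle \geq s$ together with the boundedness $\prod_{i \in \mathcal{A}} \mu_1(\lambda_i)/n^{|\mathcal{A}|/2} \leq (\prod_j \delta_j)^{-p}$ for every $\mathcal{A} \in 2^{[d]}$. The induction hypothesis underlying Theorem \ref{Theorem1.2} (which carries the weaker error $e^{-\Omega(n^{\sigma/2})}$), combined with Lemma \ref{singularvalueproduct}, shows that for every proper $\mathcal{A} \subsetneq [d]$ the complementary event has probability at most $\prod_i \delta_i + e^{-\Omega(n^{\sigma/2})}$, so restricting to $\mathcal{H}$ costs only the allowed error terms.

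Next, apply Theorem \ref{theorem3.1} on $\mathcal{H}$, producing the bound $\prod_i \delta_i \cdot e^{-s} \cdot \int I(\theta)^{1/2}\,d\theta$ over $\{\sum_i |\theta_i \delta_i|^2 \leq d\}$. Use H\"older as in \eqref{holders} to extract the Fourier-decay factor, then partition the integration domain into the regions $\mathcal{D}_\mathcal{A}$ indexed by $\mathcal{A} \in 2^{[d]}$; on each piece, rescale by $\|\theta\|_\mathcal{A}$ and apply Lemma \ref{9--lemma6.612} for $\emptyset \neq \mathcal{A} \subsetneq [d]$ and Lemma \ref{9--lemma6.61} for $\mathcal{A} = [d]$. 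Following the derivation of \eqref{whatdoweget1?} and \eqref{whatdoweget2?}, together with the change of variables provided by Lemma \ref{lemma8.2s}, the integral over $\mathcal{D}_\mathcal{A}$ is bounded by $\sum_\ell \sum_{k=2}^{c_0 n^\sigma} e^{-k}\bigl((\prod_{i \in \mathcal{A}} \mu_1(\lambda_i))^{1/|\mathcal{A}|}/\mu_k(\lambda_\ell)\bigr)^{|\mathcal{A}|-1/4}$ plus a residual $R$ arising from the $s$-range below the smallest dyadic threshold. Crucially, truncating the $k$-sum at $c_0 n^\sigma$ (rather than $c_0 n$) still preserves $\sum e^{-k} = O(1)$, and the residual $R$ is of order $e^{-c_0 n^\sigma} \cdot (\prod_i \delta_i)^{-O(p)}$, absorbable into $e^{-\Omega(n^{\sigma/2})}$ under the hypothesis $\delta_i \geq e^{-c n^\sigma}$ with $c$ small enough.

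Finally, take expectation over $A_n$ and apply H\"older with large exponents against the moment factors $\|(A_n - \lambda_i I_n)^{-1}\|_*/\mu_1(\lambda_i)$ and $\sqrt{n}/(k \mu_k(\lambda_\ell))$; by Corollary \ref{lemma4.22} and Lemma \ref{lemma4.11}, which rely only on the local semicircle law and are insensitive to the separation scale, these moments are $O(k^{O(1)})$ uniformly for $\lambda \in I_\kappa$. What remains is the quantity $\mathbb{E}[(\prod_{i \in \mathcal{A}} \mu_1(\lambda_i)/n^{|\mathcal{A}|/2})^{1 - 1/(8|\mathcal{A}|)}]^{(8|\mathcal{A}|-2)/(8|\mathcal{A}|-1)}$; for $\mathcal{A} \subsetneq [d]$ this is $O(1)$ by the induction hypothesis through the integral computation \eqref{whatline1432} (now with error $e^{-\Omega(n^{\sigma/2})}$), while for $\mathcal{A} = [d]$ it is retained explicitly as the third summand in the conclusion. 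The main obstacle is the interplay between the $n^\sigma$-truncation of the $k$-sum forced by the mesoscopic rigidity estimates (Corollaries \ref{corollary11.999} and \ref{corollary3.6chap9} only yield the two-sided control of $\mu_k(\lambda_i)$ when $k \geq c_0 n^\sigma$) and the allowable lower bound on $\delta_i$; one must choose the constant $c$ in the hypothesis $\delta_i \geq e^{-c n^\sigma}$ small enough (depending on $\sigma$, $\kappa$, $\Delta$ and the entry parameters) that the residual tail $R$ and any first-window contribution fit inside $e^{-\Omega(n^{\sigma/2})}$, and this choice of $c$ is exactly what the statement of the lemma codifies.
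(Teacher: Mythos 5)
Your proposal matches the paper's (deliberately terse) argument: the paper explicitly states that the proof is the same as that of Lemma \ref{910lemma6.111} with the mesoscopic Lemmas \ref{9--lemma6.61} and \ref{9--lemma6.612} substituted, the decomposition \eqref{decompositiongaag} truncated at $c_0 n^\sigma$, and the hypothesis $\prod_i\delta_i\geq e^{-cn^\sigma}$ invoked precisely to control the residual integral over the first interval. Your added commentary on why the $n^\sigma$-truncation and the choice of $c$ are compatible (and why the moment estimates from Section 3 carry over unchanged) is accurate and aligns with the intended reasoning.
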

We will not give the proof because it essentially follows the proof of Lemma \ref{910lemma6.111}. The only place where a change is made is where we take the following decomposition
\begin{equation}\label{decompositiongaag}[\prod_{i=1}^d\delta_i,1]=\left[\prod_{i=1}^d\delta_i,\frac{\mu_{c_0 n^\sigma}(\lambda_J)}{(\prod_{i=1}^d\mu_1(\lambda_i))^{1/d}}\right]\cup \bigcup_{k=2}^{c_0 n^\sigma}\left[\frac{\mu_{k}(\lambda_J)}{(\prod_{i=1}^d\mu_1(\lambda_i))^{1/d}},\frac{\mu_{k-1}(\lambda_J)}{(\prod_{i=1}^d\mu_1(\lambda_i))^{1/d}}\right]\end{equation} when applying Lemma \ref{9--lemma6.61} and \ref{9--lemma6.612}. The assumption that $\prod_{i=1}^d\delta_i\geq e^{-cn^{\sigma}}$ for an appropriate $c>0$ ensures that the integral over the first interval in the decomposition \eqref{decompositiongaag} is finite.

Now we can prove Theorem 1.2. 

\begin{proof}[\proofname\ of Theorem \ref{Theorem1.2}] The proof here is identical to the proof of Theorem \ref{Theorem1.1}. We can derive the corresponding versions of Lemma \ref{initial8} and \ref{bootstrap8} where the only difference is that we replace all $e^{-\Omega(n)}$ terms by $e^{-\Omega(n^{\sigma/2})}$. Then we remove the log factor in exactly the same way as the proof of Theorem \ref{Theorem1.1}. The details are omitted.
\end{proof}

\section*{Acknowledgments}
The author thanks Professor Julian Sahasrabudhe for some related discussions.

\printbibliography

\end{document}